\documentclass[11pt]{amsart}

\usepackage{amssymb,mathrsfs}
\usepackage[toc,page]{appendix}
\usepackage{graphicx,color,tikz,caption,subcaption}

\usepackage[normalem]{ulem}
\RequirePackage{yhmath} 
  \renewcommand\widering[1]{\ring{#1}}
\setlength{\textwidth}{15.0cm}
\setlength{\textheight}{22.0cm}
\hoffset=-1.5cm
\errorcontextlines=0
\numberwithin{equation}{section}
\pagestyle{plain}
\parskip 1.0ex

\theoremstyle{plain}

\newtheorem{theorem}{Theorem}[section]
\newtheorem{lemma}{Lemma}[section]
\newtheorem{proposition}{Proposition}[section]
\newtheorem{corollary}{Corollary}[section]
\newtheorem{definition}{Definition}[section]

\newtheorem{remark}{Remark}[section]

\newcommand{\N}{{\mathbb N}}     
\newcommand{\R}{{\mathbb R}}
\newcommand{\E}{{\mathbb E}}

\newcommand{\TT}{\mathsf{T}}
\def\wt{\widetilde}

 \numberwithin{equation}{section}


\begin{document}

\title
 {Multifractal analysis and Erd\"os-R\'enyi laws of large numbers for branching random walks in $\R^d$}

\author{Najmeddine Attia}
\address{Facult\'e des Sciences de Monastir, D\'epartement de Math\'ematiques, 5000-Monastir-Tunisia}\email{najmeddine.attia@fsm.rnu.tn}

\author{Julien Barral} 
\address{LAGA, CNRS UMR 7539, Institut Galil\'ee, Universit\'e Sorbonne Paris Nord, 99 avenue Jean-Baptiste Cl\'ement, 93430  Villetaneuse, 
France}\email{barral@math.univ-paris13.fr}
\keywords{Hausdorff dimension, large deviations, branching random walk, percolation.}
 
 \thanks {
2010 {\it Mathematics Subject Classification}: 28A78, 28A80; 60F10, 60G50, 60G57}

\date{}

\begin{abstract}
We revisit the multifractal analysis of $\R^d$-valued branching random walks averages by considering subsets of full Hausdorff dimension of the standard level sets, over each infinite branch of which a quantified version of the Erd\"os-R\'enyi law of large numbers holds. Assuming that the exponential moments of the increments of the walks are finite, we can indeed control simultaneously such sets when the levels belong to the interior of the compact convex domain $I$ of possible levels,  i.e. when they are  associated to so-called Gibbs measures, as well as when they belong to the subset $(\partial{I})_{\mathrm{crit}}$ of $\partial I$ made of levels associated to ``critical'' versions of these Gibbs measures.  It turns out that given such a level of one of these two types, the associated Erd\"os-R\'enyi LLN depends on the metric with which is endowed the boundary of the underlying Galton-Watson tree. To extend our control to all the boundary points in cases where $\partial I\neq (\partial{I})_{\mathrm{crit}}$, we slightly strengthen our assumption on the distribution of the increments to exhibit a natural decomposition of $\partial I\setminus (\partial{I})_{\mathrm{crit}}$ into at most countably many convex sets $J$ of affine dimension $\le d-1$ over each of which we can essentially reduce the study to that of interior and critical points associated to some $\R^{\dim J}$-valued branching random~walk.  

\end{abstract}


\maketitle

\section{Introduction and main results}\label{intro}

Let $\TT$ be a supercritical Galton-Watson tree jointly constructed with a branching random walk taking values in the Euclidean space~$\R^d$,  $d\ge 1$: on a probability space $(\Omega,\mathcal A,\mathbb P)$,  there exists a random vector $\big (N, X=(X_i)_{i\in\mathbb{N}}\big )\in \N\times (\R^d)^{\mathbb N}$, as well as $\{(N_{u},(X_{ui})_{i\in\mathbb N})\}_{u\in \bigcup_{n\ge 0} \mathbb N^n}$, a family of independent copies of $(N, X)$ indexed by the finite words over the alphabet $\mathbb N$ (with the convention that $\N^0$ contains only the empty word denoted $\epsilon$) such that: $(i)$ $\mathbb{E}(N)>1$, $\TT=\bigcup_{n=0}^\infty \TT_n$, where $\TT_0=\{\epsilon\}$, and  $\TT_n=\{ui: \, u\in\TT_{n-1},\, 1\le i\le N_u\}$ for all $n\ge 1$. The boundary of $\TT$ is then the set $\partial\TT$ of infinite words $t_1\cdots t_n\cdots$  over $\N$ such that $t_1\cdots t_k\in\TT_k$ for all $k\ge 1$. $(ii)$ For each $t=t_1t_2\cdots \in \N^\N$ and $n\ge 0$, setting  
$$
S_nX(\omega,t)=\sum_{i=1}^n X_{t_1\cdots t_n}(\omega),
$$
the restriction of $(S_nX)_{n\ge 0}$ to $\partial \TT$ is the branching random walk on $\partial\TT$ to be considered in this paper.

When $N$ is a constant interger $\ge 2$ and the components of $X$ are identically distributed and non constant,  the family $\{(S_nX(t))_{n\in\N}\}_{t\in\partial \TT}$ provides uncountably many random walks with the same law, and it turns out that the large deviations properties shared by these random walks have a counterpart in $\partial \TT$ in the sense that if we consider $E_X=\big \{t\in\partial \TT: A_X(t):=\lim_{n\to\infty} n^{-1}S_nX(t)\text{ exists}\big \}$, the set $A_X(E_X)$ is almost surely equal to a deterministic  non trivial closed convex set. The same property holds if we consider a general branching random walk as defined above. Quantifying geometrically this phenomenon, that is  measuring the sizes of the level sets $A_X^{-1}(\alpha)$, $\alpha\in A_X(E_X)$, as well as that of subsets over which the law of large numbers like property $A_X(t)=\alpha$ is refined by Erd\"os-R\'enyi law of large numbers, is the purpose of the present paper.

Conditionally on non extinction of $\TT$, that is $\partial \TT\neq\emptyset$, if the set $\partial \TT$ is endowed with some metric ${\rm d}$, the multifractal analysis of the averages of $(S_nX)_{n\in\mathbb{N}}$ consists in computing the Hausdorff dimensions of the level sets 
$$
E(X,\alpha)=\Big \{t\in\partial \TT: \lim_{n\to\infty} \frac{S_nX(t)}{n}=\alpha\Big \},\ \alpha\in\R^d
$$
and thus provides a geometric hierarchy between the level sets $E(X,\alpha)$.  A general result (Theorem A below) was obtained in \cite{AB} for these dimensions in the case that ${\rm d}$ is the restriction to $\partial \TT$ of the standard ultrametric distance on $\mathbb N^{\mathbb N}$ defined by 
$$
{\rm d}_1(s,t)=e^{-|s\land t|},
$$ 
where $s\land t$ is the maximal common prefix of $s$ and $t$ and the length of any word $w$ in $(\bigcup_{n\ge 0} \mathbb N^n)\cup\mathbb N^{\mathbb N}$ is denoted by $|w|$.

Define the  Legendre transform of a function $f:\R^d\to \R\cup\{\infty\}$ such that $f\not\equiv \infty$ as 
$$
f^*:\alpha\in\R^d\mapsto \inf\{f(q) - \langle q|\alpha\rangle: q\in\mathbb R^d\}.
$$
Also, define the function
\begin{equation}\label{Pressure function}
\widetilde P_X:q\in\mathbb R^d\mapsto \log \mathbb E\sum_{i=1}^N \exp(\langle q|X_i\rangle)
\end{equation}
as well as
\begin{equation}\label{IX}
I_X=\{\alpha\in\mathbb R^d:\, \widetilde P_X^*(\alpha)\ge 0\}.
\end{equation}
From now on we work conditionally on non extinction of $\TT$, so without loss of generality we assume that $\mathbb{P}(N\ge 1)=1$. The authors proved the following result :

\medskip
\noindent
\textbf{Theorem A (\cite[Theorem 1.1]{AB})}
\textit{With probability 1,
\begin{equation}
\forall\, \alpha\in\mathbb R^d,\ \dim E(X,\alpha)=\begin{cases} \widetilde P_X^*(\alpha)&\text{if } \alpha\in I_X\\
-\infty&\text{otherwise.}
\end{cases}
\end{equation}
}
In this paper, $\dim$ stands for the Hausdorff dimension, and we adopt the convention that for any set $E\subset \N^\N$, $\dim E=-\infty$ if and only if $E=\emptyset$. This result is a geometric counterpart of the following large deviations properties associated with $S_nX$ (\cite[Theorem~1.3]{AB}): For $u\in\TT_n$ define $S_nX(u)$ as the constant value taken by $S_nX$ restricted to the set of elements of $\partial \TT$ with common prefix of generation~$n$ equal to~$u$.  With probability 1, for all $\alpha\in\R^d$,
\begin{align*}
&\lim_{\varepsilon\to0^+} \liminf_{n\to\infty} \frac{\log\#\{u\in \TT_n,\ n^{-1}S_nX(u)\in B(\alpha,\varepsilon)\}}{n}\\
&=\lim_{\varepsilon\to0^+} \limsup_{n\to\infty} \frac{\log\#\{u\in \TT_n,\ n^{-1} S_nX(u)\in B(\alpha,\varepsilon)\}}{n}=\begin{cases} \widetilde P_X^*(\alpha)&\text{if } \alpha\in I_X\\
-\infty&\text{otherwise}
\end{cases},
\end{align*}
where $B(\alpha,\varepsilon)$ stands for the closed Euclidean ball of radius $\varepsilon$ centered at $\alpha$. 

We aim at  strengthening these information in two directions: at first quantify how, for $t\in E(X,\alpha)$, the local averages $n^{-1}(S_{j+n}X(t)-S_{j}X(t))$ ($j\ge 0$) can deviate from~$\alpha$. This will be done by using  {\it quantified} Erd\"os-R\'enyi laws of large numbers (see the next paragraphs for the definition). Specifically, we will seek for subsets $\widetilde E(X,\alpha)$ of $E(X,\alpha)$ of full Hausdorff dimension and for all points of which the local averages of $(S_{n}X(t))_{n\in\mathbb{N}}$ obey the same quantified Erd\"os-R\'enyi law. Also,  we will measure the effect of changing the standard metric to a metric associated to some branching random walk,  both by providing the new values for the Hausdorff dimensions of the sets $E(X,\alpha)$, and observing how the quantified Erd\"os-R\'enyi law invoked in $\widetilde E(X,\alpha)$ may vary with the metric. 

To begin, let us precise what we mean by quantified Erd\"os-R\'enyi law of large numbers. To concretely observe such  laws in our context, we naturally select points in $\partial\TT$ according to some Mandelbrot measures. To define a Mandelbrot measure on $\partial\TT$, jointly with $(\partial \TT,(S_nX)_{n\ge 0})$,  consider a family $\{(N_{u},(X_{ui},\psi_{u_i})_{i\ge 1})\}_{u\in \bigcup_{n\ge 0} \mathbb N^n}$ of independent copies of a random vector $\big (N,(X,\psi)=(X_i,\psi_i)_{i\ge 1}\big )$ taking values in $\mathbb N\times (\mathbb R^d 
\times \R)^{\mathbb N}$, still on $(\Omega,\mathcal A,\mathbb P)$.

Assume that 
$$
\E\sum_{i=1}^N\exp(\psi_i) =1,\  \E\sum_{i=1}^N\psi_i\exp(\psi_i) <1\text{ and } \E\Big (\sum_{i=1}^N\exp(\psi_i) \Big )\log^+\Big (\sum_{i=1}^N\exp(\psi_i) \Big )
<\infty.
$$
Then (see \cite{KP,Biggins1,Lyons}), for each $u\in \bigcup_{n\ge 0} \mathbb N^n$, defining $\TT(u)=\bigcup_{n=0}^\infty \TT_n(u)$, where $\TT_0(u)=\{u\}$, and  $\TT_n(u)=\{vi: \, v\in\TT_{n-1}(u),\, 1\le i\le N_{v}\}$ for all $n\ge 1$, the sequence
\begin{equation}\label{Yn}
Y_n(u)=\sum_{v=v_1\cdots v_n\in \TT(u)}\exp(\psi_{uv_1}+\cdots +\psi_{uv_1\cdots v_n})
\end{equation}
is a positive uniformly integrable martingale of expectation 1 with respect to its natural filtration. We denote by $Y(u)$ its almost sure limit. By construction, the random variables so obtained are identically distributed and almost surely positive. 

Now, for each $u\in  \bigcup_{n\ge 0}\N^n$, let  $[u]$ denote  the cylinder $u\cdot \mathbb N^{\mathbb N}$ and denote by $\mathcal{B}$ the $\sigma$-algebra generated by these cylinders in $\mathbb N^{\mathbb N}$ (which is nothing but the Borel $\sigma$-algebra associated with ${\rm d}_1$).  Then, define
$$
\nu([u])=\mathbf{1}_{\{u\in \TT\}}\exp(\psi_{u_1}+\cdots +\psi_{u_1\cdots u_n})\, Y(u).
$$
Due to the branching property $Y(u)=\sum_{i=1}^{N_u}\exp(\psi_{ui})Y(ui)$, this yields a non-negative additive function of the cylinders, which can be extended into a random measure $\nu_\omega=\nu_{\psi,\omega} $ on $(\mathbb N^{\mathbb N},\mathcal B)$, whose topological support is  $\partial \TT$.  Consider the so-called Peyri\`ere probability measure~$\mathcal Q$  on $(\Omega\times \mathbb N^{\mathbb N},\mathcal A\otimes\mathcal B)$, defined by 
$$
\mathcal Q(C)=\int_\Omega \int_{\mathbb N^{\mathbb N}} \mathbf{1}_C(\omega,t)\, {\rm d}\nu_\omega(t) \, {\rm d}\mathbb P(\omega).
$$ 
The random vectors $\widetilde X_n:(\omega,t)\in \Omega\times \mathbb N^{\mathbb N} \mapsto X_{t_1\cdots t_n}(\omega)$, $n\ge 1$, are independent and identically distributed with respect to $\mathcal Q$, and by definition, given $\omega\in \Omega$, $S_n\widetilde X(\omega,\cdot )=\sum_{k=1}^n\widetilde X_k(\omega,\cdot)$ coincides with $S_nX(\omega,\cdot)$ on $\partial \TT$.

Let 
$$
\Lambda_\psi: \lambda\in\R^d\mapsto \log \mathbb{E}_{\mathcal Q} \big (\exp(\langle \lambda|\widetilde X_1\rangle)\big )=\log \mathbb{E}\Big (\sum_{i=1}^N \exp( \langle \lambda| X_i\rangle+\psi_i)\Big ).
$$

Suppose that $\Lambda_\psi$ is finite on an open convex subset $\mathcal{D}_{\Lambda_\psi}$ of $\R^d$. Suppose also that $\mathcal{D}_{\Lambda_\psi}$ contains~$0$, so that $\nabla \Lambda_\psi (0)$ is well defined, and  one has   
\begin{equation}\label{alphapsi}
\nabla \Lambda_\psi (0)=\mathbb{E}_{\mathcal Q} (\widetilde X_1)=\mathbb E\Big (\sum_{i=1}^N X_i\exp( \psi_i)\Big ); \text{ set }\beta_{\Lambda_\psi}=\nabla \Lambda_\psi (0).
\end{equation}
By the strong law of large numbers $S_n\widetilde X(\omega,t)/n$ tends to $\beta_{\Lambda_\psi}$ as $n\to\infty$, $\mathcal Q$-almost surely. In other words, for $\mathbb{P}$-almost every $\omega$, the measure $\nu_\omega$ is supported on the set $E(X, \beta_{\Lambda_\psi})$. Note that $\max(\Lambda_\psi^*)=\Lambda_\psi^*(\beta_{\Lambda_\psi})=0$ since $\Lambda_\psi(0)=0$.

The classical Erd\"os-R\'enyi law of large numbers~\cite{ER} applied to $(S_n\widetilde X)_{n\in\mathbb{N}}$ claims that if $\widetilde X$ is real valued (i.e. $d=1$) and is not $\mathcal Q$-almost surely equal to the constant $\beta_{\Lambda_\psi}=\Lambda_\psi'(0)$, then for all $\alpha>\beta_{\Lambda_\psi}$ in $\mathcal D_{\Lambda_\psi}$, one has 
$$
\lim_{N\to\infty} \max_{0\le j\le N- \lfloor c_\alpha \log(N)\rfloor}(S_{j+\lfloor c_\alpha \log(N)\rfloor}\widetilde X-S_j\widetilde X)/\lfloor c_\alpha \log(N)\rfloor=\alpha, \text{ where }c_\alpha^{-1}=-\Lambda_\psi^*(\alpha).
$$
This can be reformulated as follows: Let $\widetilde k=(k(n))_{n\in\mathbb{N}}$ be an increasing sequence of integers. If  $\displaystyle\lim_{n\to\infty}n^{-1}\log(k(n))=  -\Lambda_\psi^*(\alpha)$, then one has
$$
\lim_{n\to\infty} \max_{0\le j\le nk(n)-1} (S_{j+n}\widetilde X-S_j\widetilde X)/n=\alpha.
$$ 

The quantified version of the Erd\"os-R\'enyi law of large numbers established in \cite{BL} for $(S_n\widetilde X)_{n\in\mathbb{N}}$ (see specifically \cite[section 3.4]{BL}) as a consequence of a more general statement valid for some class of weakly correlated processes, corresponds to the following large deviations properties : If $\widetilde k=(k(n))_{n\in\mathbb{N}}$ is an increasing sequence of integers, for $t\in\partial\TT$, $B$ a Borel subset of $\R^d$,  $\lambda\in\R^d$, $n\ge 1$ and $0\le j\le nk(n)-1$, consider the following two objects: 
the empirical measure associated  with the $nk(n)$ first normalised increments $n^{-1}(S_{j+n}X(t)-S_{j}(t))$ along the branch~$t$,
\begin{equation}
\label{tildemu}
\mu^t_{\tilde k,n}=\frac{1}{nk(n)}\sum_{j=0}^{nk(n)-1}\delta_{ \frac{S_{j+n}X(t)-S_{j}X(t)}{n}}\ , 
\end{equation}
as well as the logarithmic moment generating function
\begin{align}
\label{tildeLambda}
\Lambda^t_{\tilde k,n}(\lambda)=\log \int_{\mathbb{R}^d}\exp(n\langle\lambda|x\rangle)\, {\rm d}\mu_{\tilde k,n}^t (x).
\end{align}

\noindent{\bf Theorem B (\cite[section 3.4]{BL}).} \textit{Let  $\widetilde k=(k(n))_{n\in\mathbb{N}}\in\N^{\N}$ be  increasing. With probability 1,  for $\nu$-almost every  $t\in\partial\TT$,  the following large deviations properties $\mathrm{LD}({\Lambda_\psi},\widetilde k)$ hold :}
%
%
\medskip

\textit{$\mathrm{LD}({\Lambda_\psi},\widetilde k)$:}

\textit{(1) for all $\lambda \in\mathcal{D}_{\Lambda_\psi}$ such that $\displaystyle\liminf_{n\to\infty}\frac{\log(k(n))}{n}> -\Lambda_\psi^*(\nabla \Lambda_\psi(\lambda))$, one has  
$$
\lim_{n\to\infty}\frac{1}{n}\Lambda^t_{\tilde k,n}(\lambda)=\Lambda_\psi(\lambda);
$$
Hence, for all $\lambda\in\mathcal D_{\Lambda_\psi}$ such that $\displaystyle\liminf_{n\to\infty}\frac{\log(k(n))}{n}> -\Lambda_\psi^*(\nabla \Lambda_\psi(\lambda))$, due to the Gartner-Ellis theorem \cite{Ellis84a,De-Zei}, one has
$$
\lim_{\epsilon\to 0}\lim_{n\to\infty}\frac{1}{n}\log \mu^t_{\tilde k,n}(B(\nabla\Lambda(\lambda),\epsilon))=\Lambda_\psi^*(\nabla \Lambda_\psi(\lambda)).
$$
In other words 
$$
\lim_{\epsilon\to 0}\lim_{n\to\infty}\frac{1}{n}\log \frac{ \#\Big \{0\le j\le nk(n)-1: \frac{S_{j+n}X(t)-S_{j}X(t)}{n}\in B(\nabla\Lambda_\psi(\lambda),\epsilon)\Big\}}{k(n)}=\Lambda_\psi^*(\nabla \Lambda_\psi (\lambda)).
$$
}
\medskip

\textit{(2) For all $\lambda\in \mathcal D_{\Lambda_\psi}$ such that $\limsup_{n\to\infty}\frac{\log(k(n))}{n}<-\Lambda_\psi^*(\nabla \Lambda_\psi(\lambda))$, there exists $\epsilon>0$ such that for $n$ large enough, $\Big \{0\le j\le nk(n)-1: \frac{S_{j+n}X(t)-S_{j}X(t)}{n}\in B(\nabla\Lambda_\psi(\lambda),\epsilon)\Big \} =\emptyset$.}

\medskip

\textit{(3) If $\lambda\in \mathcal D_{\Lambda_\psi}$ and $\displaystyle \lim_{n\to\infty} \frac{\log k(n)}{n}=-\Lambda_\psi^*(\nabla\Lambda_\psi(\lambda))$, and if $\theta\ge 0\mapsto \Lambda_\psi (\theta\lambda)$ is strictly convex at 1,  then for all $\theta \ge 1$ one has 
\begin{equation*}
\lim_{n\to\infty} \frac{1}{n} \Lambda_{\tilde k,n}^t(\theta\lambda)=\Lambda_\psi(\lambda)+(\theta-1)\langle \lambda |\nabla\Lambda_\psi(\lambda)\rangle. 
\end{equation*}
}

\begin{remark}
Notice that by convention  the concave Legendre transform defined in this paper, which is convenient to express Hausdorff dimensions of level sets,  is the opposite of the more standard convex convention used in \cite{BL}).

In fact, in \cite{BL} such a large deviation principle is established for the $k(n)$ ``disjoint'' increments $(S_{(j+1)n}X(t)-S_{jn}X(t))_{0\le j\le k(n)-1}$, but as we will see in this paper, one can extend this result to a large deviation principle valid for $(S_{j+n}X(t)-S_{j}X(t))_{0\le j\le nk(n)-1}$, which is more faithful to the spirit of the original Erd\"os-R\'enyi law of large numbers. Moreover, the validity of $\mathrm{LD}({\Lambda_\psi},\widetilde k)$ for all sequences $\widetilde k$ implies the validity of this law.  
\end{remark}

Thus, with probability 1, $\nu$ is in fact supported on the finer set 
$$
E(X,\beta_{\Lambda_\psi}, {\rm LD}(\Lambda_\psi,\widetilde k))=\left\{t\in\partial\TT: \ \lim_{n\to\infty} \frac{S_nX(t)}{n}=\beta_{\Lambda_\psi} \text{ and } {\rm LD}(\Lambda_\psi,\widetilde k)\text{ holds}\right \}.
$$ 
It turns out that if we define 
$$
\boldsymbol{\widetilde K}=\left\{\widetilde k\in \N^\N:\, \widetilde k\text{ is increasing and }\lim_{n\to\infty}\frac{\log(k(n))}{n}\text{ exists }\right\},
$$
and say that $\mathrm{LD}({\Lambda_\psi})$  holds if  $\mathrm{LD}({\Lambda_\psi},\widetilde k)$ holds for all $k\in  \boldsymbol{\widetilde K}$, the previous theorem has the following rather direct corollary (see Section~\ref{pfcor}):
\begin{corollary}\label{cor-1}With probability 1,  for $\nu$-almost every infinite branch $t\in\partial\TT$, $\mathrm{LD}({\Lambda_\psi})$  holds. Thus,  $\nu$ is supported on 
$$
E(X,\beta_{\Lambda_\psi}, {\rm LD}(\Lambda_\psi))=\Big \{t\in\partial\TT: \ \lim_{n\to\infty} \frac{S_nX(t)}{n}=\beta_{\Lambda_\psi} \text{ and } {\rm LD}(\Lambda_\psi)\text{ holds}\Big  \}.
$$ 
\end{corollary}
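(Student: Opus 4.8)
\noindent\emph{Proof proposal.}
The plan is to obtain Corollary~\ref{cor-1} from Theorem~B by replacing the uncountable index set $\boldsymbol{\widetilde K}$ with a countable one, which is possible thanks to an elementary monotonicity of the objects entering $\mathrm{LD}(\Lambda_\psi,\widetilde k)$. For a fixed branch $t\in\partial\TT$ set
\[
D_{n,j}(t)=\frac{S_{j+n}X(t)-S_{j}X(t)}{n}\qquad (n\ge 1,\ j\ge 0),
\]
so that, by \eqref{tildemu}--\eqref{tildeLambda}, $\tfrac1n\Lambda^t_{\widetilde k,n}(\lambda)=g_n(nk(n),\lambda)-\tfrac1n\log(nk(n))$, where $g_n(M,\lambda):=\tfrac1n\log\sum_{j=0}^{M-1}\exp(n\langle\lambda|D_{n,j}(t)\rangle)$ is non-decreasing in $M$. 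For each rational $r\ge 0$ fix the sequence $\widetilde k^{(r)}(n)=n+\lceil e^{rn}\rceil$; it is increasing and $\lim_n n^{-1}\log\widetilde k^{(r)}(n)=r$, so $\widetilde k^{(r)}\in\boldsymbol{\widetilde K}$. Applying Theorem~B to each $\widetilde k^{(r)}$ and intersecting over the countably many $r\in\Q\cap[0,\infty)$ produces a $\mathbb P$-almost sure event on which, for $\nu$-almost every $t$, $\mathrm{LD}(\Lambda_\psi,\widetilde k^{(r)})$ holds for all such $r$ at once. Fix $(\omega,t)$ in this event and any $\widetilde k\in\boldsymbol{\widetilde K}$, write $\ell=\lim_n n^{-1}\log k(n)\ge 0$, and for $\lambda\in\mathcal D_{\Lambda_\psi}$ set $c(\lambda)=-\Lambda_\psi^*(\nabla\Lambda_\psi(\lambda))=\langle\lambda|\nabla\Lambda_\psi(\lambda)\rangle-\Lambda_\psi(\lambda)\ge 0$. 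For rationals $p<\ell<q$ one has $n\widetilde k^{(p)}(n)\le nk(n)\le n\widetilde k^{(q)}(n)$ for all large $n$, so monotonicity of $g_n(\cdot,\lambda)$ gives, for every $\lambda$,
\[
\tfrac1n\Lambda^t_{\widetilde k^{(p)},n}(\lambda)+\tfrac1n\log(n\widetilde k^{(p)}(n))\ \le\ \tfrac1n\Lambda^t_{\widetilde k,n}(\lambda)+\tfrac1n\log(nk(n))\ \le\ \tfrac1n\Lambda^t_{\widetilde k^{(q)},n}(\lambda)+\tfrac1n\log(n\widetilde k^{(q)}(n)).
\]

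For assertion (1) of $\mathrm{LD}(\Lambda_\psi,\widetilde k)$ at $\lambda$ with $c(\lambda)<\ell$, choose rationals $c(\lambda)<p<\ell<q$: assertion (1) of $\mathrm{LD}(\Lambda_\psi,\widetilde k^{(p)})$ and of $\mathrm{LD}(\Lambda_\psi,\widetilde k^{(q)})$ holds at $\lambda$, so the two extreme terms above tend to $\Lambda_\psi(\lambda)+p$ and $\Lambda_\psi(\lambda)+q$, while $n^{-1}\log(nk(n))\to\ell$; letting $p\uparrow\ell$ and $q\downarrow\ell$ yields $n^{-1}\Lambda^t_{\widetilde k,n}(\lambda)\to\Lambda_\psi(\lambda)$, and the Gärtner--Ellis consequences then follow exactly as in Theorem~B. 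For assertion (2) at $\lambda$ with $c(\lambda)>\ell$, pick a rational $q\in(\ell,c(\lambda))$: assertion (2) of $\mathrm{LD}(\Lambda_\psi,\widetilde k^{(q)})$ gives $\epsilon>0$ with $\{0\le j\le n\widetilde k^{(q)}(n)-1:\ D_{n,j}(t)\in B(\nabla\Lambda_\psi(\lambda),\epsilon)\}=\emptyset$ for $n$ large, and since $nk(n)\le n\widetilde k^{(q)}(n)$ for $n$ large the same holds with $nk(n)$ there.

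The main obstacle is assertion (3): given $\lambda$ with $c(\lambda)=\ell$ such that $\theta\ge 0\mapsto\Lambda_\psi(\theta\lambda)$ is strictly convex at $1$, one wants $n^{-1}\Lambda^t_{\widetilde k,n}(\theta\lambda)\to V(\theta):=\Lambda_\psi(\lambda)+(\theta-1)\langle\lambda|\nabla\Lambda_\psi(\lambda)\rangle$ for every $\theta\ge 1$. Now the sandwich is used with $\lambda$ replaced by $\theta\lambda$ and rates $p<\ell<q$ tending to $\ell$, but the reference sequences no longer satisfy assertion (1) at $\theta\lambda$; instead one exploits that $s\mapsto c(s\lambda)$ is continuous, non-decreasing, and equals $\ell$ at $s=1$, so for $q$ slightly above $\ell$ there is $s(q)\downarrow 1$ with $c(s(q)\lambda)=q$, whence assertion (3) of $\mathrm{LD}(\Lambda_\psi,\widetilde k^{(q)})$ at the base point $s(q)\lambda$ (whose critical rate is exactly $q$), evaluated at $\theta\lambda=(\theta/s(q))\cdot s(q)\lambda$ with $\theta/s(q)\ge 1$ for $q$ close to $\ell$, identifies $\lim_n n^{-1}\Lambda^t_{\widetilde k^{(q)},n}(\theta\lambda)=\Lambda_\psi(s(q)\lambda)+(\theta-s(q))\langle\lambda|\nabla\Lambda_\psi(s(q)\lambda)\rangle$ (for $\theta=1$ one uses instead assertion (1) of $\mathrm{LD}(\Lambda_\psi,\widetilde k^{(q)})$ at $\lambda$, valid since $q>c(\lambda)$); feeding this into the sandwich and letting $q\downarrow\ell$ gives $\limsup_n n^{-1}\Lambda^t_{\widetilde k,n}(\theta\lambda)\le V(\theta)$ by continuity of $\Lambda_\psi$ and $\nabla\Lambda_\psi$ along the ray. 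A symmetric argument with $\widetilde k^{(p)}$, $p\uparrow\ell$, $s(p)\uparrow 1$ such that $c(s(p)\lambda)=p$, using assertion (3) of $\mathrm{LD}(\Lambda_\psi,\widetilde k^{(p)})$ at $s(p)\lambda$ evaluated at $\theta\lambda=(\theta/s(p))\cdot s(p)\lambda$ (taking the exponent $1/s(p)\ge 1$ when $\theta=1$), gives $\liminf_n n^{-1}\Lambda^t_{\widetilde k,n}(\theta\lambda)\ge V(\theta)$. The only additional verification is that the strict convexity hypothesis transfers to the base points $s(q)\lambda$, $s(p)\lambda$ for $p,q$ near $\ell$; this holds unless $\Lambda_\psi(\cdot\,\lambda)$ is affine on a one-sided neighbourhood of $1$, in which case $c(\cdot\,\lambda)$ is locally constant and assertion (1) of the reference sequence already supplies $V(\theta)=\Lambda_\psi(\theta\lambda)$.

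Having obtained all three assertions, for this single $(\omega,t)$ and every $\widetilde k\in\boldsymbol{\widetilde K}$, we conclude that the $\mathbb P$-almost sure event above is contained in $\{\mathrm{LD}(\Lambda_\psi)\text{ holds for }\nu\text{-almost every }t\}$, which gives the first claim; the support statement follows since $\nu$ is, as already noted, supported on $E(X,\beta_{\Lambda_\psi})$, hence on its intersection with $\{\mathrm{LD}(\Lambda_\psi)\text{ holds}\}$. I expect assertion (3) to be the only genuinely delicate step: assertions (1) and (2) are one-line monotone comparisons, whereas (3) forces one to re-enter the family of reference sequences at shifted exponents along the ray $\R\lambda$ and to pass to the limit by continuity.
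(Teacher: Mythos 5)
The proposal is correct and follows the paper's strategy for parts (1) and (2) of the $\mathrm{LD}$ properties (reduction to a countable family $\widetilde k^{(r)}$ with rational rates, then a two-sided comparison exploiting the monotonicity of $M\mapsto g_n(M,\lambda)$), but it handles part (3) by a genuinely different route. The paper, after establishing part (1) for the generic $\widetilde k$, simply refers back to the arguments behind \cite[Theorem 2.3(3)]{BL}; you instead stay entirely inside the sandwich framework by re-entering the reference family at shifted base points along the ray $\R\lambda$: since $s\mapsto -\Lambda_\psi^*(\nabla\Lambda_\psi(s\lambda))$ is continuous and non-decreasing with value $\ell$ at $s=1$, for each rational $q$ near $\ell$ you pick $s(q)$ with critical rate exactly $q$ and invoke part (3) of $\mathrm{LD}(\Lambda_\psi,\widetilde k^{(q)})$ at the base point $s(q)\lambda$, then pass to the limit $q\to\ell$, $s(q)\to 1$. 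This is more self-contained than the paper's proof, which outsources part (3) to \cite{BL}.

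One point deserves tightening. You write that the strict-convexity hypothesis transfers to $s(q)\lambda$ "unless $\Lambda_\psi(\cdot\lambda)$ is affine on a one-sided neighbourhood of $1$", but in that case $\theta\mapsto\Lambda_\psi(\theta\lambda)$ already fails to be strictly convex at $1$, so part (3) of $\mathrm{LD}(\Lambda_\psi,\widetilde k)$ is vacuous and there is nothing to prove; the fallback via assertion (1) is superfluous. More importantly, your dichotomy does not cover the possibility that $\Lambda_\psi(\cdot\lambda)$ is strictly convex at $1$ yet affine on some interval away from $1$ containing $s(q)$. The clean way to close this is to use that $\Lambda_\psi$ is real-analytic on $\mathcal D_{\Lambda_\psi}$ (a log-moment generating function finite on an open set), hence $s\mapsto\Lambda_\psi(s\lambda)$ is either globally affine (part (3) vacuous) or strictly convex at every point of its domain, so the hypothesis transfers automatically. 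Also, for the $\limsup$ bound at $\theta=1$ you correctly fall back to part (1) of $\mathrm{LD}(\Lambda_\psi,\widetilde k^{(q)})$; note this is needed because $\theta/s(q)<1$ there. With these small repairs, the argument is sound.
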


Our goal is to refine Theorem A by finding, for a given $\alpha\in I_X$, that is such that $E(X,\alpha)\neq\emptyset$,  a differentiable convex function $\Lambda_\alpha$ finite over an open neighborhood $\mathcal D_{\Lambda_\alpha}$~of~0, such that $\Lambda_\alpha(0)=0$, $\alpha=\nabla\Lambda_\alpha(0)$, and the sets $E(X,\alpha,\mathrm{LD}(\Lambda_\alpha,\widetilde k))$ and  $E(X,\alpha,\mathrm{LD}(\Lambda_\alpha)$  are of maximal Hausdorff dimension in $E(X,\alpha)$. 
This requires  the finiteness of some exponential moments of $\|X\|$, and for the sake of simplicity of the discussion and exposition of our results, we assume  that 
\begin{equation}\label{allmomentsfinite}
\widetilde P_X(q)<\infty, \ \forall\, q\in\mathbb R^d.
\end{equation}
This is equivalent to requiring that $\mathbb E\big (\sum_{i=1}^N \exp (\lambda\|X_i\|)\big )<\infty$ for all $\lambda \ge 0$. 
We will discuss some possible relaxation of this assumption in Section~\ref{Relaxation}.

Without loss of generality, we also assume  the following property about $X$: 
\begin{equation}\label{pP}
 \text{$\not\exists\ (q,c) \in (\R^d\setminus\{0\})\times \R$,\ $\langle q|X_i\rangle =c \quad \forall\ 1\le i\le N$ \ almost surely (a.s.)}.
\end{equation}
If \eqref{pP}  does not hold, either  $d=1$ and the $X_i$, $1\le i\le N$, are equal to the same constant almost surely, which is a trivial case, or $d\ge 2$, and the $X_i$ belong to the same affine hyperplane so that  we can reduce our study to the case of  $\R^{d-1}$  valued random variables.

Define
$$
J_X=\big \{q\in\R^d: \widetilde P_X^*(\nabla \widetilde P_X(q))>0\big \}. 
$$
It turns out that under \eqref{allmomentsfinite} and \eqref{pP}, $I_X$ (recall \eqref{IX}) is a compact set with non-empty interior, such that (see Proposition~\ref{detI})
$$\widering{I}_X=\nabla \widetilde P (J_X).$$
When $\alpha\in \widering I_X$, i.e.  $\alpha=\nabla \widetilde P_X(q)$ for some $q\in J_X$,  setting 
$\psi_\alpha=(\exp({\langle q|X_i\rangle -\widetilde P_X(q)}))_{i\ge 1}$ and assuming that $\E\Big (\sum_{i=1}^N\exp(\psi_{\alpha,i}) \Big )\log^+\Big (\sum_{i=1}^N\exp(\psi_{\alpha,i}) \Big )<\infty$,  one obtains
a non degenerate Mandelbrot measure $\nu_\alpha=\nu_{\psi_\alpha}$ associated with the ``potential'' $\psi_\alpha$, also called Gibbs measure associated with $X$ at $q$, and the previous discussion shows that given an increasing sequence of integers $\widetilde k$, with probability 1, the measure $\nu_\alpha$ is supported on $E(X,\alpha, {\rm LD}(\Lambda_{\psi_\alpha},\widetilde k))$; moreover, due to \eqref{allmomentsfinite}, we can take $\mathcal D_{\Lambda_{\psi_\alpha}}=\R^d$. Moreover, $\Lambda_{\psi_\alpha}$ is strictly convex due to \eqref{pP}. Also, the Hausdorff dimension of $\nu_\alpha$ equals $\dim E(X,\alpha)$, which yields  $\dim E(X,\alpha, {\rm LD}(\Lambda_{\psi_\alpha},\widetilde k))=\dim E(X,\alpha)$ almost surely. 

Then, several questions arise. Let us state and comment them: 

\noindent
{\bf (Q1)}  Is it possible to get the previous property a.s. simultaneously for all $\alpha\in \widering I_X$? 

It is of course closely related to the possibility to estimate almost surely simultaneously the Hausdorff dimensions of the sets $E(X,\alpha)$, $\alpha\in I_X$. For  $\alpha\in \widering I_X$, this can be done under slightly stronger assumptions by constructing simultanously the Gibbs measures $\nu_\alpha$ (thanks to a uniform convergence result due to Biggins \cite{Biggins2}), and simultaneously for all $\alpha\in \widering I_X$ controlling the Haudorff dimension of $\nu_\alpha$ and showing that this measure is carried by $E(X,\alpha)$ (see \cite{B2,Attia}). So one may think that an adaptation of this ``uniform'' approach should give a positive answer to (Q1), since using the Gibbs measure $\nu_\alpha$ for each individual $\alpha\in \widering I_X$ does  give $\dim E(X,\alpha, {\rm LD}(\Lambda_{\psi_\alpha},\widetilde k))=\dim E(X,\alpha)$ almost surely. However,  this strategy meets an essential  difficulty (see Remark~\ref{impossibility}). To overcome it, inspired by techniques used in ergodic theory for the multifractal analysis of Birkhoff averages on hyperbolic attractors \cite{FFW,FLW}, we will use  a concatenation/approximation method to get inohomogeneous Mandelbrot measures adapted to our problem. 

\noindent
{\bf (Q2)}  When $\alpha\in \partial I_X$, is there some $(\Lambda,\mathcal D_\Lambda)$ (depending on $\alpha$) such that the equality $
\dim E(X,\alpha,{\rm LD}(\Lambda,\widetilde k))=  \dim E (X,\alpha)$ holds ?

It will be first answered positively for those $\alpha$ belonging to the subset $(\partial{I}_X)_{\mathrm{crit}}$ of $\partial I_X$ made of levels that can be associated to ``critical'' versions of the Gibbs measures mentioned above; for such a level $\alpha$ there is indeed a natural candidate $\Lambda_{\psi_\alpha}$ as well. Extending our control to all the boundary points of $I_X$ demands to be able to associate to each~$\alpha$ of $\partial I_X\setminus (\partial{I}_X)_{\mathrm{crit}}$ some quantified Erd\"os-R\'enyi LLN, possibly explicit in terms of the parameter $(N,X)$. However,  we even do not have any good description of $\partial I_X\setminus  (\partial{I}_X)_{\mathrm{crit}}$ at our disposal yet. We will strengthen a little \eqref{allmomentsfinite}  and show that there is a natural decomposition of $\partial I\setminus (\partial{I})_{\mathrm{crit}}$ into at most countably many convex sets $J$ of affine dimension $\le d-1$ over each of which we can essentially reduce the study to that of interior and critical points associated to some $\R^{\dim J}$-valued branching random walk. This decomposition yields the desired family of explicit Erd\"os-R\'enyi LLN. 

\noindent
{\bf (Q3)} For $\alpha\in I_X$, would it be possible that there were several  couples $(\Lambda,\mathcal D_\Lambda)$ (with distinct $\Lambda$) such that $
\dim E(X,\alpha,{\rm LD}(\Lambda,\widetilde k))=  \dim E (X,\alpha)$ holds? 

It remains open when $\dim E (X,\alpha)>0$, and  there is no uniqueness in general when $\dim E (X,\alpha)=0$ (see Remark~\ref{uniqueness}(4)).

\noindent
{\bf (Q4)} If $\dim E(X,\alpha, {\rm LD}(\Lambda,\widetilde k))=\dim E(X,\alpha)$ with respect to some metric, how does~$\Lambda$ depend on the choice of the metric? 

As it was said previously, we are going to consider natural metrics obtained from branching random walks. We will compute $\dim E(X,\alpha)$, $\alpha\in I_X$, with respect to such a metric and show that for the same levels $\alpha$ as under ${\mathrm d}_1$, one has $\dim E(X,\alpha,{\rm LD}(\Lambda,\widetilde k))=  \dim E (X,\alpha)$ for some large deviations properties ${\rm LD}(\Lambda,\widetilde k))$ which depends both on $\alpha$ and the metric.  

The more general metrics we consider on $\partial \TT$ are constructed jointly with $(\partial \TT,(S_nX)_{n\ge 0})$  as follows: consider a family $\{(N_{u},(X_{ui},\phi_{ui})_{i\ge 1})\}_{u\in \bigcup_{n\ge 0} \mathbb N^n}$ of independent copies of a random vector $\big (N,(X,\phi)=(X_i,\phi_i)_{i\ge 1}\big )$ taking values in $\mathbb N\times (\mathbb R^d \times \R^*_+)^{\mathbb N}$. Again, $(\Omega,\mathcal A,\mathbb P)$ stands for the probability space over which these random variables are defined. Denote by  $S_n\phi$ the (positive) branching random walk on $\partial \TT$ associated with the family $\{(N_{u},(\phi_{ui})_{i\ge 1})\}_{u\in \bigcup_{n\ge 0} \mathbb N^n}$. Symmetrically to \eqref{allmomentsfinite}, assume that
\begin{equation}\label{allmomentsfinitephi}
P_\phi(t)=\mathbb E\Big (\sum_{i=1}^N\exp (\lambda\phi_i)\Big )<\infty,\ \forall\ \lambda\in\R.
\end{equation}
Then (see Lemma~\ref{controlSnphi}), with probability 1,  $S_n\phi(u)$ tends  uniformly in $u\in\TT_n$ to $\infty$ as $n\to\infty$, so that one gets the random ultrametric distance  
\begin{equation}\label{dphi}
{\mathrm{d}}_\phi:(s,t)\mapsto \exp (-S_{|s\land t|}\phi(t))
\end{equation}
on $\partial \TT$, and $(\partial \TT,\mathrm{d}_\phi)$ is compact. Such metrics are used to study  geometric realization of Mandelbrot measures on random statistically self-similar sets 
(\cite{HW,Falc,Ol1,Mol,B2,Moerters,Biggins3}). 

We now define a family of convex functions which will be essential to describe the Hausdorff dimensions of the sets $E(X,\alpha)$ under $\mathrm{d}_\phi$. For all $(q, \alpha, t)\in\R^d  \times \R^d\times \R$, let 
\begin{equation}\label{sigma}
 \Sigma_\alpha(q,t)=\sum_{i=1}^N \exp ( \langle q| X_i-\alpha\rangle   -t \phi_i). 
\end{equation}
Under \eqref{allmomentsfinite} and \eqref{allmomentsfinitephi}, $\mathbb E(\Sigma_\alpha(q,t))$ is finite for all $(q, \alpha, t)\in\R^d  \times \R^d\times \R$, and 
since the $\phi_i$ are positive, for each $q\in\R^d$ and $\alpha\in \R^d$ there exists a unique $t=\widetilde P_{X,\phi,\alpha}(q) \in\R$ such that 
\begin{equation}\label{Palphaq}
\mathbb{E}\big (\Sigma_\alpha(q,t)\big )=1
\end{equation}
(we indicate the dependence on $(X,\phi)$ in $\wt P_{X,\phi,\alpha}$ in order to avoid confusion with $\widetilde P_X$ or $\widetilde P_\phi$). Moreover, it is direct to see that $(\alpha,q)\mapsto \widetilde P_{X,\phi,\alpha}(q)$ is real analytic by using the implicit function theorem and the real analyticity of $(\alpha,q,t)\mapsto \mathbb{E}\big (\Sigma_\alpha(q,t)\big )$. 

Notice that $\wt P_{X,\phi,\alpha}(0)$ does not depend on $\alpha$; it turns out that it is the Hausdorff dimention of $\partial\TT$ under $\mathrm{d}_\phi$. Notice also that when $\phi_i=1$ for all $i\ge 1$, one has $\mathrm{d}_\phi=\mathrm{d}_1$, and $\widetilde P_{X,\phi,\alpha}(q)= \widetilde P_X(q) - \langle q|\alpha\rangle$, hence $\wt P_{X,\phi,\alpha}^*(0)=\wt P_X^*(\alpha)$. 

Set 
\begin{equation}\label{JXphi}
J_{X,\phi}=\{(q,\alpha)\in\R^d \times I_X: \wt P_{X,\phi,\alpha}^*(\nabla\wt P_{X,\phi,\alpha}(q))>0\}.
\end{equation}
We assume also that 
\begin{equation}\label{finiteness2}
\forall\ (q,\alpha)\in J_{X,\phi},\  \exists \ \gamma>1,\ \mathbb{E}\big((\Sigma_\alpha(q,\wt P_{X,\phi,\alpha}(q))^\gamma\big)<\infty,
\end{equation}
which is automatically satisfied as soon as $\mathbb{E}(N^p)<\infty$ for some $p>1$ and both \eqref{allmomentsfinite} and \eqref{allmomentsfinitephi} hold. This assumption is quite natural in the sense that it is equivalent to requiring that the total mass of the Mandelbrot measure associated with $\psi_{\alpha,q}=\big (\langle q|X_i-\alpha\rangle -\wt P_{X,\phi,\alpha}(q) \phi_i\big )_{i\ge 1}$ does not vanish and belongs to $L^\gamma(\Omega,\mathbb P)$ for some $\gamma>1$; conditions like \eqref{finiteness2} are required in \cite{Biggins2} to construct simultaneously the limits of the martingales \eqref{Yn} when $\psi$ varies in the family $\{\psi_\alpha\}_{\alpha\in \widering I_X}$.  

Under the assumptions adopted in this paper, Theorem A has the following extension.
\begin{theorem} \label{thm-1.1}Assume \eqref{allmomentsfinite}, \eqref{pP}, \eqref{allmomentsfinitephi} and \eqref{finiteness2}, and suppose that $\partial \TT$ is endowed with the distance $\mathrm{d}_\phi$.

With probability 1, for all $\alpha\in I_X$ one has $\dim E(X,\alpha)=\widetilde P_{X,\phi,\alpha}^*(0)$. More generally, for any compact subset $K$ of $\R^d$, let 
$$
E(X,K)=\Big\{t\in\partial\TT: \bigcap_{n\in\mathbb{N}}\overline{ \Big \{\frac{S_nX(t)}{n}:n\ge N\Big\}}=K\Big\},
$$
the set of those $t\in\partial\TT$ such that the set of limit points of $(S_nX(t)/n)_{n\in\mathbb{N}}$ is equal to~$K$. Denote by $\mathscr K$ the set of compact connected subsets of $\R^d$. With probability 1, for all  $K\in\mathscr K$, one has $
\dim E(X,K)=\inf_{\alpha\in K}\widetilde P_{X,\phi,\alpha}^*(0)$.

%
%
\end{theorem}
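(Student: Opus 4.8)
The plan is to establish the statement about the sets $E(X,K)$; the statement about the level sets $E(X,\alpha)$ is its special case $K=\{\alpha\}$, since $E(X,\{\alpha\})$ is exactly the set of $t$ with $S_nX(t)/n\to\alpha$. Two preliminary reductions. First, the large deviation estimate recalled just after Theorem~A shows that, with probability one, every limit point of $(S_nX(t)/n)_{n}$ lies in $I_X$; hence $E(X,K)=\emptyset$ whenever $K\not\subseteq I_X$, and I may and do assume $K\subseteq I_X$ (the complementary case being covered by the convention on $\dim$). Second, from the relation $\E(\Sigma_\alpha(q,t))=1$ defining $t=\wt P_{X,\phi,\alpha}(q)$, whose left-hand side is strictly decreasing in $t$ because $\phi_i>0$, one gets $\wt P_{X,\phi,\alpha}(q)\ge0\iff\E\big(\sum_{i=1}^Ne^{\langle q|X_i-\alpha\rangle}\big)\ge1\iff\wt P_X(q)-\langle q|\alpha\rangle\ge0$; taking the infimum over $q$ this yields $\wt P_{X,\phi,\alpha}^*(0)\ge0\iff\alpha\in I_X$, and more precisely (as for $\wt P_X^*$) $\wt P_{X,\phi,\alpha}^*(0)>0$ on $\widering{I}_X$ and $=0$ on $\partial I_X$. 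So for $K\subseteq I_X$ the announced value $D_K:=\inf_{\alpha\in K}\wt P_{X,\phi,\alpha}^*(0)$ is nonnegative. The proof then splits into an upper bound $\dim E(X,K)\le D_K$, by a covering argument, and a lower bound $\dim E(X,K)\ge D_K$, by construction of suitable inhomogeneous Mandelbrot measures.

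For the upper bound I would fix $(q,\alpha)$ with $\wt P_{X,\phi,\alpha}(q)>0$ and use the process $W_n(q,\alpha)=\sum_{u\in\TT_n}\exp\big(\langle q|S_nX(u)-n\alpha\rangle-\wt P_{X,\phi,\alpha}(q)\,S_n\phi(u)\big)$. By the branching property and \eqref{Palphaq} this is a nonnegative martingale of mean one, and by Biggins' uniform convergence theorem \cite{Biggins2} (which is where \eqref{finiteness2} enters) $\sup_n\sup_{(q,\alpha)\in C}W_n(q,\alpha)<\infty$ almost surely for every compact $C\subseteq\{\wt P_{X,\phi,\alpha}(q)>0\}$. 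Given $t\in E(X,K)$ and $\alpha\in K$, the point $\alpha$ is a genuine limit point of $(S_nX(u_n)/n)_n$ (with $u_n$ the ancestor of $t$ in $\TT_n$), so for every $\varepsilon>0$ there are infinitely many $n$ with $S_nX(u_n)/n\in B(\alpha,\varepsilon)$; covering $E(X,K)$ by the cylinders $[u]$, $u\in\TT_n$, $n\ge N_0$, with $S_nX(u)/n\in B(\alpha,\varepsilon)$, using $\diam[u]=\exp(-S_n\phi(u))$ and the deterministic linear lower bound $S_n\phi(u)\ge c\,n$ valid for $n$ large (Lemma~\ref{controlSnphi}, or the classical linear growth of the minimal displacement of a branching random walk with all exponential moments), and splitting $e^{-sS_n\phi(u)}$ into the summand of $W_n(q,\alpha)$ times $e^{-\langle q|S_nX(u)-n\alpha\rangle}\,e^{-(s-\wt P_{X,\phi,\alpha}(q))S_n\phi(u)}$, one obtains for $s>\wt P_{X,\phi,\alpha}(q)$
$$
\mathcal H^{s}_{\delta}\big(E(X,K)\big)\ \le\ e^{\|q\|\varepsilon N_0}\Big(\sup_m W_m(q,\alpha)\Big)\sum_{n\ge N_0}e^{-\big((s-\wt P_{X,\phi,\alpha}(q))c-\|q\|\varepsilon\big)n}\ \xrightarrow[N_0\to\infty]{}\ 0
$$
as soon as $\varepsilon$ is small enough that $(s-\wt P_{X,\phi,\alpha}(q))c>\|q\|\varepsilon$, hence $\dim E(X,K)\le\wt P_{X,\phi,\alpha}(q)$. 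Optimising over a fixed countable family of pairs $(q,\alpha)$ with $\wt P_{X,\phi,\alpha}(q)>0$ (the tolerance $\varepsilon$ letting $\alpha$ be close to, not exactly in, $K$; continuity of $(\alpha,q)\mapsto\wt P_{X,\phi,\alpha}(q)$ and the identity $\wt P_{X,\phi,\alpha}^*(0)=\inf\{\wt P_{X,\phi,\alpha}(q):\wt P_{X,\phi,\alpha}(q)>0\}$) then gives $\dim E(X,K)\le D_K$ on one event of full probability, valid for all $K\in\mathscr K$ at once.

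For the lower bound, I would first treat a single $\alpha\in\widering{I}_X$. Let $q_\alpha$ realise $\wt P_{X,\phi,\alpha}^*(0)=\inf_q\wt P_{X,\phi,\alpha}(q)>0$, so $\nabla\wt P_{X,\phi,\alpha}(q_\alpha)=0$ and $(q_\alpha,\alpha)\in J_{X,\phi}$ (Proposition~\ref{detI}, \eqref{JXphi}); then \eqref{finiteness2} makes the Mandelbrot measure $\nu_\alpha$ with potential $\psi_\alpha=\big(\langle q_\alpha|X_i-\alpha\rangle-\wt P_{X,\phi,\alpha}(q_\alpha)\phi_i\big)_{i\ge1}$ nondegenerate. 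Differentiating \eqref{Palphaq} in $q$ at $q_\alpha$ gives $\E\big(\sum_{i=1}^N(X_i-\alpha)e^{\psi_{\alpha,i}}\big)=0$, i.e. $\E_{\mathcal Q}(\wt X_1)=\alpha$, so by the strong law of large numbers under the Peyri\`ere measure (recalled above) $S_nX(t)/n\to\alpha$ for $\nu_\alpha$-a.e. $t$, i.e. $\nu_\alpha$ is carried by $E(X,\alpha)$. Moreover $-\log\nu_\alpha([u_n])=-\langle q_\alpha|S_nX(u_n)-n\alpha\rangle+\wt P_{X,\phi,\alpha}(q_\alpha)S_n\phi(u_n)-\log Y_{\psi_\alpha}(u_n)$, and since $S_nX(u_n)-n\alpha=o(n)=o(S_n\phi(u_n))$ and $\log Y_{\psi_\alpha}(u_n)=o(n)$ for $\nu_\alpha$-a.e. $t$, the local dimension of $\nu_\alpha$ under $\mathrm{d}_\phi$ equals $\lim_n\frac{-\log\nu_\alpha([u_n])}{S_n\phi(u_n)}=\wt P_{X,\phi,\alpha}(q_\alpha)=\wt P_{X,\phi,\alpha}^*(0)$, whence $\dim E(X,\alpha)\ge\wt P_{X,\phi,\alpha}^*(0)$ by the mass distribution principle. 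For a general $K\subseteq I_X$ with $D_K>0$ (so $K\subseteq\widering{I}_X$), I would pick a sequence $(\alpha_j)_{j\ge1}$ in $K$, dense in $K$, with $|\alpha_{j+1}-\alpha_j|\to0$ (possible because the compact connected set $K$ is $\eta$-chainable for every $\eta>0$), and generations $0=n_0<n_1<\cdots$ with $n_j/n_{j-1}\to\infty$, and concatenate the Gibbs measures $\nu_{\alpha_j}$ over the blocks $(n_{j-1},n_j]$ into an inhomogeneous Mandelbrot measure $\mu=\mu_K$. This $\mu$ is carried by $E(X,K)$ — the limit set of $S_nX(t)/n$ contains $\overline{\{\alpha_j\}}=K$, and is contained in $K$ because at a generation $n$ in block $j$ the empirical average is, up to $o(1)$, a convex combination of $\alpha_1,\dots,\alpha_j$ dominated by $\alpha_j$ (as $n_j/n_{j-1}\to\infty$), hence within $|\alpha_{j-1}-\alpha_j|+o(1)\to0$ of $K$ — and $-\log\mu([u_n])=\sum_k\wt P_{X,\phi,\alpha_k}^*(0)\big(S_{\min(n,n_k)}\phi(u_n)-S_{n_{k-1}}\phi(u_n)\big)+o(S_n\phi(u_n))$ for $\mu$-a.e. $t$; since every coefficient is $\ge D_K$, the local dimension of $\mu$ under $\mathrm{d}_\phi$ is $\ge D_K$, so $\dim E(X,K)\ge D_K$. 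When $D_K=0$ the lower bound reduces to $E(X,K)\neq\emptyset$, obtained by the same concatenation with $\alpha_j\in\widering{I}_X$ approximating the points of $K\cap\partial I_X$ from inside.

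The genuinely hard part is to carry out all of this on a single event of full probability, simultaneously over all levels and all $K\in\mathscr K$. This demands: (i) the simultaneous construction of the family $\{\nu_\alpha\}_{\alpha\in\widering{I}_X}$ and the simultaneous validity, uniformly on compact subsets of $\widering{I}_X$, of $\dim_{\mathrm{d}_\phi}\nu_\alpha=\wt P_{X,\phi,\alpha}^*(0)$ and $\supp\nu_\alpha\subseteq E(X,\alpha)$ — done along the lines of \cite{B2,Attia}, the strengthened assumption being used precisely through \cite{Biggins2}; and (ii) for the concatenations, a control of the martingale limits $Y_{\psi_\alpha}$ and of the speed of convergence in the law of large numbers that is uniform in $\alpha$, so that for each $K$ the sequence $(\alpha_j)$ may be chosen after $\omega$ without spoiling the almost-everywhere statements, together with a control of $\mu$ across the block transitions — which is where $n_j/n_{j-1}\to\infty$ and $|\alpha_{j+1}-\alpha_j|\to0$ make the transition terms $o(S_n\phi(u_n))$. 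I expect step (ii) — this uniform-in-$\alpha$ control of transitions and of the local dimension of the inhomogeneous measures — to be the main obstacle, with the treatment of $\partial I_X$ (degeneracy of the Gibbs measures there, requiring either a limiting argument from $\widering{I}_X$ or critical Mandelbrot measures) a secondary source of difficulty.
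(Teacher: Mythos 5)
Your outline --- reduce to $K\subset I_X$, prove an upper bound by covering and a lower bound by concatenating Gibbs measures into an inhomogeneous Mandelbrot measure --- is the right one, and the reductions and the covering estimate are sound in spirit. Two remarks on the upper bound. The paper does not invoke Biggins' uniform convergence (which would require a $p$-th moment bound uniform on a compact set where the martingale may degenerate, i.e.\ outside $J_{X,\phi}$); instead it introduces the random pressure $P_{X,\phi,\alpha}(q)$, observes that it is convex both in $q$ and in $\alpha$ (Proposition~\ref{convex}), and bounds it by $\widetilde P_{X,\phi,\alpha}(q)$ one pair $(q,\alpha)$ at a time with a first-moment bound (Proposition~\ref{compPXphialpha}); convexity then upgrades the inequality to all $(q,\alpha)$ simultaneously, with no uniform-convergence theorem and indeed no appeal to \eqref{finiteness2}. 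Also, using a single $q_\alpha$ realising $\inf_q\widetilde P_{X,\phi,\alpha}(q)$ implicitly requires \eqref{phibound} (this is Proposition~\ref{qalpha}), an assumption that Theorem~\ref{thm-1.1} does not make; the paper instead works with Lemma~\ref{approxi2}, which for every $\alpha\in I_X$ produces a sequence $(q_n,\alpha_n)\in J_{X,\phi}$ with $\beta(q_n,\alpha_n)\to\alpha$ and $\widetilde P_{X,\phi,\alpha_n}(q_n)-\langle q_n|\nabla\widetilde P_{X,\phi,\alpha_n}(q_n)\rangle\to\widetilde P^*_{X,\phi,\alpha}(0)$, and feeds these pairs into the concatenation.

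The genuine gap is exactly the one you flag as ``the main obstacle'': the lower bound on $\underline\dim\mu_\varrho$ simultaneously over the uncountable family of concatenation parameters $\varrho$. Your sketch would obtain it by direct control of the local dimension, i.e.\ by controlling $\log Y_{\psi_\alpha}(u_n)$ and the LLN rate uniformly in $\alpha$ and across block transitions, which you do not carry out. The paper's route is genuinely different: it fixes a compact space $\mathcal{R}$ of admissible sequences with block lengths $(N_j)$ tuned so that the martingales $Y_n(\cdot,u)$ converge uniformly on $\mathcal R$ with explicit tail control (Proposition~\ref{pro-3.4}, estimate \eqref{control3'}), and then, for the dimension bound (Theorem~\ref{lb2}), runs fractal percolation of parameter $\beta$ on $\partial\TT$. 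A uniform version of Kahane's decomposition principle (Proposition~\ref{pro-3.5}) shows that the percolated masses $\widetilde Y_n(\beta,\cdot)$ stay bounded away from $0$ in the limit uniformly over the compact sets $\mathcal R(\beta,\ell,\varepsilon)$, and Kahane's theorem on multiplicative chaos then gives $\underline\dim\mu_\varrho\ge-\log\beta$ (with respect to $\mathrm d_1$) for all $\varrho$ in such a set on a single full-probability event. Crucially, it is this $\mathrm d_1$-dimension bound, combined with Proposition~\ref{pp33}, that is used to \emph{deduce} --- rather than to prove directly --- that $\limsup_n n^{-1}\log Y(\varrho,t_{|n})\le0$ $\mu_\varrho$-a.e., after which the $\mathrm d_\phi$-bound follows from the uniform LLN for $S_n\phi$. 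Without the percolation argument (or a carefully executed direct substitute as in \cite{AB}), the simultaneity over all $\varrho$, and hence over all $K\in\mathscr K$, is not established.
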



Notice that contrarily to what happens when $\partial\TT$ is endowed with $\mathrm{d}_1$, in general the mapping $\alpha\in I_X\mapsto \dim E(X,\alpha)=\widetilde P_{X,\phi,\alpha}^*(0)$ is not concave when $\partial\TT$ is endowed with~$\mathrm{d}_\phi$. 
For instance, when  $X_i=\phi_i$ (note that in this case $d=1$), the distortion induced by~$\mathrm{d}_\phi$ can be observed by noting that in this case $\wt P_{X,\phi,\alpha} (q)= q-\wt P_X^{-1}(\alpha q)$, which implies $\wt P_{X,\phi,\alpha}^*(0)=\wt P_X^*(\alpha)/\alpha$ for $\alpha\in I_X$. 
Also, Theorem~\ref{thm-1.1} should be compared to those obtained in  \cite{BSS,FLW,Ol} in the context of Birkhoff averages on conformal repellers. 

Next we state our results on quantified Erd\"os-R\'enyi laws. They require to assume the following property: 
\begin{equation}\label{phibound}
\sup_{q\in\R^d} \mathbb{E}\Big (\sum_{i=1}^N \phi_i \exp (\langle q|X_i\rangle -\wt P_X(q))\Big )<\infty,
\end{equation} 
which holds  as soon as $\big \|\sup_{1\le i\le N}\mathbb{E}\big (\phi_i|\sigma(N,X)\big )\big \|_\infty<\infty$. We also need the following proposition. Let 
\begin{equation}\label{tIX}
\widetilde I_X=\widering I_X\cup(\partial I_X)_{\mathrm{crit}},\text{ where } (\partial I_X)_{\mathrm{crit}}=\big\{\alpha\in\nabla \widetilde P_X(\R^d):\widetilde P_X^*(\alpha)=0\big\}.
\end{equation}

\begin{proposition}\label{qalpha}
Assume \eqref{allmomentsfinite}, \eqref{pP}, \eqref{allmomentsfinitephi}, \eqref{finiteness2} and \eqref{phibound}.  For all $\alpha\in \widetilde I_X$, there exists a unique $q=q_\alpha$ such that $\nabla \wt P_{X,\phi,\alpha}(q)=0$, hence  $\wt P_{X,\phi,\alpha}(q_\alpha)=\wt P^*_{X,\phi,\alpha}(0)$. Moreover, the mapping $\alpha\mapsto q_\alpha$ is real analytic over $\widering I_X$ and continuous over $\widetilde I_X$.
\end{proposition}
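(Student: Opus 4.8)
\emph{Proof proposal.} The statement being a deterministic fact about the real analytic family of functions $q\mapsto\wt P_{X,\phi,\alpha}(q)$ defined by \eqref{Palphaq}, the plan is to establish, for each $\alpha\in\widetilde I_X$, the following three points: (i) $q\mapsto\wt P_{X,\phi,\alpha}(q)$ is convex and attains its infimum at a unique point $q_\alpha$, characterised as the unique zero of $\nabla\wt P_{X,\phi,\alpha}$; since $\wt P_{X,\phi,\alpha}$ is convex this forces $\wt P_{X,\phi,\alpha}(q_\alpha)=\inf_q\wt P_{X,\phi,\alpha}(q)$, which equals $\wt P^*_{X,\phi,\alpha}(0)$ by the very definition of the Legendre transform; (ii) $\alpha\mapsto q_\alpha$ is real analytic on the open set $\widering{I}_X$, via the analytic implicit function theorem applied to $G(\alpha,q):=\nabla_q\wt P_{X,\phi,\alpha}(q)=0$; (iii) this map extends continuously to $\widetilde I_X$, by a compactness argument that controls $q_\alpha$ as $\alpha$ approaches $(\partial I_X)_{\mathrm{crit}}$.

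For (i), I would first note that $q\mapsto\wt P_{X,\phi,\alpha}(q)$ is convex because $(q,t)\mapsto\mathbb E(\Sigma_\alpha(q,t))$ is convex on $\R^d\times\R$ (an integral of exponentials of affine functions, recall \eqref{sigma}) and strictly decreasing in $t$ (the $\phi_i$ are positive), so that $\{\mathbb E(\Sigma_\alpha(q,t))\le1\}$ is exactly the epigraph of $\wt P_{X,\phi,\alpha}$. Uniqueness of the minimiser would use \eqref{pP}: if $\wt P_{X,\phi,\alpha}$ were constant, equal to its minimum, along a non-degenerate segment with direction $v\neq0$, then $\partial_v\wt P_{X,\phi,\alpha}=\partial_v^2\wt P_{X,\phi,\alpha}=0$ there, and differentiating \eqref{Palphaq} twice in the direction $v$ would give $\mathbb E\big(\sum_{i=1}^N\langle v|X_i-\alpha\rangle^2\exp(\langle q|X_i-\alpha\rangle-\wt P_{X,\phi,\alpha}(q)\phi_i)\big)=0$, hence $\langle v|X_i\rangle=\langle v|\alpha\rangle$ for all $1\le i\le N$ a.s., contradicting \eqref{pP}; in particular two distinct zeros of the gradient would generate such a segment. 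Existence of the minimiser is where the hypothesis $\alpha\in\widetilde I_X$ enters: since $\wt P_{X,\phi,\alpha}(0)$ does not depend on $\alpha$, write $D:=\wt P_{X,\phi,\alpha}(0)$; then $\inf_q\wt P_{X,\phi,\alpha}(q)\le D$, and $\wt P_{X,\phi,\alpha}(q)\le D$ implies, via \eqref{Palphaq}, $\mathbb E\big(\sum_{i=1}^N\exp(\langle q|X_i-\alpha\rangle-D\phi_i)\big)\le1$, i.e. $q$ lies in the sublevel set $\{g_\alpha\le0\}$ of the convex function $g_\alpha(q):=\log\mathbb E\big(\sum_{i=1}^N\exp(\langle q|X_i\rangle-D\phi_i)\big)-\langle q|\alpha\rangle$. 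Because $0<\phi_i<\infty$ a.s., $g_\alpha$ has the same recession function as $q\mapsto\wt P_X(q)-\langle q|\alpha\rangle$, namely $u\mapsto\wt P_X^{\infty}(u)-\langle u|\alpha\rangle$ with $\wt P_X^{\infty}(u):=\lim_{t\to\infty}t^{-1}\big(\wt P_X(q+tu)-\wt P_X(q)\big)\in(-\infty,+\infty]$ (independent of $q$); and by Proposition~\ref{detI} together with \eqref{tIX} one has $\widetilde I_X\subseteq\nabla\wt P_X(\R^d)$, so $\alpha=\nabla\wt P_X(q_0)$ for some $q_0$, and for every $u\neq0$ the difference quotients $t^{-1}(\wt P_X(q_0+tu)-\wt P_X(q_0))$, $t>0$, increase \emph{strictly} (by strict convexity of $\wt P_X$, which holds by \eqref{pP}) from $\langle\nabla\wt P_X(q_0)|u\rangle=\langle u|\alpha\rangle$ towards $\wt P_X^{\infty}(u)$; hence $\wt P_X^{\infty}(u)-\langle u|\alpha\rangle>0$ for all $u\neq0$, so $\{g_\alpha\le0\}$, hence $\{q:\wt P_{X,\phi,\alpha}(q)\le D\}$, is bounded, and the infimum of $\wt P_{X,\phi,\alpha}$ is attained. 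Combined with the uniqueness above this gives the unique $q_\alpha$, with $\nabla\wt P_{X,\phi,\alpha}(q_\alpha)=0$.

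For (ii), I would apply the analytic implicit function theorem to $G(\alpha,q)=\nabla_q\wt P_{X,\phi,\alpha}(q)$, which is real analytic since $(\alpha,q)\mapsto\wt P_{X,\phi,\alpha}(q)$ is, at the point $(\alpha,q_\alpha)$; the required invertibility of $\partial_qG(\alpha,q_\alpha)=\mathrm{Hess}_q\wt P_{X,\phi,\alpha}(q_\alpha)$ comes from its positive definiteness, read off by differentiating \eqref{Palphaq} twice at $q_\alpha$ (where $\nabla\wt P_{X,\phi,\alpha}$ vanishes): for $v\neq0$,
\[
\big\langle v,\mathrm{Hess}_q\wt P_{X,\phi,\alpha}(q_\alpha)\,v\big\rangle
=\frac{\mathbb E\big(\sum_{i=1}^N\langle v|X_i-\alpha\rangle^2\,e^{w_i}\big)}{\mathbb E\big(\sum_{i=1}^N\phi_i\,e^{w_i}\big)}>0,
\qquad w_i:=\langle q_\alpha|X_i-\alpha\rangle-\wt P_{X,\phi,\alpha}(q_\alpha)\phi_i,
\]
the numerator being positive by \eqref{pP} and the denominator by $\phi_i>0$. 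Since $\widering{I}_X$ is open, this yields real analyticity of $\alpha\mapsto q_\alpha$ on $\widering{I}_X$. For (iii), I would take $\alpha_n\to\alpha$ in $\widetilde I_X$; then $\wt P_{X,\phi,\alpha_n}(q_{\alpha_n})\le D$, and the boundedness of $\{q:\wt P_{X,\phi,\beta}(q)\le D\}$ obtained in (i) is uniform for $\beta$ in a compact neighbourhood of $\alpha$ inside $\nabla\wt P_X(\R^d)$ (the recession functions $u\mapsto\wt P_X^{\infty}(u)-\langle u|\beta\rangle$ stay, by lower semicontinuity, uniformly positive on the unit sphere), so $(q_{\alpha_n})_n$ is bounded; any of its limit points $q^\ast$ then satisfies $\nabla_q\wt P_{X,\phi,\alpha}(q^\ast)=0$ by continuity of $G$, hence $q^\ast=q_\alpha$ by the uniqueness in (i), and therefore $q_{\alpha_n}\to q_\alpha$.

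The hard part will be the coercivity/uniform-boundedness step inside (i): because $\phi$ need not be bounded, one cannot compare $\wt P_{X,\phi,\alpha}$ linearly with $\wt P_X$, and one must instead pin down the recession function of the auxiliary cumulant generating function $g_\alpha$ and combine the interiority $\widetilde I_X\subseteq\nabla\wt P_X(\R^d)$ (from Proposition~\ref{detI}) with the strict convexity of $\wt P_X$ (from \eqref{pP}); the uniform version of this bound, needed for (iii), requires a little care with the lower semicontinuity of recession functions. The remaining ingredients—convexity via the epigraph description, uniqueness and positive definiteness of the Hessian via \eqref{pP}, and the analytic implicit function theorem—are routine.
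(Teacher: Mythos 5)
Your proof is correct, but it follows a genuinely different route from the paper's, and one of your side remarks about the paper's route being unavailable is mistaken.

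The paper's proof of existence of $q_\alpha$ is short and rests squarely on \eqref{phibound}: setting $\lambda=\sup_{q}\E\big(\sum_i\phi_i\exp(\langle q|X_i\rangle-\wt P_X(q))\big)<\infty$ and using the convexity in $t$ of $\ell(t)=\log\E(\Sigma_\alpha(q,t))$ together with $\ell'(0)\ge -\lambda$, one gets the \emph{linear} lower bound $\wt P_{X,\phi,\alpha}(q)\ge\lambda^{-1}(\wt P_X(q)-\langle q|\alpha\rangle)$, whose right-hand side is coercive because $\alpha\in\nabla\wt P_X(\R^d)$ and $\wt P_X$ is strictly convex. This is exactly the linear comparison your closing paragraph says is unavailable ``because $\phi$ need not be bounded''; hypothesis \eqref{phibound} is there precisely to make that comparison work, and the paper uses it. You should correct this remark. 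Your alternative argument, however, is valid and in fact does not invoke \eqref{phibound} at all: you replace the linear bound by the identification $\{\wt P_{X,\phi,\alpha}\le D\}=\{g_\alpha\le 0\}$ and an argument on recession functions. The one place where you assert rather than prove is the claim that $g_\alpha$ and $q\mapsto\wt P_X(q)-\langle q|\alpha\rangle$ have the same recession function. This is true, and the clean justification is measure-theoretic: writing $\mu(A)=\E\big(\sum_i\mathbf 1_A(X_i)\big)$ and $\tilde\mu(A)=\E\big(\sum_i\mathbf 1_A(X_i)e^{-D\phi_i}\big)$, the two measures are mutually absolutely continuous (since $0<e^{-D\phi_i}\le 1$ a.s.), hence have the same support, and for a finite measure $\nu$ with everywhere-finite Laplace transform the recession function of $\log\int e^{\langle q|x\rangle}\,\mathrm d\nu$ is the support function of the closed convex hull of $\operatorname{supp}\nu$; under \eqref{allmomentsfinite} both Laplace transforms are finite everywhere, so the recession functions agree. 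Once that is in place, your coercivity, uniqueness via \eqref{pP}, Hessian positivity, and the implicit function theorem argument all go through. For continuity at $(\partial I_X)_{\mathrm{crit}}$, the paper proceeds differently again, by extending the linear lower bound to a whole neighbourhood $V=\nabla\wt P_X(U)$ of $\alpha$ (including points just outside $I_X$) and applying the implicit function theorem there, which yields analyticity of $\beta\mapsto q_\beta$ on $V$ and hence continuity at $\alpha$; your compactness/uniqueness argument achieves the same conclusion and is equally legitimate. In sum: different, correct, slightly more assumption-economical than the paper, but with a small gap in the recession-function step that should be filled, and with the misconception about the linear comparison removed.
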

Then, observe that if $\alpha\in \widering I_X$,  with probability 1, $E(X,\alpha)$ carries a Mandelbrot measure of maximal Hausdorff dimension $\widetilde P_{X,\phi,\alpha}^*(0)$ with respect to ${\mathrm d}_\phi$, namely the non degenerate Mandelbrot measure~$\nu_\alpha$ associated with $\psi_{\alpha}=\psi_{X,\phi,\alpha}=\big (\langle q_\alpha|X_i-\alpha\rangle -\wt P_{X,\phi,\alpha}(q_\alpha) \phi_i\big )_{i\ge 1}$. The domain of the associated strictly convex function 
$$
\Lambda_{\psi_\alpha}:\lambda\in\R^d\mapsto \log \mathbb{E}\Big (\sum_{i=1}^N \exp( \langle \lambda| X_i\rangle+\psi_{\alpha,i})\Big )
$$ 
is $\R^d$, and the quantified Erd\"os-R\'enyi law of large numbers of Theorem B holds with $\psi=\psi_\alpha$. If $\alpha\in (\partial I_X)_{\mathrm{crit}}$, $\psi_\alpha$ and $\Lambda_{\psi_\alpha}$ can be defined as above as well. There is no associated  Mandelbrot measure, but what is called a critical Mandelbrot measure $\nu_\alpha^c$ associated to $\alpha$ (see \cite{Kyprianou,Liu,B2,BKNSW,BuSaDyKo} for the definition and geometric properties of these objects). However, though $\nu_\alpha^c$ can be used to show that $E(X,\alpha)\neq\emptyset$, there is no associated Peyri\`ere measure, so using $\nu_\alpha^c$ to get $\dim E\big (X,\alpha, \mathrm{LD}( \Lambda_{\psi_\alpha},\widetilde k)\big )=\dim E(X,\alpha)(=0)$ is not possible.

\begin{theorem}\label{UNIFER}Assume \eqref{allmomentsfinite}, \eqref{pP}, \eqref{allmomentsfinitephi}, \eqref{finiteness2} and \eqref{phibound}. Let $\widetilde k$ be an increasing sequence of integers. With probability 1, for all $\alpha\in \widetilde I_X$, $\dim E\big (X,\alpha, \mathrm{LD}( \Lambda_{\psi_\alpha},\widetilde k)\big )=\dim E(X,\alpha)$. 
\end{theorem}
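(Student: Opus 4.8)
The plan is to separate the statement into a free upper bound and a lower bound obtained by concatenating Gibbs measures into inhomogeneous Mandelbrot measures, the construction depending only on a fixed countable subfamily so that one event of probability $1$ settles all levels $\alpha$ at once. For the upper bound: since $E(X,\alpha,\mathrm{LD}(\Lambda_{\psi_\alpha},\widetilde k))\subseteq E(X,\alpha)$, Theorem~\ref{thm-1.1} gives, on an event of probability $1$, $\dim E(X,\alpha,\mathrm{LD}(\Lambda_{\psi_\alpha},\widetilde k))\le\widetilde P_{X,\phi,\alpha}^*(0)=\dim E(X,\alpha)$ for all $\alpha\in I_X$ simultaneously; when $\alpha\in(\partial I_X)_{\mathrm{crit}}$ one checks from the relations $\nabla\widetilde P_X(q_\alpha)=\alpha$, $\widetilde P_X(q_\alpha)=\langle q_\alpha|\alpha\rangle$ (which characterise $q_\alpha$ in that case) that $\widetilde P_{X,\phi,\alpha}(q_\alpha)=0$, so $\widetilde P_{X,\phi,\alpha}^*(0)=0$ and $\dim E(X,\alpha)=0$, and there the remaining content is just that $E(X,\alpha,\mathrm{LD}(\Lambda_{\psi_\alpha},\widetilde k))\ne\emptyset$ (recall $\dim\emptyset=-\infty$). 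So everything reduces to producing, on a single probability-$1$ event and for every $\alpha\in\widetilde I_X$ and every $\epsilon>0$, a non-zero measure $\mu=\mu_{\alpha,\epsilon}$ carried by $E(X,\alpha,\mathrm{LD}(\Lambda_{\psi_\alpha},\widetilde k))$ which, if $\alpha\in\widering I_X$, also has $\dim\mu\ge\widetilde P_{X,\phi,\alpha}^*(0)-\epsilon$ with respect to $\mathrm{d}_\phi$.

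To build these I would fix once and for all a countable dense subset $\mathcal D$ of $\widering I_X$ (hence dense in $I_X=\overline{\widering I_X}$, and a fortiori approximating every point of $(\partial I_X)_{\mathrm{crit}}$). For each $\beta\in\mathcal D$ the Gibbs measure $\nu_\beta$ and its analogues on all subtrees $\TT(u)$ are non-degenerate by \eqref{finiteness2}, of Hausdorff dimension $\widetilde P_{X,\phi,\beta}^*(0)$ for $\mathrm{d}_\phi$, and obey $\mathrm{LD}(\Lambda_{\psi_\beta},\widetilde k)$ along typical branches by Theorem~B; intersecting the corresponding probability-$1$ events over the countable set $\mathcal D\times\bigcup_{n\ge0}\N^n$, together with the event of Theorem~\ref{thm-1.1}, yields the event $\Omega^*$ on which all the work is done — in particular no Biggins-type simultaneous construction over the uncountable set $\widering I_X$ is needed. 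Then, given $\alpha\in\widetilde I_X$ and $\epsilon>0$, I would pick $\alpha_k\in\mathcal D$ with $\alpha_k\to\alpha$ and a rapidly increasing sequence $0=m_0<m_1<\cdots$, and let $\mu=\mu_{\alpha,\epsilon}$ be the inhomogeneous Mandelbrot measure which on the generations $(m_{k-1},m_k]$ uses the potential $\psi_{\alpha_k}$ — i.e. below each generation-$m_{k-1}$ vertex one inserts, up to generation $m_k$, the Mandelbrot measure built from the weights $\exp(\psi_{\alpha_k,i})$ and from the martingale limits of \eqref{Yn} for that subtree, and iterates. Because $\mathbb E\sum_i\exp(\psi_{\alpha_k,i})=1$ for every $k$, $\mu$ is a non-zero random measure on $(\partial\TT,\mathrm{d}_\phi)$.

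Three properties of $\mu$ have to be checked, all achievable by letting the $m_k$ grow fast enough. (a) $\mu$ is carried by $E(X,\alpha)$: along a $\mu$-typical branch $t$, $S_nX(t)/n$ is, up to lower order, the block-length-weighted average of $\alpha_1,\dots,\alpha_k$ (using \eqref{alphapsi} and the identity $\nabla\Lambda_{\psi_\gamma}(0)=\gamma$, a consequence of $\nabla\widetilde P_{X,\phi,\gamma}(q_\gamma)=0$), and fast growth forces this average to $\lim_k\alpha_k=\alpha$. (b) $\dim\mu\ge\widetilde P_{X,\phi,\alpha}^*(0)-\epsilon$: the lower local dimension of $\mu$ at $t$ (for $\mathrm{d}_\phi$) is $\liminf_n\bigl(-\log\mu([t_1\cdots t_n])\bigr)/S_n\phi(t)$, a running weighted average of the block dimension rates $\widetilde P_{X,\phi,\alpha_k}^*(0)$; long blocks make the initial ones negligible in the $\liminf$, so it is $\ge\liminf_k\widetilde P_{X,\phi,\alpha_k}^*(0)=\widetilde P_{X,\phi,\alpha}^*(0)$ by continuity of $\gamma\mapsto q_\gamma$ (Proposition~\ref{qalpha}), up to a loss $\le\epsilon$ caused by the block-boundary fluctuations, which shrinks as the $m_k$ grow. (c) $\mu$ is carried by $E(X,\alpha,\mathrm{LD}(\Lambda_{\psi_\alpha},\widetilde k))$ — the heart of the matter, treated next. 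Granting (c): for $\alpha\in\widering I_X$, $\mu$ witnesses $\dim E(X,\alpha,\mathrm{LD}(\Lambda_{\psi_\alpha},\widetilde k))\ge\widetilde P_{X,\phi,\alpha}^*(0)-\epsilon$, and letting $\epsilon\downarrow0$ along $1/j$ and invoking the upper bound gives equality; for $\alpha\in(\partial I_X)_{\mathrm{crit}}$, the non-zero measure $\mu$ is carried by $E(X,\alpha,\mathrm{LD}(\Lambda_{\psi_\alpha},\widetilde k))$, which is therefore non-empty — here $\Lambda_{\psi_\alpha}$ still makes sense because $\gamma\mapsto q_\gamma$ is continuous up to the boundary, even though no Gibbs measure $\nu_\alpha$ exists.

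For (c) I would work under the Peyrière-type measure $\mathcal Q$ attached to $\mu$, with respect to which the labels $\widetilde X_n(\omega,t)=X_{t_1\cdots t_n}(\omega)$ form an \emph{independent but non-identically distributed} sequence, $\widetilde X_n$ having logarithmic moment generating function $\Lambda_{\psi_{\alpha_k}}$ for $n$ in the $k$-th block; by continuity of $\gamma\mapsto q_\gamma$ and analyticity of $(\gamma,q)\mapsto\widetilde P_{X,\phi,\gamma}(q)$, one has $\Lambda_{\psi_{\alpha_k}}\to\Lambda_{\psi_\alpha}$ and $\Lambda_{\psi_{\alpha_k}}^*\to\Lambda_{\psi_\alpha}^*$, uniformly with their gradients on compact sets. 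As $n\to\infty$ the $nk(n)$ increments $S_{j+n}X(t)-S_jX(t)$, $0\le j\le nk(n)-1$, occupy generations up to $\sim nk(n)\to\infty$, so that, the $m_k$ growing fast enough, only a vanishing proportion of them cross a block boundary, the others being sums of $n$ independent variables all with the same log-moment generating function $\Lambda_{\psi_{\alpha_{K(n)}}}$, $K(n)\to\infty$. Re-running the large-deviation estimates of \cite{BL} with this \emph{drifting} parameter then yields parts (1) and (2) of $\mathrm{LD}(\Lambda_{\psi_\alpha},\widetilde k)$, the strict inequalities there being preserved in the limit; for the critical part (3), writing $M_n=\max_{0\le j\le nk(n)-1}\langle\lambda|S_{j+n}X(t)-S_{j}X(t)\rangle$, it suffices to combine the elementary bounds
$$
\theta\,\frac{M_n}{n}-\frac{\log(nk(n))}{n}\ \le\ \frac{1}{n}\Lambda^t_{\tilde k,n}(\theta\lambda)\ \le\ \frac{1}{n}\Lambda^t_{\tilde k,n}(\lambda)+(\theta-1)\,\frac{M_n}{n}\qquad(\theta\ge1)
$$
with $\frac{1}{n}\Lambda^t_{\tilde k,n}(\lambda)\to\Lambda_{\psi_\alpha}(\lambda)$ and, by the classical Erdős–Rényi theorem, $\frac{M_n}{n}\to\langle\lambda|\nabla\Lambda_{\psi_\alpha}(\lambda)\rangle$ — valid precisely because $\lim_n\log k(n)/n=-\Lambda_{\psi_\alpha}^*(\nabla\Lambda_{\psi_\alpha}(\lambda))=\langle\lambda|\nabla\Lambda_{\psi_\alpha}(\lambda)\rangle-\Lambda_{\psi_\alpha}(\lambda)$ — which together force $\frac{1}{n}\Lambda^t_{\tilde k,n}(\theta\lambda)\to\Lambda_{\psi_\alpha}(\lambda)+(\theta-1)\langle\lambda|\nabla\Lambda_{\psi_\alpha}(\lambda)\rangle$. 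The hard part will be exactly this step: pushing the bookkeeping of \cite{BL} through for an independent array whose one-dimensional marginals drift to $\Lambda_{\psi_\alpha}$ as the scale grows, handling the increments that cross block boundaries, and pinning down the critical regime so that case (3) is recovered exactly; once that is in place, what remains is routine tuning of the growth of $(m_k)$.
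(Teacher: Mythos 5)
Your plan and the paper's are close in spirit — both produce inhomogeneous Mandelbrot measures by concatenating Gibbs pieces with parameters tending to $\alpha$, and both read off the quantified Erd\"os--R\'enyi law by comparing block statistics under a Peyri\`ere-type measure. But there is a genuine gap at the one point you dismiss: the claim that intersecting the probability-$1$ events over the \emph{countable} set $\mathcal D\times\bigcup_{n\ge0}\N^n$ ``does all the work'' so that ``no Biggins-type simultaneous construction over the uncountable set $\widering I_X$ is needed''. That countable intersection only controls the \emph{homogeneous} measures $\nu_\beta$ and their restrictions to subtrees. Your actual object $\mu_{\alpha,\epsilon}$ is an \emph{inhomogeneous} cascade whose defining martingale $Y_n=\sum_{u\in\TT_n}\prod_{j\le n}W_{\alpha_{\kappa(j)},u_{|j}}$ depends on the whole sequence $(\alpha_k,m_k)$, and you choose that sequence after fixing $\alpha$ and $\epsilon$ — so there are uncountably many of them. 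Its a.s.\ convergence and nondegeneracy, the a.s.\ validity of (a) (LLN for $S_nX$ under $\mu$), of (b) (the local-dimension lower bound, including the control of the $\log Y(u)$ term which you silently drop), and of (c) (the large-deviation estimates for the drifting array) are each separate probability-$1$ events \emph{per sequence}, not consequences of knowing that each piece $\nu_{\alpha_k}$ is nondegenerate. Intersecting them over all $\alpha\in\widetilde I_X$ is an uncountable intersection, and nothing in your plan collapses it.

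The paper's entire Section~3 exists precisely to repair this: it restricts to a \emph{compact} parameter space $\mathcal R$ of ``piecewise-constant with block lengths $N_j$'' sequences drawn from finite sets $D_j$, proves \emph{uniform-in-$\varrho$} convergence of $Y_n(\cdot,u)$ (Proposition~\ref{pro-3.4}), \emph{uniform} bounds $\mathbb E\sup_{\varrho}Y(\varrho,u)$ (estimate~\eqref{control3'}), and then derives (a), (b), (c) on a single event by summing $\mathbb E\bigl(\sup_{\varrho\in\mathcal R}\mu_\varrho(\text{bad set at scale }n)\bigr)$ over $n$ — see Propositions~\ref{pp33}, \ref{pp3'}, \ref{LD51}. (It even changes the lower-bound method to a percolation argument, Theorem~\ref{lb2}, partly because the uniformity is easier to push through that way.) Without a compact family and the resulting $\sup_\varrho$ estimates, you cannot promote the for-each-$\varrho$ Borel--Cantelli arguments into a single a.s.\ statement, which is exactly what Theorem~\ref{UNIFER}'s ``with probability 1, for all $\alpha$'' demands. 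The rest of your sketch — the reduction of the boundary case to the nonemptiness of $E(X,\alpha,\mathrm{LD}(\Lambda_{\psi_\alpha},\widetilde k))$, the block-averaging heuristics for (a) and (b), the Peyri\`ere rewriting and the ``drifting marginal'' version of \cite{BL} for (c) — is the right shape, but none of it settles the uniformity, so it would at best give the statement for each fixed $\alpha$ a.s., which is weaker than the theorem.
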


\begin{corollary}\label{cor-2}Assume \eqref{allmomentsfinite}, \eqref{pP}, \eqref{allmomentsfinitephi}, \eqref{finiteness2} and \eqref{phibound}. With probability 1, for all $\alpha\in \widetilde I_X$, $\dim E\big (X,\alpha, \mathrm{LD}( \Lambda_{\psi_\alpha})\big )=\dim E(X,\alpha)$. 
\end{corollary}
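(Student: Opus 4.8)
The plan is to obtain Corollary~\ref{cor-2} from Theorem~\ref{UNIFER} in exactly the way Corollary~\ref{cor-1} is obtained from Theorem~B: by replacing the uncountable quantifier ``for all $\widetilde k\in\boldsymbol{\widetilde K}$'' hidden in $\mathrm{LD}(\Lambda_{\psi_\alpha})$ by a countable one, and by using that the lower bound supplied by Theorem~\ref{UNIFER} is, in its proof, realized by measures of dimension $\widetilde P_{X,\phi,\alpha}^*(0)$ carried by $E(X,\alpha,\mathrm{LD}(\Lambda_{\psi_\alpha},\widetilde k))$ which \emph{do not depend on} $\widetilde k$. The upper bound is immediate: since $E(X,\alpha,\mathrm{LD}(\Lambda_{\psi_\alpha}))\subset E(X,\alpha)$, with probability one $\dim E(X,\alpha,\mathrm{LD}(\Lambda_{\psi_\alpha}))\le \dim E(X,\alpha)=\widetilde P_{X,\phi,\alpha}^*(0)$ for all $\alpha\in\widetilde I_X$ by Theorem~\ref{thm-1.1}. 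Only the reverse inequality requires work.

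For each $\ell\in[0,+\infty]$ fix a canonical increasing sequence $\widetilde k^{(\ell)}$ with $\lim_{n}n^{-1}\log k^{(\ell)}(n)=\ell$, and fix a countable dense subset $L$ of $[0,+\infty]$. The key step is a comparison lemma: for every $t\in\partial\TT$ and every $\alpha\in\widetilde I_X$, $\mathrm{LD}(\Lambda_{\psi_\alpha},\widetilde k)$ holds for all $\widetilde k\in\boldsymbol{\widetilde K}$ if and only if it holds for $\widetilde k=\widetilde k^{(\ell)}$ for all $\ell\in L$, so that
$$
E\big(X,\alpha,\mathrm{LD}(\Lambda_{\psi_\alpha})\big)=\bigcap_{\ell\in L}E\big(X,\alpha,\mathrm{LD}(\Lambda_{\psi_\alpha},\widetilde k^{(\ell)})\big).
$$
This is proved, as in \cite{BL} and in the proof of Corollary~\ref{cor-1} (Section~\ref{pfcor}), by observing that the hypotheses and conclusions of items (1)--(3) of $\mathrm{LD}(\Lambda_{\psi_\alpha},\widetilde k)$ depend on $\widetilde k$ only through $\ell(\widetilde k)=\lim_{n}n^{-1}\log k(n)$ and through $n^{-1}\log$-scale quantities attached to the window length $nk(n)$, which are insensitive to sub-exponential perturbations; one then squeezes an arbitrary $\widetilde k$ with $\ell(\widetilde k)\in(\ell',\ell'')$, $\ell',\ell''\in L$, between $\widetilde k^{(\ell')}$ and $\widetilde k^{(\ell'')}$, using the monotonicity in the number of summands of the quantities defining $\Lambda^t_{\widetilde k,n}(\lambda)$ and of the relevant cardinalities, and letting $\ell',\ell''$ tend to $\ell(\widetilde k)$. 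The borderline item~(3), where $\ell(\widetilde k)$ equals a critical value $-\Lambda_{\psi_\alpha}^*(\nabla\Lambda_{\psi_\alpha}(\lambda))$, is recovered by the same strict-convexity-at-$1$ argument that establishes item~(3) itself. Continuity of $\alpha\mapsto q_\alpha$, hence of $\alpha\mapsto\psi_\alpha$ and $\alpha\mapsto\Lambda_{\psi_\alpha}$ on $\widetilde I_X$ (Proposition~\ref{qalpha}), makes this uniform in $\alpha\in\widetilde I_X$.

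Finally, apply Theorem~\ref{UNIFER} — more precisely, the construction carried out in its proof to realize the lower bound — to each of the countably many sequences $\widetilde k^{(\ell)}$, $\ell\in L$: with probability one, for every $\alpha\in\widetilde I_X$ there is a measure $\eta_\alpha$ on $\partial\TT$ (the Gibbs measure $\nu_\alpha$ when $\alpha\in\widering I_X$, an inhomogeneous Mandelbrot measure produced by the concatenation/approximation scheme when $\alpha\in(\partial I_X)_{\mathrm{crit}}$), not depending on $\ell$, with $\dim\eta_\alpha=\widetilde P_{X,\phi,\alpha}^*(0)$ and carried by $E(X,\alpha,\mathrm{LD}(\Lambda_{\psi_\alpha},\widetilde k^{(\ell)}))$; here the point is that the ``carried by'' statement comes from a Peyri\`ere-measure (resp. first/second moment) argument valid for any prescribed $\widetilde k$, while $\eta_\alpha$ itself is fixed. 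Intersecting these countably many probability-one events yields: with probability one, for all $\alpha\in\widetilde I_X$, $\eta_\alpha$ is carried by $\bigcap_{\ell\in L}E(X,\alpha,\mathrm{LD}(\Lambda_{\psi_\alpha},\widetilde k^{(\ell)}))=E(X,\alpha,\mathrm{LD}(\Lambda_{\psi_\alpha}))$, whence $\dim E(X,\alpha,\mathrm{LD}(\Lambda_{\psi_\alpha}))\ge\dim\eta_\alpha=\widetilde P_{X,\phi,\alpha}^*(0)=\dim E(X,\alpha)$; combined with the first paragraph this gives the claimed equality. The only genuinely technical ingredient is the comparison lemma of the second paragraph, and within it the borderline item~(3); but, as indicated, it is the exact analogue, for the uniform family $\{\psi_\alpha\}_{\alpha\in\widetilde I_X}$, of the reduction already needed to pass from Theorem~B to Corollary~\ref{cor-1}.
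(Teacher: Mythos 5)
Your proposal matches the paper's intended proof: the paper simply states that Corollary~\ref{cor-2} is ``proved similarly'' to Corollary~\ref{cor-1}, i.e.\ by the same reduction (your comparison lemma) to countably many sequences $\widetilde k^{(\ell)}$, after which one intersects the almost-sure events furnished by Theorem~\ref{UNIFER} for each $\widetilde k^{(\ell)}$ and observes that a single carrying measure of lower Hausdorff dimension $\widetilde P_{X,\phi,\alpha}^*(0)$ does the job. Two remarks on precision. First, your parenthetical description of $\eta_\alpha$ as ``the Gibbs measure $\nu_\alpha$ when $\alpha\in\widering I_X$'' misdescribes the measure the paper actually uses: even for interior levels, the lower bound in the proof of Theorem~\ref{UNIFER} is realized by an inhomogeneous Mandelbrot measure $\mu_\varrho$ with $\varrho\to(q_\alpha,\alpha)$, not by $\nu_\alpha$ itself --- indeed the whole point of Remark~\ref{impossibility} is that the Gibbs measures $\nu_\alpha$ cannot be controlled simultaneously over $\widering I_X$. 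Second, the family $\{\mu_\varrho\}_{\varrho\in\mathcal R}$ depends on the fixed $\widetilde k$ through the additional constraint \eqref{newN_j} on $(N_j)_{j\ge 0}$, so the claim that ``$\eta_\alpha$ itself is fixed'' is not automatic; to get a single $\eta_\alpha$ valid for all $\ell\in L$ one must choose $(N_j)$ so that \eqref{newN_j} holds for $\widetilde k^{(\ell_m)}$ whenever $m\le j$ --- a diagonal choice suffices since \eqref{newN_j} is a one-sided lower bound and is only needed for $j$ large given $\ell$ --- a step you leave implicit. Neither point changes the structure or validity of your argument.
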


As an example, take $N$ deterministic, $\phi=(1)_{i\ge 1}$,  and $X_1,\ldots,X_N$, $N$ Gaussian vectors with at least one of them non degenerate. Then $\widetilde P_X$ is strictly convex and quadratic,  $\widetilde I_X=I_X$, and $\partial I_X= (\partial I_X)_{\mathrm{crit}}$ is an hyperellipsoid. However, in general $I_X\setminus \widetilde I_X=\partial I_X\setminus (\partial I_X)_{\mathrm{crit}}$ may be non empty, and even equal to $\partial I_X$. This is for instance the case when $(N,X_1,\ldots,X_N)$ is deterministic and satisfies \eqref{pP}; indeed, it is easily checked that in this case one has $J_X=\R^d$ so that $(\partial I_X)_{\mathrm{crit}}=\emptyset$. 

To complete Theorem~\ref{UNIFER}, we need to slightly strengthen  the assumptions. First, we replace \eqref{allmomentsfinite} and \eqref{allmomentsfinitephi} by 
\begin{equation}\label{psiX}
\mathbb{E}\Big (\sum_{i=1}^N\exp \big (\psi(\|X_i\|)\big )+\exp \big (\psi(\phi_i)\big )\Big)<\infty
\end{equation} 
for some convex non decreasing function $\psi: \R_+\to\R_+$ such that $\lim_{x\to\infty} \psi(x)/x=\infty$. Also, we assume that $\mathbb{E}(N^p)<\infty$ for some $p>1$. Note that when the $\phi_i$ are constant, this condition is implied by \eqref{finiteness2} (consider $q=0$); also, we already observed that together with \eqref{allmomentsfinite} and \eqref{allmomentsfinitephi} it implies \eqref{finiteness2}).  Moreover, since we will have to guaranty that \eqref{phibound} holds for various branching random walks deduced from $(S_nX,S_n\phi)$ by restriction to some subtrees, we will assume from the outset that
\begin{equation}\label{newphi}
\|\sup_{1\le i\le N}\mathbb{E}(\phi_i|\sigma(N,X))\|_\infty<\infty.
\end{equation}

\noindent
\textbf{Theorem 1.3. (First formulation)} \textit{Assume  that $\mathbb{E}(N^p)<\infty$ for some $p>1$, as well as \eqref{pP}, \eqref{psiX} and \eqref{newphi}. If $I_X\setminus\widetilde I_X\neq\emptyset$, there exists an explicit family of differentiable convex functions $(\Lambda_\alpha)_{\alpha\in I_X\setminus\widetilde I_X}$ with  $\R^d$ as domain, such that: for every increasing sequence of integers $\widetilde k$, with probability 1,  for all $\alpha\in I_X\setminus\widetilde I_X$, one has $\dim E\big (X,\alpha, \mathrm{LD}(\Lambda_\alpha,\widetilde k)\big )=\dim E(X,\alpha)$.}

\begin{corollary}\label{cor-3}Assume that $\mathbb{E}(N^p)<\infty$ for some $p>1$, as well as \eqref{pP}, \eqref{psiX} and \eqref{newphi}. Suppose that $I_X\setminus\widetilde I_X\neq\emptyset$, and  let $(\Lambda_\alpha)_{\alpha\in I_X\setminus\widetilde I_X}$ be as in Theorem~1.3. With probability 1,  for all $\alpha\in I_X\setminus\widetilde I_X$, one has $\dim E\big (X,\alpha, \mathrm{LD}(\Lambda_\alpha)\big )=\dim E(X,\alpha)$.
\end{corollary}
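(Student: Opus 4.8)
\smallskip
\noindent\textbf{Proof proposal for Corollary~\ref{cor-3}.}
The plan is to obtain Corollary~\ref{cor-3} from Theorem~1.3 by precisely the device that upgrades Theorem~B to Corollary~\ref{cor-1} (and Theorem~\ref{UNIFER} to Corollary~\ref{cor-2}): one must pass from a statement valid, with probability $1$, for each fixed increasing sequence $\widetilde k$ to one valid, with probability $1$, for all $\widetilde k\in\boldsymbol{\widetilde K}$ at once. Since $E(X,\alpha,\mathrm{LD}(\Lambda_\alpha))=\bigcap_{\widetilde k\in\boldsymbol{\widetilde K}}E(X,\alpha,\mathrm{LD}(\Lambda_\alpha,\widetilde k))\subseteq E(X,\alpha)$ by definition, it suffices to show that, with probability $1$ and uniformly in $\alpha\in I_X\setminus\widetilde I_X$, one has $\dim E(X,\alpha,\mathrm{LD}(\Lambda_\alpha))\ge\dim E(X,\alpha)$.

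First I would reduce the uncountable index set $\boldsymbol{\widetilde K}$ to a countable one. Each of assertions $(1)$, $(2)$, $(3)$ in $\mathrm{LD}(\Lambda_\alpha,\widetilde k)$ depends on $\widetilde k$ only through $\ell(\widetilde k):=\lim_n n^{-1}\log k(n)\in[0,+\infty]$, and the set of exponents $\lambda$ for which each is demanded varies monotonically with $\ell(\widetilde k)$. Fix a countable $\mathcal K_0\subseteq\boldsymbol{\widetilde K}$ with $\{\ell(\widetilde k):\widetilde k\in\mathcal K_0\}$ dense in $[0,+\infty]$ (for instance $k(n)=\lceil e^{rn}\rceil$, $r\in\mathbb Q_{\ge0}$, together with one super-exponential sequence). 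Whenever $k'(n)\le k(n)$, the nonnegative sum defining $nk(n)\,e^{\Lambda^t_{\widetilde k,n}(\lambda)}$ contains as a partial sum the one defining $nk'(n)\,e^{\Lambda^t_{\widetilde k',n}(\lambda)}$, and likewise for the counting sets appearing in $(1)$ and $(2)$; hence
\[
\frac1n\Lambda^t_{\widetilde k,n}(\lambda)\ \ge\ \frac1n\log\frac{k'(n)}{k(n)}+\frac1n\Lambda^t_{\widetilde k',n}(\lambda),
\]
and applying this with the roles of $\widetilde k,\widetilde k'$ played by $\widetilde k'',\widetilde k$ yields the complementary upper bound $\frac1n\Lambda^t_{\widetilde k,n}(\lambda)\le\frac1n\log\frac{k''(n)}{k(n)}+\frac1n\Lambda^t_{\widetilde k'',n}(\lambda)$ when $k(n)\le k''(n)$. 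Given an arbitrary $\widetilde k\in\boldsymbol{\widetilde K}$, choosing $\widetilde k',\widetilde k''\in\mathcal K_0$ with rational rates $\ell(\widetilde k')<\ell(\widetilde k)<\ell(\widetilde k'')$ arbitrarily close to $\ell(\widetilde k)$ and letting $n\to\infty$, one reads off $(1)$ and $(2)$ for $\widetilde k$ from their validity for $\widetilde k'$ and $\widetilde k''$; the boundary assertion $(3)$ follows in the same spirit but requires a bit more care (using the monotonicity of $\theta\mapsto-\Lambda_\alpha^*(\nabla\Lambda_\alpha(\theta\lambda))$ on $[1,\infty)$ and its strict convexity at $1$). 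This gives the elementary \emph{reduction lemma}: along any branch $t$, if $\mathrm{LD}(\Lambda_\alpha,\widetilde k)$ holds for all $\widetilde k\in\mathcal K_0$, then it holds for all $\widetilde k\in\boldsymbol{\widetilde K}$; equivalently $E(X,\alpha,\mathrm{LD}(\Lambda_\alpha))=\bigcap_{\widetilde k\in\mathcal K_0}E(X,\alpha,\mathrm{LD}(\Lambda_\alpha,\widetilde k))$.

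It remains to see that this countable intersection of full-dimensional sets is still full-dimensional, simultaneously for all $\alpha\in I_X\setminus\widetilde I_X$, with probability $1$. Here I would use not the statement but the proof of Theorem~1.3: for each $\alpha$ it builds, by concatenation/approximation of Gibbs measures, an inhomogeneous Mandelbrot measure $\mu_\alpha$ with $\dim\mu_\alpha=\dim E(X,\alpha)$ that does not depend on $\widetilde k$, and along $\mu_\alpha$-a.e.\ branch the Erd\"os--R\'enyi large deviations underlying $(1)$--$(3)$ hold, for each fixed $\widetilde k$ and hence---by countable intersection over $\mathcal K_0$ and the reduction lemma---for all $\widetilde k\in\boldsymbol{\widetilde K}$ at once. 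Consequently, with probability $1$, for all $\alpha\in I_X\setminus\widetilde I_X$ the measure $\mu_\alpha$ gives full mass to $E(X,\alpha,\mathrm{LD}(\Lambda_\alpha))$, so $\dim E(X,\alpha,\mathrm{LD}(\Lambda_\alpha))\ge\dim\mu_\alpha=\dim E(X,\alpha)$, which with the reverse inclusion proves the corollary.

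The only genuinely delicate point is this $\widetilde k$-uniformity---that a single probability-one event and, $\mu_\alpha$-a.e., a single measure validate $\mathrm{LD}(\Lambda_\alpha,\widetilde k)$ for all $\widetilde k$ simultaneously---everything else (the monotonicity in $\ell(\widetilde k)$, the sub-sum comparisons, the density argument) being routine. The transparent way to secure it, which I expect matches the proof of Corollary~\ref{cor-1}, is to observe that the proof of Theorem~B (hence of Theorem~1.3) actually furnishes, for a.e.\ branch $t$, a large deviations estimate for the counts $\#\{0\le j\le nK-1:(S_{j+n}X(t)-S_jX(t))/n\in B(\nabla\Lambda_\alpha(\lambda),\epsilon)\}$ that is uniform in the window length $K\in\mathbb N$; substituting $K=k(n)$ then produces $(1)$--$(3)$ for every $\widetilde k\in\boldsymbol{\widetilde K}$ with no further probabilistic input, after which the reduction lemma above serves only to make the passage to ``all $\widetilde k$'' explicit.
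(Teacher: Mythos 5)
Your proposal is essentially the paper's own argument: the paper proves Corollary~\ref{cor-1} by fixing a countable family $\{\widetilde k_r\}_{r\in\mathbb Q\cap(0,\infty)}$ with $\lim_n n^{-1}\log k_r(n)=r$, invoking the fixed-$\widetilde k$ statement simultaneously for all $r$ on a single probability-one event and, for each $\omega$, a single full-measure set of branches, and then sandwiching an arbitrary $\widetilde k$ between two rational rates $r_1<\ell(\widetilde k)<r_2$ exactly via your sub-sum inequalities $\tfrac1n\Lambda^t_{\widetilde k,n}(\lambda)\ge\tfrac1n\log\frac{k'(n)}{k(n)}+\tfrac1n\Lambda^t_{\widetilde k',n}(\lambda)$ (and the analogous one for part~(2)), before noting "Corollaries~\ref{cor-2} and~\ref{cor-3} are proved similarly." Your ``reduction lemma'' is precisely this squeeze, and your identification of the delicate point --- that one needs a single full-dimensional measure $\mu_\alpha$ carrying the $\mathrm{LD}(\Lambda_\alpha,\widetilde k_r)$ events for all rational $r$ at once, rather than a separate measure per $\widetilde k$ --- is the content that the paper's ``similarly'' glosses over; the fix is to choose the parameters $(N_j)_j$ in \eqref{newN_j} for a super-exponentially growing dominating sequence so that the same $\mathcal R$ and measures $(\mu_\varrho)$ satisfy the constraint for every $\widetilde k_r$ simultaneously. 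Your closing speculation that the paper establishes a large deviations estimate \emph{uniform in the window length} $K\in\mathbb N$ is not what it does; the mechanism is exactly the countable-intersection-plus-squeeze you describe in the body of the proposal, applied to a family of measures made $\widetilde k$-independent by the dominating-sequence trick rather than to a single $K$-uniform bound.
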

Making explicit the family $(\Lambda_\alpha)_{\alpha\in I_X\setminus\widetilde I_X}$ requires additional definitions. As mentioned above, our approach will exhibit and use a natural decomposition of $I_X\setminus \widetilde I_X$, essentially as a union of at most countably many convex subsets of the form $\widetilde I_Y$, where $Y=(Y_i)_{i\in\mathbb N}$ defines the increments of some branching random walk, and the components of $Y$ take values in some strict affine subspace of $\R^d$. 

\subsection*{Decomposition of $I_X\setminus\widetilde I_X$ and explicitation of  $(\Lambda_\alpha)_{\alpha\in I_X\setminus\widetilde I_X}$ } Denote by $\mathcal C_X$ the closure of the convex subset of $\R^d$ defined as the set of vectors $\alpha$ of the form $\mathbb{E}(\sum_{i=1}^N W_iX_i)$, where $(W_i)_{i\ge 1}$ is a non negative random element of $\mathbb R_+^{\mathbb N}$  jointly defined with $(N,(X_i)_{i\ge 1})$, such that $\mathbb{E}(\sum_{i=1}^N W_i)=1$. It is easily seen that $\mathcal C_X$ is bounded if and only if the $X_i$, $1\le i\le \|N\|_\infty$, are uniformly bounded ($\|N\|_\infty$ may be infinite). 

If $F\subset\mathcal{B}(\mathbb R^d)$, set $N^F= \#\{1\le i\le N:\, X_i\in F\}$, and if $\mathbb{E}(N^F)>0$, set 
$$
\alpha_F= \mathbb{E}\Big (\sum_{i=1}^N\mathbf{1}_F(X_i) X_i\Big)/\mathbb{E}(N^F).
$$
We refer to \cite[Ch. 18]{Roc} for an introduction to the geometric properties of convex sets. Let $\mathcal H_X$ be the set of supporting affine hyperplanes of the close  convex set $\mathcal C_X$, and $\widetilde {\mathcal H}_X$ be the set of those elements $H$ of ${\mathcal H}_X$ such that $\mathbb E(N^H)\ge 1$. Also, let $ {\mathcal F}_X$ be the set of affine subspaces $F$ of $\R^d$ such that $F\subset H$  for some  $H\in {\mathcal H}_X$ and 
\[\widehat {\mathcal F}_X=\{F\in{\mathcal F}_X:\, \mathbb E(N^F)\ge 1\text{ and }\, \forall \, G\in \mathcal F_X,\, G\subsetneq F,\, \mathbb E(N^G)<\mathbb E(N^F)\}.
\]

If $F\in\widehat{\mathcal F}_X$ and $\mathbb{E}(N^F)>1$, to the integers $N^F_u=\#\{1\le i\le N_u:\, X_{ui}\in F\}$, $u\in\bigcup_{n\ge 0}\mathbb N^n$, are naturally associated  two trees :  the supercritical Galton-Watson tree $\widetilde \TT^F$ defined as $\TT$ but with  the branching numbers $N^F_u$ instead of the  $N_u$, and  the subtree $\TT^F$ of $\TT$ defined as $\bigcup_{n\ge 0}\TT^F_n$, where $\TT^F_0=\{\epsilon\}$ and for $n\ge 1$, $\TT^F_n=\{ui:\, u\in\TT^F_{n-1},\, 1\le i\le N_u, \, X_{ui}\in F\}$. Denote by $\vec{F}$ the vector subspace $F-\alpha_F$, and denote by $X_F-\alpha_F$ the random vector $(X_{i_1}-\alpha_F,\ldots,X_{i_{N^F}}-\alpha_F,0,\ldots,0,\ldots)$, where $i_1,\ldots, i_{N_F}$ are the indices $i\in [1,N]$ such that $X_i\in F$, ranked in increasing order; also define $\phi_F=(\phi_{i_1},\ldots,\phi_{i_{N^F}},0,\ldots,0,\ldots)$.  Similarly, define $X_{F,u}-\alpha_F=(X_{ui_1}-\alpha_F,\ldots,X_{ui_{N^F_u}}-\alpha_F,0,\ldots,0,\ldots)$ and $\phi_{F,u}= (\phi_{ui_1},\ldots,\phi_{ui_{N^F_u}},0,\ldots,0,\ldots)$ for all $u\in\bigcup_{n\ge 0}\mathbb N^{\mathbb{N}}$. The trees $\widetilde \TT^F$ and $\TT^F$ are in bijection via the mapping $\boldsymbol{b}_F$ defined by $\boldsymbol{b}_F(\epsilon)=\epsilon$ and once $\boldsymbol{b}_F$ is defined as a bijection between $\widetilde\TT^F_n$ and $\TT^F_n$ for some $n\ge 0$, if $u\in \widetilde\TT^F_n$, one sets $\boldsymbol{b}_F(uj)=\boldsymbol{b}_F(u)i_j$ for $1\le j\le N^F_{\boldsymbol{b}_F(u)}$.  Conditionally on non-extinction of $\widetilde \TT^F$ (and so $\widetilde \TT^F$), $\boldsymbol{b}_F$ extends naturally into a bijection between $\partial \widetilde \TT^F$ and  $\partial \TT^F$.  Also,  the branching random walk $(S_nX-n\alpha_F)_{n\in\mathbb{N}}$ on $\partial T^F$ is related to the $\vec{F}$-valued branching random walk $(S_n(X_F-\alpha_F))_{n\in\mathbb{N}}$ associated with the random vectors $(N^F_u,(X_{F,u}-\alpha_F))$, $u\in\bigcup_{n\ge 0}\mathbb N^{\mathbb{N}}$, on  $\partial \widetilde \TT^F$ via the equality $S_nX(t)-n\alpha_F= S_n(X_F-\alpha_F)(\boldsymbol{b}_F^{-1}(t))$,  and $\boldsymbol{b}_F^{-1}$ is an isometry between $\partial \TT^F$ and  $\partial \widetilde \TT^F$ endowed with  the restriction of  $\mathrm{d}_\phi$ and  the metric $\mathrm{d}_{\phi_F}$ respectively.  

By construction, the function $\widetilde P_{X_F-\alpha_F}$ associated to $(N^F,X_F-\alpha_F)$ as $\widetilde P_X$ is to $(N,X)$ in \eqref{Pressure function} satisfies
$$
\widetilde P_{X_F-\alpha_F}(q)=\log \mathbb{E}\Big (\sum_{i=1}^N\mathbf{1}_F(X_i) \exp(\langle q|X_i-\alpha_F\rangle)\Big) \quad\text{for all }q\in \vec{F}.
$$
If, moreover, $\dim F\ge 1$, it easily seen that  under the assumptions of Theorem~\ref{UNIFER2}, $X_F-\alpha_F$ satisfies the same assumptions as $X$ since $F\in\widehat{\mathcal F}_X$ and  $\mathbb{E}(N^F)>1$.  One associates to  $\widetilde P_{X_F-\alpha_F}$ the sets $I_{X_F-\alpha_F}$ and $\widetilde I_{X_F-\alpha_F}$ in the same way as $I_X$ and $\widetilde I_X$ are associated to $\widetilde P_X$, that is $I_{X_F-\alpha_F}=\big \{\beta\in \vec{F}:\, (\widetilde P^F_{X})^*(\beta)\ge 0\big \}$, and $\widetilde I_{X_F-\alpha_F}=\{\nabla \widetilde P_{X_F-\alpha_F}(q):\, q\in \vec{F},\,  (\widetilde  P_{X_F-\alpha_F})^*(\nabla \widetilde P_{X_F-\alpha_F}(q))\ge 0\}$. Then set $I_{X}^F=\alpha_F+I_{X_F-\alpha_F}$ and $\widetilde I_{X}^F=\alpha_F+\widetilde I_{X_F-\alpha_F}$. The set $I^F_X$ has non-empty relative interior in $F$, that we denote by $\widering I^F_X$, and $\alpha_F\in \widering I^F_X$.  

One also defines  $\mathcal C_{X,F}$,  the closure of the convex subset of $F$ defined as the set of vectors $\alpha$ of the form $\mathbb{E}(\sum_{i=1}^N W_iX_i)$, where $(W_i)_{i\in\mathbb N}$ is a non negative random element of $\mathbb R_+^{\mathbb N}$  jointly defined with $(N,(X_i)_{i\in\mathbb N})$, such that $\mathbb{E}(\sum_{i=1}^N W_i)=1$ and $\mathbb{E}(\sum_{i=1}^N W_i)=\mathbb{E}(\sum_{i=1}^N \mathbf{1}_{F}(X_i)W_i)$. Note that $\mathcal{C}_X=\mathcal{C}_{X, \mathbb{R}^d}$. To  $\mathcal C_{X,F}$ are associated the sets $\mathcal H_{X,F}$,  $\widetilde {\mathcal H}_{X,F}$ and $\widehat {\mathcal F}_{X,F}$ as $\mathcal H_{X}=\mathcal H_{X,\mathbb{R}^d}$,  $\widetilde {\mathcal H}_{X}=\widetilde {\mathcal H}_{X,\mathbb{R}^d}$ and $\widehat {\mathcal F}_{X}=\widehat {\mathcal F}_{X,\mathbb{R}^d}$ are to $\mathcal C_X$. Note that if  $\dim F=0$, then, conditionally on $\partial\TT^F\neq\emptyset$, the branching random walk $(S_nX)_{n\in\mathbb{N}}$ restricted to $\partial\TT^F\neq\emptyset$ equals $(n\alpha_F)_{n\in\mathbb{N}}$.

If $F\in \widehat{\mathcal F}_X$, for $\beta$ and $q$ in  $\vec{F}$, define $\widetilde P_{X_F-\alpha_F,\phi_F,\beta}(q)$ in the same way as $\widetilde P_{X,\phi,\alpha}(q)$, that is as the unique solution of the equation 
$$
\mathbb{E}\Big (\sum_{i=1}^N \mathbf{1}_F(X_i) \exp(\langle q|X_i-\alpha_F-\beta\rangle-t\phi_i)\Big )=1.
$$
If, moreover, $\mathbb{E}(N^F)>1$ and $\dim F\ge 1$, according to Proposition~\ref{qalpha} for all $\beta\in \widetilde I^F_{X-\alpha_F}$ there exists a unique $q_\beta^F\in \vec{F}$ such that $\widetilde P_{X_F-\alpha_F,\phi_F,\beta}(q_\beta^F)=(\widetilde P_{X_F-\alpha_F,\phi_F,\beta})^*(0)$. For all $\alpha\in \widetilde I_X^F$, we then set for all $\lambda\in\R^d$
$$
\Lambda_\alpha^F(\lambda)= \log \mathbb{E}\Big (\sum_{i=1}^N \mathbf{1}_F(X_i)\exp\big  (\langle \lambda |X_i\rangle+\langle q^F_{\alpha-\alpha_F} |X_i-\alpha\rangle-\widetilde P_{X_F-\alpha_F,\phi_F,\alpha-\alpha_F}(q^F_{\alpha-\alpha_F})\phi_i\big )\Big ).
$$
Theorems~\ref{thm-1.1} and \ref{UNIFER} yield that given an increasing sequence of integers $\widetilde k$, with probability~1, conditionally on $\partial\TT_F\neq\emptyset$, for all $\alpha\in I_X^F$, one has:

 $\dim (E(X,\alpha)\cap \partial \TT_F)=(\widetilde P_{X_F-\alpha_F,\phi_F,\alpha-\alpha_F})^*(0)$, and for all $\alpha\in\widetilde I_X^F$, one has $\dim (E(X,\alpha)\cap \partial \TT_F)=\dim (E(X,\alpha,{\rm LD}(\Lambda_\alpha^F,\widetilde k)) \cap \partial \TT_F)$.
 
 \begin{remark}
One checks that choosing another reference point $\widetilde\alpha_F\in  F$ and considering the branching random walk associated with $X_F-\widetilde \alpha_F$ in $\vec{F}$ on $\partial \TT^F$ yields the same functions~$\Lambda_\alpha^F$. This is due to the fact that if $\alpha=\alpha_F+\beta=\widetilde\alpha_F+\widetilde\beta$, then for all $q\in \vec{F}$ one has $\widetilde P_{X_F-\alpha_F,\phi_F,\beta}(q)=\widetilde P_{X_F-\widetilde \alpha_F,\phi_F,\widetilde \beta}(q)$. 
 \end{remark}

If $F\in\widehat {\mathcal F}_X$ is the singleton $\{\alpha_F\}$, set $I_X^F=\widetilde I_X^F=\{\alpha_F\}$ and
$$
\Lambda_{\alpha_F}^{F}:\lambda\in\R^d\mapsto \langle \lambda|\alpha_F\rangle.
$$
Note that in this case $(\Lambda_{\alpha_F}^{F})^*=-\infty\cdot \mathbf{1}_{\R^d\setminus\{\alpha_F\}}$, so that for any increasing sequence of integers $\widetilde k$, if property $\mathrm {LD}(\Lambda^F_{\alpha_F},\widetilde k)$ holds for $(S_nX)_{n\in\mathbb{N}}$ on some infinite branch, it takes a trivial form, and does not depend on $\phi$. 

If  $F\in\widehat {\mathcal F}_X$ and $\mathbb{E}(N^F)=1$, set $I_X^F=\widetilde I_X^F=\{\alpha_F\}$ and
$$
\Lambda_{\alpha_F}^{F}:\lambda\in\R^d\mapsto \log \mathbb{E}\Big (\sum_{i=1}^N \mathbf{1}_{F}(X_i)\exp\big  (\langle \lambda |X_i\rangle\big )\Big ).
$$

Finally, we select a subcollection of $\widehat{\mathcal F}_{X,\mathbb{R^d}}$. First, define for any affine subspace $E$ of $\mathbb{R^d}$
\begin{align*}
\widetilde {\mathcal F}_{X,E}&=\big \{F\in\widehat {\mathcal F}_{X,E}:\, \dim F\ge 1, \text{ and } \mathbb E(N^F)= \mathbb E(N^H)>1\text{ for all  $H\in {\mathcal H}_{X,E}$, $F\subset H$}\big \},\\
\overline {\mathcal F}_{X,E}&=  \big \{F\in\widehat {\mathcal F}_{X,E}:\, \mathbb E(N^F)= \mathbb E(N^H)\text{ for all  $H\in {\mathcal H}_{X,E}$, $F\subset H$}\big \}\setminus \widetilde {\mathcal F}_{X,E}.
\end{align*}
Note that for $\widetilde {\mathcal F}_{X,E}$ to be non empty it is necessary that $\dim E\ge 2$, and if $F\in \overline {\mathcal F}_{X,E}$ then either $\mathbb{E}(N^F)=1$, or $\mathbb{E}(N^F)>1$ and $\dim F=0$. 

Then, define $\widetilde {\mathcal F}^1_{X}=\widetilde {\mathcal F}_{X,\mathbb{R}^d}$, $\overline {\mathcal F}^1_{X}=\overline {\mathcal F}_{X,\mathbb{R}^d}$, and for $2\le i\le d$, $\widetilde {\mathcal F}^i_{X}=\widetilde {\mathcal F}^{i-1}_{X}\bigcup \bigcup_{F\in \widetilde {\mathcal F}^{i-1}_{X}}\widetilde  {\mathcal F}_{X,F}$ and  $\overline {\mathcal F}^i_{X}=\overline {\mathcal F}^{i-1}_{X}\bigcup \bigcup_{F\in \widetilde {\mathcal F}^{i-1}_{X}}\overline {\mathcal F}_{X,F}$. Note that with the convention $\widetilde {\mathcal F}^0_{X}=\emptyset$, the elements of $\widetilde {\mathcal F}^{i}_{X}\setminus\widetilde {\mathcal F}^{i-1}_{X}$ have dimension at most $d-i$, and $\widetilde {\mathcal F}^{d-1}_{X}=\widetilde {\mathcal F}^{d}_{X}$. 

Note that $\widetilde {\mathcal F}_X^{d}\cup\overline {\mathcal F}^{d}_{X}$ is at most countable. Indeed, the mapping $\mu:F\in\mathcal{B}(\R^d)\mapsto \mathbb{E} (N^F)$ is a finite positive  Borel measure, so it cannot assign positive mass to uncountably many affine subspaces $F$ of $\R^d$ such that $\mu(G)<\mu(F)$ for all affine subspaces $G$ of $F$. 

\begin{theorem}\label{UNIFER2} Assume that $\mathbb{E}(N^p)<\infty$ for some $p>1$, as well as \eqref{pP}, \eqref{psiX} and \eqref{newphi}. The following properties hold:
\begin{enumerate}
\item $\widetilde{\mathcal H}_X=\{H\in \mathcal H_X:\, H\cap I_X\neq\emptyset\}$.

\item $\widetilde I_X\subset \widering {\mathcal{C}}_X$ and $I_X\setminus \widetilde I_X= \bigcup_{H\in\widetilde{\mathcal H}_X} H\cap I_X$. In particular, $I_X=\widetilde I_X$ if and only if $\widetilde{\mathcal H}_X=\emptyset$. 
Also, 
$I_X\setminus  \widetilde I_X=\bigsqcup_{F\in {\mathcal F}_X^{d}\cup\overline {\mathcal F}^{d}_{X}}\widetilde I^F_X$. 

\item One has $I_X\setminus \widetilde I_X=\partial I_X$, i.e. $(\partial I_X)_{\mathrm{crit}}=\emptyset$, if and only if $\widetilde{\mathcal H}_X={\mathcal H}_X$. Moreover, in this case $\mathcal C_X$ is a convex polytope and $\mathcal C_X=I_X$, which is equivalent to saying that for any exposed point $P$ of~$\mathcal C_X$ one has $\mathbb{E}(N^{\{P\}})\ge 1$. 

\item With probability 1, for all $F\in\widetilde {\mathcal F}_X^{d}\cup\overline {\mathcal F}^{d}_{X}$: 

\begin{itemize} 
\item If $\mathbb E(N^F)=1$, then $ F\cap I_X=I^F_X=\widetilde I^F_X=\{\alpha_F\}$, and $\dim E(X,\alpha_F)=0$. 

\item If $\mathbb E(N^F)>1$  then  $F\cap I_X=I^F_X$ and for all $\alpha\in F\cap I_X$ one has $\dim E(X,\alpha)=\widetilde P_{X,\phi,\alpha}^*(0)=(\widetilde P_{X_F-\alpha_F,\phi_F,\alpha-\alpha_F})^*(0)$.
\end{itemize}

\item Suppose that $I_X\setminus \widetilde I_X\neq\emptyset$. Let $\widetilde k$ be an increasing sequence of integers. With probability 1, for all $F\in\widetilde {\mathcal F}_X^{d}\cup\overline {\mathcal F}^{d}_{X}$, for all $\alpha\in \widetilde I^F_X$, one has $\dim E\big (X,\alpha, \mathrm{LD}(\Lambda^F_\alpha,\widetilde k)\big )=\dim E(X,\alpha)$.

\end{enumerate}

\end{theorem}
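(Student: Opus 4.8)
The plan is to split Theorem~\ref{UNIFER2} into its deterministic convex‑geometric content — items (1), (2), (3) and the set identities $F\cap I_X=I_X^F$ — and its probabilistic content — items (4), (5) — and to reduce the latter to Theorem~\ref{thm-1.1}, Theorem~\ref{UNIFER} and the transport statement established just above through the $\mathrm{d}_\phi$/$\mathrm{d}_{\phi_F}$‑isometries $\boldsymbol{b}_F$. The deterministic backbone rests on three elementary facts: (i) for $1\le i\le N$ one has $X_i\in\mathcal C_X$ a.s. (choose $W_j=\mathbf 1_{\{j=i\}}\mathbf 1_A/\mathbb P(A)$ for small events $A\subset\{N\ge i\}$); (ii) if $H=\{x:\langle q_H|x\rangle=c_H\}$ supports $\mathcal C_X$, then $\langle q_H|X_i\rangle\le c_H$ a.s., the map $t\mapsto\widetilde P_X(tq_H)-tc_H$ is nonincreasing (its derivative is $\langle q_H|\nabla\widetilde P_X(tq_H)\rangle-c_H\le 0$ since $\nabla\widetilde P_X(tq_H)\in\mathcal C_X$) and decreases by dominated convergence to $\log\mathbb E(N^H)$; the same monotone ``escaping direction'' computation, applied to $q\mapsto\widetilde P_{X,\phi,\alpha}(q)$ as $q$ runs to infinity along the outward normal directions of a face $F\in\mathcal F_X$, yields the \emph{pressure identity} $\widetilde P_{X,\phi,\alpha}^*(0)=(\widetilde P_{X_F-\alpha_F,\phi_F,\alpha-\alpha_F})^*(0)$ for $\alpha\in F$ (the reverse inequality comes from restricting the defining equation to $X_i\in F$); (iii) $\widetilde I_X\subset\widering{\mathcal C}_X$ and $I_X\subset\mathcal C_X$ — a Gibbs barycentre $\nabla\widetilde P_X(q)$ on a supporting hyperplane would force $\langle q_H|X_i\rangle=c_H$ for all $i\le N$ (the weights $e^{\langle q|X_i\rangle}$ being positive), contradicting \eqref{pP}, while a point outside $\mathcal C_X$ can be separated so that $\widetilde P_X(tq_H)-t\langle q_H|\alpha\rangle\to-\infty$.

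Item (1): if $\mathbb E(N^H)\ge1$, the pressure identity with $\phi\equiv1$ gives $\widetilde P_X^*(\alpha_H)=(\widetilde P_{X_H-\alpha_H})^*(0)=\log\mathbb E(N^H)\ge0$, so $\alpha_H\in H\cap I_X$; conversely (ii) gives $0\le\widetilde P_X^*(\alpha)\le\log\mathbb E(N^H)$ for $\alpha\in H\cap I_X$. The same identity gives $\widetilde P_X^*(\alpha)=(\widetilde P_{X_F-\alpha_F})^*(\alpha-\alpha_F)$, hence $F\cap I_X=I_X^F$, and since $0$ minimises $\widetilde P_{X_F-\alpha_F}$ with value $\log\mathbb E(N^F)$, the set part of (4): $F\cap I_X=I_X^F=\widetilde I_X^F=\{\alpha_F\}$ when $\mathbb E(N^F)=1$. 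Item (2): $\widetilde I_X\subset\widering{\mathcal C}_X$ is (iii); $\bigcup_{H\in\widetilde{\mathcal H}_X}H\cap I_X\subset I_X\setminus\widetilde I_X$ since such $H$ miss $\widering{\mathcal C}_X\supset\widetilde I_X$; conversely, by Proposition~\ref{detI} and \eqref{tIX} one has $I_X\setminus\widetilde I_X=I_X\setminus\nabla\widetilde P_X(\mathbb R^d)$, so a point $\alpha$ there is a limit $\nabla\widetilde P_X(q_n)\to\alpha$ with $|q_n|\to\infty$, $q_n/|q_n|\to v$; using (i) one sees $\langle v|\cdot\rangle$ attains its maximum $\langle v|\alpha\rangle$ on $\{X_i:i\le N\}$, so $H=\{\langle v|x\rangle=\langle v|\alpha\rangle\}$ supports $\mathcal C_X$ at $\alpha$ and $\widetilde P_X^*(\alpha)=\lim_n(\widetilde P_X(q_n)-\langle q_n|\alpha\rangle)=\log\mathbb E(N^H)\ge0$, i.e. $H\in\widetilde{\mathcal H}_X$. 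The refinement $I_X\setminus\widetilde I_X=\bigsqcup_F\widetilde I_X^F$ comes from iterating this description inside each $H$ (with $\mathcal C_{X,F},\mathcal H_{X,F}$ replacing $\mathcal C_X,\mathcal H_X$), the index set being $\widetilde{\mathcal F}_X^d\cup\overline{\mathcal F}_X^d$; disjointness follows by induction on codimension, distinct faces at a given level having disjoint relative interiors which contain the respective $\widetilde I_X^F$ by (iii). Item (3): $(\partial I_X)_{\mathrm{crit}}=\emptyset\Leftrightarrow\partial I_X\subset\partial\mathcal C_X\Leftrightarrow I_X=\mathcal C_X\Leftrightarrow$ every supporting hyperplane of $\mathcal C_X$ meets $I_X\Leftrightarrow\widetilde{\mathcal H}_X=\mathcal H_X$, using (1) and $\partial I_X=(I_X\setminus\widetilde I_X)\sqcup(\partial I_X)_{\mathrm{crit}}$; when this holds, finiteness of $\mu(\cdot)=\mathbb E(N^{\cdot})$ lets only finitely many hyperplanes carry mass $\ge1$, so $\mathcal C_X$ is a polytope, its vertices are exposed, $X_i\in\mathcal C_X$ a.s. forces $N^H=N^{\{P\}}$ for the hyperplane exposing a vertex $P$, which gives the exposed‑point reformulation and $\mathcal C_X=\mathrm{conv}(\text{vertices})\subset I_X\subset\mathcal C_X$.

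For (4), Theorem~\ref{thm-1.1} gives, with probability one, $\dim E(X,\alpha)=\widetilde P_{X,\phi,\alpha}^*(0)$ for all $\alpha\in I_X$, which with the pressure identity and $F\cap I_X=I_X^F$ yields (4) verbatim ($\widetilde P_{X_F-\alpha_F,\phi_F,0}^*(0)=\log\mathbb E(N^F)=0$ when $\mathbb E(N^F)=1$). For (5), fix $F\in\widetilde{\mathcal F}_X^d$ (so $\dim F\ge1$, $\mathbb E(N^F)>1$). One checks that $(X_F-\alpha_F,\phi_F)$, as increments of a $\vec F\cong\mathbb R^{\dim F}$‑valued branching random walk on $\partial\widetilde\TT^F$, satisfies the hypotheses of Theorems~\ref{thm-1.1} and \ref{UNIFER}: \eqref{pP} holds in $\vec F$ because $F\in\widehat{\mathcal F}_X$ forbids a proper subface of equal mass; \eqref{psiX} and $\mathbb E((N^F)^p)<\infty$ are inherited; and \eqref{newphi} is exactly what makes \eqref{phibound} hold for the restricted walk. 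Applying Theorem~\ref{UNIFER} there and transporting along $\boldsymbol{b}_F$ — under which the Erdős–Rényi empirical measures and log‑moment generating functions along a branch of $\partial\TT^F$ differ from those of its preimage only by the translation by $\alpha_F$, so $\mathrm{LD}(\Lambda_\alpha^F,\widetilde k)$ on $\partial\TT^F$ matches the corresponding property of the restricted walk at level $\alpha-\alpha_F$ and $\Lambda_\alpha^F|_{\vec F}=\langle\cdot|\alpha_F\rangle+(\text{restricted }\Lambda)$, affine off $\vec F$ — gives, conditionally on $\partial\TT^F\neq\emptyset$, $\dim\big(E(X,\alpha,\mathrm{LD}(\Lambda_\alpha^F,\widetilde k))\cap\partial\TT^F\big)=(\widetilde P_{X_F-\alpha_F,\phi_F,\alpha-\alpha_F})^*(0)$ for all $\alpha\in\widetilde I_X^F$. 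Together with the pressure identity and Theorem~\ref{thm-1.1} this sandwiches $\dim E(X,\alpha,\mathrm{LD}(\Lambda_\alpha^F,\widetilde k))$ between $(\widetilde P_{X_F-\alpha_F,\phi_F,\alpha-\alpha_F})^*(0)=\widetilde P_{X,\phi,\alpha}^*(0)=\dim E(X,\alpha)$ and itself. The ``for all $F$'' is then a countable intersection, $\widetilde{\mathcal F}_X^d\cup\overline{\mathcal F}_X^d$ being at most countable, after conditioning on non‑extinction of each $\widetilde\TT^F$.

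I expect two points to need real work. The first is the uniform escaping‑direction analysis underpinning the pressure identity and the decomposition $I_X\setminus\widetilde I_X=\bigsqcup_F\widetilde I_X^F$, in particular controlling $\widetilde P_{X,\phi,\alpha}(q)$ as $q\to\infty$ when $X$ (hence $\mathcal C_X$) is unbounded — this is where \eqref{psiX} rather than a mere exponential‑moment hypothesis is really used. The second, and the genuine obstacle, is the degenerate faces $F\in\overline{\mathcal F}_X^d$ with $\mathbb E(N^F)=1$ (and the singletons): there $\widetilde\TT^F$ is a critical Galton–Watson tree, $\partial\TT^F=\emptyset$ a.s., so the isometric reduction is vacuous and there is no Peyrière measure to exploit; proving that $E(X,\alpha_F,\mathrm{LD}(\Lambda_{\alpha_F}^F,\widetilde k))$ is nevertheless nonempty — so its dimension equals $\dim E(X,\alpha_F)=0$ — requires exhibiting a branch realising $\alpha_F$ along which the short‑block increments have empirical law asymptotically concentrated on the conditional law of $X$ given $X\in F$, which is furnished by the critical Mandelbrot measure / concatenation construction developed for the core results and invoked here.
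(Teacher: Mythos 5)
Your deterministic ``pressure identity'' $\widetilde P_{X,\phi,\alpha}^*(0)=(\widetilde P_{X_F-\alpha_F,\phi_F,\alpha-\alpha_F})^*(0)$ is a genuinely different and slicker route to items~(1), (2) and~(4) than the paper's. The paper obtains this equality \emph{a posteriori} from a probabilistic sandwich: conditionally on $\partial\TT^F\neq\emptyset$ it shows $(\widetilde P_{X_F-\alpha_F,\phi_F,\alpha-\alpha_F})^*(0)\ge\dim(E(X,\alpha)\cap\partial\TT^F)\ge\widetilde P^*_{X,\phi,\alpha}(0)$ via Lemma~\ref{AAA} and a constructed measure $\mu^\alpha$, and then gets the reverse inequality from perturbed Mandelbrot measures on $\partial\TT$. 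Your derivation---the lower bound by discarding the terms $X_i\notin F$ in the defining equation of $\widetilde P_{X,\phi,\alpha}$, the upper bound by letting $q=q'+se$ with $e$ the outward normal and noting that $\mathbb{E}(\Sigma_\alpha(q'+se,t))$ decreases to $\mathbb{E}\big(\sum_i\mathbf 1_H(X_i)e^{\langle q'|X_i-\alpha\rangle-t\phi_i}\big)$ (and $\mathbf 1_H(X_i)=\mathbf 1_F(X_i)$ a.s. since $\mathbb E(N^F)=\mathbb E(N^H)$ for $F\in\widetilde{\mathcal F}^1_X\cup\overline{\mathcal F}^1_X$)---avoids the probabilistic detour entirely and makes item~(4) a corollary of Theorem~\ref{thm-1.1}. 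Similarly, the monotone limit $\widetilde P_X(tq_H)-tc_H\downarrow\log\mathbb E(N^H)$ gives a much shorter proof of Proposition~\ref{propkey}(2) than the paper's argument via the auxiliary functions $G_{X,\varepsilon}$.

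There are, however, two places where your sketch has real gaps. The first is the disjointness in $I_X\setminus\widetilde I_X=\bigsqcup_F\widetilde I^F_X$: the claim that ``distinct faces at a given level have disjoint relative interiors'' is not literally true for the affine subspaces $F$, which can intersect; the paper's Proposition~\ref{propkey}(6) carries out a case analysis (for $F\in\widetilde{\mathcal F}^1_X$ one argues that a common point of $\widering{\mathcal C}_{X,F}$ and $\mathcal C_{X,F'}$ forces a neighbourhood of it in $F$ on the wrong side of the supporting hyperplane defining $F'$, hence $F\subset F'$; for two elements of $\overline{\mathcal F}^1_X$ a separate argument using $\mathbb E(N^{F\cap F'})$ is needed). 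This needs to be supplied.

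The second, and more serious, gap is in item~(5) for $F$ with $\mathbb E(N^F)>1$. Applying Theorem~\ref{UNIFER} to $(S_n(X_F-\alpha_F))_n$ on $\partial\widetilde\TT^F$ and transporting via $\boldsymbol b_F$ only gives $\dim\big(E(X,\alpha,\mathrm{LD}(\Lambda_\alpha^F,\widetilde k))\cap\partial\TT^F\big)=\widetilde P^*_{X,\phi,\alpha}(0)$ \emph{conditionally on} $\partial\TT^F\neq\emptyset$, an event of probability strictly between~$0$ and~$1$. The theorem's statement is unconditional (given non-extinction of $\TT$), and on $\{\partial\TT^F=\emptyset\}$ one still has $\dim E(X,\alpha)=\widetilde P^*_{X,\phi,\alpha}(0)>0$ by Theorem~\ref{thm-1.1}, since branches realising the level $\alpha\in F$ need not remain in $\TT^F$. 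Your closing sentence (``the `for all $F$' is then a countable intersection \ldots\ after conditioning on non-extinction of each $\widetilde\TT^F$'') does not close this, because these conditionings are mutually incompatible. The paper resolves it by the small-perturbation device: replacing the generating weights $\mathbf 1_F(X_{ui})W_{\varrho_n,ui}$ by $(\lambda_{\varrho,\theta}(n))^{-1}\big(\mathbf 1_F(X_{ui})W_{\varrho_n,ui}+\theta_n\mathbf 1_{F^c}(X_{ui})\big)$ with $\theta_n\downarrow 0$, producing inhomogeneous Mandelbrot measures supported on \emph{all} of $\partial\TT$ that still concentrate on $E(X,\alpha,\mathrm{LD}(\Lambda^F_\alpha,\widetilde k))$ and have the same lower Hausdorff dimension. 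You correctly flag the $\mathbb E(N^F)=1$ (critical) faces as an obstacle, but the perturbation is equally needed when $\mathbb E(N^F)>1$, and it is the ingredient your plan omits.
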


\begin{remark}
Except the claim about the partition of $I_X\setminus \widetilde I_X$, the properties stated in items (1) to (3) of the previous statement hold under \eqref{allmomentsfinite} and \eqref{pP} only. See Proposition~\ref{propkey}.
\end{remark}

\noindent
\textbf{Some examples.} To illustrate the previous result, we focus on examples where $\mathcal C_X$ is compact. Let $K$ be a compact convex subset of $\R^d$, with non-empty interior. Fix $\mu$ a Borel probability measure either fully supported on $\partial K$ or on the set of extremal points of~$K$.  Suppose that $(N,(X_i)_{i\in\mathbb N})$ is chosen so that the $X_i$ are identically distributed with law $\mu$, and independent of $N$.  It is clear that $\mathcal C_X\subset K$, since for any half-space $V=\{\beta\in\R^d: \langle q|\beta\rangle\le c\}$ ($(q,c)\in\mathbb{S}^{d-1}\times\R$)  which contains $K$ and any $\alpha=\mathbb{E}\big (\sum_{i=1}^N W_iX_i\big)\in\mathcal C_X$, the fact that almost surely for all $1\le i\le N$ one has $\langle q|X_i\rangle\le c$ hence $\langle q|W_iX_i\rangle\le W_ic$, implies $\langle q|\alpha\rangle\le \mathbb{E}\big (\sum_{i=1}^N W_i\big)c=c$, that is  $\alpha\in V$. To see the reverse inclusion, fix $\alpha\in K$ and a Borel probability measure on $\nu_\alpha$ supported on $\partial K$ such that $\alpha=\int_{\partial K} \beta\,\mathrm{d}\nu_\alpha(\beta)$.  Fix $\varepsilon>0$ and a finite partition $\mathcal A_\varepsilon=\{A_j\}_{j=1}^{p_\varepsilon}$ of $\partial K$ into Borel subsets of positive $\mu$-measure and diameter less than $\varepsilon$. Define the  sequence of identically distributed nonnegative random variables $W_{\mathcal A_\varepsilon,i}=(\mathbb{E}(N))^{-1} \sum_{j}\mathbf{1}_{A_j}(X_i)\frac{\nu_\alpha (A_i)}{\mu(A_i)}$, $i\ge 1$.  Note that $\mathbb{E}(\sum_{i=1}^N W_{\mathcal A_\varepsilon,i})=1$. As $\varepsilon$ tends to 0, by construction $\mathbb{E}(\sum_{i=1}^N W_{\mathcal A_\varepsilon,i}X_i)$ tends to $\alpha$. Hence $\alpha\in \mathcal C_X$. 

In the following examples we pick special examples corresponding to the situation just described.

\noindent
(1) Supppose that $K$ is a convex polytope with $n$ vertices $P_1,\ldots, P_n$.  Take $\mu=\sum_{j=1}^np_j\delta_{P_j}$, where $(p_1,\ldots,p_n)$ is a positive probability vector. Point (3) of Theorem~\ref{UNIFER2} shows that it is necessary and sufficient that  $\mathbb{E}(N)\ge \max_j p_j^{-1}$  so that $I_X=K$. 

\noindent
(2) We now give an example to illustrate the fact that the elements of $\widetilde {\mathcal F}^1_X\cup \overline{ \mathcal F}^1_X$ can have any integer dimension between $0$ and $d-1$ when $d\ge 2$ (the case $d=1$ is trivial). Suppose that  $K$ as the following properties: $\partial K$ is $C^\infty$ smooth; also, there exist a convex polytope $\widetilde K$ such that $K\subset \widetilde K$, for each face $Q_j$ of $\widetilde K$ of dimension $\ge 1$, $K_j=K\cap Q_j=\partial K\cap Q_j$ is included in the relative interior of $Q_j$ and is itself open relative to $Q_j$, and  $\partial K\setminus\bigcup_j K_j$ has positive curvature (recall that a convex subset $Q$ of dimension $\ge 1$ is a face of $P$ if  $Q=P\cap H$ where $H$ is a supporting  hyperplane of $P$).  The previous properties imply that the sets $K_j$ are pairwise disjoint and each $K_j$ is contained in a unique supporting  hyperplane of $K$ (the existence of such configurations is quite intuitive and it turns out that such a $K$ can be associated to any convex polytope $\widetilde K$ with non empty interior \cite{Ghomi}. Note also that any  convex polytope does possess  faces of dimension $k$ for all  integers $k$ between 0 and $d-1$). 

Suppose, moreover, that  $\mu$ has a topological support equal to $\partial K$, that its restriction  to each $K_j$ does not vanish and has topological support equal to $K_j$, and  that $\mu$ has a unique atom, at a point $\alpha_0\in\partial K\setminus\bigcup_j K_j$. Denote by $F_j$ the smallest affine subset containing $K_j$. Taking $\mathbb{E}(N)\ge \max(\mu(\{\alpha_0\})^{-1},\max_j\mu(K_j)^{-1})$ implies that $F_j\in \widetilde {\mathcal F}^1_X$ or $F_j\in \overline{ \mathcal F}^1_X$ according to whether $\mathbb{E}(N^{F_j})=\mathbb{E}(N)\mu(K_j)>1$ or $\mathbb{E}(N^{F_j})=\mathbb{E}(N)\mu(K_j)=1$. Similarly, since $K$ possesses a tangent space at $\{\alpha_0\}$, one has $\{\alpha_0\}\in \widetilde {\mathcal F}^1_X$ or $\{\alpha_0\}\in \overline{ \mathcal F}^1_X$ according to whether $\mathbb{E}(N^{\{\alpha_0\}})=\mathbb{E}(N)\mu(\{\alpha_0\})>1$ or $\mathbb{E}(N^{\{\alpha_0\}})=\mathbb{E}(N)\mu(\{\alpha_0\})=1$. Also, by construction and due to Theorem~\ref{UNIFER2}(1)(2), $(\partial K\setminus (\{\alpha_0\}\cup \bigcup_j K_j))\cap I_X=\emptyset$.

\noindent
(3) The last example provides a situation where $\widetilde {\mathcal F}^1_X$ is infinite (note that $d$ has to be $\ge 3$), as well as $\widetilde {\mathcal F}^i_X\setminus \widetilde {\mathcal F}^{i-1}_X$ for all $2\le i\le d-1$. 

Let $(\theta_n)_{n\in\mathbb{N}}$ be an increasing sequence in $[0,2\pi)$ converging to $2\pi$ and such that $\theta_1=0$. Define $K_2$ as the convex hull of the set $\{\alpha_n=(\cos(\theta_n),\sin(\theta_n),0,\ldots,0):\, n\ge 1\}$. Then,  pick for $3\le i\le d$, $\beta_i\in \R^i\times\{0\}^{d-i}\setminus (\R^{i-1}\times\{0\}^{d-i+1})$ and define recursively $K_i$ as the convex hull of $K_{i-1}\cup \{\alpha_i\}$. Let $(p_n)_{n\in\mathbb{N}}$ and $(q_i)_{i=3}^d$ be  positive sequences, such that $\sum_{n\in\mathbb{N}} p_n+\sum_{i=3}^dq_i=1$ and set $\mu= \sum_{n\in\mathbb{N}} p_n\delta_{\alpha_n}+\sum_{i=3}^dq_i\delta_{\beta_i}$. 

For $n\ge 1$, denote $\{\alpha_n\}$ by $F^n_0$, the line containing $[\alpha_n,\alpha_{n+1}]$ by $F^n_1$, and for $2\le i\le d-1$, the $i$-dimensional affine subspace generated by $\{\alpha_n,\alpha_{n+1},\beta_3,\ldots,\beta_{i+1}\}$ by $F^n_i$. We get an increasing sequence of convex polytopes, which are faces of $K_d$.  If $q_3\mathbb{E}(N)>1$, then for every $2\le i\le d-1$,  the affine subspaces $F^n_{i}$, $n\ge 1$, belong to $\widetilde{\mathcal F}^{1}_X$ if $i=d-1$, and to $\widetilde{\mathcal F}^{d-i}_X\setminus \widetilde{\mathcal F}^{d-i-1}$ otherwise, with the convention $\widetilde{\mathcal F}^{0}_X=\emptyset$.   Moreover, for all $n\ge 1$, denoting by $G^n_1$ the line supporting the segments $[\alpha_n,\beta_3]$, one has $G^n_1\in \widetilde{\mathcal F}^{d-1}_X\setminus \widetilde{\mathcal F}^{d-2}_X$. Also, the line supporting $[\alpha_n,\alpha_{n+1}]$ belongs to $\widetilde{\mathcal F}^{d-1}_X\setminus \widetilde{\mathcal F}^{d-2}_X$ or  $\overline {\mathcal F}^{d-1}_X\setminus \overline {\mathcal F}^{d-2}_X$according to whether $p_n+p_{n+1}>1$ of $p_n+p_{n+1}=1$, which  can happen for finitely many $n$ only, so that $(\partial I_X)_{\mathrm{crit}}\neq\emptyset$. The point $\beta_3$ belongs to $\overline {\mathcal F}^{d}_X\setminus \overline {\mathcal F}^{d-1}_X$. Note that the previous observations  do not exhaust the description of $\widetilde {\mathcal F}^d_X\cup \overline{ \mathcal F}^d_X$.

Let us finish this section with some comments and remarks.
\begin{remark}\label{uniqueness}
(1) In the case $d=1$, a partial result regarding the Hausdorff dimensions of the sets $E(X,\alpha)$ is presented   in \cite{A2}, where $\dim E(X,\alpha)$ is computed under ${\mathrm d}_\phi$ for each individual $\alpha$ of the interval $\widering I_X$, almost surely, by using the Gibbs measure  $\nu_\alpha$.

\noindent
(2) A concatenation/approximation method, somehow more elaborate than that used in the present paper, was used in \cite{AB} to construct some family of inhomogeneous Mandelbrot measures particularly adapted to get the general Theorem A, and more generally to compute $\dim E(X,K)$ under the metric $\mathrm{d}_1$ (where $K$ may belong to a larger classe of closed connected sets when $I_X$ is not bounded); $0$-$\infty$ laws are also established for the Hausdorff and packing measures of the sets $E(X,\alpha)$. This method was considered in particular to deal with those $\alpha$ belonging to $  I_X\setminus \overline {\nabla  \widetilde P_X(J_X)}$ whenever this set is non empty, a situation which occurs in presence of so-called first order phase transitions but that we will not meet here due  to \eqref{allmomentsfinite}. Adapting this method when working with a metric $\mathrm{d}_\phi$ more general than $\mathrm{d}_1$ seems quite delicate; this is related to the loss of concavity of the mapping $\alpha\mapsto \dim E(X,\alpha)$. Moreover, even when working with $\mathrm{d}_1$, quantified Erd\"os-R\'enyi laws of large numbers associated with $\alpha\in I_X\setminus \overline {\nabla \widetilde P_X(J_X)}$ seem out of reach by using the techniques currently at our disposal. 

\noindent
(3) With respect to \cite{AB}, appart from the fact that we modify the metric, in our study of the Hausdorff dimensions of the sets $E(X,\alpha)$ or $E(X,K)$, a difference will come from the way  we simultaneously estimate from below  the Hausdorff dimensions of the  inhomogeneous Mandelbrot measures coming into play. We use the fact that the elements of this uncountable family are simultaneously not killed by the actions of some percolations processes, adapting an original idea of Kahane for the action of a given multiplicative chaos on a fixed Radon measure \cite{K1,K2}. In \cite{AB}, we directly estimated the dimensions of such measures by using large deviations estimates, with different technicalities as a counterpart.

\noindent
(4) Let us come back to (Q3). When $\alpha\in(\partial I_X)_{\rm{crit}}$, for each choice of $\phi$ such that the assumptions of Theorem~\ref{UNIFER} hold, there is a function $\Lambda_{X,\phi,\alpha}$, depending on $\phi$, such that the conclusions of Corollary~\ref{cor-2} hold for $\alpha$. Moreover, for each such $\phi$, one has $\dim E(X,\alpha,\mathrm{LD}( \Lambda_{\psi_{X,\phi,\alpha}}))=0=\dim E(X,\alpha)$ both under $\mathrm{d}_1$ and $\mathrm{d}_\phi$ due to Lemma~\ref{controlSnphi}. If $\phi$ is not a multiple of $(1)_{i\in\mathbb N}$, then  $\Lambda_{\psi_{X,\phi,\alpha}}$ differs in general from $\Lambda_{\psi_{X,1,\alpha}}$, hence the choice of $(\Lambda,\mathbb R)$ is non unique. Also,  when $(X,\phi)$ is bounded, it is not hard to see that when $\alpha \in\widering I_X$ there is a unique Mandelbrot measure whose dimension equals $\dim E(X,\alpha)$, but this observation is far from being sufficient to answer the uniqueness question we raise.  

%
\end{remark}

The paper is organised as follows. In Section~\ref{Justification} we justify some properties used in the introduction. In Section~\ref{ihM} we construct inhomogeneous Mandelbrot measures and compute their Hausdorff dimensions; these measures will be used to get the sharp lower bound for $\dim E(X,K)$ and $\dim E\big (X,\alpha, \mathrm{LD}( \Lambda_{\psi_\alpha},\widetilde k)\big )$ in the proofs of our three main results. Section~\ref{proofthm1.1} establishes Theorem~\ref{thm-1.1}, while Theorems~\ref{UNIFER} and~\ref{UNIFER2} are proved in Sections~\ref{proofLD} and~\ref{proofLD2} respectively, and Corollaries~\ref{cor-1},~\ref{cor-2}  and~\ref{cor-3} are proved in Section~\ref{pfcor}. 

\section{Justification of some properties claimed in the introduction}\label{Justification} 

The following proposition was invoked in the previous section.
\begin{proposition}\label{detI}{\cite[Proposition 2.2 and Remark 2.1]{AB}}\label{jusfitication} Assume \eqref{allmomentsfinite}.
\begin{enumerate}
\item  $\widetilde P_X$ is strictly convex and $I_X$ is  convex, compact with non-empty interior.

\item $I_X=\overline{ \{\nabla \widetilde P_X(q): q\in J_X\}}$, and $\widering I_X= \nabla \widetilde P_X (J_X)$. 
\end{enumerate}
\end{proposition}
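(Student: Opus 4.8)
The plan is to settle the two assertions in turn, after recording the analytic properties of $\widetilde P_X$ furnished by \eqref{allmomentsfinite} and \eqref{pP}. Since \eqref{allmomentsfinite} is equivalent to $\mathbb{E}\sum_{i=1}^N e^{\lambda\|X_i\|}<\infty$ for all $\lambda\ge 0$, the bound $\|X_i\|^k e^{\langle q|X_i\rangle}\le C_{q_0,k}\,e^{(\|q_0\|+1)\|X_i\|}$ valid for $q$ near $q_0$ lets one differentiate $\Phi(q):=e^{\widetilde P_X(q)}=\mathbb{E}\sum_{i=1}^N e^{\langle q|X_i\rangle}$ under $\mathbb{E}\sum$ arbitrarily often (in fact $\Phi$ is real analytic), with $\nabla\widetilde P_X(q)=\Phi(q)^{-1}\mathbb{E}\sum_{i=1}^N X_i e^{\langle q|X_i\rangle}$ and $\langle v,\nabla^2\widetilde P_X(q)v\rangle=\mathrm{Var}_{\mu_q}(\langle v|\cdot\rangle)$, where $\mu_q$ is the probability measure $g\mapsto \Phi(q)^{-1}\mathbb{E}\sum_{i=1}^N g(X_i)e^{\langle q|X_i\rangle}$. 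This variance vanishes only if $\langle v|X_i\rangle$ is a.s.\ constant over $1\le i\le N$ (using $e^{\langle q|X_i\rangle}>0$), which \eqref{pP} forbids for $v\neq 0$; hence $\nabla^2\widetilde P_X\succ 0$ everywhere, $\widetilde P_X$ is strictly convex, $\nabla\widetilde P_X$ is injective and — being a $C^\infty$ map with everywhere invertible differential — open, so $U:=\nabla\widetilde P_X(\R^d)$ is open, and on $U$ the function $\widetilde P_X^*$ agrees with the continuous map $\beta\mapsto \widetilde P_X(p)-\langle p|\beta\rangle$ where $p=(\nabla\widetilde P_X)^{-1}(\beta)$ (the infimum defining $\widetilde P_X^*(\beta)$ being attained at this $p$, a critical point of the convex function $q\mapsto\widetilde P_X(q)-\langle q|\beta\rangle$). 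In particular $\widetilde P_X^*(\nabla\widetilde P_X(q))=\widetilde P_X(q)-\langle q|\nabla\widetilde P_X(q)\rangle$ for every $q$.

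For (1): as the infimum of the affine functions $\alpha\mapsto \widetilde P_X(q)-\langle q|\alpha\rangle$, the map $\widetilde P_X^*$ is concave and upper semicontinuous, so $I_X=\{\widetilde P_X^*\ge 0\}$ is closed and convex. Taking $\alpha_0:=\nabla\widetilde P_X(0)$, convexity of $\widetilde P_X$ gives $\widetilde P_X(q)-\langle q|\alpha_0\rangle\ge \widetilde P_X(0)=\log\mathbb{E}(N)>0$ for all $q$, whence $\widetilde P_X^*(\alpha_0)\ge \log\mathbb{E}(N)>0$ and $I_X\neq\emptyset$. Moreover $\alpha\in I_X$ forces $\langle u|\alpha\rangle\le \widetilde P_X(u)$ for every unit vector $u$, so $\|\alpha\|\le\max_{\|u\|=1}\widetilde P_X(u)<\infty$; thus $I_X$ is bounded, hence compact. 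Finally, $\widetilde P_X^*$ is continuous on the open set $U\ni\alpha_0$ and positive at $\alpha_0$, hence positive on a neighbourhood of $\alpha_0$ contained in $U$, which is thus a neighbourhood of $\alpha_0$ inside $I_X$; so $\widering I_X\neq\emptyset$.

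For (2): if $q\in J_X$ and $\alpha=\nabla\widetilde P_X(q)$, then $\widetilde P_X^*(\alpha)=\widetilde P_X(q)-\langle q|\alpha\rangle>0$, and since $\widetilde P_X^*$ is continuous on the open set $U\ni\alpha$ it stays positive near $\alpha$, so $\alpha\in\widering I_X$; thus $\nabla\widetilde P_X(J_X)\subset\widering I_X$. Conversely, fix $\alpha\in\widering I_X$ and $\varepsilon>0$ with $\overline B(\alpha,\varepsilon)\subset I_X$. Evaluating $\widetilde P_X^*(\alpha+\varepsilon q/\|q\|)\ge 0$ at the point $q$ yields $h_\alpha(q):=\widetilde P_X(q)-\langle q|\alpha\rangle\ge\varepsilon\|q\|$ for every $q\neq 0$; hence $h_\alpha$ is coercive and, being smooth and strictly convex, attains its minimum at a unique $q_\alpha$, with $\nabla\widetilde P_X(q_\alpha)=\alpha$, and $\widetilde P_X^*(\alpha)=h_\alpha(q_\alpha)$ equals $h_\alpha(0)=\log\mathbb{E}(N)>0$ if $q_\alpha=0$ and is $\ge\varepsilon\|q_\alpha\|>0$ otherwise. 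In both cases $\widetilde P_X^*(\alpha)>0$, i.e.\ $q_\alpha\in J_X$ and $\alpha\in\nabla\widetilde P_X(J_X)$, so $\widering I_X=\nabla\widetilde P_X(J_X)$. Since $I_X$ is convex with non-empty interior, $I_X=\overline{\widering I_X}=\overline{\nabla\widetilde P_X(J_X)}$, which completes the proof.

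The step I expect to be the crux is the reverse inclusion $\widering I_X\subset\nabla\widetilde P_X(J_X)$: the single coercivity estimate $h_\alpha(q)\ge\varepsilon\|q\|$, extracted from the fact that a whole $\varepsilon$-ball around $\alpha$ lies in $I_X$, is what simultaneously produces a minimiser $q_\alpha$ (so that $\alpha$ lies in the range of $\nabla\widetilde P_X$) and the strict inequality $\widetilde P_X^*(\alpha)>0$; getting both at once — rather than appealing to softer convex-duality facts relating $\mathrm{ri}(\mathrm{dom}\,\widetilde P_X^*)$ to the range of $\partial\widetilde P_X$ — is the point that needs care. The other technical point, routine but used throughout, is checking that \eqref{allmomentsfinite} legitimises the differentiations under $\mathbb{E}\sum_{i=1}^N$ and hence the smoothness of $\widetilde P_X$ and the openness of $U$.
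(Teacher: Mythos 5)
The paper itself does not prove this proposition: it is imported verbatim from \cite{AB} (Proposition~2.2 and Remark~2.1), so there is no in-text proof to compare your argument against. Judged on its own terms, your proof is correct and complete. The preparatory analysis (differentiation under $\mathbb{E}\sum$ justified by \eqref{allmomentsfinite}, positive-definiteness of $\nabla^2\widetilde P_X$ via \eqref{pP} and the variance interpretation, openness of $U=\nabla\widetilde P_X(\R^d)$, continuity of $\widetilde P_X^*$ on $U$ via the inverse function theorem) is standard and carefully stated. The core of part~(2) — extracting the uniform coercivity estimate $h_\alpha(q)\ge\varepsilon\|q\|$ from $\overline B(\alpha,\varepsilon)\subset I_X$ by evaluating $\widetilde P_X^*(\alpha+\varepsilon q/\|q\|)\ge 0$ at the test vector $p=q$, and using strict convexity to produce a unique minimiser $q_\alpha$ with $\nabla\widetilde P_X(q_\alpha)=\alpha$ and $\widetilde P_X^*(\alpha)>0$ — is exactly the point that needs care, and you handle both the existence of $q_\alpha$ and the strict positivity in one stroke. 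The remaining steps (closedness and convexity from upper semicontinuity and concavity of $\widetilde P_X^*$, boundedness of $I_X$ from $\langle u|\alpha\rangle\le\widetilde P_X(u)$, nonempty interior via $\alpha_0=\nabla\widetilde P_X(0)$, and $I_X=\overline{\widering I_X}$ for a convex body) are all sound. One small caveat worth making explicit: the proposition as stated only lists \eqref{allmomentsfinite}, but strict convexity and nonempty interior genuinely require \eqref{pP}; you invoke it correctly, and the paper does adopt \eqref{pP} as a standing assumption, so this is a presentational quirk of the source rather than a gap in your argument.
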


The fact that $\mathrm{d}_\phi$ is a metric is a direct consequence of the third inequality of the following elementary lemma, whose proof is left to the reader. 

\begin{lemma}\label{controlSnphi}
Assume \eqref{allmomentsfinitephi}. There exist $0<\widetilde\beta\le \beta<1$ such that, with probability 1, for $n$ large enough, 
$$
\widetilde\beta^n\le \min\{\exp (-S_n\phi(u)) :u\in \TT_n\}\le \max\{\exp (-S_n\phi(u)) :u\in \TT_n\}\le \beta^n.
$$
\end{lemma}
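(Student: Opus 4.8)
## Proof strategy for Lemma 2.3

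The plan is to prove the three inequalities via elementary large-deviation bounds on the branching random walk $S_n\phi$, using assumption \eqref{allmomentsfinitephi} together with the first-moment (union bound) method on the Galton-Watson tree. Since the statement is symmetric in form, I treat the upper and lower bounds in parallel.

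\textbf{Upper bound.} Fix $\lambda>0$ (to be chosen small). For any $\delta>0$, Markov's inequality applied along the tree gives
\[
\mathbb{E}\,\#\{u\in\TT_n:\, S_n\phi(u)\le \delta n\}
=\mathbb{E}\Big(\sum_{u\in\TT_n}\mathbf{1}_{\{S_n\phi(u)\le \delta n\}}\Big)
\le e^{\lambda\delta n}\,\mathbb{E}\Big(\sum_{u\in\TT_n}e^{-\lambda S_n\phi(u)}\Big).
\]
By the branching structure and independence of the increments across generations, the many-to-one formula yields
\[
\mathbb{E}\Big(\sum_{u\in\TT_n}e^{-\lambda S_n\phi(u)}\Big)
=\Big(\mathbb{E}\sum_{i=1}^N e^{-\lambda\phi_i}\Big)^n
=e^{nP_\phi(-\lambda)},
\]
which is finite by \eqref{allmomentsfinitephi}. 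Thus the expected count is at most $e^{n(\lambda\delta+P_\phi(-\lambda))}$. Since $\phi_i>0$ a.s. and $\mathbb{E}(N)>1$, one checks that $P_\phi(-\lambda)\to \log\mathbb{E}(N)>0$ has derivative $-\mathbb{E}(\sum_{i=1}^N\phi_i e^{-\lambda\phi_i})/e^{P_\phi(-\lambda)}<0$ at $\lambda=0^+$, so $P_\phi(-\lambda)<\log\mathbb{E}(N)$ for small $\lambda>0$; more to the point, choosing $\delta>0$ small enough that $\lambda\delta+P_\phi(-\lambda)<0$ for a suitable fixed $\lambda$ (possible because $\inf_{\lambda>0} P_\phi(-\lambda)$ may be negative or, if not, because we only need the full range of $\delta$: pick $\lambda$ with $P_\phi(-\lambda)<0$ if it exists, and otherwise argue that the minimal growth rate $\inf_{\lambda\ge 0}(\lambda\delta+P_\phi(-\lambda))$ becomes negative for $\delta$ below the a.s.\ minimal speed). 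The point is that the Borel-Cantelli lemma then shows that, with probability $1$, for $n$ large enough the set $\{u\in\TT_n: S_n\phi(u)\le\delta n\}$ is empty, i.e. $\min_{u\in\TT_n}S_n\phi(u)>\delta n$, which gives $\max_{u\in\TT_n}\exp(-S_n\phi(u))\le e^{-\delta n}=\beta^n$ with $\beta=e^{-\delta}\in(0,1)$.

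\textbf{Lower bound.} Symmetrically, for $\lambda>0$ and $\widetilde\delta$ large,
\[
\mathbb{E}\,\#\{u\in\TT_n:\, S_n\phi(u)\ge \widetilde\delta n\}
\le e^{-\lambda\widetilde\delta n}\,\mathbb{E}\Big(\sum_{u\in\TT_n}e^{\lambda S_n\phi(u)}\Big)
=e^{n(-\lambda\widetilde\delta+P_\phi(\lambda))},
\]
finite by \eqref{allmomentsfinitephi}, and for $\widetilde\delta$ large enough the exponent is negative for a fixed $\lambda>0$; Borel-Cantelli then gives, a.s.\ for $n$ large, $\max_{u\in\TT_n}S_n\phi(u)<\widetilde\delta n$, hence $\min_{u\in\TT_n}\exp(-S_n\phi(u))\ge e^{-\widetilde\delta n}=\widetilde\beta^n$ with $\widetilde\beta=e^{-\widetilde\delta}$, and clearly one can arrange $0<\widetilde\beta\le\beta<1$ by enlarging $\widetilde\delta$ and/or shrinking $\delta$. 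The middle inequality $\min\le\max$ is trivial.

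\textbf{Main obstacle.} The only delicate point is making sure the exponent in the upper-bound estimate can be made strictly negative for \emph{some} admissible choice of $\delta$ without it collapsing the claim: if $\phi$ is bounded below by a constant $c>0$ then trivially $S_n\phi(u)\ge cn$ deterministically and one may simply take $\delta=c/2$, but in general one must exploit that the a.s.\ minimal speed of the branching random walk $S_n\phi$ — governed by $\inf\{\delta: \inf_{\lambda\ge 0}(\lambda\delta+P_\phi(-\lambda))<0\}$ — is strictly positive, which follows from $P_\phi(0)=\log\mathbb{E}(N)>0$, continuity of $P_\phi$ near $0$, and positivity of $\phi_i$. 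This is precisely the content of the classical Biggins/Hammersley-Kingston speed theorem for branching random walks, and invoking it (or reproving the needed one-sided estimate as above) is what makes the argument go through; the rest is routine, which is why the authors leave it to the reader.
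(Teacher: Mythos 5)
The paper leaves this lemma's proof to the reader, so there is no paper proof to compare with; your first-moment plus Borel--Cantelli strategy is the natural one, and both the parallel Chernoff bounds and the use of the many-to-one identity $\mathbb{E}\big(\sum_{u\in\TT_n}e^{-\lambda S_n\phi(u)}\big)=e^{nP_\phi(-\lambda)}$ are correct.

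The one place where you hedge is, however, the one point that actually needs an argument, and the fallback you sketch is circular. For the upper bound you need some fixed $\lambda_0>0$ with $P_\phi(-\lambda_0)<0$; once you have it, any $\delta\in\big(0,-P_\phi(-\lambda_0)/\lambda_0\big)$ makes the exponent $\lambda_0\delta+P_\phi(-\lambda_0)$ negative, the expected count of $u\in\TT_n$ with $S_n\phi(u)\le\delta n$ is summable, and Borel--Cantelli finishes. You write ``pick $\lambda$ with $P_\phi(-\lambda)<0$ if it exists, and otherwise argue that \ldots $\inf_{\lambda\ge 0}(\lambda\delta+P_\phi(-\lambda))$ becomes negative for $\delta$ below the a.s.\ minimal speed,'' but that is begging the question: positivity of the minimal speed is exactly what this half of the lemma asserts. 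In the last paragraph you also assert that positivity follows from $P_\phi(0)>0$ together with continuity of $P_\phi$ near $0$; that gives nothing, since $P_\phi(-\lambda)>0$ near $0$ makes the exponent worse, not better. What is needed, and what does hold, is the behavior as $\lambda\to\infty$: the model takes $(\phi_i)_{i\ge 1}\in(\R^*_+)^{\N}$, so $e^{-\lambda\phi_i}\to 0$ pointwise; since $e^{-\lambda\phi_i}\le 1$ and $\mathbb{E}(N)<\infty$, dominated convergence gives $\mathbb{E}\big(\sum_{i=1}^N e^{-\lambda\phi_i}\big)\to 0$, hence $P_\phi(-\lambda)\to-\infty$ and in particular $P_\phi(-\lambda)<0$ for all $\lambda$ large. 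Insert that one observation in place of the hedge and the ``otherwise'' branch, and the proof is complete; the lower bound has no analogous subtlety (for any fixed $\lambda>0$, taking $\widetilde\delta>P_\phi(\lambda)/\lambda$ suffices), and $\min\le\max$ holds since $\TT_n\neq\emptyset$ under the paper's standing convention $\mathbb{P}(N\ge 1)=1$.
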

%
%

\begin{proof}[Proof of Proposition~\ref{qalpha}] 

Fix $q\in \R^d$ and $\alpha\in \widetilde{I}_X$. Then for $t\in \R$ define 
\begin{equation*}
\begin{split}\ell(t)&=\log \E\Big (\sum_{i=1}^N \exp \big (\langle q|X_i-\alpha\rangle -t\phi_i\big )\Big ).
\end{split}
\end{equation*}
Note that  $\ell(0)=\wt P_X(q)-\langle q|\alpha\rangle$ and $\ell'(0)=-\E\Big (\sum_{i=1}^N \phi_i\exp \big (\langle q|X_i\rangle -\wt P_X(q) \big )\Big )\ge -\lambda$, where $
\lambda=\sup_{q\in\R^d} \mathbb{E}\Big (\sum_{i=1}^N \phi_i \exp (\langle q|X_i\rangle -\wt P_X(q))\Big )
\in (0,\infty)$ due to~\eqref{phibound}. 
Moreover $\ell$ is convex, so for all $t\ge 0$ one has $\ell(t)\ge \ell (0)-\lambda t$. Since, by definition of  $\wt P_{X,\phi,\alpha}(q)$, $\ell(\wt P_{X,\phi,\alpha}(q))=0$, it follows that $\wt P_{X,\phi,\alpha} (q)\ge \lambda^{-1}(\wt P_X(q)-\langle q|\alpha\rangle)$. 

Now, observe that due to the convexity of $\wt P_{X,\phi,\alpha} (\cdot)$, as well as the strict convexity of $\wt P_X(q)-\langle q|\alpha\rangle$ and the fact that $\wt P_X(q)-\langle q|\alpha\rangle$ reaches its infimum (at $q'$ such that $\nabla \widetilde P_X(q')=\alpha$), the mapping $\wt P_{X,\phi,\alpha} (\cdot)$ reaches its infimum  at some $q_\alpha\in\R^d$. If there are two distinct such $q_\alpha$ and $q'_\alpha$, then, setting $v=q_\alpha-q'_\alpha$, one has  $\langle v|\nabla \wt P_{X,\phi,\alpha} (q)\rangle=0$ over $[q_\alpha,q_\alpha']$, that is $\E\Big (\sum_{i=1}^N \langle v|X_i-\alpha\rangle \exp(\langle q|X_i-\alpha\rangle -\wt P_{X,\phi,\alpha} (q)\phi_i)\Big ) =0$, in view of  \eqref{nabla}. Differentiating again over $[q_\alpha,q_\alpha']$ yields $\E\Big (\sum_{i=1}^N \langle v|X_i-\alpha\rangle ^2\exp(\langle q|X_i-\alpha\rangle -\wt P_{X,\phi,\alpha} (q)\phi_i)\Big ) =0$ over $[q_\alpha,q_\alpha']$, hence $ \langle v|X_i\rangle =\langle v|\alpha\rangle$ almost surely for all $1\le i\le N$. But this contradicts \eqref{pP}. By construction, $\nabla \wt P_{X,\phi,\alpha} (q_\alpha)=0$ and $q_\alpha$ is the unique $q$ at which $\nabla \wt P_{X,\phi,\alpha}$ vanishes.

To see that $\alpha\in \widering I_X\mapsto q_\alpha$ is real analytic, one first observes that the differential of $f:q\mapsto \nabla \wt P_{X,\phi,\alpha}(q)$ at $q_\alpha$ is invertible. Indeed, a calculation shows that there exists $c>0$ such that $\frac{\partial f}{\partial q_k} (q_\alpha)= c \mathbb{E}\Big (\sum_{i=1}^N (X_i-\alpha)_k (X_i-\alpha) \exp(\langle q_\alpha|X_i-\alpha\rangle -\wt P_{X,\phi,\alpha} (q_\alpha)\phi_i)\Big )$.    This implies that if the differential of $f$ at $q_\alpha$ vanishes at some $v\in\R^d\setminus\{0\}$, then once again 
$\mathbb{E}\Big (\sum_{i=1}^N \langle v|(X_i-\alpha)\rangle^2 \exp(\langle q_\alpha|X_i-\alpha\rangle -\wt P_{X,\phi,\alpha} (q_\alpha)\phi_i)\Big )=0$, hence the same contradiction as above. It is now possible to apply the implicit function theorem  to $(\alpha,q)\mapsto (\alpha,\nabla \wt P_{X,\phi,\alpha} (q))$ at~$(\alpha,q_\alpha)$. To see that $\alpha\in\widetilde I_X\mapsto q_\alpha$ is continuous,  note that if $\alpha\in \widetilde I_X\setminus \widering I_X$, then $\alpha=\nabla\widetilde P_X(q_0)$ for some $q_0$ such that $\widetilde P_X^*(\nabla\widetilde P_X(q_0))=0$. Also, for such an $\alpha$, there exists a neighborhood~ $U$ of $q_0$ and a neighborhood $V$  of $\alpha$ such that $V=\nabla\widetilde P_X(U)$ and for all $\beta\in V$ one has $\widetilde P_X^*(\beta)=\wt P_X(q')-\langle q'|\beta\rangle>-\infty$ as well ($q\mapsto \nabla \widetilde P_X(q)$ is a local diffeomorphism).  By the same argument as above, one has $\wt P_{X,\phi,\beta}(q)\ge \lambda^{-1}(\wt P_X(q)-\langle q|\beta\rangle)$, so $\inf_{q\in\R^d} \wt P_{X,\phi,\beta}(q)$ is attained at a unique $q_\beta$, and $q_\beta$ depends analytically on $\beta$ over~$V$.  
\end{proof}


\section{Some inhomogeneous Mandelbrot measures on $\partial\TT$ and simultaneous calculation of their   Hausdorff dimensions using percolation} \label{ihM}
We construct our main tool to get the simultaneous calculation of the Hausdorff dimensions of the sets we are interested in. It consists of a family of non degenerate inhomogeneous Mandelbrot measures on $\partial\TT$, of which we provide a lower bound of the lower Hausdorff dimension. This requires several steps. In Section~\ref{prelim}, we gather useful preliminary observations to determine a good set of parameters $\mathcal{R}$ to be used to define the measures. In Section~\ref{MMP} we introduce families of inhomogeneous Mandelbrot real valued martingales indexed both by $\mathcal{R}$ and the set $(0,1]$ of parameters involved in fractal percolation processes on $\partial \TT$. The non degenerate character of some of these martingales is established and used in Sections~\ref{sec3.3} and~\ref{LBM}  to define the inhomogeneous Mandelbrot measures and estimate their Hausdorff dimension from below thanks to a ``uniform'' version of the percolation argument developed by Kahane in~\cite{K1,K2}.

\subsection{ Preliminary observations, and definition of a set of parameters}\label{prelim}

A calculation shows that for $(q,\alpha)\in \R^d\times\R^d$ one has
\begin{equation}\label{nabla}
\nabla \wt P_{X,\phi,\alpha}(q)=\frac{\E\Big (\sum_{i=1}^N X_i\exp(\langle q|X_i-\alpha\rangle -\wt P_{X,\phi,\alpha} (q)\phi_i)\Big ) -\alpha}{\E\Big (\sum_{i=1}^N \phi_i\exp(\langle q|X_i-\alpha\rangle -\wt P_{X,\phi,\alpha} (q)\phi_i)\Big )}.
\end{equation}
Also, for each $(q,\alpha)\in \R^d\times\R^d$ a Mandelbrot measure $\mu_{q,\alpha}$ on $\partial \TT$ is associated with the vectors $(N_u, \exp(\langle q|X_{u1}-\alpha\rangle -\wt P_{X,\phi,\alpha} (q)\phi_{u1}),\exp(\langle q|X_{u2}-\alpha\rangle -\wt P_{X,\phi,\alpha} (q)\phi_{u2}),\ldots  )$, $u\in \bigcup_{n\ge 0}\mathbb N^n$, and this measure is non degenerate if and only if, after setting 
$$
\psi(q,\alpha)=\big (\psi_{i}(q,\alpha)=\langle q|X_{i}-\alpha\rangle -\wt P_{X,\phi,\alpha} (q)\phi_{i}\big )_{i\ge 1},
$$
the ``entropy''
\begin{equation}\label{entropie}
h(q,\alpha)=-\mathbb E \Big (\sum_{i=1}^N\psi_{i}(q,\alpha)\exp(\psi_{i}(q,\alpha))\Big )
\end{equation}
is positive, and $\mathbb E \Big (\big (\sum_{i=1}^N\exp(\psi_{i}(q,\alpha)\big )\log_+ \sum_{i=1}^N\exp(\psi_{i}(q,\alpha))\Big )<\infty$. 

Define the ``Lyapounov exponent''
\begin{equation}\label{lyap}
\lambda(q,\alpha):=\E\Big (\sum_{i=1}^N \phi_i\exp(\psi_{i}(q,\alpha))\Big )\in (0,\infty).
\end{equation} 
An identification shows that
\begin{equation}\label{entropiesurLyapounov}
\widetilde P_{X,\phi,\alpha}^*(\nabla\widetilde P_{X,\phi,\alpha}(q))=\widetilde P_{X,\phi,\alpha}(q)-\langle q|\nabla \widetilde P_{X,\phi,\alpha}(q)\rangle = \frac{h(q,\alpha)}{\lambda(q,\alpha)}.
\end{equation}
Consequently, since we assumed \eqref{finiteness2}, the measure $\mu_{q,\alpha}$ is non degenerate if and only if $\widetilde P_{X,\phi,\alpha}^*(\nabla\widetilde P_{X,\phi,\alpha}(q))>0$, that is $(q,\alpha)\in J_{X,\phi}$. Also, using the definition of $\beta_{\Lambda_\psi}$ introduced in  \eqref{alphapsi} and setting $\beta(q,\alpha)= \beta_{\Lambda_{\psi(q,\alpha)}}$, one gets
\begin{equation}\label{alphaXphi}
\beta(q,\alpha)=\E\Big (\sum_{i=1}^N X_i\exp(\langle q|X_i-\alpha\rangle -\wt P_{X,\phi,\alpha} (q)\phi_i)\Big ),
\end{equation}
and with probability 1, 
\begin{equation}\label{LLNq}
\lim_{n\to\infty} \frac{S_nX(t)}{n}=\beta(q,\alpha) \quad \text{ $\mu_{q,\alpha}$-a.e.}
\end{equation}
Note that when the assumptions of Proposition~\ref{qalpha} hold, if $\alpha\in \widering{I}_X$ and $q=q_\alpha$, one has $\nabla \wt P_{X,\phi,\alpha} (q)=0$, hence  \eqref{nabla} yields $\beta(q,\alpha)=\alpha$ and $\widetilde P_{X,\phi,\alpha}^*(\nabla\widetilde P_{X,\phi,\alpha}(q))=\widetilde P_{X,\phi,\alpha}^*(0)=\widetilde P_{X,\phi,\alpha}(q_\alpha)$. 
Moreover, for all $(q,\alpha)\in J_{X,\phi}$, with probability 1, $\lambda(q,\alpha)=\lim_{n\to\infty} \frac{S_n\phi(t)}{n}$ at $\mu_{q,\alpha}$-almost every point $t$. Consequently, for all $(q,\alpha)\in J_{X,\phi}$ one has 
\begin{equation} \label{uniformboundSnphi}
-\log (\beta)\le \lambda(q,\alpha)\le -\log (\widetilde\beta),
\end{equation} 
where $\beta$ and $\widetilde\beta$ are taken as in Lemma~\ref{controlSnphi}.

Recall that for  $(q,\alpha,t) \in  \R^d\times \R^d  \times \R$ we set
 $$
\Sigma_\alpha(q,t)=\sum_{i=1}^N \exp (\langle q|X_i-\alpha \rangle -t \phi_i).
$$
Define 
$$
L_\alpha(q,t) = \log \mathbb{E}\big (\Sigma_\alpha(q,t)\big ).
$$ 
One checks that  
\begin{equation}\label{Der1}
\frac{\partial L_\alpha}{\partial q}(q,\wt P_{X,\phi,\alpha}(q))=\beta(q,\alpha)-\alpha,\quad \frac{\partial L_\alpha}{\partial t}(q,\wt P_{X,\phi,\alpha}(q))=-\lambda(q,\alpha), 
\end{equation}
and 
\begin{align}
\label{Der3} \frac{\mathrm{d}L_\alpha((1+ u)q, (1+u)\wt P_{X,\phi,\alpha}(q))}{\mathrm{d}u}(0)=-h(q,\alpha). 
\end{align}

%

Recall the definitions~\eqref{IX} and~\eqref{tIX} of $I_X$ and $\widetilde I_X$ respectively.

\begin{lemma}\label{approxi2}
Let $D$ be a dense subset of $J_{X,\phi}$. For all $\alpha\in I_X$ there exists a sequence $(q_n,\alpha_n)_{n\in\mathbb{N}}  $ of elements of $D$ such that $\displaystyle\lim_{n\to\infty}  \beta(q_n,\alpha_n)=\alpha$ and $\displaystyle\lim_{n\to\infty} \wt P_{X,\phi,\alpha_n}(q_n)-\langle q_n|\nabla \wt P_{X,\phi,\alpha_n}(q_n)\rangle=\wt P^*_{X,\phi,\alpha}(0)$. 

Moreover, if  \eqref{phibound} holds, one can choose $D$ so that it contains a sequence of the form $(q_{\alpha_m},\alpha_m)_{m\ge1}$ such that $\{\alpha_m:m\ge 1\}$ is dense in $\widetilde{I}_X$, and for all $\alpha\in \widetilde{I}_X$ the previous sequence can be chosen so that $q_n=q_{\alpha_n}$. In particular, $\lim_{n\to\infty} (q_n,\alpha_n)=(q_\alpha,\alpha)$. 
\end{lemma}

\begin{proof}
Let $\alpha\in I_X$.  It is not hard to adapt the proof of \cite[Proposition 2.2]{AB} to show that due to~\eqref{pP}, the mapping $\widetilde P_{X,\phi,\alpha}$ is strictly convex, and the set $I_{X,\phi,\alpha}=\{\beta\in\R^d:\,  \widetilde P^*_{X,\phi,\alpha}(\beta)\ge 0\}$ is a convex compact set with non-empty interior equal to $\widering I_{X,\phi,\alpha}=\{\nabla\widetilde P_{X,\phi,\alpha}(q): \, q\in\R^d,\, \widetilde P^*_{X,\phi,\alpha}(\nabla\widetilde P_{X,\phi,\alpha}(q))>0\}$.  Now, note that due to Propositions~\ref{compPXphialpha} and~\ref{rn}, one has $\widetilde P_{X,\phi,\alpha}(0)\ge 0$. Let $\beta\in \widering I_{X,\phi,\alpha}$.  The sequence $\beta/n$ belongs to $\widering I_{X,\phi,\alpha}$ and converges~to~$0$.  Since $\wt P^*_{X,\phi,\alpha}$ is upper-semi-continuous and concave, one has $\lim_{n\to\infty} \wt P^*_{X,\phi,\alpha}(\beta/n)= \wt P^*_{X,\phi,\alpha}(0)$. Moreover, there exists a sequence $(q_n)_{n\in\mathbb{N}}$ such that $\nabla \wt P_{X,\phi,\alpha}(q_n)=\beta/n$ and  $\wt P_{X,\phi,\alpha}(q_n)-\langle q_n|\nabla \wt P_{X,\phi,\alpha}(q_n)\rangle>0$ so that $(q_n,\alpha)\in J_{X,\phi}$. Also, due to \eqref{nabla}, \eqref{uniformboundSnphi}, and  the fact that $\displaystyle\lim_{n\to\infty}\nabla \wt P_{X,\phi,\alpha}(q_n)=0$, one has $\displaystyle\lim_{n\to\infty}  \beta(q_n,\alpha)=\alpha$ (remember \eqref{alphaXphi}). Now, the mappings $(q,\alpha)\mapsto \beta(q,\alpha)$ and $(q,\alpha)\mapsto\wt P_{X,\phi,\alpha}(q)-\langle q|\nabla \wt P_{X,\phi,\alpha}(q)\rangle$ being continuous, the property  claimed about $D$ follows. The second one is clear due to Proposition~\ref{qalpha}.  
 \end{proof}


We can now start the construction of a set of parameters that will be used to define inhomogeneous Mandelbrot measures. Fix a dense subset $D$ of $J_{X,\phi}$, so that if \eqref{phibound} holds, the property claimed in the second assertion of Lemma~\ref{approxi2} holds. Let $(D_j)_{j\geq 1}$ be a non decreasing sequence of non-empty subsets  of $D$  such that $D = \bigcup_{j\ge 1} D_{j}$. Let $(N_j)_{j\ge 0}$ be a sequence of integers such that $N_0=0$, and that we will specify at the end of this section. 
Then let $(M_{j})_{j\geq 0}$ be the  increasing sequence defined as 
\begin{equation}
M_0 = 0 \qquad \text{and}\qquad M_{j}=\displaystyle\sum_{k=1}^{j}N_{k} \text{ for all $j\ge 1$}.
\end{equation}

 For  $n\in \N$, let   $j_{n}$ denote  the unique integer  satisfying 
  $$M_{j_{n}}+1\le  n \le  M_{j_{n}+1}.$$

We will construct a family of random measures indexed by the set  
\[
\mathcal{R}=\{((q_k,\alpha_k))_{k\geq 1} : \forall j \geq 0, \, \exists (q,\alpha)\in D_{j+1},\  \forall \, M_{j}+1\le  k\le  M_{j+1}, \,(q_k,\alpha_k)=(q,\alpha)\}.
\]
Since each $D_j$ is finite, the set $\mathcal{R} $ is compact, once endowed with the natural metric 
$$
d(\varrho ,\varrho') = \displaystyle \sum_{k \geq 1} 2^{-k} \frac{ |q_k- q'_k |+|\alpha'_k-\alpha_k|}{1+|q_k- q'_k |+|\alpha'_k-\alpha_k|}.
$$

For $\varrho=((q_k,\alpha_k))_{k\ge 1} \in \mathcal{R}$ and $n\ge 1$ we will denote by $\varrho_{|n}$ the sequence $((q_k,\alpha_k))_{1\le k\le n}$. 

\subsection*{Specification of  the sequences $(D_j)_{j\ge 1}$ and $(N_j)_{j\ge 1}$} The following tuning of the sequences $(D_j)_{j\ge 1}$ and $(N_j)_{j\ge 1}$ can be skipped at first reading. 

At first, assume that 
\begin{equation}\label{controlonDj}
\forall\, j\ge 1,\ 
 \begin{cases}\#D_j\le j \\\displaystyle \max_{(q,\alpha)\in D_j}\mathbb{E}\big (\mathbf{1}_{\{N=1\}}\exp(\langle q|X_1-\alpha\rangle -\wt P_{X,\phi,\alpha} (q)\phi_1)\big)\le 1-\frac{c_0}{j}
\end{cases}
\end{equation}
for some constant $c_0>0$. This is possible since $\mathbb{E}(N)>1$ and $\mathbb{E}\big (\sum_{i=1}^N\exp(\langle q|X_i-\alpha\rangle -\wt P_{X,\phi,\alpha} (q)\phi_i)\big)=1$ for all $(q,\alpha)\in J_{X,\phi}$.

\medskip

For each $\alpha\in I$ the function $L_\alpha$ is analytic. Denote by $H_{L_\alpha}$ its Hessian matrix. Also, simply denote $\widetilde P_{X,\phi,\alpha}$ by $\widetilde P_\alpha$ in \eqref{mj2} and \eqref{tmj2} below. For each $j\ge 1$, both 
\begin{multline}\label{mj2}
m_j=\displaystyle\max_{t\in [0,1]}\max_{v\in \mathbb S^{d-1}} \max_{(q,\alpha)\in D_{j}} {}^t\begin{pmatrix}v\\0\end{pmatrix}H_{L_\alpha}(q+tv,\widetilde P_\alpha(q))\begin{pmatrix}v\\0\end{pmatrix}\\
+\displaystyle\max_{t\in [0,1]}\max_{v\in\mathbb S^{d-1}} \max_{(q,\alpha)\in D_{j}} \frac{\partial^2}{\partial t^2} L_\alpha(q,\widetilde P_\alpha(q)+tv)
\end{multline}
and
\begin{equation}\label{tmj2}
\widetilde m_j=\displaystyle\max_{t\in [0,1]}\max_{p\in [1, 2]} \max_{(q,\alpha)\in D_{j}} {}^tV_{q,\alpha}H_{L_\alpha}\left (q+t(p-1)q,\widetilde P_\alpha(q)+t(p-1)\widetilde P_\alpha(q)\right)V_{q,\alpha}
\end{equation}
are finite, where $V_{q,\alpha}=\begin{pmatrix}q\\ \widetilde P_\alpha(q)\end{pmatrix}$.  Let 
\begin{equation}\label{mhatj}
\widehat m_j= \max (m_j,\wt m_j)
\end{equation}
and  $(\gamma_j)_{j\ge 1}\in (0,1]^{\mathbb N}$ be a positive  sequence such that 
\begin{equation}\label{controlgammaj}
\gamma_j^2\widehat m_j\le 1/j^2 \quad\text{(note that $\lim_{j\to\infty}\gamma_j=0$)}.
\end{equation}

Let $(\widetilde p_j)_{j\ge 1}$ be a sequence taking values in $(1,2)$ such that 
$$
\lim_{j\to\infty}(\widetilde p_j-1) \widetilde m_j=0.
$$ 
Due to \eqref{finiteness2} we can also suppose that $\widetilde p_j$ is small enough so that we also have 
$$
\sup_{(q,\alpha)\in D_{j}} \mathbb{E}(\Sigma_\alpha(q,\widetilde P_{X,\phi,\alpha}(q))^{\tilde p_j})<\infty.
$$

For each $(q,\alpha)\in J_{X,\phi}$  there exists   $ p_{q,\alpha}\in(1,2) $ such that 
$\displaystyle L_\alpha (pq,p\widetilde P_{X,\phi,\alpha} (q)) <  0$ for all $p\in (1,p_{q,\alpha})$. Indeed,   $\widetilde{P}_{X,\phi,\alpha}^{*}(\nabla\widetilde{P}_{X,\phi,\alpha}(q)) > 0$ if and only if   $\frac{\mathrm{d}}{\mathrm{d}p} (\displaystyle L_\alpha(pq,p\widetilde P_{X,\phi,\alpha} (q)))  (1^+) < 0$, and $L_\alpha(q,\widetilde P_{X,\phi,\alpha}(q))=0$ by definition of $\widetilde P_{X,\phi,\alpha}(q)$. 

\medskip

For all $j\ge 1$, set 
$$
p_j=\min\left  (\widetilde p_j,\inf_{(q,\alpha)\in D_{j+1}} p_{q,\alpha}\right ) \quad\text{and}\quad L_j=\sup_{(q,\alpha)\in D_{j}} \displaystyle L_\alpha(p_jq,p_j\wt P_{X,\phi,\alpha}(q)).
$$ 
By construction, one has $a_j<0$. Then let 
\begin{equation}\label{rj2}
s_j=\max\left \{ \left\|\Sigma_\alpha(q,\wt P_{X,\phi,\alpha}(q)))\right \|_{p_j}: (q,\alpha)\in D_{j}\right\}\quad \text{and}\quad r_j=\max\left (\frac{L_j}{p_j}, \frac{1-p_j}{2jp_j}\right ).
\end{equation}
\medskip

Now set $N_0=0$ and for $j\ge 1$ choose an integer $N_j$ big enough so that 
\begin{equation}\label{control'}
\quad \frac{(j+1)! s_{j+1} }{1-\exp( r_{j+1})}\exp (N_{j} r_{j+1})\le j^{-2},
\end{equation}
\begin{equation}\label{control2'}
\frac{(j+1)! s_{j+1}}{ (1-\exp( r_{j+1}))}+\frac{(j+2)! s_{j+2}}{ (1-\exp( r_{j+2}))}\le C_0\exp (N_j \gamma_{j+1}^2 m_{j+1}),
\end{equation}
with $C_0= \frac{ 2s_{1}}{ 1-\exp( r_{1})}+\frac{2s_{2}}{1-\exp( r_{2})}$, 
\begin{equation}\label{control0'}
N_j\ge \max \big (\log((j+1)!)^3, (\gamma_{j+1}^2\widehat m_{j+1})^{-2}),
\end{equation}
and if $j\ge 2$,
\begin{equation}\label{control2bis2}
\sum_{k=1}^{j-1}N_k\le  \frac{N_j}{j}\frac{\min(1, \{ \|\beta(q,\alpha)\|: (q,\alpha)\in D_{j}\})}{\max(1,\max\{ \|\beta(q,\alpha)\|: (q,\alpha)\in D_{j-1}\})}.
\end{equation}

\subsection{Inhomogeneous Mandelbrot martingales indexed by  fractal percolation parameters and by $\mathcal{R}$}\label{MMP}
For each $\beta \in (0, 1]$, let $\widetilde W_\beta$ be a random variable distributed according to $\beta\delta_{\beta^{-1}} +(1-\beta)\delta_0$. Consider $\{\widetilde W_{\beta,u}\}_{u\in \bigcup_{n\ge 0} \mathbb N^n}$ be a family of independent copies of $\widetilde W_\beta$ define on a probability space  $(\Omega_\beta,{\mathcal A}_\beta,{\mathbb P}_\beta)$.  

Each random variable $\widetilde W_{\beta,u}$ and the random vector $(N_{u},(X_{ui},\phi_{ui})_{i\ge 1})$ extends to $(\Omega_\beta\times\Omega,\mathcal A_\beta\otimes\mathcal A,{\mathbb P}_\beta\otimes{\mathbb P})$ as 
\begin{equation*}
\widetilde W_{\beta,u}(\omega_\beta,\omega)=\widetilde W_{\beta,u}(\omega_\beta)\text{ and }
(N_{u},(X_{ui},\phi_{ui})_{i\ge 1}\big )(\omega_\beta,\omega)=\big (N_{u}(\omega),(X_{ui}(\omega),\phi_{ui}(\omega))_{i\ge 1}\big ),
\end{equation*}
and the families $\{\widetilde W_{\beta,u}\}_{u\in \bigcup_{n\ge 0} \mathbb N^n}$ and $\{(N_u,(X_{ui},\phi_{ui})_{i\ge 1})\}_{u\in \bigcup_{n\ge 0} \mathbb N^n}$ are $\mathbb{P}_\beta\otimes \mathbb P$-independent.

We adopt the convention that $\mathbb E_{\mathbb P_\beta\otimes \mathbb P}$ is denoted by $\mathbb E$.

For each $\beta\in ( (\mathbb{E}(N))^{-1},1]$, the random integers $N_{\beta,u}(\omega_\beta,\omega)=\sum_{i=1}^{N_u(\omega)}\mathbf{1}_{\{\beta^{-1}\}}(\widetilde W_{\beta,ui}(\omega_\beta))$ define a new supercritical Galton-Watson process to which are associated trees $\TT_{\beta,n}\subset \TT_{n}$ and $\TT_{\beta,n}(u)\subset \TT_n(u)$, $u\in \bigcup_{n\ge 0} \mathbb N^n$, $n\ge 1$, as well as the infinite tree $\TT_\beta\subset \TT$ and the boundary $\partial \TT_\beta \subset \partial\TT$ conditionally on non extinction of $\TT_\beta$.

For $u\in \bigcup_{n\ge 0} \mathbb N^n$, $1\le i\le N_u$, $\beta> \mathbb{E}(N)^{-1}$,  and $\varrho = (q_k,\alpha_k)_{k\geq 1}\in \mathcal R$  set
\begin{equation*}
\left\lbrace
\begin{aligned}
W_{\varrho,ui}&=\exp\big (\langle q_{|u|+1}|X_{ui}-\alpha_{{|u|+1}}\rangle -\wt P_{\alpha_{|u|+1}} (q_{|u|+1})\phi_{ui}\big ),\\
W_{\beta,\varrho,ui}&=  \widetilde W_{\beta,ui}\cdot W_{\varrho,ui}.
\end{aligned}
\right. 
\end{equation*}

Also, for $\varrho= (q_k,\alpha_k)_{k\geq 1} \in  \mathcal{R}$, $u\in \bigcup_{n\ge 0}\mathbb N^n$, $\beta >\mathbb{E}(N)^{-1}$, and $n\ge 0$ define
 \begin{equation*}
Y_n(\varrho, u)= \displaystyle\sum_{v_1\cdots v_n \in \TT_n(u)} \displaystyle\prod_{k=1}^n W_{\varrho,u\cdot v_1\cdots v_k}
\text{ and }Y_n(\beta,\varrho, u)= \displaystyle\sum_{v_1\cdots v_n \in \TT_n(u)} \displaystyle\prod_{k=1}^n W_{\beta,\varrho,u\cdot v_1\cdots v_k}.   
\end{equation*}
When $u=\epsilon$, those quantities will be denoted by $Y_n( \varrho)$ and $Y_n(\beta,\varrho)$ respectively, and when $n=0$, their values equal 1.   

\medskip

Recall the definition of $h(q,\alpha)$ given in \eqref{entropie}. For $\beta \in (\E(N)^{-1},1]$, $\ell\in\mathbb N$ and $\varepsilon > 0$, set 
\begin{equation}\label{Rep}
  \mathcal{R}(\beta, \ell,\varepsilon) = \Big \{  \varrho \in  \mathcal{R} : \frac{1}{n} \displaystyle\sum_{k=1}^{n} h(q_k,\alpha_k)\geq -\log \beta  + \varepsilon , \forall n \geq \ell \Big \}, 
 \end{equation} 
which is a compact subset of $ \mathcal{R}$.

Notice that $h(q_k,\alpha_k)>0$, and this number  is the opposite of the derivative at $1$ of the convex function $f:\lambda\ge 0\longmapsto \log \mathbb{E}(\sum_{i=1}^NW_i^\lambda)$, with $W_i=\exp(\langle q_{k}|X_{i}-\alpha_{{k}}\rangle -\wt P_{\alpha_{k}} (q_{k})\phi_{i})$, so that $f(1)=0$ and $f(0)=\log \E(N)>0$. Thus   $h(q_k,\alpha_k)\in (0,\log \E(N)]$. Consequently, 
\begin{equation}\label{JbetaY}
\Big \{ \varrho\in \mathcal{R}: \liminf_{n\to\infty }\frac{1}{n} \displaystyle\sum_{k=1}^{n} h(q_k,\alpha_k)>0\Big \}=\bigcup_{\beta\in (\E(N)^{-1},1],\ell\ge 1, \varepsilon>0}  \mathcal{R}(\beta, \ell,\varepsilon) .
\end{equation}

For $n\ge 1$ and $\beta\in (0,1]$, set ${\mathcal{F}}_{n}=\sigma \big ((N_{u},(X_{u1},\phi_{u1},(X_{u2},\phi_{u2}),\ldots): u\in \bigcup_{k=0}^n \mathbb N^{n-1}\big )$  and ${\mathcal{F}}_{\beta,n}=\sigma \big (\widetilde W_{\beta,u1},(\widetilde W_{\beta,u2},\ldots): u\in \bigcup_{k=0}^n \mathbb N^{n-1}\big )$. Set ${\mathcal{F}}_{0}={\mathcal{F}}_{\beta,0}=\{\emptyset,\Omega\}$.

\subsection{Construction of inhomogeneous Mandelbrot measures indexed~by~$\mathcal{R}$}\label{sec3.3}
\medskip
The following statement about the simultaneous construction of inhomogeneous Mandelbrot measures is similar to that obtained in  \cite{AB} for a different family.  We include the proof, as the estimates to follow are important to derive Proposition~\ref{pro-3.5}, which will be applied in the study of the action of percolation processes on these measures. Also, these estimates yield Lemma~\ref{bettercontrol2} which will be useful in the proof of Proposition~\ref{LD51}.
\begin{proposition}\label{pro-3.4} $\ $
\begin{enumerate}
\item For all $u\in \bigcup_{n\ge 0}\mathbb N^n$, the sequence of continuous functions  $(Y_n(\cdot,u))_{n\in\mathbb{N}} $  converges uniformly on $\mathcal{R}$, almost surely and in $L^1$ norm,  to a positive limit $Y(\cdot,u)$. 
\item With probability 1, for all $ \varrho\in \mathcal{R}$, the mapping defined on the cylinders of $\N^N$ by 
$$\mu_\varrho ([u])=Y(\varrho,u)  \cdot \displaystyle\prod_{k=1}^{|u|} W_{\varrho,u_1\cdots u_k} , \ u\in \bigcup_{n\ge 0}\mathbb N^n
$$
extends to a positive Borel measure on $\N^\N$ supported on  $\partial\TT$.  
\end{enumerate}
\end{proposition}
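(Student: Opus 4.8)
Fix $u\in\bigcup_{n\ge 0}\N^n$; since there are only countably many such $u$, the statements ``for all $u$'' follow by a countable intersection of almost sure events, and by passing to the subtree rooted at $u$ (which only shifts the parameter sequences while preserving the block structure of $\mathcal R$ and the quantities $D_j,N_j,p_j,L_j,s_j,r_j$) it suffices to treat $u=\epsilon$. For part (1), first fix $\varrho=((q_k,\alpha_k))_{k\ge1}\in\mathcal R$. The branching relation $Y_n(\varrho)=\sum_{i=1}^{N}W_{\varrho,i}\,Y_{n-1}(\varrho,i)$ together with the identity $\mathbb E\big(\sum_{i=1}^N\exp(\langle q|X_i-\alpha\rangle-\wt P_{X,\phi,\alpha}(q)\phi_i)\big)=\mathbb E(\Sigma_\alpha(q,\wt P_{X,\phi,\alpha}(q)))=1$, which is \eqref{Palphaq} and holds for all $(q,\alpha)\in J_{X,\phi}\supseteq D$, shows that $(Y_n(\varrho))_{n\ge0}$ is a nonnegative $(\mathcal F_n)$-martingale of expectation $1$, hence $\mathbb P$-a.s. convergent. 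To upgrade this to uniform convergence over the compact set $\mathcal R$ I would prove that $\sum_{n\ge1}\mathbb E\big(\|Y_n(\cdot)-Y_{n-1}(\cdot)\|_{C(\mathcal R)}\big)<\infty$. Since each $Y_n(\cdot)$ is continuous (a finite sum of products of the continuous weights $\varrho\mapsto W_{\varrho,\cdot}$), this gives a.s. a uniformly Cauchy sequence in $C(\mathcal R)$, hence uniform convergence to a continuous limit $Y(\cdot)$; and since $\sup_{n\ge0}\|Y_n(\cdot)\|_{C(\mathcal R)}\le 1+\sum_{n\ge1}\|Y_n(\cdot)-Y_{n-1}(\cdot)\|_{C(\mathcal R)}$ has finite expectation, dominated convergence also yields the convergence in $L^1$ norm and $\mathbb E(Y(\varrho))=1$ for every $\varrho$.

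The key estimate is the bound on the block increments $\mathbb E\big(\|Y_{M_{j+1}}(\cdot)-Y_{M_j}(\cdot)\|_{C(\mathcal R)}\big)$ (the within-block increments being treated identically). Because $\varrho$ is constant on each block and $\#D_i\le i$ by \eqref{controlonDj}, the function $\varrho\mapsto Y_{M_{j+1}}(\varrho)-Y_{M_j}(\varrho)$ depends only on the first $j+1$ block-values of $\varrho$, hence takes at most $\prod_{i=1}^{j+1}\#D_i\le (j+1)!$ distinct values, and $\|\cdot\|_{C(\mathcal R)}^{p}$ (for the relevant exponent $p=p_{j+1}\in(1,2)$) is bounded by the sum of the $p$-th powers over these finitely many representatives. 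For a fixed representative I would split
$$Y_{M_{j+1}}(\varrho)-Y_{M_j}(\varrho)=\sum_{v\in\TT_{M_j}}\Big(\prod_{k=1}^{M_j}W_{\varrho,v_1\cdots v_k}\Big)\big(Z^{(v)}-1\big),$$
where $Z^{(v)}$ is the homogeneous block-$(j+1)$ Mandelbrot martingale (weight-law $\exp(\psi_i(q,\alpha))$, $(q,\alpha)\in D_{j+1}$) rooted at $v$; conditionally on $\mathcal F_{M_j}$ the variables $Z^{(v)}-1$ are independent and centered. The von Bahr--Esseen inequality for exponents in $(1,2]$ then gives
$$\mathbb E\big(|Y_{M_{j+1}}(\varrho)-Y_{M_j}(\varrho)|^{p}\big)\le 2\,\mathbb E\Big(\sum_{v\in\TT_{M_j}}\big(\textstyle\prod_{k=1}^{M_j}W_{\varrho,v_1\cdots v_k}\big)^{p}\Big)\cdot\sup_v\mathbb E\big(|Z^{(v)}-1|^{p}\big).$$
By a direct first-moment (``many-to-one'') computation the first factor equals $\prod_{k=1}^{M_j}\exp\!\big(L_{\alpha_k}(pq_k,p\wt P_{X,\phi,\alpha_k}(q_k))\big)$: every factor is $\le 1$ and those coming from the last block of $\TT_{M_j}$ are $\le e^{L_{j+1}}$ with $L_{j+1}<0$, so the first factor is $\le \exp(N_j\,p\,r_{j+1})$ by the definition of $r_{j+1}$ in \eqref{rj2}; this is where the sign of $L_{j+1}$ —equivalently $D\subseteq J_{X,\phi}$, i.e. $\wt P^*_{X,\phi,\alpha}(\nabla\wt P_{X,\phi,\alpha}(q))>0$— enters. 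The second factor is the classical $L^{p}$-bound for a Mandelbrot cascade: since $\mathbb E(\sum_i W_{\varrho,i}^{p})=e^{L_\alpha(pq,p\wt P_{X,\phi,\alpha}(q))}<1$ on block $j+1$, the martingale $Z^{(v)}$ is bounded in $L^{p}$, with bound controlled by $s_{j+1}/(1-e^{r_{j+1}})$ (this uses \eqref{finiteness2} via the choice of $s_{j+1}$). Taking $p$-th roots, summing over the $(j+1)!$ representatives, and invoking \eqref{control'} makes $\mathbb E\big(\|Y_{M_{j+1}}(\cdot)-Y_{M_j}(\cdot)\|_{C(\mathcal R)}\big)\le C j^{-2}$, which is summable over $j$.

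For part (2), letting $n\to\infty$ in $Y_n(\varrho,u)=\sum_{i=1}^{N_u}W_{\varrho,ui}Y_{n-1}(\varrho,ui)$ —legitimate a.s. simultaneously for all $\varrho$ and all (countably many) $u$ by part (1)— gives $Y(\varrho,u)=\sum_{i=1}^{N_u}W_{\varrho,ui}Y(\varrho,ui)$, i.e. $\mu_\varrho([u])=\sum_{i=1}^{N_u}\mu_\varrho([ui])$. So $u\mapsto\mu_\varrho([u])$ is a nonnegative additive function of the cylinders with total mass $Y(\varrho)<\infty$; since $(\partial\TT,\mathrm d_\phi)$ is compact and the cylinders form a basis of clopen sets, it extends to a Borel measure on $\N^\N$, carried by $\partial\TT$ because $\mu_\varrho([u])=0$ for $u\notin\TT$. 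It remains to show that $Y(\varrho,u)>0$ for all $\varrho\in\mathcal R$ and all $u\in\TT$ with $[u]\cap\partial\TT\ne\emptyset$ (which gives $\mu_\varrho\ne0$ and $\supp\mu_\varrho=\partial\TT$). For a single $\varrho$ this is the standard dichotomy: $Y(\varrho,u)$ is a uniformly integrable martingale of mean $1$ with positive weights and $\mathbb E(N)>1$, $\mathbb P(N\ge1)=1$, so $\mathbb P(Y(\varrho,u)=0\mid\TT(u)\text{ infinite})=0$. For the simultaneous version over the uncountable $\mathcal R$ I would argue by a recursion on the block index: the closed random sets $\{\varrho\in\mathcal R: Y(\varrho,ui)=0\}$, $1\le i\le N_u$, are conditionally independent given $\mathcal F_{|u|+1}$, and the quantitative non-degeneracy in \eqref{controlonDj} (the weighted single-offspring probability is $\le 1-c_0/j$ on block $j+1$) is precisely what forces the probability that $\bigcap_i\{\varrho:Y(\varrho,ui)=0\}\ne\emptyset$ to vanish as one descends the tree; intersecting over all $u$ concludes.

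The main obstacle is twofold. Computationally, it is the key estimate: one must extract a geometric per-block decay with an exponent $p\in(1,2]$ compatible both with the combinatorial factor $(j+1)!$ and with the prescribed growth of $N_j$, which is exactly the purpose of the intertwined tunings \eqref{controlonDj}, \eqref{rj2} and \eqref{control'}. Conceptually, it is the \emph{uniform in $\varrho$} positivity in part (2): the one-cascade dichotomy has to be promoted to a statement about the continuum of cascades $\{Y(\varrho,\cdot):\varrho\in\mathcal R\}$ sharing the same underlying Galton--Watson randomness, and this is where \eqref{controlonDj} is genuinely needed rather than merely convenient.
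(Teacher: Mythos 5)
The genuine gap is the uniform positivity in part (2). The paper's argument is a $0$\nobreakdash-$1$ law: since $Y(\varrho)=\sum_{u\in\TT_m}\big(\prod_{k=1}^mW_{\varrho,u_{|k}}\big)Y(\varrho,u)$ with a.s.\ positive weights for every $m$, the event $\mathcal N=\{\omega:\exists\,\varrho\in\mathcal R,\,Y(\varrho)=0\}$ is a tail event, hence of probability $0$ or $1$; if $\mathbb P(\mathcal N)=1$, a recursive selection over the finite sets $\{\varrho_{|n}:\varrho\in\mathcal R\}$ (each $\mathcal N_\gamma$ is again a tail event, and at each step some child has full probability) produces a deterministic $\gamma\in\mathcal R$ with the property that a.s.\ there exist $\varrho^{(n)}\to\gamma$ with $Y(\varrho^{(n)})=0$; the continuity of the uniform limit $Y(\cdot)$ on $\mathcal R$ then gives $Y(\gamma)=0$ a.s., contradicting $\mathbb E(Y(\gamma))=1$. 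Your sketch — a block-index recursion over the conditionally independent random sets $\{\varrho:Y(\varrho,ui)=0\}$ — does not close as stated: you never specify what quantity contracts under the recursion, and the sketch uses neither the compactness of $\mathcal R$ nor the continuity of $Y$, both of which are load-bearing in the paper's proof. Worse, the assertion that \eqref{controlonDj} is ``genuinely needed'' for positivity is wrong; the paper does not invoke \eqref{controlonDj} here at all. That hypothesis enters only in the proof of Proposition~\ref{pp33}(2), to control the waiting time $k_n(t)$.

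On part (1), your outline is correct in spirit but leaves loose ends. The paper estimates every one-step increment $\|Y_n(\cdot)-Y_{n-1}(\cdot)\|_{C(\mathcal R)}$ directly via Lemma~\ref{lem-2.2'}, a single-generation von~Bahr--Esseen bound; you estimate only the block increments $\|Y_{M_{j+1}}(\cdot)-Y_{M_j}(\cdot)\|_{C(\mathcal R)}$, using a cascade $L^{p_{j+1}}$-moment bound on the block martingale $Z^{(v)}$. Your remark that the within-block increments are ``treated identically'' is inaccurate: for $M_j<n\le M_{j+1}$, $Y_n-Y_{n-1}$ decomposes over $\TT_{n-1}$ with the centered one-step variable $A_{\varrho,u}-1$, not over $\TT_{M_j}$ with $Z^{(v)}-1$, and the relevant $L^p$ control is the single-generation moment $s_{j+1}$, not the cascade bound $s_{j+1}/(1-e^{r_{j+1}})$. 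This matters because your stated plan is summability of the per-level increments, and the block estimate alone does not deliver a.s.\ Cauchy for the full sequence $(Y_n)_n$ (one would need a within-block maximal inequality with a constant that remains controlled as $p_j\downarrow 1$). Finally, the reduction to $u=\epsilon$ by ``passing to the subtree rooted at $u$, which preserves the block structure'' is imprecise: the block structure is shifted and its first block truncated to length $M_{j_{|u|}+1}-|u|$; the paper re-does the increment estimate for general $u$ and relies on \eqref{control'}, \eqref{control2'} and the bound \eqref{sharp}, which is also the source of the uniform control \eqref{control3'} used in the proof of Proposition~\ref{pp33}.
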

\begin{lemma}\cite{vB-E}\label {lem-2.1}
 Let $(X_j)_{j\geq 1}$ be a sequence of centered independent  real random variables. For every finite  $I\subset\mathbb N$ and $p\in (1, 2]$, one has $\E\big (\big |\sum_{i\in I} X_{i}\big|^p\big ) \leq 2^{p-1}  \sum_{i\in I}  \E( \left|X_{i}\right|^p)$.  
\end{lemma}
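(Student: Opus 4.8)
\textbf{Plan for proving Lemma~\ref{lem-2.1} (the von Bahr--Esseen inequality).}

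The statement is the classical von Bahr--Esseen inequality, and I would reproduce its standard short proof. The plan is to reduce by induction on $\#I$ to the two-summand case, and then to prove the two-summand case by a pointwise convexity estimate combined with a symmetrization/centering trick. So write $I=\{1,\dots,n\}$ without loss of generality, set $S_k=\sum_{i=1}^k X_i$, and suppose inductively that $\E(|S_{n-1}|^p)\le 2^{p-1}\sum_{i=1}^{n-1}\E(|X_i|^p)$. The induction step then reduces to showing $\E(|S_{n-1}+X_n|^p)\le \E(|S_{n-1}|^p)+2^{p-1}\E(|X_n|^p)$, which is exactly the two-summand statement with the first summand $S_{n-1}$ and second summand $X_n$, these being independent and centered.

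The core of the argument is the two-summand inequality: if $U,V$ are independent, centered, real, and $p\in(1,2]$, then $\E(|U+V|^p)\le \E(|U|^p)+2^{p-1}\E(|V|^p)$. First I would record the elementary numerical inequality that for all real $a,b$,
$$
|a+b|^p\le |a|^p+p\,|a|^{p-1}\operatorname{sgn}(a)\,b+2^{p-1}|b|^p,
$$
which follows from the fact that the function $g(x)=|x|^p$ has $g''(x)=p(p-1)|x|^{p-2}$ and a Taylor-with-remainder argument, or more simply from the two cases $|b|\le|a|$ (use $|a+b|^p-|a|^p-p|a|^{p-1}\operatorname{sgn}(a)b\le \tfrac{p(p-1)}{2}\,2^{2-p}|a|^{p-2}b^2\le 2^{p-1}|b|^p$ after bounding $|a|^{p-2}b^2$ using $|b|\le|a|$ and $p\le 2$) and $|b|>|a|$ (then $|a+b|^p\le(2|b|)^p=2^p|b|^p$ and the linear term is dominated crudely). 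I would present whichever case split compiles most cleanly; the key point is only that such a pointwise bound with the constant $2^{p-1}$ on the $|b|^p$ term exists. Then apply this with $a=U(\omega)$, $b=V(\omega)$, take expectations, and use independence together with $\E(V)=0$: the cross term $\E\big(p|U|^{p-1}\operatorname{sgn}(U)\,V\big)=p\,\E\big(|U|^{p-1}\operatorname{sgn}(U)\big)\E(V)=0$, leaving $\E(|U+V|^p)\le\E(|U|^p)+2^{p-1}\E(|V|^p)$, which is the claim. One should note that all expectations are finite: $\E(|X_i|^p)<\infty$ is the hypothesis, and $\E(|S_k|^p)<\infty$ follows by induction (or crudely from $|S_k|^p\le k^{p-1}\sum|X_i|^p$), so there are no integrability issues when splitting the expectation of the sum.

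The induction then closes immediately: $\E(|S_n|^p)\le\E(|S_{n-1}|^p)+2^{p-1}\E(|X_n|^p)\le 2^{p-1}\sum_{i=1}^{n-1}\E(|X_i|^p)+2^{p-1}\E(|X_n|^p)=2^{p-1}\sum_{i=1}^n\E(|X_i|^p)$, with the base case $n=1$ trivial since $2^{p-1}\ge1$. The main (really the only) obstacle is getting the pointwise inequality $|a+b|^p\le|a|^p+p|a|^{p-1}\operatorname{sgn}(a)b+2^{p-1}|b|^p$ with the sharp constant and presenting its verification compactly; everything after that is a one-line application of independence and mean-zero, and the induction is routine. Since this is a cited, standard lemma (\cite{vB-E}), I would keep the write-up brief, possibly just sketching the pointwise estimate and emphasizing the centering cancellation.
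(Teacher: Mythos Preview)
The paper does not prove this lemma; it is stated with a citation to von Bahr and Esseen \cite{vB-E} and used as a black box. Your sketch follows the standard argument from that reference: induction on the number of summands, reducing to the two-summand estimate $\E|U+V|^p\le \E|U|^p+2^{p-1}\E|V|^p$, which in turn rests on a pointwise inequality $|a+b|^p\le |a|^p+p|a|^{p-1}\mathrm{sgn}(a)\,b+C_p|b|^p$ so that independence and $\E V=0$ kill the cross term. This is the right architecture.

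One technical caution: your Taylor-remainder justification of the pointwise inequality is delicate because for $p\in(1,2)$ the second derivative $p(p-1)|x|^{p-2}$ is singular at $x=0$, so the case $|b|\le|a|$ with $a$ and $b$ of opposite signs (where $a+tb$ may vanish) is not handled by the bound you write. The original paper avoids this by a different decomposition (essentially analysing $t\mapsto|1+t|^p-1-pt$ directly and bounding it by a multiple of $|t|^p$ via elementary inequalities rather than Taylor's formula). Since you already flag this step as the only real obstacle and the lemma is cited rather than proved here, your plan is adequate; just be aware that the clean Taylor argument you outline does not go through verbatim and you would need to patch it along the lines of \cite{vB-E}.
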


\begin{lemma} \label{lem-2.2'}
Let $ \varrho\in \mathcal{R}$ and $\beta\in (0,1]$. Define  $Z_n(\beta,\varrho)=  Y_n(\beta,\varrho)- Y_{n-1}(\beta,\varrho)$ for $n\ge 0$. Recall the definition \eqref{sigma} of $\Sigma_\alpha$. For every $p\in (1,2)$ one has (writing $\widetilde P_{\alpha_n}$ for $\widetilde P_{X,\phi,\alpha_n}$)
\begin{equation} \E( |Z_n(\beta,\varrho)|^{p}) \leq (2\beta^{-1})^p\mathbb{E}\big (\Sigma_{\alpha_n}(q_n,\widetilde P_{\alpha_n} (q_n))^p\big )\displaystyle\prod_{k=1}^{n-1}\beta^{1-p}\exp \big( L_{\alpha_k} (pq_k,p\widetilde P_{\alpha_k} (q_k)) \big ).
\end{equation}
\end{lemma}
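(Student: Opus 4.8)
The plan is to split $Z_n(\beta,\varrho)$ over the vertices of $\TT_{n-1}$, to observe that conditionally on the first $n-1$ generations it becomes a weighted sum of i.i.d.\ centered random variables, to apply the von Bahr--Esseen inequality (Lemma~\ref{lem-2.1}) conditionally, and to finish with an elementary first--moment identity.

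First I would use the branching decomposition of $Y_n(\beta,\varrho)$ and $Y_{n-1}(\beta,\varrho)$ to write
\[
Z_n(\beta,\varrho)=\sum_{v\in\TT_{n-1}}\Pi_v\,(\Sigma_v-1),\qquad \Pi_v=\prod_{k=1}^{n-1}W_{\beta,\varrho,v_1\cdots v_k},\quad \Sigma_v=\sum_{i=1}^{N_v}\widetilde W_{\beta,vi}\,W_{\varrho,vi}.
\]
Conditioning on ${\mathcal F}_{n-1}\vee{\mathcal F}_{\beta,n-1}$, the a.s.\ finite index set $\TT_{n-1}$ and the weights $\Pi_v$ are measurable, while the offspring data $(N_v,(\widetilde W_{\beta,vi},X_{vi},\phi_{vi})_{i\ge1})$, $v\in\TT_{n-1}$, are i.i.d.\ copies of $(N,(\widetilde W_{\beta,i},X_i,\phi_i)_{i\ge1})$ and independent of the conditioning; hence the $\Sigma_v$ are conditionally i.i.d., each distributed as $\Sigma_\beta:=\sum_{i=1}^N\widetilde W_{\beta,i}\exp(\langle q_n|X_i-\alpha_n\rangle-\widetilde P_{\alpha_n}(q_n)\phi_i)$. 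Since $\widetilde W_{\beta,i}$ is independent of $(N,X,\phi)$ with $\mathbb E(\widetilde W_{\beta,i})=1$, \eqref{Palphaq} gives $\mathbb E(\Sigma_\beta)=\mathbb E\big(\Sigma_{\alpha_n}(q_n,\widetilde P_{\alpha_n}(q_n))\big)=1$, so $\Sigma_v-1$ is centered.

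Applying Lemma~\ref{lem-2.1} to the conditional law of the independent centered summands $\Pi_v(\Sigma_v-1)$ and then taking expectations yields
\[
\mathbb E\big(|Z_n(\beta,\varrho)|^p\big)\le 2^{p-1}\,\mathbb E\Big(\sum_{v\in\TT_{n-1}}\Pi_v^{\,p}\Big)\,\mathbb E\big(|\Sigma_\beta-1|^p\big).
\]
To bound the last factor I would use $\Sigma_\beta\ge0$ and $\mathbb E(\Sigma_\beta)=1$, whence $|\Sigma_\beta-1|^p\le\Sigma_\beta^p\vee1\le\Sigma_\beta^p+1$ and, by Jensen, $1\le\mathbb E(\Sigma_\beta^p)$, so that $\mathbb E(|\Sigma_\beta-1|^p)\le 2\,\mathbb E(\Sigma_\beta^p)$; moreover $\widetilde W_{\beta,i}\le\beta^{-1}$ forces $\Sigma_\beta\le\beta^{-1}\Sigma_{\alpha_n}(q_n,\widetilde P_{\alpha_n}(q_n))$, hence $\mathbb E(|\Sigma_\beta-1|^p)\le 2\beta^{-p}\,\mathbb E\big(\Sigma_{\alpha_n}(q_n,\widetilde P_{\alpha_n}(q_n))^p\big)$ (the asserted inequality being trivial if this moment is infinite). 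For the remaining factor I would expand over paths $v=v_1\cdots v_{n-1}\in\TT_{n-1}$ and use the independence of the offspring data met along a path, the equalities $\mathbb E((\widetilde W_{\beta,i})^p)=\beta^{1-p}$, and the independence of $\widetilde W_{\beta,i}$ from $(N,X,\phi)$, which by induction on $n$ gives
\[
\mathbb E\Big(\sum_{v\in\TT_{n-1}}\Pi_v^{\,p}\Big)=\prod_{k=1}^{n-1}\beta^{1-p}\,\mathbb E\big(\Sigma_{\alpha_k}(pq_k,p\widetilde P_{\alpha_k}(q_k))\big)=\prod_{k=1}^{n-1}\beta^{1-p}\exp\!\big(L_{\alpha_k}(pq_k,p\widetilde P_{\alpha_k}(q_k))\big).
\]
Multiplying these bounds produces the constant $2^{p-1}\cdot2=2^p$, i.e.\ the announced prefactor $(2\beta^{-1})^p$, which finishes the argument.

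The computations above are routine; the only point needing care is the measurability bookkeeping for the conditioning --- checking that $\Pi_v$ and $\TT_{n-1}$ are ${\mathcal F}_{n-1}\vee{\mathcal F}_{\beta,n-1}$-measurable while the generation-$(n-1)$ offspring data are ``fresh'' and i.i.d.\ --- together with carrying the shift of the parameters $(q_k,\alpha_k)$ correctly through the many-to-one expansion, exactly as in the analogous estimates of \cite{AB}.
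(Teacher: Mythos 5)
Your proof is correct and follows essentially the same route as the paper: the same branching decomposition of $Z_n(\beta,\varrho)$ over $\TT_{n-1}$, the same conditional application of the von Bahr--Esseen inequality, the same bound $\mathbb E(|A-1|^p)\le 2\,\mathbb E(A^p)$ combined with $A\le\beta^{-1}\Sigma_{\alpha_n}(q_n,\widetilde P_{\alpha_n}(q_n))$, and the same many-to-one computation giving the product $\prod_k \beta^{1-p}\exp(L_{\alpha_k}(pq_k,p\widetilde P_{\alpha_k}(q_k)))$. The only difference is cosmetic: you spell out the elementary justification of $\mathbb E(|\Sigma_\beta-1|^p)\le 2\,\mathbb E(\Sigma_\beta^p)$ via $|\Sigma_\beta-1|^p\le\Sigma_\beta^p\vee 1$ and Jensen, where the paper merely states ``since $\mathbb E(A_{\beta,\varrho})=1$ and $p\ge 1$.''
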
 
 \begin{proof}
Fix $p\in (1,2)$. Setting $A_{\beta,\varrho,u}= \displaystyle\sum_{i=1}^{N_u} \widetilde W_{\beta, ui} W_{\varrho,ui} $ and using the branching property we can write 
$
Z_n(\beta,\varrho) = \displaystyle\sum_{u\in T_{n-1}} \Big (\prod_{k=1}^{n-1} \widetilde W_{\beta,u_1\cdots u_k} W_{\varrho,u_1\cdots u_k} \Big ) (A_{\beta,\varrho,u}- 1).
$
By construction, the random variables $(A_{\beta,\varrho,u}-1)$,  $u\in\N^{n-1}$, are centered and i.i.d., and  independent of $\mathcal{F}_{\beta,n-1}\otimes\mathcal{F}_{n-1}$. Consequently, conditionally on $\mathcal{F}_{\beta,n-1}\otimes\mathcal{F}_{n-1}$, we can apply Lemma~\ref{lem-2.1} to the  $\{ A_{\beta,\varrho,u} \prod_{k=1}^{n-1} \widetilde W_{\beta,u_1\cdots u_k} W_{\varrho,u_1\cdots u_k} \}_{u\in \TT_{n-1}}$. Since the $A_{\beta,\varrho,u}$, $u\in \N^{n-1}$, have the same distribution, this yields, with $A_{\beta,\varrho}=A_{\beta,\varrho,\epsilon}$:
$$
\E(\left|Z_n(\beta,\varrho)\right|^{p}| {\mathcal{F}}_{\beta,n-1} \otimes {\mathcal{F}}_{n-1}) \leq 2^{p-1}\mathbb{E}(|A_{\beta,\rho}-1|^p) \sum_{u\in \TT_{n-1}}\prod_{k=1}^{n-1} \widetilde W_{\beta,u_1\cdots u_k} ^pW_{\varrho,u_1\cdots u_k} ^p.
$$
Also, $\mathbb{E}(|A_{\beta,\rho}-1|^p) \le 2\mathbb{E}(A_{\beta,\rho}^p)$, since $\mathbb{E}(A_{\beta,\rho})=1$ and $p\ge 1$, and  as $0\le \widetilde W_{\beta,i}\le \beta^{-1}$,   $A_{\beta,\rho}\le \beta^{-1} \Sigma_{\alpha_n} \big(q_n,\widetilde P_{X,\phi,\alpha_n} (q_n)\big)$. 
Thus, $2^{p-1}\mathbb{E}(|A_{\beta,\rho}-1|^p) \le (2\beta^{-1})^p\mathbb{E}\big (\Sigma_{\alpha_n}(q_n,\widetilde P_{X,\phi,\alpha_n} (q_n))^p\big )$. Moreover, a recursive using of the branching property and the independence of the random vectors $(N_u,X_{u1}, \phi_{u1}, \ldots)$ and random variables $\widetilde W_{\beta,u}$ used in the constructions yields, setting $W_{q_k,i}=  \exp (\langle q_{k}|X_{i} -\alpha_k\rangle - \widetilde P_{X,\phi,\alpha_k}(q_k)\phi_i)$:
\begin{align*}
\mathbb{E}\Big (\sum_{u\in \TT_{n-1}}\prod_{k=1}^{n-1} \widetilde W_{\beta,u_1\cdots u_k} ^pW_{\varrho,u_1\cdots u_k} ^p\Big )&=\prod_{k=1}^{n-1} \mathbb{E} (\widetilde W_\beta^p)\mathbb{E}\Big (\sum_{i=1}^N W_{q_k,i}^p\Big )\\&=\displaystyle\prod_{k=1}^{n-1}\beta^{1-p}\exp \big( L_{\alpha_k} (pq_k,p\widetilde P_{X,\phi,\alpha_k} (q_k)) \big ).
\end{align*}
Collecting the previous estimates one gets the desired conclusion.
 \end{proof}
 
 \begin{proof}[Proof of Proposition~\ref{pro-3.4}] (1) It is similar to the proof of~\cite[Proposition 2.8(1)]{AB}. We detail it for reader's convenience. 
 
First, consider the case $u=\epsilon$. 
 Observe that for all $n\ge 1$,  by construction the function  $Y_n(\cdot)=Y_n(\cdot,\emptyset)$ is continuous and constant over the set of those sequences $\varrho$ having the same first terms. 
Given $n\ge 1$ and $ \varrho\in \mathcal{R}$,  since $M_{j_n}+1\le n\le M_{j_n+1}$, applying Lemma~\ref{lem-2.2'} with $p=p_{j_n+1}$ and $\beta=1$, one obtains
\begin{align*}
&\|Y_n(\varrho)-Y_{n-1}(\varrho)\|^{p_{j_n+1}}_{p_{j_n+1}}\\
&\le  2^{p_{j_n+1}}\mathbb{E}\big (\Sigma_{\alpha_n}(q_n,\widetilde P_{X,\phi,\alpha_n} (q_n))^{p_{j_n+1}}\big )\displaystyle\prod_{k=1}^{n-1}\exp \big( L_{\alpha_k} ({p_{j_n+1}}q_k,p\widetilde P_{X,\phi,\alpha_k} (q_k)) \big )\\
&\leq  2^{p_{j_n+1}} s_{j_n+1}^{p_{j_n+1}}\displaystyle\prod_{k=1}^{n-1}\exp \big( \sup_{(q,\alpha)\in D_{j_n+1}} L_{\alpha} ({p_{j_n+1}}q,p\widetilde P_{X,\phi,\alpha_k} (q)) \big )\\
& \qquad(\text{since $\{(q_k,\alpha_k): 1\le k\le n\}\subset D_{j_n+1}$}) \\
&\le  2^{p_{j_n+1}}s_{j_n+1}^{p_{j_n+1}} \exp ((n-1)p_{j_{n}+1}r_{j_n+1}) \ (\text{due \eqref{rj2}});
\end{align*}
this bound is independent of $\varrho$. Note that by definition of $\mathcal{R}$,  $\#\{\varrho_{|n}: \varrho\in \mathcal{R}\}=\prod_{j=1}^{j_n+1}\#D_j\le (j_n+1)!$; also  $Y_n(\varrho)-Y_{n-1}(\varrho) $ only depends on $\varrho_{|n}=((q_1,\alpha_1),\cdots, (q_n,\alpha_n))$. Consequently,
\begin{eqnarray*}
\big  \|\|Y_n(\cdot)-Y_{n-1}(\cdot)\|_\infty\big\|_1\le \sum_{\varrho_{|n}}\|Y_n(\varrho)-Y_{n-1}(\varrho)\|_{p_{j_n+1}}
 \le  2 (j_n+1)!s_{j_n+1} \exp ((n-1)r_{j_n+1}).
 \end{eqnarray*}
 This yields
\begin{eqnarray*}
 \sum_{n\in\mathbb{N}}\big  \|\|Y_n(\cdot)-Y_{n-1}(\cdot)\|_\infty\big\|_1&\le& \sum_{j\ge 0}\sum_{M_j+1\le n\le M_{j+1}} 2(j+1)!s_{j+1}  \exp ((n-1)r_{j+1})\\
 &\le& \sum_{j\ge 0} 2  (j+1)!s_{j+1}  \frac{\exp (M_{j}r_{j+1})}{1-\exp (r_{j+1})}<\infty,
 \end{eqnarray*}
where we used \eqref{control'}.  If follows that $(Y_n)_{n\in\mathbb{N}}$ converges uniformly, almost surely and in $L^1$ norm, to a function $Y$, as $n\to\infty$. 

Let us show that $Y$ does not vanish on $\mathcal{R}$ almost surely. 
 For each $n\ge 1$, let $\mathcal{R}_{X,\phi |n}=\{\varrho_{|n}: \varrho\in \mathcal{R} \}$, and for $\gamma\in \mathcal{R}_{X,\phi |n}$ define the event $\mathcal N_\gamma = \{ \omega\in\Omega: \exists  \varrho\in \mathcal{R},\  Y(\varrho)=0,\ \varrho_{|n}=\gamma \}$.  Let $\mathcal N=\{ \omega\in\Omega: \exists  \varrho\in \mathcal{R},\  Y(\varrho)=0\}$. Since the functions $Y_n$ are almost surely positive, this event is a tail event, and it has probability 0 or 1. The same property holds for the events $\mathcal N_\gamma$, $\gamma\in \bigcup_{n\in\mathbb{N}}\mathcal{R}_{X,\phi |n}$.  Suppose that $\mathcal N$ has probability 1. Since $\mathcal N=\bigcup_{\varrho_1\in \mathcal{R}_1} \mathcal N_{(\gamma_1)}$, necessarily, there exists $\gamma_1\in\mathcal{R}_1$ such that $\mathbb P(\mathcal N_{(\gamma_1)})>0$, and so $\mathbb P(\mathcal N_{(\gamma_1)})=1$. Iterating this remark we can build an infinite deterministic sequence $\gamma=(\gamma_k)_{k\ge 1}\in \mathcal{R}$ such that $\mathbb P(\mathcal N_{(\gamma_1,\ldots,\gamma_n)})=1$ for all $n\ge 1$. This means that almost surely, for all $n\ge 1$, there exists $\varrho^{(n)}\in  \mathcal{R} $ such that $\varrho^{(n)}_{|n}=(\gamma_1,\ldots,\gamma_n)$ and $Y(\varrho^{(n)})=0$. But $\varrho^{(n)}_{|n}=(\gamma_1,\ldots,\gamma_n)$ implies that $\varrho^{(n)}$ converges to~$\gamma$ as $n\to\infty$. Hence, by continuity of $Y$ at $\gamma$, we get $Y(\gamma)=0$ almost surely. However, a consequence of our convergence result for $Y_n$ is that the martingale $Y_n(\gamma)$ converges in $L^1$ to $Y(\gamma)$, so that $\mathbb{E}(Y(\gamma))=1$. This is a contradiction. Thus $\mathbb{P}(\mathcal N)=0$. 
 
\medskip
Now fix any $u\in \bigcup_{n\in\mathbb{N}}\mathbb N^n$. Mimicking what was done for $u=\epsilon$,  for all $n\ge 1$ one  gets
$$
  \big \|\|Y_n(\cdot,u)-Y_{n-1}(\cdot,u)\|_\infty\big \|_1\le  2 (j_{|u|+n}+1)!s_{j_{|u|+n}+1}  \exp \big ((n-1)r_{j_{|u|+n}+1}\big ).
$$
Consequently, setting $a_{j,n}(u)=2 (j_{|u|+n}+1)!s_{j_{|u|+n}+1}  \exp \big ((n-1)r_{j_{|u|+n}+1}\big )$, we can get
\begin{align*}
& \sum_{n\in\mathbb{N}}\big \|\|Y_n(\cdot,u)-Y_{n-1}(\cdot,u)\|_\infty\big \|_1
\\ &\le \sum_{n=1}^{M_{j_{|u|}+1}-|u|} a_{j_{|u|},n}(u)+ \sum_{j\ge j_{|u|}+1}\sum_{M_j+1\le |u|+n\le M_{j+1}} a_{j,n}(u)\\
 &\le  \sum_{j_{|u|}\le j\le j_{|u|}+1} \frac{2 (j+1)!s_{j+1}}{1-\exp (r_{j+1})}+ \sum_{j\ge j_{|u|}+2} 2 (j+1)!s_{j+1}  \frac{\exp ((M_{j}-|u|)r_{j+1})}{1-\exp (r_{j+1})}.
 \end{align*}
 Note that due to the inequalities $M_{j_{|u|}}+1\le |u|\le M_{j_{|u|}+1}$, for  $j\ge j_{|u|}+2$ one has  $M_j-|u|\ge N_j$, so 
 \begin{eqnarray}
\nonumber&& \sum_{n\in\mathbb{N}}\big \|\|Y_n(\cdot,u)-Y_{n-1}(\cdot,u)\|_\infty\big \|_1\\
\label{sharp}&\le& \sum_{j_{|u|}\le j\le j_{|u|}+1} \frac{2 (j+1)!s_{j+1}}{1-\exp (r_{j+1})}+ \sum_{j\ge j_{|u|}+2} 2 (j+1)!s_{j+1}  \frac{\exp (N_jr_{j+1})}{1-\exp (r_{j+1})}\\
\nonumber & \le& 2 C_0\exp (N_{j_{|u|}}\gamma_{j_{|u|}+1}^2m_{j_{|u|}+1})+2 \sum_{j\ge j_{|u|}+2}j^{-2},
 \end{eqnarray}
where  \eqref{control'} and \eqref{control2'} have been used. This implies the desired convergence to a limit $Y(\cdot, u)$. The set $\bigcup_{k\ge 0}\mathbb N^k$ being countable, the convergence holds also  almost surely, simultaneously for all $u$, and  the almost sure positivity of $Y(\cdot, u)$ is proven by using the same argument as for $u=\epsilon$.

Finally, recall the definition \eqref{mhatj} of $(\widehat m_{j})_{j\ge 1}$ and set $\varepsilon_k=\gamma_{j_{k}+1}^2\widehat m_{j_{k}+1}$ for all $k\ge 0$. The previous calculations, together with  the fact that $Y_0(\cdot, u)=1$ for all $u\in \bigcup_{k\ge 0}\mathbb N^k$ and the inequality  $|u|\ge N_{j_{|u|}}$ imply the existence of a constant $C_{X,\phi}$ such that:
\begin{equation}\label{control3'}
\|\sup_{ \varrho\in \mathcal{R} }Y(\varrho, u)\|_1\le C_{X,\phi} \exp (\varepsilon_{|u|} N_{j_{|u|}} ) \le C_{X,\phi}  \exp (\varepsilon_{|u|} |u|) \quad (\forall\ u\in \bigcup_{k\ge 0}\mathbb N^k).
\end{equation}

\noindent(2) This follows from the branching property. \end{proof}

Estimates similar to the previous ones yield the following lemma, which will be used in the proof of Proposition~\ref{LD51}.

\begin{lemma}\label{bettercontrol2}
Let $K$ be a compact subset of $J_{X,\phi}$ containing the unique element of~$D_1$. There exists $p_K\in (1,2)$ such that  $
\sup_{j\ge 1} \sup_{(q,\alpha)\in D_{j}\cap K}L_\alpha (p_Kq ,p_K\widetilde P_{X,\phi,\alpha}(q))<0$ and $\sup_{j\ge 1} \sup_{(q,\alpha)\in D_{j}\cap K}
\mathbb{E}\big (\Sigma_{\alpha}(q,\widetilde P_{X,\phi,\alpha} (q))^{p_K}\big )<\infty$. Set $\mathcal{R}(K)=\{ \varrho\in \mathcal{R} : \ \forall \ k\ge 1,\ (q_k,\alpha_k)\in K\}$. One has  
$
\|\sup_{\varrho\in \mathcal{R} (K)}Y(\varrho, u)\|_{p_K}= O((j_{|u|}+2)!).
$
\end{lemma}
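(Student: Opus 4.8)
The plan is to mimic closely the estimates carried out in the proof of Proposition~\ref{pro-3.4}(1), but keeping track of the fact that now all the parameters $(q_k,\alpha_k)$ are constrained to lie in the fixed compact set $K\subset J_{X,\phi}$, which gives us \emph{uniform} (i.e. $j$-independent) control on the exponents $p_j$ and on the quantities $s_j$ and $L_j$. First I would establish the existence of $p_K$. For each $(q,\alpha)\in K$ we have $(q,\alpha)\in J_{X,\phi}$, so by \eqref{entropiesurLyapounov} and \eqref{finiteness2} the function $p\mapsto L_\alpha(pq,p\widetilde P_{X,\phi,\alpha}(q))$ is finite on a neighbourhood of $1$, vanishes at $1$, and has strictly negative right-derivative at $1$ (this is exactly the computation recalled just before \eqref{rj2}: $\frac{\mathrm d}{\mathrm d p}L_\alpha(pq,p\widetilde P_{X,\phi,\alpha}(q))(1^+)=-h(q,\alpha)<0$). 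Since $K$ is compact and $(q,\alpha,p)\mapsto L_\alpha(pq,p\widetilde P_{X,\phi,\alpha}(q))$ is jointly continuous (indeed analytic, by the real analyticity of $\widetilde P_{X,\phi,\alpha}$ and of $L_\alpha$), a standard compactness argument yields a single $p_K\in(1,2)$ with $\sup_{(q,\alpha)\in K}L_\alpha(p_Kq,p_K\widetilde P_{X,\phi,\alpha}(q))<0$; shrinking $p_K$ further if necessary (again using \eqref{finiteness2} and continuity of $(q,\alpha)\mapsto \mathbb E(\Sigma_\alpha(q,\widetilde P_{X,\phi,\alpha}(q))^{p})$ in $p$ near $1$, uniformly over the compact $K$) we also get $\sup_{(q,\alpha)\in K}\mathbb E(\Sigma_\alpha(q,\widetilde P_{X,\phi,\alpha}(q))^{p_K})<\infty$. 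Restricting to $D_j\cap K$ only makes these suprema smaller, so both displayed bounds follow. Write $-\delta_K:=\sup_{(q,\alpha)\in K}L_\alpha(p_Kq,p_K\widetilde P_{X,\phi,\alpha}(q))<0$ and $S_K:=\sup_{(q,\alpha)\in K}\|\Sigma_\alpha(q,\widetilde P_{X,\phi,\alpha}(q))\|_{p_K}<\infty$.

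Next I would run the martingale-increment estimate. Fix $\varrho\in\mathcal R(K)$ and $n\ge 1$. Applying Lemma~\ref{lem-2.2'} with $p=p_K$ and $\beta=1$ (note $p_K$ is admissible since for each $k$, $(q_k,\alpha_k)\in D_j\cap K$ for the appropriate $j$, and $L_{\alpha_k}(p_Kq_k,p_K\widetilde P_{\alpha_k}(q_k))\le -\delta_K<0$), one gets
$$
\|Y_n(\varrho,u)-Y_{n-1}(\varrho,u)\|_{p_K}^{p_K}\le 2^{p_K}\,\mathbb E\big(\Sigma_{\alpha_{|u|+n}}(q_{|u|+n},\widetilde P_{\alpha_{|u|+n}}(q_{|u|+n}))^{p_K}\big)\,\prod_{k=1}^{n-1}e^{L_{\alpha_{|u|+k}}(p_Kq_{|u|+k},p_K\widetilde P_{\alpha_{|u|+k}}(q_{|u|+k}))}\le 2^{p_K}S_K^{p_K}\,e^{-(n-1)\delta_K},
$$
a bound uniform in $\varrho\in\mathcal R(K)$ and depending on $u$ only through the shift of indices (which is irrelevant to the estimate). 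As in the proof of Proposition~\ref{pro-3.4}, $Y_n(\varrho,u)-Y_{n-1}(\varrho,u)$ depends on $\varrho$ only through $\varrho_{|n}$ (after the obvious index shift by $|u|$), and $\#\{\varrho_{|n}:\varrho\in\mathcal R\}=\prod_{j=1}^{j_{|u|+n}+1}\#D_j\le (j_{|u|+n}+1)!$. Summing over this finite set and using the triangle inequality in $L^{p_K}$,
$$
\big\|\sup_{\varrho\in\mathcal R(K)}|Y_n(\cdot,u)-Y_{n-1}(\cdot,u)|\big\|_{p_K}\le 2 S_K\,(j_{|u|+n}+1)!\,e^{-(n-1)\delta_K/p_K}.
$$
Summing over $n\ge 1$, splitting the sum according to the block index $j$ (as in Proposition~\ref{pro-3.4}), and using $Y_0(\cdot,u)=1$, one obtains that $\|\sup_{\varrho\in\mathcal R(K)}Y(\cdot,u)\|_{p_K}$ is bounded by $1$ plus a sum in which the block starting at generation $M_{j}$ contributes at most a constant times $(j+1)!\,e^{-(M_j-|u|)\delta_K/p_K}$ for $j\ge j_{|u|}+1$, plus the two ``current'' blocks $j\in\{j_{|u|},j_{|u|}+1\}$ which contribute at most a constant times $(j_{|u|}+2)!$. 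Since for $j\ge j_{|u|}+2$ one has $M_j-|u|\ge N_j\ge (\log((j+1)!))^3$ by \eqref{control0'}, the terms $(j+1)!\,e^{-(M_j-|u|)\delta_K/p_K}\le (j+1)!\,e^{-\delta_K(\log((j+1)!))^3/p_K}$ form a convergent series whose tail is $o(1)$; hence the whole expression is $O((j_{|u|}+2)!)$, which is the claimed bound.

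The main obstacle, modest as it is, lies in the first paragraph: passing from the \emph{pointwise} (in $(q,\alpha)$) strict negativity of $L_\alpha(p q,p\widetilde P_{X,\phi,\alpha}(q))$ for $p$ slightly above $1$ — which is all that \eqref{finiteness2} and the definition of $J_{X,\phi}$ give directly — to a \emph{single} exponent $p_K>1$ working uniformly over the compact $K$, together with the simultaneous $L^{p_K}$-integrability of $\Sigma_\alpha(q,\widetilde P_{X,\phi,\alpha}(q))$. This is handled by the joint real analyticity (hence continuity) of the relevant functions and the compactness of $K$; there is no genuine probabilistic difficulty. Once $p_K$, $\delta_K$ and $S_K$ are fixed, the rest is a verbatim repetition of the summation scheme of Proposition~\ref{pro-3.4}, the only new point being that the uniform exponential decay rate $\delta_K$ replaces the block-dependent rates $r_j$, and the combinatorial factor $(j_{|u|+n}+1)!$ is now dominated using \eqref{control0'} rather than \eqref{control'}/\eqref{control2'}.
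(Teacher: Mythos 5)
Your proof is correct and follows exactly the route the paper intends: the paper gives no proof of this lemma, only the remark that ``estimates similar to the previous ones yield'' it, and your argument is precisely the adaptation of the proof of Proposition~\ref{pro-3.4}(1) with the block-dependent exponents $p_j$, $r_j$, $s_j$ replaced by a single uniform exponent $p_K$ with the associated uniform constants $\delta_K$, $S_K$. The compactness argument producing $p_K$ (continuity of $(q,\alpha,p)\mapsto L_\alpha(pq,p\widetilde P_{X,\phi,\alpha}(q))$, vanishing at $p=1$ with negative derivative $-h(q,\alpha)$ bounded away from zero on $K$, plus a tube-lemma step), the application of Lemma~\ref{lem-2.2'} with $\beta=1$, the Minkowski/triangle-inequality step over the at most $(j_{|u|+n}+1)!$ restrictions $\varrho_{|n}$, and the block-wise summation using $N_j\ge(\log((j+1)!))^3$ from~\eqref{control0'} are all in order, and match the paper's own use of that inequality in the proof of Propositions~\ref{pp3'} and~\ref{pp33}(1). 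The one place you could afford to say a little more is the uniform finiteness of $\mathbb{E}\big(\Sigma_\alpha(q,\widetilde P_{X,\phi,\alpha}(q))^{p}\big)$ over $K$ for $p$ near $1$: condition~\eqref{finiteness2} is stated pointwise and the moment map need not be continuous where the moment is infinite, so one should say explicitly that one first fixes $p$ close enough to $1$ (depending on the exponent in $\mathbb{E}(N^{p'})<\infty$) so that, via the decomposition $\Sigma^p\le 2^{p-1}\big(N^p+N^{p-1}\sum_i W_i^p\big)$ and Young's inequality, $\mathbb{E}(\Sigma_\alpha(q,t)^p)$ is a continuous function of $(q,\alpha,t)$; but this is exactly the spirit of the remark following~\eqref{finiteness2}, and does not affect the validity of the argument.
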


\subsection{Lower bounds for the Hausdorff dimensions of the measures $\mu_\varrho$ via percolation}\label{LBM}
Let us recall the definition of the lower Hausdorff dimension of a measure and its characterisation in terme of lower local dimension (see \cite{Fan} for instance). 
\begin{definition}Let $(\mathcal Z,d)$ be a compact metric space and $\mu$ a finite Borel measure on $\mathcal Z$. Then, the lower Hausdorff dimension of $\mu$ is defined as 
\begin{equation*}
\underline\dim(\mu)=\inf\{\dim E: \, E\in \mathcal B(\mathcal Z),\, \mu(E)>0\}. 
\end{equation*}
\end{definition}
\begin{lemma}\label{characlhd} Let $(\mathcal Z,d)$ be a compact metric space. Then 
$$
\underline\dim(\mu)={\mathrm{ess\,inf}}_\mu \liminf_{r\to0^+}\frac{\log (\mu(B(z,r)))}{\log(r)}. 
$$
\end{lemma}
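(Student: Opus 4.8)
\textbf{Proof plan for Lemma~\ref{characlhd}.}

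The statement is the classical characterization of the lower Hausdorff dimension of a finite Borel measure on a compact metric space by its lower local dimensions. The plan is to prove the two inequalities separately, writing $d_\mu^-(z)=\liminf_{r\to 0^+}\frac{\log(\mu(B(z,r)))}{\log(r)}$ and $s_0={\mathrm{ess\,inf}}_\mu\, d_\mu^-$.

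For the inequality $\underline\dim(\mu)\ge s_0$, I would argue by a standard mass distribution / Frostman-type principle. Fix any $s<s_0$; by definition of the essential infimum, the set $A=\{z:\, d_\mu^-(z)>s\}$ has full $\mu$-measure. Let $E\in\mathcal B(\mathcal Z)$ with $\mu(E)>0$; then $\mu(E\cap A)=\mu(E)>0$. Decompose $E\cap A$ according to the rate of the $\liminf$: for $m\ge 1$ set $A_m=\{z\in E\cap A:\, \mu(B(z,r))\le r^{s}\ \text{for all }0<r\le 1/m\}$. Since $d_\mu^-(z)>s$ on $A$, one has $E\cap A=\bigcup_{m\ge 1} A_m$ up to a $\mu$-null set, hence $\mu(A_{m_0})>0$ for some $m_0$. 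A covering of $A_{m_0}$ by sets $U_i$ of diameter $<1/m_0$ gives, after enlarging each $U_i$ meeting $A_{m_0}$ to a ball of radius $\mathrm{diam}(U_i)$ centered at a point of $A_{m_0}\cap U_i$, the bound $\mu(A_{m_0})\le\sum_i\mu(B(z_i,\mathrm{diam}(U_i)))\le\sum_i\mathrm{diam}(U_i)^{s}$, up to the usual constant factors in the definition of Hausdorff measure, so $\mathcal H^{s}(A_{m_0})\ge\mu(A_{m_0})>0$ and therefore $\dim E\ge\dim A_{m_0}\ge s$. Letting $s\uparrow s_0$ yields $\dim E\ge s_0$ for every $E$ with $\mu(E)>0$, i.e. $\underline\dim(\mu)\ge s_0$.

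For the reverse inequality $\underline\dim(\mu)\le s_0$, I would produce, for every $s>s_0$, a Borel set $E$ with $\mu(E)>0$ and $\dim E\le s$. Since $s>s_0$, the set $B=\{z:\, d_\mu^-(z)<s\}$ has positive $\mu$-measure; decomposing $B$ as $\bigcup_{m\ge 1}B_m$ with $B_m=\{z\in B:\, \mu(B(z,r_k))\ge r_k^{s}\text{ for some sequence }r_k\downarrow 0,\ r_k\le 1/m\}$, one gets $\mu(B_{m_0})>0$ for some $m_0$. On $B_{m_0}$ each point admits arbitrarily small balls of $\mu$-mass at least $r^{s}$; a Vitali-type covering argument (using the $5r$-covering lemma in a general metric space, or Besicovitch in $\R^d$ — here $\mathcal Z\subset\N^\N$ is ultrametric, so disjointness of balls is automatic and the argument is even simpler) produces, at each scale, countably many disjoint balls of radius $\le r$ whose $\mu$-mass is $\ge r^{s}$ and which cover $\mu$-almost all of $B_{m_0}$; the number of such balls is then $\le\mu(\mathcal Z)r^{-s}$, giving $\mathcal H^{s}(B_{m_0})<\infty$, hence $\dim B_{m_0}\le s$. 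Thus $\underline\dim(\mu)\le\dim B_{m_0}\le s$, and letting $s\downarrow s_0$ gives $\underline\dim(\mu)\le s_0$.

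The two inequalities together prove the lemma. The main technical point is the covering argument in the second inequality: one must pass from ``each point of a positive-measure set has good small balls along a subsequence of scales'' to a genuine efficient cover of that set, which is where a covering lemma is needed; in the ultrametric setting of $\partial\TT\subset\N^\N$ this is routine because two balls are either nested or disjoint, but in a general compact metric space one invokes the $5r$-covering lemma. Since the result is quoted from \cite{Fan}, I would in fact just cite it and perhaps sketch the above for completeness.
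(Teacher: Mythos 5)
The paper does not prove this lemma; it is stated with a pointer to \cite{Fan}, so there is no argument in the text to compare against. Your sketch is the standard and correct proof: the lower bound $\underline\dim(\mu)\ge s_0$ is the Billingsley/mass-distribution argument (stratify the full-measure set $\{d_\mu^->s\}$ by the scale at which $\mu(B(z,r))\le r^s$ kicks in, then bound $\mathcal H^s_\delta$ from below by $\mu$), and the upper bound $\underline\dim(\mu)\le s_0$ comes from extracting, via the $5r$-covering lemma on the positive-measure set $\{d_\mu^-<s\}$, an efficient cover by balls carrying mass $\ge r^s$, which forces $\mathcal H^s$ to be finite there. Both directions are sound, including the factor bookkeeping. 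One minor remark: in the second half your sets $B_m$ are all equal to $B$ --- $d_\mu^-(z)<s$ already yields arbitrarily small radii $r$ with $\mu(B(z,r))>r^s$, so the extra constraint $r_k\le 1/m$ is automatically met and the stratification does no work; you can apply the Vitali argument directly to $B$. The observation that the ultrametric setting of $\partial\TT$ trivializes the covering lemma (balls are nested or disjoint) is apt, though of course the lemma as stated is for a general compact metric space, where the $5r$-lemma is the right tool.
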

The goal of this section is to prove the following result.  
\begin{theorem}\label{lb2}
With probability 1, for all $ \varrho\in \mathcal{R}$, 
\begin{align*}
 \underline\dim (\mu_\varrho)\ge \liminf_{n\to\infty}\frac{\sum_{k=1}^nh(q_k,\alpha_k)}{ \sum_{k=1}^n\lambda(q_k,\alpha_k)}.
\end{align*}
\end{theorem}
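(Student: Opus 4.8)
The strategy is to obtain a uniform (in $\varrho\in\mathcal R$) lower bound for the lower local dimension of $\mu_\varrho$ at $\mu_\varrho$-almost every point, using Lemma~\ref{characlhd}, by combining two ingredients: (a) a uniform control, coming from the martingale estimates of Proposition~\ref{pro-3.4} (specifically \eqref{control3'}), on the $\mu_\varrho$-measure of cylinders $[t_{|n}]$ along typical branches, and (b) a uniform control, via the percolation argument à la Kahane, that the measures $\mu_\varrho$ restricted to the percolation subtrees $\partial\TT_\beta$ remain non-degenerate simultaneously for all $\varrho$ in the compact sets $\mathcal R(\beta,\ell,\varepsilon)$ of \eqref{Rep}. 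Since the radius of the ball $B(t,r)$ in the metric $\mathrm d_\phi$ is governed by $\exp(-S_n\phi(t))$, and $S_n\phi(t)/n\to\lambda(q_k,\alpha_k)$ averaged along $t$ at $\mu_\varrho$-typical points, the denominator $\sum_{k=1}^n\lambda(q_k,\alpha_k)$ will naturally appear as the conversion factor between "generation" and "radius".

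\textbf{Step 1: reduce to a cylinder estimate.} First I would fix $\beta\in(\E(N)^{-1},1]$, $\ell\ge1$, $\varepsilon>0$ and work on $\mathcal R(\beta,\ell,\varepsilon)$; by \eqref{JbetaY} and the fact that $h(q_k,\alpha_k)\le\log\E(N)$, a standard exhaustion argument reduces the theorem to proving, for each such triple, that with probability $1$, for all $\varrho\in\mathcal R(\beta,\ell,\varepsilon)$ and $\mu_\varrho$-a.e. $t$,
$$
\liminf_{n\to\infty}\frac{\log\mu_\varrho([t_{|n}])}{-S_n\phi(t)}\ \ge\ \liminf_{n\to\infty}\frac{\sum_{k=1}^n h(q_k,\alpha_k)}{\sum_{k=1}^n\lambda(q_k,\alpha_k)}.
$$
Here $\mu_\varrho([t_{|n}])=Y(\varrho,t_{|n})\prod_{k=1}^n W_{\varrho,t_{|k}}$, and $-\log\prod_{k=1}^n W_{\varrho,t_{|k}}=\sum_{k=1}^n\big(\wt P_{\alpha_k}(q_k)\phi_{t_{|k}}-\langle q_k|X_{t_{|k}}-\alpha_k\rangle\big)$. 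The factor $Y(\varrho,t_{|n})$ is handled by a Borel–Cantelli argument using \eqref{control3'}: $\|\sup_{\varrho}Y(\varrho,u)\|_1\le C\exp(\varepsilon_{|u|}|u|)$ with $\varepsilon_{|u|}\to0$, so $\log Y(\varrho,t_{|n})=o(n)$ along $\mu_\varrho$-typical (hence, after passing to the Peyrière-type measure, $\mathcal Q$-typical) branches, uniformly in $\varrho$; a covering argument over the at most $(j_n+1)!$ values of $\varrho_{|n}$ keeps this uniform.

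\textbf{Step 2: law of large numbers for the numerator and denominator along typical branches.} On the Peyrière measure associated to $\mu_\varrho$ (restricted to the percolation tree to make it a probability measure of mass controlled below by the non-degeneracy in Step 3), the increments $(\langle q_k|X_{t_{|k}}-\alpha_k\rangle,\phi_{t_{|k}})$ are, for a \emph{fixed} $\varrho$, independent with $k$-th expectations $\langle q_k|\beta(q_k,\alpha_k)-\alpha_k\rangle$ and $\lambda(q_k,\alpha_k)$ respectively, by \eqref{alphaXphi}, \eqref{lyap} and a direct computation of the tilted expectation. A Kolmogorov-type strong law for independent (non-i.i.d.) summands with uniformly bounded variances — the variances are controlled by the Hessians $H_{L_\alpha}$, which are uniformly bounded over $D_j$ thanks to \eqref{mj2}–\eqref{mhatj} and the choices \eqref{controlgammaj}, \eqref{control0'} — then gives, $\mu_\varrho$-a.s.,
$$
\sum_{k=1}^n\big(\wt P_{\alpha_k}(q_k)\phi_{t_{|k}}-\langle q_k|X_{t_{|k}}-\alpha_k\rangle\big)=\sum_{k=1}^n\big(\wt P_{\alpha_k}(q_k)\lambda(q_k,\alpha_k)-\langle q_k|\beta(q_k,\alpha_k)-\alpha_k\rangle\big)+o(n),
$$
and $S_n\phi(t)=\sum_{k=1}^n\lambda(q_k,\alpha_k)+o(n)$. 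By the identification \eqref{entropiesurLyapounov}, i.e. $\wt P_{\alpha_k}(q_k)-\langle q_k|\nabla\wt P_{\alpha_k}(q_k)\rangle=h(q_k,\alpha_k)/\lambda(q_k,\alpha_k)$, together with \eqref{nabla} relating $\nabla\wt P_{\alpha_k}(q_k)$ to $\beta(q_k,\alpha_k)$, the leading term of the numerator equals $\sum_{k=1}^n h(q_k,\alpha_k)$. Since on $\mathcal R(\beta,\ell,\varepsilon)$ we have $n^{-1}\sum_k h(q_k,\alpha_k)\ge\varepsilon>0$ and $\lambda$ is bounded below by $-\log\beta>0$ via \eqref{uniformboundSnphi}, the $o(n)$ errors are negligible relative to both sums, yielding the desired liminf after dividing. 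Making this \emph{uniform} in $\varrho$ requires, as usual, discretizing: control the exceptional probabilities by maximal inequalities and sum over the finitely many prefixes $\varrho_{|n}$, then pass to the continuous parameter by the equicontinuity already built into the construction (the functions $Y_n(\cdot,u)$ are locally constant, and $h,\lambda,\beta,\wt P_{\alpha}(q)$ are continuous).

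\textbf{Step 3: the percolation / non-degeneracy input (main obstacle).} The point that makes everything run — and which I expect to be the crux — is showing that with probability $1$, simultaneously for all $\varrho\in\mathcal R(\beta,\ell,\varepsilon)$, the measure $\mu_\varrho$ does not give full mass to the complement of $\partial\TT_\beta$; equivalently, the inhomogeneous Mandelbrot martingale $Y_n(\beta,\varrho)$ built with the extra percolation weights $\widetilde W_{\beta,\cdot}$ has a nonzero limit for all such $\varrho$ at once. This is the "uniform Kahane percolation" step: one runs the $L^p$-estimate of Lemma~\ref{lem-2.2'} with the percolation weights, noting that the extra factor $\beta^{1-p}$ per generation is compensated precisely when $-\log\beta<$ the per-step entropy, which is exactly the defining inequality of $\mathcal R(\beta,\ell,\varepsilon)$ with the margin $\varepsilon$; then the same covering-over-$\varrho_{|n}$ plus tail-event $0$–$1$ argument used at the end of the proof of Proposition~\ref{pro-3.4}(1) upgrades non-degeneracy from individual $\varrho$ to all of $\mathcal R(\beta,\ell,\varepsilon)$ simultaneously. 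Once $\mu_\varrho$ charges $\partial\TT_\beta$, one restricts to this subtree where the branching is "thinner" — which is what legitimizes using the percolated Peyrière measure as a genuine (sub)probability in Steps 1–2 — and the local-dimension bound transfers back to $\mu_\varrho$ itself since $\mu_\varrho\!\restriction_{\partial\TT_\beta}\le\mu_\varrho$. Finally, letting $\beta\uparrow$, $\ell\to\infty$, $\varepsilon\downarrow0$ along countable sequences and using \eqref{JbetaY} removes the auxiliary parameters and yields the stated bound for all $\varrho\in\mathcal R$ on a single almost-sure event.
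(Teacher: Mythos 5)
Your plan diverges from the paper at exactly the point you flag as "the main obstacle," and I think the way you fill that gap does not work, for two reasons.

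First, the role of the percolation is different from what you describe. You use it only to ensure that the Peyrière-type measure on $\partial\TT_\beta$ is a genuine (sub)probability, and then you want to bring the local-dimension estimate back to $\mu_\varrho$ via $\mu_\varrho\!\restriction_{\partial\TT_\beta}\le\mu_\varrho$. That last step goes the wrong way for \emph{lower} Hausdorff dimension: knowing that $\mu_\varrho\!\restriction_{\partial\TT_\beta}$-a.e.\ point has lower local dimension $\ge -\log\beta$ does not constrain what happens on the $\mu_\varrho$-positive set $\partial\TT\setminus\partial\TT_\beta$, so you cannot conclude $\underline\dim(\mu_\varrho)\ge -\log\beta$. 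In the paper the percolation is used quite differently: the simultaneous non-degeneracy of $\widetilde Y_n(\beta,\cdot)$ on $\mathcal R(\beta,\ell,\varepsilon)$ (Proposition~\ref{pro-3.5}) is exactly the hypothesis of Kahane's capacity theorem \cite[Theorem 3]{K1}, together with the moment bound $\mathbb{E}_\beta(\sup_{t\in B}Q_{\beta,n}(t)^h)=(|B|_{\mathrm d_1})^{-(1-h)\log\beta}$. Kahane's theorem then says that $\mu_\varrho$ \emph{itself} (not its restriction) is not carried by any set of $\mathrm d_1$-dimension less than $-\log\beta$, which gives $\underline\dim_{\mathrm d_1}(\mu_\varrho)\ge D_\varrho$ uniformly over $\varrho\in\mathcal R$ after letting $(\beta,\ell,\varepsilon)$ vary along a countable family. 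Only after this $\mathrm d_1$ bound is in hand is the LLN of Proposition~\ref{pp33} used — to convert cylinder sizes from $\mathrm d_1$ to $\mathrm d_\phi$ via $S_n\phi\approx\sum\lambda(q_k,\alpha_k)$ and $\log\mathrm{diam}([t_{|n}])\approx -S_n\phi(t)$, and to absorb the $Y(\varrho,t_{|n})$ factor. No Peyrière measure on the percolated tree is invoked.

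Second, your Step~1 claims to control $\log Y(\varrho,t_{|n})$ by a Borel–Cantelli argument using only the $L^1$ bound $\|\sup_\varrho Y(\varrho,u)\|_1\le C_{X,\phi}\exp(\varepsilon_{|u|}|u|)$ from \eqref{control3'}. That bound is not sufficient. To run Borel–Cantelli for $\mu_\varrho(\{Y(\varrho,t_{|n})>e^{n\eta}\})$ uniformly in $\varrho$, one is led to estimate $\mathbb{E}\big(\sup_\varrho\mathbf 1_{\{Y(\varrho,u)>e^{n\eta}\}}Y(\varrho,u)\big)$, which with $Z=\sup_\varrho Y(\varrho,u)$ is only bounded by $\mathbb{E}(Z)$ if one has nothing better than $L^1$ control — and $\mathbb{E}(Z)\le C\exp(\varepsilon_n n)$ does not tend to $0$, so the series does not converge. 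One would need an $L^p$ bound on $Z$ for some $p>1$ to gain the factor $e^{-n\eta(p-1)}$; the paper provides such a bound in Lemma~\ref{bettercontrol2}, but only for $\varrho$ ranging over $\mathcal R(K)$ with $K$ a fixed compact subset of $J_{X,\phi}$, which is strictly smaller than $\mathcal R$. In the paper this difficulty does not arise in the proof of Theorem~\ref{lb2}, precisely because the Kahane argument delivers the $\mathrm d_1$ dimension bound without ever needing an a priori control of $\log Y(\varrho,t_{|n})$; the control $\limsup n^{-1}\log Y(\varrho,t_{|n})\le 0$ is then obtained as a consequence of the $\mathrm d_1$ bound, not as an input to it. So your route would require an additional uniform-in-$\varrho$ moment estimate that is not in the paper, and it is not clear how to obtain it for unbounded sequences $(q_k,\alpha_k)$.
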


We need the following two propositions. Recall the definition \eqref{Rep} of the set of parameters $ \mathcal{R}(\beta,\ell,\varepsilon)$.

\begin{proposition}\label{pro-3.5} Let $\beta\in ((\mathbb{E}(N))^{-1},1]$. 
Conditionally on non extinction of $(\TT_{\beta,n}(u))_{n\in\mathbb{N}}$, for all $\ell\geq 1$ and $\varepsilon \in\mathbb Q^*_+$,
 
 \begin{enumerate}
 \item   the sequence of continuous functions $(Y_n(\cdot ,\beta))_{n\in\mathbb{N}}$ converges uniformly, almost surely and in $L^1$ norm,  to a positive limit $Y(\beta,\cdot)$ on   $ \mathcal{R}(\beta,\ell,\varepsilon)$;
 
\item  the sequence of continuous functions 
$$
\Big(\varrho\mapsto \widetilde Y_n(\beta,\varrho)=\sum_{u\in \TT_n}(\displaystyle\prod_{k=1}^n \widetilde W_{\beta,u_1\cdots u_k} )\mu_\varrho([u])\Big)_{n\in\mathbb{N}}
$$
converges uniformly, almost surely and in $L^1$ norm,    to  $Y(\beta,\cdot)$ on   $ \mathcal{R}(\beta, \ell,\varepsilon)$.
\end{enumerate}
\end{proposition}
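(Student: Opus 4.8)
The plan is to follow the scheme of the proof of Proposition~\ref{pro-3.4}, the one genuinely new point being that the extra multiplicative factors $\beta^{1-p}$ produced by the percolation weights $\widetilde W_{\beta,u}$ are compensated by the ``entropy excess'' $\varepsilon$ built into the definition \eqref{Rep} of $\mathcal R(\beta,\ell,\varepsilon)$. Fix $\beta\in((\mathbb E(N))^{-1},1]$, $\ell\ge 1$ and $\varepsilon\in\mathbb Q_+^*$, and work conditionally on non-extinction of $\TT_\beta$, a supercritical Galton--Watson tree since $\beta\,\mathbb E(N)>1$. For each fixed $\varrho\in\mathcal R$, $(Y_n(\beta,\varrho))_n$ is a non-negative $(\mathcal F_{\beta,n}\otimes\mathcal F_n)_n$-martingale of expectation $1$, locally constant in $\varrho$ (depending only on $\varrho_{|n}$), while $(\widetilde Y_n(\beta,\varrho))_n$ is a non-negative $(\mathcal F_{\beta,n}\otimes\mathcal F_\infty)_n$-martingale of expectation $1$, continuous in $\varrho$; these facts use $\mathbb E(\widetilde W_\beta)=1$ together with $\mathbb E(\Sigma_\alpha(q,\widetilde P_{X,\phi,\alpha}(q)))=1$ on $J_{X,\phi}$ (see \eqref{Palphaq}) for the first, and $\sum_{i=1}^{N_u}\mu_\varrho([ui])=\mu_\varrho([u])$ for the second. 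What must be proven is the \emph{uniform} (a.s.\ and $L^1$) convergence of both sequences on the compact set $\mathcal R(\beta,\ell,\varepsilon)$, and the positivity of the common limit.

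For (1), I would apply Lemma~\ref{lem-2.2'} to $Z_n(\beta,\varrho)=Y_n(\beta,\varrho)-Y_{n-1}(\beta,\varrho)$ with $p=p_{j_n+1}$; since $\{(q_k,\alpha_k):1\le k\le n\}\subset D_{j_n+1}$ and recalling \eqref{rj2},
\[
\|Z_n(\beta,\varrho)\|_p^p\ \le\ (2\beta^{-1})^p\,s_{j_n+1}^p\prod_{k=1}^{n-1}\Big(\beta^{1-p}\exp\big(L_{\alpha_k}(pq_k,p\widetilde P_{X,\phi,\alpha_k}(q_k))\big)\Big).
\]
To bound the product, expand $u\mapsto L_{\alpha_k}((1+u)q_k,(1+u)\widetilde P_{X,\phi,\alpha_k}(q_k))$ to second order at $u=0$: its value there is $0$, its first derivative is $-h(q_k,\alpha_k)$ by \eqref{Der3}, and its second derivative on $[0,1]$ is at most $\widetilde m_{j_n+1}$ by \eqref{tmj2} (one may assume $(\widetilde m_j)$ non-decreasing), so $L_{\alpha_k}(pq_k,p\widetilde P_{X,\phi,\alpha_k}(q_k))\le -(p-1)h(q_k,\alpha_k)+\tfrac{(p-1)^2}{2}\widetilde m_{j_n+1}$. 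Since $(p-1)\widetilde m_{j_n+1}\le(\widetilde p_{j_n+1}-1)\widetilde m_{j_n+1}\to 0$ as $n\to\infty$, and since $\sum_{k=1}^{n-1}h(q_k,\alpha_k)\ge(n-1)(-\log\beta+\varepsilon)$ for $\varrho\in\mathcal R(\beta,\ell,\varepsilon)$ and $n-1\ge\ell$ (this is exactly the constraint in \eqref{Rep}), the product is, for $n$ large and uniformly on $\mathcal R(\beta,\ell,\varepsilon)$, at most $\exp\!\big(-(p_{j_n+1}-1)(n-1)\varepsilon/2\big)$. As $Z_n(\beta,\varrho)$ depends only on $\varrho_{|n}$ and $\#\{\varrho_{|n}:\varrho\in\mathcal R\}\le(j_n+1)!$, summing $\|Z_n(\beta,\varrho)\|_p$ over the $\varrho_{|n}$ and then over $n$ — grouping the $n$'s with a common value of $j_n$, using $M_j\ge N_j$, and the choice \eqref{control'} of $N_j$, in which the term $\tfrac{1-p_j}{2jp_j}$ inside $r_j$ precisely guarantees that $(j+1)!s_{j+1}$ is beaten by $\exp(-c(p_{j+1}-1)N_j)$ for any fixed $c>0$ once $j$ is large — yields $\sum_n\big\|\sup_{\varrho\in\mathcal R(\beta,\ell,\varepsilon)}|Z_n(\beta,\varrho)|\big\|_1<\infty$, hence uniform a.s.\ and $L^1$ convergence of $Y_n(\beta,\cdot)$ to a continuous limit $Y(\beta,\cdot)$ on $\mathcal R(\beta,\ell,\varepsilon)$. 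Positivity there is obtained by the tail-triviality-plus-compactness argument of Proposition~\ref{pro-3.4} (a would-be deterministic zero $\gamma\in\mathcal R(\beta,\ell,\varepsilon)$ would contradict $\mathbb E(Y(\beta,\gamma))=1$), run now on $\TT_\beta$ — the sole place the conditioning on non-extinction of $\TT_\beta$ is used.

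For (2), the pointwise convergence follows, once (1) gives $Y(\beta,\varrho)\in L^1$, from unfolding $Y_m(\beta,\varrho)$ along generation $n$ and letting $m\to\infty$: one gets $Y(\beta,\varrho)=\sum_{u\in\TT_n}\big(\prod_{k=1}^nW_{\beta,\varrho,u_1\cdots u_k}\big)Y(\beta,\varrho,u)$ with $\mathbb E\big(Y(\beta,\varrho,u)\mid\mathcal F_\infty\big)=Y(\varrho,u)$ (using $\mathbb E(\beta^{-m}\mathbf 1_{\{u\text{ survives to level }|u|+m\}}\mid\mathcal F_\infty)=1$ and the mean-one property), whence $\widetilde Y_n(\beta,\varrho)=\mathbb E\big(Y(\beta,\varrho)\mid\mathcal F_{\beta,n}\otimes\mathcal F_\infty\big)$, a Doob martingale, so $\widetilde Y_n(\beta,\varrho)\to Y(\beta,\varrho)$ a.s.\ and in $L^1$. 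For uniformity, compare $\widetilde Y_n$ with $Y_n$: since $\mu_\varrho([u])=Y(\varrho,u)\prod_{k=1}^{|u|}W_{\varrho,u_1\cdots u_k}$,
\[
\widetilde Y_n(\beta,\varrho)-Y_n(\beta,\varrho)=\sum_{u\in\TT_n}\Big(\prod_{k=1}^nW_{\beta,\varrho,u_1\cdots u_k}\Big)\big(Y(\varrho,u)-1\big),
\]
a sum with, conditionally on $\mathcal F_{\beta,n}\otimes\mathcal F_n$, independent centered summands (each $Y(\varrho,u)$ has conditional mean $1$ by Proposition~\ref{pro-3.4}). Applying Lemma~\ref{lem-2.1} conditionally with $p=p_{j_n+1}$, taking expectations, and using the recursive computation of $\mathbb E\big(\sum_{u\in\TT_n}\prod_kW_{\varrho,u_1\cdots u_k}^p\big)$ from Lemma~\ref{lem-2.2'}, one obtains an $L^p$-bound equal to $C_p\big(1+\mathbb E(Y(\sigma^n\varrho)^p)\big)$ times the same geometrically decaying product $\prod_{k=1}^n\big(\beta^{1-p}\exp(L_{\alpha_k}(pq_k,p\widetilde P_{X,\phi,\alpha_k}(q_k)))\big)$ as in (1). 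Re-running the prefix count — and handling the fact that $\widetilde Y_n-Y_n$ is only continuous, not locally constant, in $\varrho$, by first conditioning on $\mathcal F_\infty$ — this gives $\widetilde Y_n(\beta,\cdot)-Y_n(\beta,\cdot)\to0$ uniformly on $\mathcal R(\beta,\ell,\varepsilon)$, a.s.\ and in $L^1$, and (2) follows from (1).

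The main obstacle is the moment factor $\mathbb E(Y(\sigma^n\varrho)^p)$ in (2): it must grow sub-exponentially in $n$, uniformly over $\varrho\in\mathcal R(\beta,\ell,\varepsilon)$, so as not to destroy the geometric gain $e^{-c(p-1)n}$. This is delicate precisely because $\mathcal R(\beta,\ell,\varepsilon)$ does \emph{not} confine the coordinates $(q_k,\alpha_k)$ to a fixed compact subset of $J_{X,\phi}$ — the constraint in \eqref{Rep} only bounds below the Ces\`aro averages of $h(q_k,\alpha_k)$. The resolution is to combine Lemma~\ref{bettercontrol2}, which supplies exactly such $L^p$-bounds on $\mathcal R(K)$ for any compact $K\subset J_{X,\phi}$, with the observation that, since $h\le\log\mathbb E(N)$ (recall \eqref{entropie}), the constraint in \eqref{Rep} forces a positive proportion of the coordinates of any $\varrho\in\mathcal R(\beta,\ell,\varepsilon)$ to lie in a fixed compact set $\{h\ge\delta\}$, $\delta=\delta(\beta,\varepsilon)>0$; splitting the Mandelbrot martingale at those coordinates, the room left in the geometric decay of (1) then absorbs the remaining factorial and polynomial corrections through the tuning of the $N_j$.
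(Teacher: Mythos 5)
Your treatment of part (1) is essentially the paper's proof: the same application of Lemma~\ref{lem-2.2'} with $p=p_{j_n+1}$, the same second-order Taylor expansion of $L_{\alpha}(pq,p\widetilde P_{X,\phi,\alpha}(q))$ controlled by $\widetilde m_{j_n+1}$, and the same use of the Ces\`aro constraint in \eqref{Rep} to absorb $\beta^{1-p}$ and leave a geometric gain of order $\exp(-c(p_{j_n+1}-1)n\varepsilon)$; the prefix count $(j_n+1)!$ and the tuning \eqref{control'} then close the sum exactly as in Proposition~\ref{pro-3.4}. That part is fine.

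For part (2), however, your route diverges from the paper's and contains a genuine gap. You correctly observe that, pointwise in $\varrho$, $\widetilde Y_n(\beta,\varrho)=\mathbb E\big(Y(\beta,\varrho)\mid \mathcal F_{\beta,n}\otimes\mathcal F_\infty\big)$ is a Doob martingale, but to upgrade this to \emph{uniform} convergence on $\mathcal R(\beta,\ell,\varepsilon)$ you then estimate $\widetilde Y_n(\beta,\varrho)-Y_n(\beta,\varrho)=\sum_{u\in\TT_n}\big(\prod_{k=1}^nW_{\beta,\varrho,u_1\cdots u_k}\big)(Y(\varrho,u)-1)$ in $L^p$, which forces you to control $\mathbb E\big(|Y(\sigma^n\varrho)|^p\big)$ for some $p>1$, uniformly over $\varrho\in\mathcal R(\beta,\ell,\varepsilon)$ and subexponentially in $n$. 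This is precisely what is \emph{not} available: Lemma~\ref{bettercontrol2} only furnishes such bounds on $\mathcal R(K)$ for a fixed compact $K\subset J_{X,\phi}$, while $\mathcal R(\beta,\ell,\varepsilon)$ is emphatically not contained in any $\mathcal R(K)$ — the Ces\`aro constraint in \eqref{Rep} allows arbitrarily long stretches of coordinates $(q_k,\alpha_k)$ drifting towards $\partial J_{X,\phi}$ or off to infinity, where $\mathbb E\big(\Sigma_\alpha(q,\widetilde P_{X,\phi,\alpha}(q))^p\big)$ is uncontrolled for any fixed $p>1$. Your proposed fix — that a positive proportion of coordinates lie in $\{h\ge\delta\}$, and that one can ``split the martingale at those coordinates'' — does not resolve this: $\{h\ge\delta\}$ need not be compact, and more importantly the moment $\mathbb E(Y(\sigma^n\varrho)^p)$ is governed by the entire tail $\sigma^n\varrho$, including the bad coordinates, not only by the good fraction. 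The paper sidesteps the whole issue by lifting the observation you already made to the Banach space $E=\mathcal C(\mathcal R(\beta,\ell,\varepsilon))$: since part (1) shows $Y(\beta,\cdot)\in L^1_E$, Neveu's theory of Banach-valued conditional expectations and martingales (\cite[Propositions V-2-5 and V-2-6]{Neveu}) gives directly $\widetilde Y_{m,n}(\beta,\cdot)=\mathbb E(Y(\beta,\cdot)\mid\mathcal F_{\beta,m}\otimes\mathcal F_n)$ as an $E$-valued conditional expectation, and the double passage to the limit $n\to\infty$ then $m\to\infty$ yields the uniform a.s.\ and $L^1$ convergence with no $L^p$ moment estimate on $Y(\sigma^n\varrho)$ at all. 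In short: the key identity you wrote down is correct, but it must be read in the function space, where only an $L^1$ bound is required; your quantitative route requires information the hypotheses simply do not provide.
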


\begin {proposition} \label{pp33}$\ $
\begin{enumerate} 
\item With probability~$1$, for all $\varrho =(q_k,\alpha_k)_{k\geq 1} \in  \mathcal{R}$,  for $\mu_\varrho$-almost all  $t \in\partial\TT$, for $n$ large enough, one has  
 $$\lim_{n\to\infty} n^{-1}\Big |\displaystyle \log\Big (\prod_{k=1}^n W_{\varrho,t_1\cdots t_n}\Big )- \sum_{k=1}^{n} h(q_k,\alpha_k) \Big |=0=\displaystyle \lim_{n\to\infty}n^{-1}\Big |\displaystyle S_n\phi(t) - \sum_{k=1}^{n} \lambda(q_k,\alpha_k) \Big |.$$
 \item With probability 1, for all $ \varrho\in \mathcal{R}$, for $\mu_\varrho$-almost every $t\in\partial\TT$, one has 
$$
\lim_{n\to\infty} \frac{\log(\mathrm{diam}([t_{|n}]))}{-S_n\phi(t)}=1,
$$
where the diameter is measured with respect to the metric $\mathrm{d}_\phi$.
\end{enumerate} 
\end{proposition}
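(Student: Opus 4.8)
The plan is to reduce both assertions to laws of large numbers for the increments seen along the branch selected by the Peyri\`ere (spine) measure attached to $\mu_\varrho$, and then to upgrade the resulting pointwise statements into statements holding simultaneously for \emph{all} $\varrho\in\mathcal{R}$ by a first moment / Borel--Cantelli argument. The key structural fact is that every event below depends on $\varrho$ only through $\varrho_{|n}$, which is constant on each block and takes at most $\prod_{i\le j_n+1}\#D_i\le(j_n+1)!$ values; by \eqref{control0'} this quantity stays negligible ($\log((j_n+1)!)<n^{1/3}$), as does the $L^1$-bound $\|\sup_\varrho Y(\varrho,u)\|_1\le\exp(\varepsilon_{|u|}N_{j_{|u|}})\le\exp(n^{1/2})$ obtained by combining \eqref{control3'} with \eqref{control0'}.

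For item~(1), fix $\varrho$ and let $\mathcal{Q}_\varrho$ be the Peyri\`ere probability measure built from the non-degenerate limit $Y(\varrho)$ of Proposition~\ref{pro-3.4}. Under $\mathcal{Q}_\varrho$, along the spine $t$ the generation-$k$ environment is size-biased by $\Sigma_{\alpha_k}(q_k,\widetilde P_{\alpha_k}(q_k))$ and $t_k=i$ is then chosen proportionally to $\exp(\psi_i(q_k,\alpha_k))$; hence $\log W_{\varrho,t_1\cdots t_k}=\psi_{t_k}(q_k,\alpha_k)$ and $\phi_{t_1\cdots t_k}$, $k\ge1$, are $\mathcal{Q}_\varrho$-independent, with conditional means $-h(q_k,\alpha_k)$ and $\lambda(q_k,\alpha_k)$ by \eqref{entropie} and \eqref{lyap}, and $\mathbb{E}_{\mathcal{Q}_\varrho}(e^{\theta\psi_{t_k}(q_k,\alpha_k)})=\exp\!\big(L_{\alpha_k}((1+\theta)q_k,(1+\theta)\widetilde P_{\alpha_k}(q_k))\big)$, with derivative $-h(q_k,\alpha_k)$ at $\theta=0$ by \eqref{Der3} and, for $\theta$ near $0$ and $k$ in block $j$, second derivative at most $\widehat m_j$ by \eqref{mj2}--\eqref{mhatj}. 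I would then show, for each rational $\varepsilon>0$ and simultaneously for all $\varrho$, that $\mu_\varrho$-a.e.\ $t$ eventually satisfies $|n^{-1}\log\prod_{k=1}^nW_{\varrho,t_1\cdots t_k}+n^{-1}\sum_{k=1}^nh(q_k,\alpha_k)|\le\varepsilon$ and $|n^{-1}S_n\phi(t)-n^{-1}\sum_{k=1}^n\lambda(q_k,\alpha_k)|\le\varepsilon$, via a two-layer first moment / Borel--Cantelli estimate. At the block endpoints $n=M_j$ (at most $j!$ values of $\varrho_{|M_j}$) a Cram\'er bound with tilt of order $\gamma_j$ — using $\gamma_j^2\widehat m_j\le j^{-2}$ from \eqref{controlgammaj}, which turns the block-$j$ contribution to the exponent into at most $N_j\gamma_j^2\widehat m_j$ — decays fast in $N_j$ and hence beats $j!$; inside block $j$ the partial-sum maximum is controlled by Doob's inequality and Lemma~\ref{lem-2.1}, the bound being summable because \eqref{control2bis2} keeps $N_j$ long relative to $M_{j-1}$ (with $(D_j)$ chosen so that $\widehat m_j$ grows slowly). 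Passing from the spine to $\mu_\varrho$ uses $\mathbb{E}(Y(\varrho,u)\mid\mathcal{F}_n)=1$ to replace $\mu_\varrho([u])$ by $\prod_{k\le n}W_{\varrho,u_1\cdots u_k}$ — recognising a probability under the tilted law — together with \eqref{control3'} for the remaining factor $\sup_\varrho Y(\varrho,u)$; Borel--Cantelli for the finite measures $\mu_\varrho$ then gives the two limits (for the second, $\phi$ has finite exponential moments under $\mathcal{Q}_\varrho$ by \eqref{allmomentsfinite} and \eqref{allmomentsfinitephi}, treated block-wise as above).

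For item~(2), note first that with respect to $\mathrm{d}_\phi$ one has $\diam([t_{|n}])=\exp(-S_{m(n)}\phi(t))$ where $m(n)=\inf\{m\ge n:\ N_{t_{|m}}\ge2\}$: indeed $[t_{|m}]=[t_{|m+1}]$ whenever $N_{t_{|m}}=1$, while when $t_{|m}$ has at least two children the supremum in the diameter is attained by two points $s,s'$ branching at $t_{|m}$, for which $|s\land s'|=m$ and $S_m\phi(s')=S_m\phi(t)$. Since $S_n\phi(t)$ is of exact order $n$ by Lemma~\ref{controlSnphi}, it suffices to prove $S_{m(n)}\phi(t)-S_n\phi(t)=o(n)$ for $\mu_\varrho$-a.e.\ $t$ and all $\varrho$; for this I would again use first moment plus Borel--Cantelli. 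After the same change of measure, the expected $\mu_\varrho$-mass of $\{t:\ N_{t_{|m}}=1\ \forall\, m\in(n,n+L]\}$ equals the probability that the spine makes $L$ consecutive steps out of single-child vertices, which in block $j$ is at most $(1-c_0/j)^L$ by \eqref{controlonDj}; with $L=\lfloor\varepsilon n\rfloor$ this decays like $\exp(-c_0\varepsilon n/j_n)$, and $j_n=o(n^{1/3})$ makes it beat the $(j_{n+L}+1)!$ factor of the union over $\varrho_{|n+L}$. Hence $m(n)$ is finite and $m(n)-n=o(n)$ a.s.; combining with item~(1) applied at generations $n$ and $m(n)$ and with the uniform bound \eqref{uniformboundSnphi} on the $\lambda(q_k,\alpha_k)$ gives $S_{m(n)}\phi(t)-S_n\phi(t)=o(n)$, so that $\log(\diam([t_{|n}]))/(-S_n\phi(t))\to1$.

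The main obstacle is the uniform-in-$\varrho$ large deviation estimate of item~(1): since the conditional variances $\widehat m_j$ along the spine are not bounded, a single Cram\'er exponent cannot be used on $\{1,\dots,n\}$, so the estimate must be run block by block with block-dependent tilts $\sim\gamma_j$ and the error level $\varepsilon$ allocated among blocks, and one has to verify that the tuning fixed in \eqref{controlonDj}--\eqref{control2bis2} makes the sum of the block contributions decay fast enough to absorb the combinatorial factor $(j_n+1)!$ and the factor $\exp(\varepsilon_{|u|}N_{j_{|u|}})$ coming from \eqref{control3'}. Everything else is routine bookkeeping around the spine decomposition already used in the proof of Proposition~\ref{pro-3.4}.
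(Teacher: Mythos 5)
Your sketch of item~(2) matches the paper's proof step for step: you correctly identify $\mathrm{diam}([t_{|n}])=\exp(-S_{n+k_n(t)}\phi(t))$ with $k_n(t)=\inf\{k\ge0:N_{t_{|n+k}}>1\}$, reduce the claim to $k_n(t)=o(n)$ by combining item~(1) with \eqref{uniformboundSnphi}, and obtain $k_n(t)=o(n)$ by a first moment estimate using \eqref{controlonDj} together with the combinatorial factors $(j_{n+\lfloor n\eta\rfloor}+1)!$ and $\exp(\varepsilon_{|u|}|u|)$, then Borel--Cantelli applied to $\mu_\varrho$.

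For item~(1), however, you overcomplicate the argument and misstate the key structural point. You claim that ``since the conditional variances $\widehat m_j$ along the spine are not bounded, a single Cram\'er exponent cannot be used on $\{1,\dots,n\}$,'' and propose a two-layer scheme (a Cram\'er bound at block endpoints $M_j$, then a Doob maximal inequality inside each block, invoking \eqref{control2bis2}). The paper does the opposite: it applies a \emph{single} tilt $\gamma=\gamma_{j_n+1}$ to the whole sum $\sum_{k=1}^n$, and this works precisely because the $D_j$ are nondecreasing, so every $(q_k,\alpha_k)$ with $k\le n$ lies in $D_{j_n+1}$, and hence the second-order Taylor remainder of $L_{\alpha_k}$ at each $k\le n$ is dominated by the \emph{same} constant $m_{j_n+1}$ (resp.\ $\widetilde m_{j_n+1}$), see \eqref{mj2}--\eqref{mhatj}. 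With $\delta_n=4\gamma_{j_n+1}^2\widehat m_{j_n+1}$ and \eqref{controlgammaj}, \eqref{control0'}, the expected $\mu_\varrho$-mass of the bad event at generation $n$ is bounded, uniformly in $\varrho$, by $O\big((j_n+1)!\exp(-n\gamma_{j_n+1}^2\widehat m_{j_n+1})\big)\le O\big(\exp(n^{1/3})\exp(-n^{1/2})\big)$, which is summable over $n$; no maximal inequality within blocks, and no allocation of an error level across blocks, is needed. Your Doob-inside-blocks step is thus a detour motivated by a false premise, and it is unverified as sketched: you do not specify how the $L^p$ exponent of the maximal inequality is to be matched with $\gamma_j$ and $N_j$ so as to absorb the factors $(j_n+1)!$ and $\exp(\varepsilon_{|u|}N_{j_{|u|}})$, and the appeal to \eqref{control2bis2} is misplaced --- that condition relates $M_{j-1}$ to $N_j$ in order to control $\|\sum_{k\le M_{j-1}}\beta(q_k,\alpha_k)\|$ in Proposition~\ref{conclusion2}, not partial-sum fluctuations inside a block. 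The Peyri\`ere-measure framing of the change of measure and the moment generating function identity via \eqref{Der3} are correct and essentially the content of the paper's Chernov computation, but the core of item~(1) in the paper is the per-$n$ bound with a single tilt.
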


\begin{proof}[Proof of Theorem~\ref{lb2}]
Let $\beta\in (0,1]$ such that $\beta\mathbb{E}(N)>1$. Let $\ell\ge 1$ and $\varepsilon \in\mathbb Q^*_+$.  

For every $t\in\partial \TT$ and $\omega_\beta\in \Omega_\beta$ set 
$$
Q_{\beta,n}(t,\omega_\beta)=\prod_{k=1}^n \widetilde W_{\beta,t_{|k}}, 
$$
so that for $\varrho \in\mathcal{R}(\beta,\ell,\varepsilon)$, $\widetilde Y_{n}(\beta,\varrho)$ is the total mass of the measure $Q_{\beta,n}(t,\omega_\beta)\cdot \mathrm{d}\mu_\varrho^\omega(t)$.  

There exists a measurable subset $\Omega(\beta,\ell,\varepsilon)$ of $\Omega$, such that $\mathbb{P}(\Omega(\beta,\ell,\varepsilon))=1$ and for all $\omega\in \Omega(\beta,\ell,\varepsilon)$, there exists $\Omega_\beta^\omega\subset \Omega_\beta$ of positive probability such that for all $\omega\in \Omega(\beta,\ell,\varepsilon)$, for all $\varrho\in \mathcal{R}(\beta,\ell,\varepsilon)$, for all $\omega_\beta\in \Omega_\beta^\omega$, $\widetilde Y_{n}(\beta,\varrho)$ does not converge to $0$. In terms of the multiplicative chaos theory developed in \cite{K1}, this means that for all $\omega\in \Omega(\beta,\ell,\varepsilon)$ and $\varrho\in \mathcal{R}(\beta,\ell,\varepsilon)$, the set of those $\omega_\beta$ such that the multiplicative chaos $(Q_{\beta,n} (\cdot,\omega))_{n\in\mathbb{N}}$ has not killed the measure $\mu_\varrho$ on the compact set $\partial\TT$ has a positive $\mathbb{P}_\beta$-probability.  Moreover, under the metric $\mathrm{d}_1$, for any ball $B$ in $\partial \TT$, there exists $n\ge 0$ and $u\in \TT_n$ such that 
$B=[u]\cap \partial \TT$, $Q_{\beta,n}(t)$ is constant over $B$, and denoting by $|B|_{\mathrm{d}_1}$  the diameter of $B$ under $\mathrm{d}_1$, for any $h\in (0,1)$ we have 
\[
\mathbb{E}_\beta\big ( \displaystyle\sup_{t\in B} (Q_{\beta,n}(t))^h \big )= e^{n(1-h)\log(\beta)}=(| B |_{\mathrm{d}_1})^{-(1-h)\log(\beta)},
\] 
where $\E_\beta$ stands for  the expectation  with respect to $\mathbb{P}_\beta$. Thus, one can apply \cite[Theorem 3]{K1} and obtain that for all $\omega\in \Omega(\beta,\ell,\varepsilon)$ and all $\varrho\in \mathcal{R}(\beta,\ell,\varepsilon)$, the measure $\mu_\varrho$ is not carried by a Borel set of Hausdorff dimension less than  $-\log (\beta)$. 

Let $\Omega'=\bigcap_{\beta\in (\mathbb{E}(N)^{-1},1]\cap \mathbb Q^*_+,\ell\ge 1,\varepsilon\in\mathbb Q^*}\Omega(\beta,\ell,\varepsilon)$. This set is of $\mathbb{P}$-probability 1. 

Let $ \varrho\in \mathcal{R}$ and set  $D_\varrho=\liminf_{n\to\infty}n^{-1}\sum_{k=1}^nh(q_k,\alpha_k)$. If $D_\varrho>0$, by \eqref{JbetaY} there exists a sequence of points $(\beta_n,\ell_n, \varepsilon_n)\in (\mathbb{E}(N)^{-1},1]\times \mathbb N\times \mathbb Q_+^*$ such that  $D_\varrho\ge -\log(\beta_n) +\varepsilon_n/2$ for all $n\ge 1$, $\lim_{n\to\infty} -\log(\beta_n)=D_\varrho$, $\lim_{n\to\infty}\varepsilon_n=0$, and $\varrho\in \bigcap_{n\in\mathbb{N}}\mathcal{R}( \beta_n, \ell_n,\varepsilon_n)$. Consequently, the previous paragraph implies that,  with respect to the metric $\mathrm{d}_1$, for all $\omega\in \Omega'$, $\underline \dim (\mu_\varrho^{\omega}) \ge \limsup_{n\to\infty}-\log(\beta_n)=D_\varrho$.   
In particular, due to Proposition~\ref{pp33}(2) applied to $\phi=(1)_{n\in\N}$ (it is valid in this case as well, due to the proof of this part of the proposition) and Lemma~\ref{characlhd} one has $\displaystyle\liminf_{n\to\infty}\frac{\log \mu_\varrho ([t_{|n}])}{-n}\ge D_\varrho$, $\mu_\varrho$-almost everywhere. 


Note now  that since $\mu_\varrho ([t_{|n}])=Y(\rho,t_1\cdots t_n)\,\prod_{k=1}^{n}W_{\varrho,t_1\cdots t_n}$, we can deduce from the first limit in Proposition~\ref{pp33}(1) that $\limsup_{n\to\infty} n^{-1}\log(Y(\rho,t_1\cdots t_n))\le 0$, $\mu_\varrho$-almost everywhere. Due to the second limit in Proposition~\ref{pp33}(1), this implies that under ${\mathrm d}_\phi$ one has $\displaystyle \liminf_{n\to\infty}\frac{\log \mu_\varrho ([t_{|n}])}{\log(\mathrm{diam}([t_{|n}]))}\ge  \liminf_{n\to\infty}\frac{ \sum_{k=1}^nh(q_k,\alpha_k)}{ \sum_{k=1}^n\lambda(q_k,\alpha_k)}$ (an inequality which holds as well trivially if~$D_\varrho=0$).
\end{proof}

%
%

\begin{proof}[Proof of Proposition~\ref{pro-3.5}] (1) Let $\ell\ge 1$ and $\varepsilon>0$. For $\varrho\in\mathcal{R}(\beta, \ell,\varepsilon)$ and $n\ge 1$, Lemma~\ref{lem-2.2'} applied with $p=p_{j_n+1}$  provides us with the inequality (where $\widetilde P_{\alpha_k}$ stands for $\widetilde P_{X,\phi,\alpha_k}$)
\begin{eqnarray*}
&&\|Y_n(\beta,\varrho)-Y_{n-1}(\beta,\varrho)\|^{{p_{j_n+1}}}_{p_{j_n+1}}\\
&\le& (2\beta^{-1})^{p_{j_n+1}}\mathbb{E}\big (\Sigma_{\alpha_n}(q_n,\widetilde P_{\alpha_n} (q_n))^{p_{j_n+1}}\big )\displaystyle\prod_{k=1}^{n-1}\beta^{1-{p_{j_n+1}}}\exp \big(L_{\alpha_k}({p_{j_n+1}}q_k,{p_{j_n+1}}\widetilde P_{\alpha_k} (q_k)) \big ).
\end{eqnarray*}

Let $(q,\alpha)\in D_{j_n+1}$ and set $g_{q,\alpha}:\lambda\in\mathbb{R}\mapsto L_\alpha(pq,p\widetilde P_{X,\phi,\alpha}(q))$. By construction we have $g_{q,\alpha}(1)=0$ so for $p\in [1,2]$  
 $$ g_{q,\alpha} ( p ) =(p-1)  g_{q,\alpha}'(1) +  (p - 1)^2 \int_{0}^{1} (1-t) g_{q,\alpha}''(1 + t (p- 1) ) \,  \mathrm{d}t,
  $$
with $g_{q,\alpha}'(1)=-h(q,\alpha)$ (see \eqref{entropie} for the definition of $h(q,\alpha)$) and 
\begin{eqnarray*}
g_{q,\alpha}''(1 + t (p - 1) ) &=&   {}^t\begin{pmatrix}q\\ \widetilde P_{\alpha}(q)\end{pmatrix} H_{L_\alpha}\big (q+t(p-1)q,\widetilde P_{\alpha}(q)+t(p-1)\widetilde P_{\alpha}(q)\big )\begin{pmatrix}q\\ \widetilde P_{\alpha}(q)\end{pmatrix}\\
& \le &
  \widetilde m_{j_n+1},
\end{eqnarray*} 
where $(\widetilde m_j)_{j\ge 1}$ is defined in \eqref{tmj2}. Let $\eta_j=(p_j-1) \widetilde m_j$ for $j\ge 1$. By construction of $(p_j)_{j\ge 1}$, one has $\lim_{j\to\infty}\eta_j=0$ and specifying  $p=p_{j_n+1}$ one obtains
\begin{eqnarray*}
L_\alpha ({p_{j_n+1}}q,{p_{j_n+1}}\widetilde P_{X,\phi,\alpha} (q))
 \le (1-p_{j_n+1}) h(q,\alpha)+\eta_{j_n+1}(p_{j_n+1}-1).
 \end{eqnarray*}
We can insert this upper bound in our estimation of $Y_n(\beta,\varrho)-Y_{n-1}(\beta,\varrho)$ and get,  remembering that $\varrho\in \mathcal{R}(\beta, \ell,\varepsilon)$, for $n\ge \ell+1$  
\begin{eqnarray*}
&&\|Y_n(\beta,\varrho)-Y_{n-1}(\beta,\varrho)\|^{p_{j_n+1}}_{p_{j_n+1}}\\
&\le& (2\beta^{-1})^{p_{j_n+1}}s_{j_n+1}^{p_{j_n+1}}\exp\Big ((1-p_{j_n+1}) \sum_{k=1}^{n-1} \log(\beta) +  h(q_k,\alpha_k)  -\eta_{j_n+1}\Big)\\
&\le & (2\beta^{-1})^{p_{j_n+1}}s_{j_n+1}^{p_{j_n+1}}\exp \big ((n-1)  (1-p_{j_n+1}) (\varepsilon-\eta_{j_n+1})\big ).
\end{eqnarray*}
Let $j(\varepsilon)=\min\{j\ge \lfloor \varepsilon^{-1}\rfloor  +1:\eta_j\le \varepsilon/2\}$ and $n_\varepsilon=\min\{n\ge \ell+1: j_{n+1}\ge j(\varepsilon)\}$. For $n\ge n_\varepsilon$ on has, remembering \eqref{rj2},
\begin{align*}
 \|Y_n(\beta,\varrho)-Y_{n-1}(\beta,\varrho)\|^{p_{j_n+1}}_{p_{j_n+1}}
\le(2\beta^{-1})^{p_{j_n+1}}s_{j_n+1}^{p_{j_n+1}}\exp \big ((n-1) p_{j_n+1}r_{j_n+1})\big ).
\end{align*}
Consequently, using the estimates as in the proof of Proposition~\ref{pro-3.4} one gets
\begin{align*}
\sum_{n\ge n_\varepsilon}\Big \|\sup_{\varrho\in \mathcal{R}(\beta,\ell,\varepsilon)}|Y_n(\beta,\varrho)-Y_{n-1}(\beta,\varrho)|\Big \|_1
<\infty.
 \end{align*}
This yields the conclusion about the uniform convergence. The fact that the limit $Y(\beta,\cdot)$ does not vanish almost surely, conditionally on non extinction of $(\TT_{\beta,n})_{n\ge1 }$, follows the same lines as in the study of $Y(\cdot)$, combined with the fact that for a fixed $\varrho\in \mathcal{R}(\beta, \ell,\varepsilon)$, the probability that the limit of $Y_n(\beta,\varrho)$ be 0 equals that of the extinction of $(\TT_{\beta,n})_{n\in\mathbb{N}}$. This comes from the fact that  conditionally on non extinction, the event $\{Y(\beta,\varrho)=0\}$ is asymptotic so has probability 0 or 1, and it has probability 0 since the convergence of $Y_n(\beta,\varrho)$ to $Y(\beta,\varrho)$ holds in~$L^1$. Thus, we have the desired result for a given couple $(\ell,\varepsilon)$; but it holds  simultaneously for all $\ell\ge 1$ and $\varepsilon\in\mathbb Q^*_+$ since  $\mathbb N\times \mathbb Q^*_+$ is countable.

\medskip

\noindent
(2) The approach to follow can be interpreted as a uniform version of  the ``decomposition'' principle of multiplicative cascades on homogeneous trees due Kahane (see \cite{K2}, as well as \cite{WW} for a proof and \cite{KF'} for general multiplicative chaos). 

Fix $\ell\ge 1$ and $\varepsilon>0$. Denote by $E$ the separable Banach space of real valued continuous functions over the compact set $\mathcal{R}(\beta, \ell,\varepsilon)$ endowed with the supremum norm $\|\ \|_\infty$.   

For $n\ge m \ge 1$ and $\varrho\in\mathcal{R}(\beta, \ell,\varepsilon)$ let 
$$
\widetilde Y_{m,n}(\beta,\varrho)=\sum_{u\in \TT_m} Y_{n-m} (\varrho, u)\prod_{k=1}^{m}\widetilde W_{\beta,u_{|k}}W_{\varrho,u_{|k}}.
$$
Notice that $\widetilde Y_{n,n}(\beta,\varrho)=Y_n(\beta,\varrho)$. Moreover, since  $Y_n(\beta,\cdot)$ converges uniformly, almost surely and in $L^1$ norm to $Y(\beta,\cdot)$ as $n\to\infty$, $Y_n(\beta,\cdot)$ belongs to  $L^1_E= L^{1}_E(\Omega_\beta\times \Omega,\mathcal A_\beta\times \mathcal A,{\mathbb  P}_\beta \times{\mathbb  P})$ (where we use the notations of \cite[Section V-2]{Neveu}), so that  the continuous random function $\mathbb{E}(\widetilde Y_{n,n}(\beta,\varrho)|\mathcal F_{\beta,m}\otimes\mathcal F_{n})$ is well defined by \cite[Proposition V-2-5]{Neveu}. Also,  given $\varrho\in \mathcal{R}(\beta, \ell,\varepsilon)$, we can deduce from the definitions and the independence assumptions that 
$$
\widetilde Y_{m,n}(\beta,\varrho)=\mathbb{E}(\widetilde Y_{n,n}(\beta,\varrho)|\mathcal F_{\beta,m}\otimes\mathcal F_{n})
$$
almost surely. Consequently, by \cite[Proposition V-2-5]{Neveu} again, since $e\in E\mapsto e(\varrho)$ is a continuous linear form over $E$, we obtain $
\widetilde Y_{m,n}(\beta,\varrho)=\mathbb{E}(\widetilde Y_{n,n}(\beta,\cdot)|\mathcal F_{\beta,m}\otimes\mathcal F_{n}) (\varrho)$ almost surely. Since given any dense countable subset $\mathcal D$ of $\mathcal{R}(\beta, \ell,\varepsilon)$ this holds simultaneously for all $\varrho\in\mathcal D$, we can conclude  that  the random continuous functions $\widetilde Y_{m,n}(\beta,\cdot)$ and $\mathbb{E}(\widetilde Y_{n,n}(\beta,\cdot)|\mathcal F_{\beta,m}\otimes\mathcal F_{n})$ are equal almost surely. 

Similarly, since for each $\varrho\in \mathcal{R}(\beta, \ell,\varepsilon)$ the martingale $(Y_n(\beta,\varrho),\mathcal F_{\beta,n}\otimes\mathcal F_{n})_{n\in\mathbb{N}}$ converges to $Y(\beta,\varrho)$ almost surely and in $L^1$, and $Y(\beta,\cdot)\in L^1_E$,   by using  \cite[Proposition V-2-5]{Neveu} once more we can get 
\begin{equation}\label{zn}
\widetilde Y_{n,n}(\beta,\cdot)=\mathbb{E}(Y(\beta,\cdot)|\mathcal F_{\beta,n}\otimes\mathcal F_{n}), \text{ hence } \widetilde Y_{m,n}(\beta,\cdot)=\mathbb{E}(Y(\beta,\cdot)|\mathcal F_{\beta,m}\otimes\mathcal F_{n}),
\end{equation}
almost surely. Moreover, it follows from Proposition~\ref{pro-3.4}(1) and the definition of $\mu_\varrho([u])$ that $\widetilde Y_{m,n}(\beta,\cdot)$ converges uniformly, almost surely  and in $L^1$ norm, as $n\to\infty$, to  $\wt Y_m(\beta,\cdot)$. This and \eqref{zn} yield, using \cite[Proposition V-2-6]{Neveu},
$$
\widetilde Y_m(\beta,\cdot)= \lim_{n\to\infty} \widetilde Y_{m,n}(\cdot) =\mathbb{E}\big (Y(\beta,\cdot)|\mathcal F_{\beta,m}\otimes \sigma (\bigcup_{n\in\mathbb{N}} \mathcal F_{n})\big ),
$$
and finally
$$
\lim_{m\to\infty} \widetilde Y_m(\beta,\cdot)=\mathbb{E}\big (Y(\beta,\cdot)|\sigma(\bigcup_{m\ge 1}\mathcal F_{\beta,m})\otimes \sigma (\bigcup_{n\in\mathbb{N}} \mathcal F_{n})\big )=Y(\beta,\cdot)
$$
almost surely (since by construction $Y(\beta,\cdot)$ is $\sigma(\bigcup_{m\ge 1}\mathcal F_{\beta,m})\otimes \sigma (\bigcup_{n\in\mathbb{N}} \mathcal F_{n})$-measurable), where the convergences hold in the uniform norm. 
\end{proof}

\begin{proof}[Proof of Proposition~\ref{pp33}](1) This will be established simultaneously with Proposition~\ref{pp3'} below, which deals with $\R^d$-valued branching random walks. 
(2) Recall \eqref{uniformboundSnphi}. Observe that by construction, for all $t\in\partial \TT$ one has $\mathrm{diam}([t_{|n}]))= \exp(-S_{n+k_n(t)}\phi(t))$, where $k_n(t)=\inf\{k\ge 0: N_{t_{|n+k}}>1\}$. Consequently, if follows from the previous observation and part (1) of the proposition that the property we have to establish will follow if we show that with probability 1, for all $ \varrho\in \mathcal{R}$ one has $k_n(t)=o(n)$ for $\mu_\varrho$-almost every~$t$.

Fix $\eta\in(0,1)$. Denoting by $1^k$ the word $\underbrace{1\cdots1}_{k}$, for all $\varrho\in \mathcal{R}$ one has 
\begin{align*}
\mu_\varrho(\{t\in\partial \TT: \, k_n(t)> \lfloor n\eta\rfloor\})&\le \sum_{|u|=n} \mu_\varrho([u\cdot1^{\lfloor n\eta\rfloor}])\mathbf{1}_{\{N_u=N_{u1}=\ldots=N_{u\cdot 1^{\lfloor n\eta\rfloor-1}}=1\}}\\
&= \sum_{|u|=n} \mu_{\varrho_{|n}}([u])\Big (\prod_{k=0}^{\lfloor n\eta\rfloor-1} \mathbf{1}_{\{N_{u\cdot 1^{k}}=1\}} W_{\varrho,u\cdot 1^{k+1}}\Big ) Y(\varrho,u\cdot 1^{\lfloor n\eta\rfloor}).
\end{align*}
Thus 
\begin{align*}
&\sup_{ \varrho\in \mathcal{R}}\mu_\varrho(\{t\in\partial \TT: \, k_n(t)> \lfloor n\eta\rfloor\})\\
&\le \sum_{\varrho_{|n+\lfloor n\eta\rfloor}: \varrho\in \mathcal{R}} \sum_{|u|=n} \mu_{\varrho_{|n}}([u])\Big (\prod_{k=0}^{\lfloor n\eta\rfloor-1} \mathbf{1}_{\{N_{u\cdot 1^{k}}=1\}} W_{\varrho,u\cdot 1^{k+1}}\Big ) \sup_{ \varrho\in \mathcal{R}}Y(\varrho,u\cdot 1^{\lfloor n\eta\rfloor}).
\end{align*}
Consequently, using \eqref{controlonDj} and \eqref{control3'}, we get 
\begin{align}
\nonumber&\mathbb{E}(\sup_{ \varrho\in \mathcal{R}}\mu_\varrho(\{t\in\partial \TT: \, k_n(t)\ge \lfloor n\eta\rfloor\})\\
\nonumber& \le (\#\{\varrho_{|n+\lfloor n\eta\rfloor}: \varrho\in \mathcal{R}\})\Big (1-\frac{c_0}{j_{n+\lfloor n\eta\rfloor}}\Big )^{\lfloor n\eta\rfloor} C_{X,\phi}e^{(n+\lfloor n\eta\rfloor)\varepsilon_{n+\lfloor n\eta\rfloor}}\\
\label{aaa}&\le C_{X,\phi} (j_{n+\lfloor n\eta\rfloor})! \exp \left (-c_0\frac{\lfloor n\eta\rfloor}{j_{n+\lfloor n\eta\rfloor}}+(n+\lfloor n\eta\rfloor)\varepsilon_{n+\lfloor n\eta\rfloor}\right ).
\end{align}
Note that due to \eqref{controlgammaj} and the fact that  $\varepsilon_k=\gamma_{j_{k}+1}^2\widehat m_{j_{k}+1}$, for $n$ large enough we have $-c_0\frac{\lfloor n\eta\rfloor}{j_{n+\lfloor n\eta\rfloor}}+(n+\lfloor n\eta\rfloor)\varepsilon_{n+\lfloor n\eta\rfloor}\le -c_0\frac{n\eta}{2j_{n}}$. Moreover, $n\ge N_{j_n}\ge (j_n)!$ so $n\ge \frac{n+\lfloor n\eta\rfloor}{1+\eta}\ge \frac{(j_{n+\lfloor n\eta\rfloor})!}{1+\eta}$, and $j_n=o(\log(n))$ as $n\to\infty$. Consequently, \eqref{aaa} implies that 
$$
\sum_{n\in\mathbb{N}}\mathbb{E}(\sup_{ \varrho\in \mathcal{R}}\mu_\varrho(\{t\in\partial \TT: \, k_n(t)\ge \lfloor n\eta\rfloor\})<\infty,
$$
This holds for all $\eta\in (0,1)$ from which it follows that, with probability 1, for all positive rational number $\eta>0$, one has $\sum_{n\in\mathbb{N}}\mathbb{E}(\sup_{ \varrho\in \mathcal{R}}\mu_\varrho(\{t\in\partial \TT: \, k_n(t)\ge \lfloor n\eta\rfloor\})<\infty$. By the Borel-Cantelli lemma, this implies that with probability 1, for all $ \varrho\in \mathcal{R}$ one has $k_n(t)=o(n)$ for $\mu_\varrho$-almost every $t$, which is what had to be established.\end{proof}

\section{Proof of Theorem~\ref{thm-1.1}}\label{proofthm1.1}

Sections~\ref{up2} and \ref{LB} are respectively dedicated to establish the sharp upper bound and lower bound for $\dim E(X,K)$, almost surely for all $K\in\mathcal K$.

\subsection{Upper bounds for the Hausdorff dimensions of the sets $E(X,K)$}
\label{up2}
For each $(q,\alpha)\in \R^d\times\R^d$, recall the definition \eqref{Palphaq} of $\widetilde P_{X,\phi,\alpha}(q)$ and define  
$$
P_{X,\phi,\alpha}(q)=\inf\Big \{t\in\R: \limsup_{n\rightarrow\infty}\frac{1}{n}\log\Big (\displaystyle\sum_{u\in \TT_n}  \exp ({\langle q|S_{n}(X-\alpha)(u)\rangle}-tS_n\phi(u))\Big)\le 0\Big \}.
$$

The following proposition is a direct consequence of the log-convexity, of the mappings $(q,t)\mapsto \displaystyle\sum_{u\in \TT_n}  \exp ({\langle q|S_{n}(X-\alpha)(u)\rangle}-tS_n\phi(u))$  and $(\alpha,t)\mapsto \displaystyle\sum_{u\in \TT_n}  \exp ({\langle q|S_{n}(X-\alpha)(u)\rangle}-tS_n\phi(u))$ given $\alpha\in\R^d$ and  $q\in\R^d$ respectively. 
\begin{proposition}\label{convex}
The mappings $q\mapsto P_{X,\phi,\alpha}(q)$ and $\alpha\mapsto P_{X,\phi,\alpha}(q)$ are convex. 
\end{proposition}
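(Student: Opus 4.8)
The plan is to work with the ``partition function'' $Z_n(q,\alpha,t)=\sum_{u\in\TT_n}\exp\big(\langle q|S_nX(u)-n\alpha\rangle - tS_n\phi(u)\big)$ (recall $S_n(X-\alpha)(u)=S_nX(u)-n\alpha$), its normalised logarithm $\varphi_n(q,\alpha,t)=\frac1n\log Z_n(q,\alpha,t)$, and $\varphi(q,\alpha,t)=\limsup_{n\to\infty}\varphi_n(q,\alpha,t)$, so that by definition $P_{X,\phi,\alpha}(q)=\inf\{t\in\R:\ \varphi(q,\alpha,t)\le 0\}$. The whole statement will then follow from two elementary facts: joint convexity of $\varphi$ in the relevant variables, and stability of convexity under partial minimisation in $t$.

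\emph{Convexity of $\varphi$.} For fixed $\alpha$, the exponent $\langle q|S_nX(u)-n\alpha\rangle - tS_n\phi(u)$ is an affine function of $(q,t)$, so $Z_n(\cdot,\alpha,\cdot)$ is a finite sum (the tree $\TT_n$ being a.s.\ finite) of exponentials of affine functionals, hence log-convex by H\"older's inequality; thus $\varphi_n(\cdot,\alpha,\cdot)$ is convex on $\R^d\times\R$. Rewriting the exponent as $\langle q|S_nX(u)\rangle - n\langle q|\alpha\rangle - tS_n\phi(u)$, which is affine in $(\alpha,t)$ for fixed $q$, gives in the same way that $\varphi_n(q,\cdot,\cdot)$ is convex. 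Finally $\varphi=\inf_{m\ge 1}\sup_{n\ge m}\varphi_n$; each $\sup_{n\ge m}\varphi_n$ is convex as a supremum of convex functions, the sequence $\big(\sup_{n\ge m}\varphi_n\big)_{m\ge1}$ is non-increasing, and the pointwise infimum of a non-increasing sequence of convex functions is convex (if $g_m\downarrow g$ with $g_m$ convex, then $g(\lambda x+(1-\lambda)y)\le\inf_m\big(\lambda g_m(x)+(1-\lambda)g_m(y)\big)=\lambda g(x)+(1-\lambda)g(y)$, the last equality because $g_m(x)$ and $g_m(y)$ are separately non-increasing in $m$). Hence $(q,t)\mapsto\varphi(q,\alpha,t)$ and $(\alpha,t)\mapsto\varphi(q,\alpha,t)$ are convex; one allows the value $+\infty$ throughout, although under \eqref{allmomentsfinite} and \eqref{allmomentsfinitephi} every $Z_n$ is a.s.\ finite.

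\emph{Partial minimisation in $t$.} I use the general fact that if $g:\R^m\times\R\to\R\cup\{\pm\infty\}$ is jointly convex, then $x\mapsto h(x)=\inf\{t\in\R:\ g(x,t)\le0\}$ is convex. Indeed, given $x_1,x_2$, $\lambda\in[0,1]$ and $\varepsilon>0$, choose $t_i$ with $t_i\le h(x_i)+\varepsilon$ and $g(x_i,t_i)\le0$ (nothing to prove if $h(x_i)=+\infty$, and take $t_i\to-\infty$ if $h(x_i)=-\infty$); convexity gives $g\big(\lambda x_1+(1-\lambda)x_2,\ \lambda t_1+(1-\lambda)t_2\big)\le\lambda g(x_1,t_1)+(1-\lambda)g(x_2,t_2)\le0$, whence $h(\lambda x_1+(1-\lambda)x_2)\le\lambda h(x_1)+(1-\lambda)h(x_2)+\varepsilon$, and $\varepsilon\to0$ concludes. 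Applying this with $x=q$ and $g=\varphi(\cdot,\alpha,\cdot)$ yields convexity of $q\mapsto P_{X,\phi,\alpha}(q)$, and with $x=\alpha$ and $g=\varphi(q,\cdot,\cdot)$ yields convexity of $\alpha\mapsto P_{X,\phi,\alpha}(q)$.

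The only mildly delicate point is the passage from finite-$n$ convexity to convexity of the $\limsup$, handled above via the $\inf_m\sup_{n\ge m}$ decomposition; everything else is a direct use of H\"older's inequality and of the elementary partial-minimisation lemma. Note that the positivity of the $\phi_i$, which ensures that $\{t:\varphi(q,\alpha,t)\le0\}$ is a half-line and hence that $P_{X,\phi,\alpha}$ behaves like a topological pressure, is not even needed for the convexity assertion itself.
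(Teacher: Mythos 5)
Your proof is correct and follows the paper's approach, namely log-convexity of the partition function $(q,t)\mapsto Z_n(q,\alpha,t)$ and $(\alpha,t)\mapsto Z_n(q,\alpha,t)$, which the paper states in a single sentence. You have merely filled in the routine verifications that the paper leaves implicit: that $\limsup_n = \inf_m\sup_{n\ge m}$ of convex functions is convex because the outer $\inf$ is taken over a pointwise non-increasing sequence, and that $x\mapsto\inf\{t:g(x,t)\le 0\}$ is convex whenever $g$ is jointly convex (convexity of the sublevel set $\{g\le 0\}$ suffices, so the positivity of the $\phi_i$ is indeed not needed here).
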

\begin{proposition}\label{compPXphialpha}
With probability 1, $P_{X,\phi,\alpha}(q)\le \widetilde P_{X,\phi,\alpha}(q)$ for all $(q,\alpha)\in\R^d\times\R^d$. 
\end{proposition}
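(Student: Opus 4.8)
The plan is to reduce the claim to a standard first-moment / Borel–Cantelli argument comparing a deterministic exponential sum to its expectation. First I would fix $(q,\alpha)\in\R^d\times\R^d$ and, for $n\ge 1$, set
$$
Z_n(q,\alpha,t)=\sum_{u\in\TT_n}\exp\big(\langle q\,|\,S_n(X-\alpha)(u)\rangle - t\,S_n\phi(u)\big).
$$
By definition of $\widetilde P_{X,\phi,\alpha}(q)$ in \eqref{Palphaq}, the expectation factorises along the branching structure: using the i.i.d.\ copies $(N_u,(X_{ui},\phi_{ui})_i)$ and conditioning generation by generation, one gets
$$
\mathbb{E}\big(Z_n(q,\alpha,t)\big)=\Big(\mathbb{E}\,\Sigma_\alpha(q,t)\Big)^n,
$$
with $\Sigma_\alpha$ as in \eqref{sigma}. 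Hence for $t=\widetilde P_{X,\phi,\alpha}(q)$ one has $\mathbb{E}(Z_n)=1$ for every $n$, and more generally $\mathbb{E}(Z_n(q,\alpha,t))\to 0$ geometrically as soon as $t>\widetilde P_{X,\phi,\alpha}(q)$, since $t\mapsto \mathbb{E}\,\Sigma_\alpha(q,t)$ is strictly decreasing (the $\phi_i$ are positive) and continuous with value $1$ at $\widetilde P_{X,\phi,\alpha}(q)$.

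Next I would fix $\varepsilon>0$ and $t=\widetilde P_{X,\phi,\alpha}(q)+\varepsilon$. By Markov's inequality, $\mathbb{P}\big(Z_n(q,\alpha,t)\ge 1\big)\le \mathbb{E}(Z_n(q,\alpha,t))=\rho^n$ with $\rho=\mathbb{E}\,\Sigma_\alpha(q,t)<1$, which is summable; Borel–Cantelli gives that, with probability $1$, $Z_n(q,\alpha,t)<1$ for all $n$ large enough, so $\limsup_n n^{-1}\log Z_n(q,\alpha,t)\le 0$, i.e.\ $P_{X,\phi,\alpha}(q)\le \widetilde P_{X,\phi,\alpha}(q)+\varepsilon$ almost surely. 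Intersecting over $\varepsilon\in\mathbb{Q}_+^*$ yields $P_{X,\phi,\alpha}(q)\le \widetilde P_{X,\phi,\alpha}(q)$ a.s.\ for each fixed $(q,\alpha)$, hence a.s.\ simultaneously for all $(q,\alpha)$ in the countable dense set $\mathbb{Q}^d\times\mathbb{Q}^d$. To promote this to all $(q,\alpha)\in\R^d\times\R^d$ I would invoke continuity: the right-hand side $(q,\alpha)\mapsto\widetilde P_{X,\phi,\alpha}(q)$ is (real-)analytic, as noted in the text after \eqref{Palphaq}, and the left-hand side is convex in $q$ and convex in $\alpha$ by Proposition~\ref{convex}, hence continuous in each variable and in fact locally bounded and continuous jointly; passing to the limit along a sequence in $\mathbb{Q}^d\times\mathbb{Q}^d$ converging to $(q,\alpha)$ preserves the inequality.

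The only genuinely delicate point is the last continuity/limiting step: one must be careful that $P_{X,\phi,\alpha}(q)$ is indeed upper semicontinuous (or continuous) jointly in $(q,\alpha)$ so that the countable-density argument closes. Separate convexity in $q$ and in $\alpha$ does not immediately give joint continuity, so I would instead argue directly: the function $(q,t)\mapsto \limsup_n n^{-1}\log Z_n(q,\alpha,t)$ is a $\limsup$ of convex functions, hence convex and therefore upper semicontinuous as a $\limsup$ is dominated on compacts; alternatively, since for rational $(q',\alpha')$ near $(q,\alpha)$ and any $t>\widetilde P_{X,\phi,\alpha'}(q')$ we have the bound above, a short comparison of the summands $\exp(\langle q|S_n(X-\alpha)(u)\rangle - tS_n\phi(u))$ for nearby parameters — controlled uniformly in $u\in\TT_n$ only up to a factor $e^{o(n)}$ using Lemma~\ref{controlSnphi} and the growth $\#\TT_n=e^{O(n)}$ — transfers the estimate. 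This is routine but needs the uniform control on $\|S_nX(u)\|$ and $S_n\phi(u)$ over $\TT_n$; I expect it to occupy a few lines rather than being a serious obstacle.
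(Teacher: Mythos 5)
Your proof is correct and follows essentially the same route as the paper: one bounds the first moment $\mathbb{E}\sum_{n\ge 1}\sum_{u\in\TT_n}\exp(\langle q\,|\,S_n(X-\alpha)(u)\rangle - tS_n\phi(u))=\sum_{n\ge 1}(\mathbb{E}\,\Sigma_\alpha(q,t))^n<\infty$ for $t>\widetilde P_{X,\phi,\alpha}(q)$, concludes $P_{X,\phi,\alpha}(q)\le t$ almost surely, and then intersects over rational parameters and invokes Proposition~\ref{convex} to pass to all $(q,\alpha)$ — the only cosmetic difference being that the paper sums the first moments directly rather than going through Markov and Borel--Cantelli. Your closing worry about \emph{joint} continuity is unnecessary: separate convexity from Proposition~\ref{convex} already suffices, since once $P_{X,\phi,\alpha}(\cdot)\le\widetilde P_{X,\phi,\alpha}(\cdot)$ holds on $\mathbb{Q}^d$ for each fixed rational $\alpha$, the convex function $P_{X,\phi,\alpha}(\cdot)$ is dominated by a finite continuous function on a dense set, hence finite-valued and continuous on all of $\R^d$, which extends the inequality in $q$; the same one-variable argument in $\alpha$ (for each real $q$) then finishes.
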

\begin{proof}
Due to Proposition~\ref{convex},  we only need to prove the inequality for each $(q,\alpha)\in\R^d\times\R^d$, almost surely. Fix  $(q,\alpha)\in\R^d\times\R^d$.  For $t>\widetilde{P}_\alpha(q)$ we have 
\begin{eqnarray*}
\E(\displaystyle\sum_{n\geq1}\displaystyle\sum_{u\in \TT_n}  \exp ({\langle q|S_{n}(X-\alpha)(u)\rangle}-tS_n\phi(u))&=& \displaystyle\sum_{n\geq1}\E (\sum_{i=1}^{N}   \exp ({\langle q|X_i-\alpha\rangle}-t\phi_i))^n<\infty.
\end{eqnarray*}
Consequently, $\sum_{u\in \TT_n}  \exp \big ({\langle q|S_{n}(X-\alpha)(u)\rangle}-tS_n\phi(u)\big )$ is bounded almost surely, so $t\ge P_{X,\phi,\alpha}(q)$ almost surely. Since $t>\wt P_{X,\phi,\alpha}(q)$ is arbitrary, we get the desired conclusion. 
\end{proof}

For $\alpha\in\R^d$, set
$$
\widehat E(X,\alpha)=\Big\{t\in\partial\TT: \alpha\in \bigcap_{n\in\mathbb{N}}\overline{ \Big \{\frac{S_nX(t)}{n}:n\ge N\Big\}}\Big\} .
$$ 

The following proposition and its corollary extend \cite[Proposition 2.5 and Corollary 2.3]{AB}, valid for ${\mathrm d}_1$, to the case of the more general metric ${\mathrm d}_\phi$.

\begin{proposition}\label{rn}
With probability 1, for all  $\alpha\in \R^d$, $\dim \widehat E(X,\alpha)\le P_{X,\phi,\alpha}^*(0)$, a negative dimension meaning that $\widehat E(X,\alpha)$ is empty. In particular $P_{X,\phi,\alpha}^*(0)\ge 0$ for all $\alpha\in I_X$.
\end{proposition}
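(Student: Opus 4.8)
The plan is a covering argument at the level of cylinders, turning the pressure $P_{X,\phi,\alpha}(q)$ into an upper bound for the Hausdorff measure of $\widehat E(X,\alpha)$, carried out $\omega$ by $\omega$ on the full probability event supplied by Lemma~\ref{controlSnphi} so that the estimate is obtained simultaneously for all $\alpha$.

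First I would fix a realization on the event where $n\abs{\log\beta}\le S_n\phi(u)\le n\abs{\log\wt\beta}$ holds for all $u\in\TT_n$ and all large $n$ (Lemma~\ref{controlSnphi}). On this event, for $u\in\TT_n$ one has $\mathrm{diam}([u]\cap\partial\TT)\le\exp(-S_n\phi(u))\le\beta^n$ under $\mathrm{d}_\phi$ (because $\phi_i>0$ forces $S_{\abs{s\wedge t}}\phi(t)\ge S_n\phi(u)$ whenever $s,t\in[u]$), and, via Proposition~\ref{compPXphialpha}, each $P_{X,\phi,\alpha}(q)$ is finite; moreover, since $t\mapsto\sum_{u\in\TT_n}\exp(\langle q|S_n(X-\alpha)(u)\rangle-tS_n\phi(u))$ is non-increasing, the set of $t$ for which $\limsup_n n^{-1}\log\sum_{u\in\TT_n}\exp(\langle q|S_n(X-\alpha)(u)\rangle-tS_n\phi(u))\le 0$ is an upper half-line with infimum $P_{X,\phi,\alpha}(q)$, so this limsup inequality holds for every $(q,\alpha)$ and every $t>P_{X,\phi,\alpha}(q)$. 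The basic observation is that if $t\in\widehat E(X,\alpha)$ then $\abs{S_n(X-\alpha)(t)}<n\varepsilon$ for infinitely many $n$, hence for every $N$ the set $\widehat E(X,\alpha)$ is covered by the cylinders $[u]$ with $u\in\TT_n$, $n\ge N$, $\abs{S_n(X-\alpha)(u)}<n\varepsilon$; on such $u$ the factor $\exp(-\langle q|S_n(X-\alpha)(u)\rangle)$ is at most $e^{\abs{q}n\varepsilon}$, which lets me insert the weight $\exp(\langle q|S_n(X-\alpha)(u)\rangle)$ for free and bound $\sum_{u:\,\abs{S_n(X-\alpha)(u)}<n\varepsilon}\exp(-sS_n\phi(u))$ by $e^{\abs{q}n\varepsilon}\sum_{u\in\TT_n}\exp(\langle q|S_n(X-\alpha)(u)\rangle-sS_n\phi(u))$.

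Then I would split according to the sign of $P_{X,\phi,\alpha}^*(0)=\inf_q P_{X,\phi,\alpha}(q)$. If it is $\ge 0$: for $s$ slightly above it, choose $q$ with $P_{X,\phi,\alpha}(q)<s$ and $s'\in(P_{X,\phi,\alpha}(q),s)$; writing $S_n\phi(u)\ge n\abs{\log\beta}$ turns the extra weight $\exp(-(s-s')S_n\phi(u))$ into an exponential gain $e^{-(s-s')n\abs{\log\beta}}$ which, combined with $\limsup_n n^{-1}\log\sum_u\exp(\langle q|S_n(X-\alpha)(u)\rangle-s'S_n\phi(u))\le 0$ and with the factor $e^{\abs{q}n\varepsilon}$ for $\varepsilon$ small enough, makes $\sum_{n\ge N}\sum_{u:\,\abs{S_n(X-\alpha)(u)}<n\varepsilon}(\mathrm{diam}([u]))^s\to 0$; hence $\mathcal H^s(\widehat E(X,\alpha))=0$, $\dim\widehat E(X,\alpha)\le s$, and letting $s\downarrow P_{X,\phi,\alpha}^*(0)$ gives the bound. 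If $P_{X,\phi,\alpha}^*(0)<0$: Hausdorff measures of negative order are meaningless, so instead I would pick $q$ with $P_{X,\phi,\alpha}(q)<0$ and $t'<t<0$ with $t'>P_{X,\phi,\alpha}(q)$, and estimate the integer $\#\{u\in\TT_n:\abs{S_n(X-\alpha)(u)}<n\varepsilon\}\le e^{\abs{q}n\varepsilon}\,\big(\sup_{u\in\TT_n}e^{tS_n\phi(u)}\big)\sum_{u\in\TT_n}\exp(\langle q|S_n(X-\alpha)(u)\rangle-tS_n\phi(u))\le e^{(\abs{q}\varepsilon+t\abs{\log\beta}+o(1))n}$ (using $\sup_u e^{tS_n\phi(u)}=e^{t\inf_u S_n\phi(u)}\le e^{tn\abs{\log\beta}}$ since $t<0$); choosing $\varepsilon$ small so that $\abs{q}\varepsilon+t\abs{\log\beta}<0$ forces this integer to vanish for $n$ large, whence $\widehat E(X,\alpha)=\emptyset$, again consistent with the asserted bound. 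Finally, for the ``in particular'': for $\alpha\in I_X$, Theorem A gives $\dim E(X,\alpha)=\wt P_X^*(\alpha)\ge 0$ (non-emptiness being metric-free), so $E(X,\alpha)\neq\emptyset$; since $E(X,\alpha)\subseteq\widehat E(X,\alpha)$ this forces $0\le\dim\widehat E(X,\alpha)\le P_{X,\phi,\alpha}^*(0)$.

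The point needing care, more than a genuine obstacle, is uniformity in $\alpha$: this is exactly why the argument must be run pathwise on the single event of Lemma~\ref{controlSnphi}, whose role is to replace $S_n\phi(u)$ by a fixed multiple of $n$ uniformly over $\TT_n$; once that is in place, $P_{X,\phi,\alpha}(q)$ is an honest $\omega$-wise real number and the covering estimate above applies to every $(q,\alpha)$ at once, so no density-and-continuity extension (via Proposition~\ref{convex}) is required. The secondary subtlety is that the ``empty set'' regime $P_{X,\phi,\alpha}^*(0)<0$ has to be extracted from the integer-counting bound of the previous paragraph rather than from a Hausdorff-measure computation.
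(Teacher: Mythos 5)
Your proof is correct and follows essentially the same strategy as the paper's: cover $\widehat E(X,\alpha)$ by cylinders of generation $n\ge N$ satisfying the concentration condition, insert the exponential weight $\exp(\langle q|S_n(X-\alpha)(u)\rangle)$ at the cost of a factor $e^{\|q\|n\varepsilon}$, and use the definition of $P_{X,\phi,\alpha}(q)$ together with the uniform bound $S_n\phi(u)\ge n|\log\beta|$ from Lemma~\ref{controlSnphi} to force the covering sum to decay, concluding by letting $\varepsilon\to0^+$ and taking the infimum over $q$. Your explicit cardinality count in the regime $P^*_{X,\phi,\alpha}(0)<0$ is a slightly different packaging of what the paper leaves implicit (a negative Hausdorff-dimension bound being read as emptiness), and your remark that no convexity/density-in-$\alpha$ argument is needed here---because $P_{X,\phi,\alpha}$ is already an $\omega$-wise quantity and the estimate applies to every $(q,\alpha)$ at once on the event of Lemma~\ref{controlSnphi}---is a correct clarification of why the bound holds simultaneously for all $\alpha$.
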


\begin{proof}
The argument is largely inspired by the approach developed for the multifractal analysis of $\R^d$-values Birkhoff averages on conformal repellers (see \cite{BSS}; for a general multifractal formalism for vector-valued functions, see~\cite{Pey4}). Recall that for any  $E\subset\partial\mathcal T$, $\dim E=-\infty$ if $E=\emptyset$ and $\dim E= \inf\{s\in\mathbb R_+: \lim_{\delta\to 0^+} \mathcal H^s_\delta(E)=0\}$ otherwise, where 
$$
\mathcal H^s_\delta(E)=\inf\Big\{\sum_{i\in\mathbb N} \mathrm{diam} (E_i)^s:\ E\subset \bigcup_{i\in\mathbb N} E_i, \ \mathrm{diam} (E_i)\le \delta\Big\}.
$$
For every $n\ge 1$ let us denote $\widetilde r_n=\max\{\mathrm{diam}([u]): u\in \TT_n\}$. To begin, note that 
\begin{eqnarray*}
\widehat E(X,\alpha)&=&\bigcap_{\varepsilon>0}\bigcap_{n\in\mathbb{N}}\bigcup_{n\ge N}\{t\in\partial\TT: \|S_nX(t)-n\alpha\|\le n\varepsilon\}\\
&\subset& \bigcap_{q\in\R^d}\bigcap_{\varepsilon>0}\bigcap_{n\in\mathbb{N}}\bigcup_{n\ge N}\{t\in\partial\TT: |\langle q|S_nX(t)-n\alpha\rangle|\le n\|q\|\varepsilon\}.
\end{eqnarray*}
Fix $q\in\mathbb{R}^d$ and $\varepsilon>0$. For $N\ge 1$, the set $E(q,N,\varepsilon,\alpha)=\bigcup_{n\ge N}\{t\in\partial\TT: |\langle q|S_nX(t)-n\alpha\rangle|\le n\|q\|\varepsilon\}$ is covered by the union of those $[u]$ such that $u\in \TT_n$ and $\langle q|S_nX(u)-n\alpha\rangle +n\|q\|\varepsilon\ge 0$. Consequently, noting that by construction of the metric $\mathrm{d}_\phi$, for all $u\in \TT_n$ we have $\mathrm{diam}([u])\le \exp(-S_n\phi(u))$, for any $s\ge 0$ we can write
\begin{align*}
\mathcal H^s_{\tilde r_N}(E(q,N,\varepsilon,\alpha))&\le \sum_{n\ge N}\sum_{u\in \TT_n} \mathrm{diam}([u])^s \exp(\langle q|S_nX(u)-n\alpha\rangle +n\|q\|\varepsilon)\\
&\le  \sum_{n\ge N}\sum_{u\in \TT_n}\exp(\langle q|S_n(X-\alpha)(u)\rangle -sS_n\phi(u) +n\|q\|\varepsilon).
\end{align*}
Hence, if $\eta>0$ and $s>P_{X,\phi,\alpha}(q)+\eta+\|q\|\varepsilon$, by definition of $P_{X,\phi,\alpha}(q)$, for $N$ large enough  one has $
\mathcal H^s_{\tilde r_N}(E(q,N,\varepsilon,\alpha))\le \sum_{n\ge N}e^{-n\eta/2}$.
Since $\widetilde r_N\le r_n=\max\{\exp(-S_n\phi(u)): u\in \TT_n\}$,  $\widetilde r_N$ tends to 0 almost surely as $N$ tends to $\infty$, and we conclude that $\dim E(q,N,\varepsilon,\alpha)\le s$. As this holds for all $\eta>0$, we get $\dim E(q,N,\varepsilon,\alpha)\le P_{X,\phi,\alpha}(q)+\|q\|\varepsilon$. It follows that 
$
\dim \widehat E(X,\alpha)\le \inf_{q\in\mathbb{R^d}}\inf_{\varepsilon>0}P_{X,\phi,\alpha}(q)+\|q\|\varepsilon=\inf_{q\in\mathbb{R^d}}P_{X,\phi,\alpha}(q)=P^*_{X,\phi,\alpha}(0).
$ 
If $\inf_{q\in\mathbb{R^d}}P_{X,\phi,\alpha}(q)<0$, we necessarily have $ \widehat E(X,\alpha)=\emptyset$. Since for $\alpha\in I_X$ one has $\emptyset\neq E(X,\alpha)\subset \widehat E(X,\alpha)$, we get $P^*_{X,\phi,\alpha}(0)\ge 0$. 
\end{proof}
\begin{corollary}\label{upbd}
With probability 1, for all  compact connected subset $K$ of $\R^d$, one has $E(X,K)= \emptyset$  if $K\not \subset I_X$, and $
\dim E(X,K)\le \inf_{\alpha\in K}\widetilde P_{X,\phi,\alpha}^*(0)$ otherwise. 
\end{corollary}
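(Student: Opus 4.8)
The plan is to derive the corollary from Proposition~\ref{rn} and Proposition~\ref{compPXphialpha}. First, fix a compact connected $K\subset\R^d$ and observe the elementary inclusion $E(X,K)\subset\bigcap_{\alpha\in K}\widehat E(X,\alpha)$: indeed, if $t\in E(X,K)$, then $K$ is exactly the set of limit points of $(S_nX(t)/n)_{n}$, so in particular every $\alpha\in K$ is a limit point, i.e. $t\in\widehat E(X,\alpha)$ for every such $\alpha$. Consequently $\dim E(X,K)\le\inf_{\alpha\in K}\dim\widehat E(X,\alpha)$. Applying Proposition~\ref{rn} on the probability-one event where its conclusion holds simultaneously for all $\alpha$, we get $\dim E(X,K)\le\inf_{\alpha\in K}P_{X,\phi,\alpha}^*(0)$, with the convention that a negative right-hand side forces $E(X,K)=\emptyset$.

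Next I would pass from $P_{X,\phi,\alpha}^*(0)$ to $\widetilde P_{X,\phi,\alpha}^*(0)$. By Proposition~\ref{compPXphialpha}, with probability $1$ one has $P_{X,\phi,\alpha}(q)\le\widetilde P_{X,\phi,\alpha}(q)$ for all $(q,\alpha)$, hence $P_{X,\phi,\alpha}^*(0)=\inf_q P_{X,\phi,\alpha}(q)\le\inf_q\widetilde P_{X,\phi,\alpha}(q)=\widetilde P_{X,\phi,\alpha}^*(0)$ for every $\alpha$. Taking the infimum over $\alpha\in K$ yields $\dim E(X,K)\le\inf_{\alpha\in K}\widetilde P_{X,\phi,\alpha}^*(0)$, as required.

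It remains to treat the case $K\not\subset I_X$. If $K\not\subset I_X$, pick $\alpha\in K\setminus I_X$; by definition \eqref{IX} this means $\widetilde P_X^*(\alpha)<0$ — more precisely, since we are now working under $\mathrm{d}_\phi$, the relevant statement is that $\widetilde P_{X,\phi,\alpha}^*(0)<0$ exactly when $\alpha\notin I_X$ (this equivalence follows from the remarks in the introduction relating $\widetilde P_{X,\phi,\alpha}^*(0)$ to $\widetilde P_X^*(\alpha)$ together with Proposition~\ref{detI}, and can be extracted from the analysis of $I_{X,\phi,\alpha}$ in the proof of Lemma~\ref{approxi2}). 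Then $\inf_{\alpha'\in K}P_{X,\phi,\alpha'}^*(0)\le P_{X,\phi,\alpha}^*(0)\le\widetilde P_{X,\phi,\alpha}^*(0)<0$, so the bound from the previous paragraph forces $\dim E(X,K)=-\infty$, i.e. $E(X,K)=\emptyset$.

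The main obstacle I anticipate is not the covering argument — that is essentially immediate from the two cited propositions and the inclusion $E(X,K)\subset\bigcap_{\alpha\in K}\widehat E(X,\alpha)$ — but rather making sure the ``negative dimension $\Rightarrow$ empty set'' bookkeeping is handled uniformly in $\alpha\in K$ on a single probability-one event, and pinning down the equivalence $\alpha\notin I_X\iff\widetilde P_{X,\phi,\alpha}^*(0)<0$ cleanly so that the case distinction $K\subset I_X$ versus $K\not\subset I_X$ matches the statement. Since $K$ is compact and $\alpha\mapsto\widetilde P_{X,\phi,\alpha}^*(0)$ is continuous, the infimum over $K$ is attained, which is what lets the negativity at a single point propagate to emptiness of the whole level set $E(X,K)$.
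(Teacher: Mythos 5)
Your proof follows the same route as the paper, which disposes of this corollary with the single sentence that it ``follows directly from Propositions~\ref{compPXphialpha} and~\ref{rn}''; you correctly supply the intermediate steps that are left implicit: the inclusion $E(X,K)\subset\bigcap_{\alpha\in K}\widehat E(X,\alpha)$, the bound $\dim E(X,K)\le\inf_{\alpha\in K}P^*_{X,\phi,\alpha}(0)$ from Proposition~\ref{rn} on a single full-measure event, and the upgrade $P^*_{X,\phi,\alpha}(0)\le\widetilde P^*_{X,\phi,\alpha}(0)$ from Proposition~\ref{compPXphialpha}.

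The one place you should tighten is the claim that $\alpha\notin I_X$ implies $\widetilde P_{X,\phi,\alpha}^*(0)<0$. The sources you cite do not actually give this for general~$\phi$: the identity $\widetilde P_{X,\phi,\alpha}^*(0)=\widetilde P_X^*(\alpha)$ quoted from the introduction is stated only for $\phi\equiv 1$, Proposition~\ref{detI} concerns $\widetilde P_X$ alone, and the relevant passage in the proof of Lemma~\ref{approxi2} treats only $\alpha\in I_X$ (and establishes the inequality $\ge 0$ there, not the strict negativity off $I_X$). The implication you need is true, but it requires a short argument: if $\widetilde P_X^*(\alpha)<0$, pick $q_0$ with $\widetilde P_X(q_0)-\langle q_0|\alpha\rangle<0$; then $\mathbb{E}\big(\Sigma_\alpha(q_0,0)\big)=\exp\big(\widetilde P_X(q_0)-\langle q_0|\alpha\rangle\big)<1$, and since $t\mapsto\mathbb{E}\big(\Sigma_\alpha(q_0,t)\big)$ is strictly decreasing (the $\phi_i$ are positive), the solution of $\mathbb{E}\big(\Sigma_\alpha(q_0,t)\big)=1$ is negative, i.e.\ $\widetilde P_{X,\phi,\alpha}(q_0)<0$, whence $\widetilde P_{X,\phi,\alpha}^*(0)<0$. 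Alternatively, observe that $\widehat E(X,\alpha)$ is a metric-independent set, so its emptiness for $\alpha\notin I_X$ can be read off from Proposition~\ref{rn} applied with $\phi\equiv1$, where the identification $\widetilde P_{X,1,\alpha}^*(0)=\widetilde P_X^*(\alpha)$ holds on the nose. Finally, the closing remark about compactness and continuity forcing the infimum to be attained is superfluous: once a single $\alpha\in K\setminus I_X$ gives $\widehat E(X,\alpha)=\emptyset$, the inclusion $E(X,K)\subset\widehat E(X,\alpha)$ already yields $E(X,K)=\emptyset$ without any limiting argument over $K$.
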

\begin{proof}
This follows directly from Propositions~\ref{compPXphialpha} and~\ref{rn}.
\end{proof}

\subsection{Lower bounds for the Hausdorff dimensions of the set $E(X,K)$}\label{LB}
The sharp lower bound estimates  for the Hausdorff dimensions of the set $E(X,K)$ are direct consequences of Theorem~\ref{lb2}, the fact that $\liminf_{n\to\infty}\frac{ \sum_{k=1}^nh(q_k,\alpha_k)}{\sum_{k=1}^n\lambda(q_k,\alpha_k)}\ge  \liminf_{n\to\infty}\frac{h(q_n,\alpha_n)}{\lambda(q_n,\alpha_n)}$ for all $\varrho\in \mathcal R$, \eqref{entropiesurLyapounov}, and the following two propositions. Recall the definition \eqref{alphaXphi} of
$\beta(q,\alpha)$.

\begin {proposition} \label{pp3'}
With probability  $1$, for all $\varrho =((q_k,\alpha_k))_{k\geq 1} \in  \mathcal{R}$,  for $\mu_\varrho$-almost all  $t \in\partial\TT$,  one has
 $$\lim_{n\to\infty}\displaystyle n^{-1}\Big \|\displaystyle S_nX(t) - \displaystyle\sum_{k=1}^{n} \beta(q_k,\alpha_k) \Big \| =0. 
 $$
\end{proposition}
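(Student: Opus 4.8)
The plan is to imitate the classical Biggins-type argument that yields the strong law of large numbers along $\mu_\varrho$-typical branches for a single Mandelbrot measure, but carried out uniformly over the compact parameter set $\mathcal{R}$ (this is the same philosophy as the uniform convergence of the martingales $Y_n(\cdot,u)$ in Proposition~\ref{pro-3.4}). Recall that under $\mathcal{Q}_\varrho$ (the Peyri\`ere measure attached to $\mu_\varrho$), the increments $\widetilde X_k:(\omega,t)\mapsto X_{t_1\cdots t_k}(\omega)$ are independent with $\mathbb{E}_{\mathcal{Q}_\varrho}(\widetilde X_k)=\beta(q_k,\alpha_k)$ by \eqref{alphaXphi}, because given $\varrho$ the $k$-th generation weights are governed by the potential $\psi(q_k,\alpha_k)$. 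So the proposition is exactly a strong law of large numbers for a sum of \emph{independent but not identically distributed} bounded-exponential-moment increments along a $\mu_\varrho$-typical branch, made uniform in $\varrho\in\mathcal{R}$.

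The key steps, in order, would be: (i) fix a direction $v\in\mathbb{S}^{d-1}$ and a small $\lambda>0$; for $\theta\in\{-\lambda,\lambda\}$ form the product martingale $Z_n(\varrho,t)=\exp\big(\langle\theta v|S_nX(t)-\sum_{k=1}^n\beta(q_k,\alpha_k)\rangle\big)\prod_{k=1}^n(\text{normalising factors})$, more precisely work with the Mandelbrot-type martingale whose weights at generation $k$ are $\exp(\langle\theta v|X_i-\beta(q_k,\alpha_k)\rangle)W_{\varrho,\cdot i}$ renormalised by $\mathbb{E}\big(\sum_{i=1}^N\exp(\langle\theta v|X_i-\beta(q_k,\alpha_k)\rangle)W_{\varrho,\cdot i}\big)=\exp(\Delta_k(\theta v,\varrho))$ where, by analyticity of $L_{\alpha}$ and the definition of $\beta(q,\alpha)$, one has $\Delta_k(\theta v,\varrho)=O(\theta^2)$ uniformly for $(q_k,\alpha_k)$ ranging over any fixed $D_j$ (use the bound $\widehat m_j$ from \eqref{mhatj} and the fact that the second derivative in the direction transverse to the defining constraint is controlled, just as in the proof of Proposition~\ref{pro-3.5}); (ii) apply the von Bahr--Esseen estimate (Lemma~\ref{lem-2.1}) exactly as in Lemma~\ref{lem-2.2'}, together with the choices of $p_j$, $s_j$, $r_j$, $N_j$ from Section~\ref{prelim}, to show that the $E$-valued martingale $\varrho\mapsto Z_n(\varrho)$ (total mass) converges uniformly, a.s.\ and in $L^1$, on $\mathcal{R}$ to a limit $Z_\infty(\cdot)$ — the summability is obtained precisely because the extra factor $\exp(\sum_k \Delta_k)$ grows at most like $\exp(C\lambda^2 n)$, which is absorbed by the same geometric bounds as in Proposition~\ref{pro-3.4} once $\lambda$ is taken small (this is where the sub-exponential control of $\widehat m_j$ via \eqref{controlgammaj}, \eqref{control'}--\eqref{control0'} is used); (iii) since by construction $Z_n(\varrho)$ is obtained from $\mu_\varrho$ by the absolutely continuous change of weights above, one reads off that $\mathbf 1_{\{t:\ \langle v|S_nX(t)-\sum_k\beta(q_k,\alpha_k)\rangle> n\delta\}}$ has $\mu_\varrho$-mass at most $e^{-n\lambda\delta}\exp(\sum_k\Delta_k)$ times the total mass of the tilted measure, hence by Markov's inequality and the uniform $L^1$-boundedness, $\mathbb{E}\big(\sup_{\varrho\in\mathcal R}\mu_\varrho(\{t:\langle v|S_nX(t)-\sum_k\beta(q_k,\alpha_k)\rangle> n\delta\})\big)$ is summable in $n$ as soon as $\lambda\delta$ beats the $O(\lambda^2 n)$ term, i.e.\ for $\lambda$ small depending on $\delta$; (iv) Borel--Cantelli gives, with probability one, simultaneously for all $\varrho\in\mathcal{R}$, $\limsup_n n^{-1}\langle v|S_nX(t)-\sum_k\beta(q_k,\alpha_k)\rangle\le\delta$ for $\mu_\varrho$-a.e.\ $t$; doing the same with $-v$, then intersecting over a countable dense set of directions $v$ and a sequence $\delta\downarrow0$, and using that $\{\|\beta(q,\alpha)\|:(q,\alpha)\in D_j\}$ is bounded (so the partial Ces\`aro sums are well-behaved), yields the claimed norm convergence to $0$, $\mu_\varrho$-a.e., uniformly in $\varrho$.

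The main obstacle I expect is making step (ii) genuinely uniform over the \emph{non-compact-in-$q$} family: although $\mathcal{R}$ itself is compact, the parameters $(q_k,\alpha_k)$ along a sequence $\varrho$ range over the increasing exhaustion $\bigcup_j D_j$ with no a priori bound on $\|q_k\|$, so the tilt exponent $\Delta_k(\theta v,\varrho)$ and the $L^{p}$-moments of the increment generating functions are only controlled block-by-block through $\widehat m_j$, $s_j$, and the delicate tuning $N_j\gg\log((j+1)!)^3$ etc. One must check that the extra $\theta$-tilt does not destroy the summability \eqref{control'}--\eqref{control2'}; concretely, one should introduce, for each fixed rational $\lambda$ and each $v$ in a countable dense set, a slightly enlarged family of weights and verify that the numbers $p_j, s_j, r_j, N_j$ chosen in Section~\ref{prelim} still work after replacing $L_{\alpha}(pq,p\widetilde P_\alpha(q))$ by $L_{\alpha}(pq+p\theta v,p\widetilde P_\alpha(q))-p\langle\theta v|\beta(q,\alpha)\rangle$ — this is an $O(\theta^2\widehat m_j)$ perturbation, hence harmless for $\theta$ in a $j$-independent neighbourhood of $0$, but the bookkeeping is the technical heart of the argument. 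Finally, step (i)'s identity $\mathbb{E}(\sum_i\exp(\langle\theta v|X_i-\beta(q,\alpha)\rangle)W_{\varrho,\cdot i})=1+O(\theta^2)$ with derivative zero at $\theta=0$ is precisely \eqref{Der1}/\eqref{alphaXphi}, so no new analytic input is needed there.
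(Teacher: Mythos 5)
Your overall plan---Chernov tilting in each direction $v$, a uniform (in $\varrho$) moment bound on the resulting exponential functional of $\mu_\varrho$, and Borel--Cantelli to pass to the a.s.\ statement---is exactly the philosophy of the paper's proof, so the high-level architecture is right. The paper is also a bit lighter than what you sketch: it does not build and prove uniform convergence of a new tilted family of Mandelbrot martingales $Z_n(\varrho)$; it simply writes $\mu_\varrho(E^1_{\varrho,n,\delta_n}(v))\le e_{n,\gamma}(\varrho)=\sum_{u\in\TT_n}\Pi^e_{n,\gamma}(\varrho,u)Y(\varrho,u)$ after Chernov, then controls $\mathbb{E}\big(\sup_{\varrho}e_{n,\gamma}(\varrho)\big)$ by the already established bound \eqref{control3'} on $\|\sup_\varrho Y(\varrho,u)\|_1$ together with the count $\#\{\varrho_{|n}\}\le (j_n+1)!$. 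That is a mild simplification, not a gap.

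The genuine gap in your proposal is the fixed tilt parameter. In steps (ii)--(iii) you take $\lambda$ small but fixed (then a countable dense set of rationals), and you claim the cumulative tilt $\sum_{k\le n}\Delta_k(\theta v,\varrho)$ grows ``at most like $\exp(C\lambda^2 n)$'' and, in your obstacle paragraph, that the $O(\theta^2\widehat m_j)$ perturbation is ``harmless for $\theta$ in a $j$-independent neighbourhood of $0$.'' This is not correct: $\Delta_k=O(\theta^2\widehat m_{j_k+1})$ with $\widehat m_j$ typically unbounded in $j$ (the sets $D_j$ exhaust $J_{X,\phi}$, so $\|q_k\|$ may grow without bound, and the Hessians in \eqref{mj2}--\eqref{tmj2} grow accordingly). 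For a fixed $\theta$ and a fixed deviation threshold $\delta$, the Chernov exponent $-n\theta\delta+\theta^2\sum_{k\le n}\widehat m_{j_k+1}$ is eventually positive and the bound diverges. The paper circumvents this by making \emph{both} the tilt and the threshold block-dependent and vanishing: it takes $\gamma=\gamma_{j_n+1}$ with $\gamma_j^2\widehat m_j\le 1/j^2$ (condition \eqref{controlgammaj}), and $\delta_n=4\gamma_{j_n+1}^2\widehat m_{j_n+1}\to0$, so that the exponent becomes $-n\gamma_{j_n+1}(\delta_n-O(\gamma_{j_n+1}^2\widehat m_{j_n+1}))$, which, after absorbing the $(j_n+1)!$ counting factor and the $e^{n\varepsilon_n}$ factor from \eqref{control3'} using \eqref{control0'}, gives a summable series $O\big(\exp(n^{1/3})\exp(-n^{1/2})\big)$. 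The coupling of $\gamma_{j_n+1}\to0$ with $\delta_n\to0$ (rather than fixed $\lambda,\delta$) is precisely the missing ingredient, and it is also why the proposition only claims convergence and not a Hoeffding-type rate with a fixed $\delta$.
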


\begin{proposition}\label{conclusion2}
For all compact connected subset $K$ of $I_X$, there exists $ \varrho\in \mathcal{R}$ such that 
$$
\begin{cases}
 \displaystyle \bigcap_{n\in\mathbb{N}}\overline{ \Big \{n^{-1}\sum_{k=1}^n\beta(q_k,\alpha_k) :n\ge N\Big\}}=K\\
 \liminf_{n\to\infty}\wt P_{X,\phi,\alpha_n}^*(\nabla\wt P_{X,\phi,\alpha_n} (q_n))\ge \inf\{P_{X,\phi,\alpha}^*(0):\alpha\in K\}
\displaystyle
\end{cases}.
$$
\end{proposition}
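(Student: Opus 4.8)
The plan is to build a single parameter sequence $\varrho = ((q_k,\alpha_k))_{k\ge 1}\in\mathcal R$ by concatenating longer and longer blocks of approximate parameters associated with a dense sequence of target levels in $K$, in such a way that the Cesàro averages $n^{-1}\sum_{k=1}^n\beta(q_k,\alpha_k)$ both (i) accumulate exactly on $K$ and (ii) keep the running Legendre values $\wt P^*_{X,\phi,\alpha_n}(\nabla\wt P_{X,\phi,\alpha_n}(q_n))$ asymptotically at least $\inf\{P^*_{X,\phi,\alpha}(0):\alpha\in K\}$. The key input is Lemma~\ref{approxi2}: for every $\alpha\in K\subset I_X$ there is a sequence $(q^{(m)},\alpha^{(m)})_{m\ge 1}$ in the dense set $D\subset J_{X,\phi}$ with $\beta(q^{(m)},\alpha^{(m)})\to\alpha$ and $\wt P_{X,\phi,\alpha^{(m)}}(q^{(m)})-\langle q^{(m)}|\nabla\wt P_{X,\phi,\alpha^{(m)}}(q^{(m)})\rangle\to \wt P^*_{X,\phi,\alpha}(0)$. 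Note that $\wt P^*_{X,\phi,\alpha}(0)\ge P^*_{X,\phi,\alpha}(0)$ is automatic from Propositions~\ref{compPXphialpha} and~\ref{rn}, and since $(q,\alpha)\in J_{X,\phi}$ forces the entropy-over-Lyapunov ratio to be $>0$ by~\eqref{entropiesurLyapounov}, all approximants have strictly positive ``local dimension''.

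First I would fix a countable dense subset $\{a_\ell:\ell\ge 1\}$ of $K$ (possible since $K$ is compact, hence separable) with each $a_\ell$ repeated infinitely often in the enumeration, and a sequence $\varepsilon_j\downarrow 0$. Using connectedness of $K$, I would also arrange intermediate levels so that the visited points trace a dense net of $K$; concretely, since $K$ is compact and connected, for each $\ell$ one can choose a finite $\varepsilon_j$-chain in $K$ from $a_\ell$ to $a_{\ell+1}$, and feed the whole chain into the construction. Then, proceeding block by block: when it is time to target a point $a\in K$, pick via Lemma~\ref{approxi2} an approximant $(q,\alpha)\in D$ (in fact in some $D_{j+1}$, so that membership in $\mathcal R$ is respected) with $\|\beta(q,\alpha)-a\|<\varepsilon_j$ and with $\wt P_{X,\phi,\alpha}(q)-\langle q|\nabla\wt P_{X,\phi,\alpha}(q)\rangle > \wt P^*_{X,\phi,a}(0)-\varepsilon_j \ge P^*_{X,\phi,a}(0)-\varepsilon_j$, and then repeat this constant pair $(q,\alpha)$ on the block $M_j+1\le k\le M_{j+1}$. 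Because $\beta(q,\alpha)$ and the running average are both bounded (the $X_i$ may be unbounded, but $K$ is compact and $\|\beta(q_k,\alpha_k)\|$ stays controlled along the chosen approximants — one should record a uniform bound on the finitely many pairs used up to step $j$, exactly in the spirit of~\eqref{control2bis2}), by making each block much longer than the total length of all previous blocks one can flush the Cesàro average from the old value to the new one within the block, passing $\varepsilon_j$-close to every intermediate chain point and ending $\varepsilon_j$-close to $\beta(q,\alpha)$, hence $2\varepsilon_j$-close to the target $a$. Since each $a_\ell$ is revisited infinitely often with $\varepsilon_j\to0$, the set of accumulation points of $n^{-1}\sum_{k=1}^n\beta(q_k,\alpha_k)$ contains $\overline{\{a_\ell\}}=K$; and since every term $\beta(q_k,\alpha_k)$ lies within $\varepsilon_{j_k}$ of a chain point of $K$ and the averages are convex combinations of these, every accumulation point lies in $K$. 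Thus the first displayed equality holds.

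For the second inequality, note that by construction $\wt P^*_{X,\phi,\alpha_n}(\nabla\wt P_{X,\phi,\alpha_n}(q_n)) = \wt P_{X,\phi,\alpha_n}(q_n)-\langle q_n|\nabla\wt P_{X,\phi,\alpha_n}(q_n)\rangle$ (this is~\eqref{entropiesurLyapounov}, i.e. it equals $h(q_n,\alpha_n)/\lambda(q_n,\alpha_n)$), and on the block ending at step $n$ this value is $> P^*_{X,\phi,a}(0)-\varepsilon_{j_n}$ for the target $a\in K$ of that block; since $\varepsilon_{j_n}\to0$ and $P^*_{X,\phi,a}(0)\ge \inf\{P^*_{X,\phi,\alpha}(0):\alpha\in K\}$, taking $\liminf$ gives the claim. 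Combined with Theorem~\ref{lb2}, Proposition~\ref{pp3'}, and the elementary inequality $\liminf_n \frac{\sum_{k\le n}h(q_k,\alpha_k)}{\sum_{k\le n}\lambda(q_k,\alpha_k)}\ge\liminf_n\frac{h(q_n,\alpha_n)}{\lambda(q_n,\alpha_n)}$ (valid because the $\lambda(q_k,\alpha_k)$ are uniformly bounded above and below by~\eqref{uniformboundSnphi}), this yields the sharp lower bound for $\dim E(X,K)$.

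The main obstacle I anticipate is the bookkeeping needed to simultaneously (a) respect the rigid block structure of $\mathcal R$ (each $(q_k,\alpha_k)$ constant on $[M_j+1,M_{j+1}]$ and drawn from the prescribed finite set $D_{j+1}$, whose tuning was fixed \emph{before} we knew $K$), (b) make blocks long enough to perform the Cesàro ``transport'' between consecutive targets while controlling the contribution of earlier, possibly large, $\|\beta(q_k,\alpha_k)\|$ — this is precisely why a condition like~\eqref{control2bis2} is imposed, and one must check it suffices here — and (c) still sweep a dense-in-$K$ set of intermediate values using the connectedness of $K$. The cleanest route is probably to first prove the statement for $K$ a singleton (where no transport is needed, just Lemma~\ref{approxi2} along one block per scale), then for $K$ a finite polygonal path, and finally pass to general compact connected $K$ by a nested-$\varepsilon$-net/diagonal argument, at each stage invoking the already-tuned lengths $N_j$ and the estimate~\eqref{control2bis2} to absorb the past.
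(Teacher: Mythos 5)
Your approach is essentially the paper's: approximate an $\varepsilon$-chain covering $K$ by elements of $D$ via Lemma~\ref{approxi2}, concatenate them into $\varrho$ block by block, and rely on the pre-tuned rapid growth of $(N_j)$ encoded in~\eqref{control2bis2} so that the current block dominates the running Ces\`aro average; the second displayed inequality is then immediate from the choice of approximants. One misstatement should be fixed: you justify that the accumulation points of $\big(n^{-1}\sum_{k\le n}\beta(q_k,\alpha_k)\big)_n$ lie in $K$ by saying ``the averages are convex combinations'' of points near $K$. That only gives containment in a shrinking neighbourhood of the convex hull of $K$, and $K$ is not assumed convex. The correct mechanism is the block-domination argument you invoke just before: the running average at time $n$ is $o(1)$-close to a convex combination of just the current and previous targets, which by the chain construction are two $\varepsilon_{j_n}$-close points of $K$. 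The paper makes this precise by writing $\sum_{k\le n}\beta(q_k,\alpha_k)=S_1+S_2+S_3$, bounding $\|S_1\|$ (the distant past) by $O(n/(j_{m_n,\ell_n}-1))$ via~\eqref{control2bis2}, and controlling $S_2,S_3$ directly. Finally, your ``one target per block'' assignment needs a small adjustment: the approximant produced by Lemma~\ref{approxi2} for the target chosen at stage $j$ need not lie in $D_{j+1}$; the paper instead lets each approximant occupy the range of blocks from the first index at which it enters the $(D_j)$-hierarchy until the next one does, which is compatible with $\mathcal R$ because $(D_j)$ is non-decreasing.
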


\begin{proof}[Proofs of Proposition~\ref{pp3'} and Proposition~\ref{pp33}(1)] We will prove slightly stronger results by controlling uniformly the speed of convergence to 0. 

Fix $(\delta_n)_{n\in\mathbb{N}}$ converging to 0, and to be specified later. Let  $v$  be a vector of the canonical basis $\mathcal B$ of  $\R^d$. For  $ \varrho\in \mathcal{R}$, $v\in\mathcal B$, $\lambda\in\{-1,1\}$ and  $n \geq 1$, we set ~:
\begin{equation*}
\left\lbrace
\begin{aligned}E^\lambda_{\varrho,n,\delta_n}(v)&= \Big \{t\in \partial\TT : \lambda \Big  \langle v \Big | S_nX(t) - \sum_{k=1}^{n} \beta(q_k,\alpha_k)\Big \rangle \ge   n\delta_n \Big \}\\
F_{\varrho,n,\delta_n}^{\lambda}&= \Big \{t\in \partial\TT : \lambda \Big ( S_n\phi(t) - \sum_{k=1}^{n} \lambda(q_k,\alpha_k)\Big ) \ge   n\delta_n \Big \}\\
G_{\varrho,n,\delta_n}^{\lambda}&= \Big \{t\in \partial\TT : \lambda\Big ( \log \Big (\prod_{k=1}^nW_{\varrho,t_1\cdots t_k}\Big ) -\sum_{k=1}^{n} h(q_k,\alpha_k)\Big )\ge   n\delta_n \Big \}
\end{aligned}
\right. .
\end{equation*}
It is enough to specify $(\delta_n)_{n\in\mathbb{N}}$ such that  for  $\lambda\in\{-1,1\}$ and $v\in\mathcal B$ one has\begin{equation} \label{eq2'}
\E\Big (\displaystyle \displaystyle  \sup_{\varrho \in \mathcal R} \sum_{n\geq 1}\mu_\varrho (E_{\varrho,n,\delta_n}^{\lambda}(v))+\mu_\varrho (F_{\varrho,n,\delta_n}^{\lambda})+\mu_\varrho (G_{\varrho,n,\delta_n}^{\lambda})\Big )< \infty. 
\end{equation}
Indeed, if \eqref{eq2'} holds then,  with probability  $1$, for all  $\varrho \in  \mathcal R$,  $\lambda\in\{-1,1\}$ and $v\in \mathcal B$, one has $\displaystyle\sum_{n\geq 1} \mu_\varrho (E_{\varrho,n,\delta_n}^{\lambda}(v))+\mu_\varrho (F_{\varrho,n,\delta_n}^{\lambda})+\mu_\varrho (G_{\varrho,n,\delta_n}^{\lambda})<\infty$. Consequently, by the Borel-Cantelli lemma, for $\mu_\varrho$-almost every $t$,  for $n$ large enough, for all $v\in\mathcal B$,
\begin{equation*}
\left\lbrace
\begin{aligned}
&\Big |\Big  \langle v \Big | \Big (S_nX(t)  - \sum_{k=1}^{n} \beta(q_k,\alpha_k)\Big )\Big \rangle \Big | \le n\delta_n\\
&\max \Big (\Big |S_n\phi (t) - \sum_{k=1}^{n} \lambda(q_k,\alpha_k)\Big|,
\Big | \log \Big (\prod_{k=1}^nW_{\varrho,t_1\cdots t_k}\Big )  - \sum_{k=1}^{n} h(q_k,\alpha_k) \Big|\Big )\le n\delta_n,
\end{aligned}
\right.\end{equation*}
which yields the desired result. 

Now we prove \eqref{eq2'} when $\lambda=1$ (the case $\lambda=-1$ is similar). Let   $\varrho \in  \mathcal R$. For $n\ge1$ and $u\in\mathbb N^{\mathbb N}$, and $\gamma>0$, set
$$
\Pi^e_{n,\gamma}(\varrho,u)= \prod_{k=1}^{n} \exp \big ( \langle q_k+\gamma v |X_{u_{|k}}-\alpha_k\rangle- \widetilde P_{\alpha_k}(q_k)\phi_{u_{|k}}-\langle \gamma v|\beta(q_k,\alpha_k)-\alpha_k\rangle - \gamma   \delta_n\big ),
$$
where $\widetilde P_{\alpha_k}$ stands for $\widetilde P_{X,\phi,\alpha_k}$. For  every $\gamma>0$, using Chernov's inequality one can get 
\begin{eqnarray*}
\mu_\varrho (E_{\varrho,n,\delta_n}^{1}(v))\le e_{n,\gamma}(\varrho),
\end{eqnarray*} 
where 
\begin{align*}
 e_{n,\gamma}(\varrho)= \displaystyle\sum_{u\in \TT_n} \mu_\varrho ([u]) \prod_{k=1}^{n} \exp \big (\gamma \langle v |X_{u_{|k}} -\beta(q_k,\alpha_k)\rangle - \gamma   \delta_n\big )
=\displaystyle\sum_{u\in \TT_n }  \Pi^e_{n,\gamma}(\varrho,u) Y(\varrho,u).
 \end{align*}
Note that $\Pi^e_{n,\gamma}(\cdot,u)$ only depends on $\varrho_{|n}$, so
\begin{eqnarray*}
\sup_{ \varrho\in \mathcal{R}}  e_{n,\gamma}(\varrho)\le \displaystyle\sum_{u\in \TT_n }\sup_{\varrho_{|n}:  \varrho\in \mathcal{R}}\Pi^e_{n,\gamma}(\varrho,u)\cdot  \sup_{ \varrho\in \mathcal{R}}Y(\varrho,u) .
\end{eqnarray*}
Consequently, since $\mathbb{E} (\sup_{ \varrho\in \mathcal{R}}Y(\varrho,u))\le C_{X,\phi}\exp (\varepsilon_{|u|} |u|) $ by \eqref{control3'}, one obtains (taking into account the independences)
\begin{align*}
&\mathbb E (\sup_{ \varrho\in \mathcal{R}} e_{n,\gamma} (\varrho))\\
&\le C_{X,\phi}\, e^{n\varepsilon_n} \mathbb{E}\Big ( \sum_{u\in \TT_n} \sup_{\varrho_{|n}:  \varrho\in \mathcal{R}}\Pi^e_{n,\gamma}(\varrho,u)\Big) \\
&\le  C_{X,\phi}\, e^{n\varepsilon_n} \mathbb{E}\Big ( \sum_{u\in \TT_n}  \sum_{\varrho_{|n}:  \varrho\in \mathcal{R}}\Pi^e_{n,\gamma}(\varrho,u)\Big)\\
&= C_{X,\phi}\, e^{n\varepsilon_n} \sum_{\varrho_{|n}:  \varrho\in \mathcal{R}}\exp\Big (\sum_{k=1}^{n} L_{\alpha_k}(q_k+\gamma v, \widetilde P_{\alpha_k}(q_k))- \langle \gamma v |\beta(q_k,\alpha_k) -\alpha_k \rangle- \gamma   \delta_n\Big ).
\end{align*} 
Similarly, setting 
$$
\Pi^f_{n,\gamma}(\varrho,u)= \prod_{k=1}^{n} \exp \big ( \langle q_k|X_{u_{|k}}-\alpha_k\rangle- (\widetilde P_{\alpha_k}(q_k)+\gamma )\phi_{u_{|k}}-\gamma \lambda(q_k,\alpha_k) - \gamma   \delta_n\big ),
$$ a new application of Chernov's inequality yields 
\begin{align*}
\mu_\varrho (F_{\varrho,n,\delta_n}^{1}(v))\le f_{n,\gamma}(\varrho),
\end{align*} 
where 
\begin{align*}
 f_{n,\gamma}(\varrho)= \displaystyle\sum_{u\in \TT_n} \mu_\varrho ([u]) \prod_{k=1}^{n} \exp \big (\gamma (\phi_{u_{|k}} -\lambda(q_k,\alpha_k)) - \gamma   \delta_n\big )=\displaystyle\sum_{u\in \TT_n }  \Pi^f_{n,\gamma}(\varrho,u) Y(\varrho,u).
 \end{align*}
It follows that  
\begin{align*}
&\E (\sup_{ \varrho\in \mathcal{R}} f_{n,\gamma} (\varrho))\le C_{X,\phi}\, e^{n\varepsilon_n} 
\sum_{\varrho_{|n}:  \varrho\in \mathcal{R}}\exp\Big (\sum_{k=1}^{n} L_{\alpha_k}(q_k, \widetilde P_{\alpha_k}(q_k)+\gamma)- \gamma \lambda(q_k,\alpha_k) - \gamma   \delta_n\Big ).
\end{align*}
Also, setting $p=1+\gamma$ and 
$$
\Pi^g_{n,\gamma}(\varrho,u)= \prod_{k=1}^{n} \exp \big (p\langle q_k|X_{u_{|k}}-\alpha_k\rangle- p\widetilde P_{\alpha_k}(q_k)\phi_{u_{|k}}-\gamma h(q_k,\alpha_k) - \gamma   \delta_n\big ),
$$ it holds that 
\begin{align*}
\mu_\varrho (G_{\varrho,n,\delta_n}^{1}(v))\le g_{n,\gamma}(\varrho),
\end{align*} 
where 
\begin{align*}
 g_{n,\gamma}(\varrho)&= \displaystyle\sum_{u\in \TT_n} \mu_\varrho ([u]) \prod_{k=1}^{n} \exp \big (\gamma (\log (W_{\varrho,u_1\cdots u_k}) -h(q_k,\alpha_k)) - \gamma   \delta_n\big )\\
&=\displaystyle\sum_{u\in \TT_n }  \Pi^g_{n,\gamma}(\varrho,u) Y(\varrho,u).
 \end{align*}
This time,  one has the upper bound  
\begin{align*}
\E (\sup_{ \varrho\in \mathcal{R}} g_{n,\gamma} (\varrho)) \le   C_{X,\phi}\, e^{n\varepsilon_n} \sum_{\varrho_{|n}:  \varrho\in \mathcal{R}}\exp\Big (\sum_{k=1}^{n} L_{\alpha_k}\big (p q_k, p\widetilde P_{\alpha_k}(q_k)\big )- \gamma h(q_k,\alpha_k) - \gamma   \delta_n\Big ).
 \end{align*}

For each $ \varrho\in \mathcal{R}$, one has $q_k\in D_{j_n+1}$ for all $1\le k\le n$. Thus, reasoning as in the proof of Proposition~\ref{pro-3.5}(1) and  writing for each  $1\le k\le n$ the Taylor expansion with integral rest of order 2 of $\gamma\mapsto L_{\alpha_k}(q_k+\gamma v, \widetilde P_{\alpha_k}(q_k))- \langle \gamma v |\beta(q_k,\alpha_k) -\alpha_k \rangle$ at $0$, taking  $\gamma=\gamma_{j_n+1}$, and using \eqref{Der1} and \eqref{mj2} one gets
\begin{align*}
&\sum_{k=1}^n L_{\alpha_k}(q_k+\gamma_{j_n+1} v, \widetilde P_{\alpha_k}(q_k))- \langle \gamma_{j_n+1} v |\beta(q_k,\alpha_k)-\alpha_k\rangle -\gamma_{j_n+1}\delta _n\\
& \quad \le n\gamma_{j_n+1}^2m_{j_n+1}-n \gamma_{j_n+1}\delta _n
\end{align*}
uniformly in $ \varrho\in \mathcal{R}$. Similarly, using \eqref{Der1} and \eqref{mj2} again one gets 
$$
\sum_{k=1}^{n} L_{\alpha_k}(q_k, \widetilde P_{\alpha_k}(q_k)+\gamma)- \gamma \lambda(q_k,\alpha_k) - \gamma   \delta_n\le n\gamma_{j_n+1}^2m_{j_n+1}-n \gamma_{j_n+1}\delta _n,
$$
while using \eqref{Der3} and \eqref{tmj2} (as in the proof of Proposition~\ref{pro-3.5}) yields
$$
\sum_{k=1}^{n} L_{\alpha_k}\big (p q_k, p\widetilde P_{X,\phi,\alpha_k}(q_k)\big )- \gamma h(q_k,\alpha_k) - \gamma   \delta_n\le n\gamma_{j_n+1}^2\widetilde m_{j_n+1}-n \gamma_{j_n+1}\delta _n.
$$
Consequently, since $\varepsilon_n=2\gamma_{j_n+1}^2\widehat m_{j_n+1}$, $\max(m_{j_n+1},\widetilde m_{j_n+1})\le  \widehat  m_{j_n+1}$, and $\mathrm{card}(\{\varrho_{|n}: \varrho\in\mathcal{R}\})\le (j_n+1)!$, one obtains the following upper bound:
\begin{align*}
&\E \Big (\sup_{ \varrho\in \mathcal{R}} e_{n,\gamma_{j_{n}+1}} (\varrho)+\sup_{ \varrho\in \mathcal{R}} f_{n,\gamma_{j_{n}+1}} (\varrho)+\sup_{ \varrho\in \mathcal{R}} g_{n,\gamma_{j_{n}+1}} (\varrho)\Big )\\
&\quad \le 3 \, C_{X,\phi}\,  (j_n+1)!\exp \big ((-n\gamma_{j_n+1}(\delta_n-3\gamma_{j_n+1}^2\widehat m_{j_n+1})\big ).
\end{align*}
Let $\delta_n= 4\gamma_{j_n+1}^2\widehat m_{j_n+1}$. Note that $\lim_{n\to\infty} \delta_n=0$. Now we  use \eqref{control0'}:  $(j_n+1)!\le \exp (N_{j_n}^{1/3})\le \exp (n^{1/3})$ and $\gamma_{j_{n+1}}^2\widehat m_{j_n+1}\ge N_{j_n}^{-1/2}\ge n^{-1/2}$. Thus
$$
\E \Big (\sup_{ \varrho\in \mathcal{R}} e_{n,\gamma_{j_{n}+1}} (\varrho)+\sup_{ \varrho\in \mathcal{R}} f_{n,\gamma_{j_{n}+1}} (\varrho)+\sup_{ \varrho\in \mathcal{R}} g_{n,\gamma_{j_{n}+1}} (\varrho)\Big )\\
 \le 3 \, C_{X,\phi}\, \exp (n^{1/3})\exp(-n^{1/2}).
$$
This yields  \eqref{eq2'}.
\end{proof}

\begin{proof}[Proof of Proposition~\ref{conclusion2}]
For every integer $m\ge 1$, let $B(\widetilde \alpha_{m,\ell},1/m)_{1\le \ell\le L_m}$ be a finite covering of $K$ by balls centered on $K$, with $L_m\ge 2$. Since $K$ is connected, without loss of generality we can assume that $B(\widetilde\alpha_{m,\ell},1/m)\cap B(\widetilde\alpha_{m,\ell+1},1/m)\neq \emptyset$ for all $1\le \ell\le L_m-1$, and $B(\widetilde\alpha_{m+1,1},1/(m+1))\cap B(\widetilde\alpha_{m,L_m},1/m)\neq\emptyset$. 


Now, applying Lemma~\ref{approxi2}, for each $\widetilde\alpha_{m,\ell}$ let $(q_{m,\ell}, \alpha_{m,\ell}) \in D$ such that  $\| \beta(q_{m,\ell}, \alpha_{m,\ell}) - \widetilde\alpha_{m,\ell} \|\le 1/m$ and $|\wt P_{X,\phi, \alpha_{m,\ell}}^*(\nabla\wt P_{X,\phi, \alpha_{m,\ell}}(q_{m,\ell}))-\wt P_{X,\phi, \widetilde\alpha_{m,\ell}}^*(0)|\le 1/m$. 

Let $j_{1,1}=\min\{j\ge 1:( q_{1,1}, \alpha_{1,1}) \in D_j\}$. Then, define recursively for each $m\ge 1$ and $1\le \ell\le L_m-1$,  $j_{m,\ell+1}=\min\{j> j_{m,\ell}: (q_{m,\ell+1}, \alpha_{m,\ell+1}) \in D_j\}$, and   $j_{m+1,1}=\min\{j> j_{m,L_m}: 
(q_{m+1,1},\alpha_{m+1,1}) \in D_j\}$. 

The sequence $\varrho$ is constructed as follows. For  $1\le k\le M_{j_{1,1}-1}$, let $(q_{k}, \alpha_k) $ be equal to the unique element of~$D_1$. Then, for all $m\ge 1$, let $(q_{k}, \alpha_k) =(q_{m,\ell}, \alpha_{m,\ell})$ for $k\in  [M_{j_{m,\ell}-1}+1,M_{j_{m,\ell+1}-1}]$ if $1\le \ell\le L_m-1$ and $(q_{k}, \alpha_k) =(q_{m,L_m}, \alpha_{m,L_m} )$ for $k\in   [M_{j_{m,L_m}-1}+1,M_{j_{m+1,1}-1}]$. 

\medskip

Now let $n\ge M_{j_{2,1}}+1$. There is an integer $m_n\ge 2$ such that either $n\in [M_{j_{m_n,\ell_n}-1}+1,M_{j_{m_n,\ell_n+1}-1}]$ for some $1\le \ell_n\le L_{m_n}-1$ or $n\in  [M_{j_{m_n,L_{m_n}-1}}+1,M_{j_{m_n+1,1}-1}]$. 

In the first case,  let us write $\sum_{k=1}^n \beta(q_k, \alpha_k)=S_1+S_2+S_3$, where 
$$
S_1=\sum_{k=1}^{M_{j_{m_n,\ell_n}-2}} \beta(q_k, \alpha_k) ,\ S_2=\sum_{k=M_{j_{m_n,\ell_n}-2}+1}^{M_{j_{m_n,\ell_n}-1}}
 \beta(q_k, \alpha_k),\ S_3= \sum_{k=M_{j_{m_n,\ell_n}-1}+1}^n \beta(q_k, \alpha_k).
$$
Setting $(q, \alpha)=
(q_{m_n,\ell_{n}-1}, \alpha_{m_n,\ell_{n}-1})$ if $\ell_n\ge 2$ and 
$(q, \alpha)=(q_{m_{n}-1,L_{m_{n}-1}}, \alpha_{m_{n}-1,L_{m_{n}-1}})$ otherwise, one has
$
S_2=
(M_{j_{m_n,\ell_n}-1}-M_{j_{m_n,\ell_n}-2}) \beta(q, \alpha).
$
Thus, by construction of $(q_{m,\ell},\alpha_{m,\ell})$, setting $\widetilde \alpha= \widetilde \alpha_{m_n,\ell_{n}-1}$ if $\ell_n\ge 2$ and $\widetilde \alpha=\widetilde \alpha_{m_{n}-1,L_{m_{n}-1}}$ otherwise, one has 
$$
\|S_2-  (M_{j_{m_n,\ell_n}-1}-M_{j_{m_n,\ell_n}-2}) \widetilde \alpha\|\le  (M_{j_{m_n,\ell_n}-1}-M_{j_{m_n,\ell_n}-2})/(m_{n}-1).
$$
Also, setting $q=q_{m_n,\ell_{n}}$, $\alpha'=\alpha_{m_n,\ell_{n}}$ and $\widetilde \alpha'=\widetilde \alpha_{m_n,\ell_{n}}$, one has 
$S_3=(n-M_{j_{m_n,\ell_n}-1}) \beta(q, \alpha)$, so  
$$
\|S_3- (n-M_{j_{m_n,\ell_n}-1})\widetilde\alpha'\|\le (n-M_{j_{m_n,\ell_n}-1})/m_n.
$$ 
Moreover,  due to \eqref{control2bis2}, one has $\|S_1\|\le (j_{m_n,\ell_n}-1)^{-1}N_{j_{m_n,\ell_n}-1} \| \beta(q, \alpha')\|\le  (j_{m_n,\ell_n}-1)^{-1}n \|  \beta(q, \alpha')\|$, so  
$$
\|S_1\|\le (j_{m_n,\ell_n}-1)^{-1}n(\|\widetilde\alpha'\|+1/m_n);
$$ 
also, due to \eqref{control2bis2} again,  $\|M_{j_{m_n,\ell_n}-2}\,  \widetilde \alpha\|\le    (j_{m_n,\ell_n}-1)^{-1}n\| \widetilde \alpha' \|$.  Moreover, the choice of the balls $B(\widetilde \alpha_{m,\ell},1/m)$ implies that $\|\widetilde \alpha-\widetilde \alpha'\|\le 1/(m_n-1)$. Consequently, putting the previous estimates together one gets
$$
\Big \|\sum_{k=1}^n\beta(q_k,\alpha_k) - n\alpha_{m_n,\ell_n}\Big \|\le n \Big (\frac{3}{m_n-1}+ \frac{2\|\alpha_{m_n,\ell_n}\|+ 1/m_n}{j_{m_n,\ell_n}-1}\Big ).
$$
The same estimate holds if $n\in  [M_{j_{m_n,L_{m_n}-1}}+1,M_{j_{m_n+1,1}-1}]$. Consequently, since as $n$ tends to $\infty$ the sequence $\alpha_{m_n,\ell_n}$ describes all the $\alpha_{m,\ell}$, the set of limit points of $n^{-1}\sum_{k=1}^n\beta (q_k, \alpha_k)$ is the same as that of the sequence $((\alpha_{m,\ell})_{1\le\ell\le L_m})_{m\ge 1}$, that is $K$. 

The fact that  $\liminf_{n\to\infty}n^{-1}\wt P_{X,\phi,\alpha_n}^*(\nabla\wt P_{X,\phi,\alpha_n} (q_n))\ge \inf\{\widetilde P_{X,\phi,\alpha}^*(0):\alpha\in K\}$ is a direct consequence of the choice of the vectors $q_{m,\ell}$ and $\alpha_{m,\ell}$, since $\wt P_{\alpha_{m,\ell}} ^*(\nabla\wt P_{\alpha_{m,\ell}} (q_{m,\ell}))\ge \inf\{\widetilde P_{X,\phi,\alpha}^*(0):\alpha\in K\}-1/m$. 
\end{proof}

\section{Proof of Theorem~\ref{UNIFER}}\label{proofLD}

We need to slightly modify the set $\mathcal{R}$ by requiring, in addition to the initial conditions on $(N_j)_{j\ge 0}$, that for all $j\ge 1$
\begin{equation}\label{newN_j}
N_{j+1}> M_j(k(M_j)+1)\quad\text{and}\quad ((j+3)!)^2\exp(-N_j/j)\le j^{-2}.
\end{equation}
Since the sequence $(k(n))_{n\in\mathbb{N}}$ is increasing and $M_{j_n}+1\le n\le M_{ j_{n}+1}$, one has  $n(k(n)+1)\le M_{j_{n}+1 }(k(M_{ j_{n}+1})+1)<N_{j_n+2}<M_{j_n+2}$, so $j_{n(k(n)+1)}\le j_n+1$. 

\medskip
For each integer $m\ge 1$, define the compact set
\begin{equation*}
K_m=\{(q,\alpha)\in J_{X,\phi}\cap B(0,m): d((q,\alpha),\partial J_{X,\phi})\ge 1/m\}\cup\{(q_1,\alpha_1)\},
\end{equation*}
where $B(0,m)$ is the Euclidean ball of radius $m$ centered at 0 in $\R^{2d}$ and $(q_1,\alpha_1)$ is the unique element of $D_1$. Then, recalling that $D$ was chosen so that the second claim of Lemma~\ref{approxi2} holds, for $\ell,m\ge 1$,  let  
\begin{equation}\label{Km}
\mathcal{R}(m)=\left \{\varrho=(q_k,\alpha_k)_{k\ge 1}\in \mathcal{R}\cap K_m^{\mathbb N}: 
\exists \ \alpha\in \widetilde I_X,\ \lim_{k\to\infty}(q_k,\alpha_k)=(q_\alpha,\alpha)\right\} 
\end{equation}
and 
$$
\widetilde{I}_X(m)=\left \{\alpha\in \widetilde I_X: (q_\alpha,\alpha)\in \{\lim_{k\to\infty}\varrho_k=(q_k,\alpha_k): \varrho\in \mathcal{R}(m)\}\right \}.
$$
Note that if $\varrho\in \mathcal R(m)$, then there is a unique $\alpha\in\widetilde I_X$ such that $\lim_{k\to\infty}(q_k,\alpha_k)=(q_\alpha,\alpha)$.  By construction, 
$
\widetilde I_X=\bigcup_{m\ge 1} \widetilde{I}_X(m).
$ 
Note also that in the statement of Theorem~\ref{UNIFER} the vector  $\psi(q_\alpha,\alpha)$ is simply denoted by $\psi_\alpha$, a notation that we adopt in this section as well.

\medskip

Let $\kappa=\liminf_{n\to\infty} \log(k(n))/n$ and $\kappa'=\limsup_{n\to\infty} \log(k(n))/n$.  For all integers $\ell,m\ge 1$ and closed dyadic cube $Q$ in $\R^d$, define the sets
\begin{equation*}
\left\lbrace
\begin{aligned}
 \mathcal{R}(m,\ell,Q)&=\left\{\varrho\in  \mathcal{R}(m):\, \forall \ \lambda\in \wt Q,\  
-\Lambda^*_{\psi_\alpha}(\nabla \Lambda_{\psi_\alpha}(\lambda)) < \min (\ell, \kappa-1/\ell)\right\},\\
  \mathcal{R}'(m,\ell,Q)&=\left\{\varrho\in  \mathcal{R}(m):\, \forall\ \lambda\in Q,\  
-\Lambda^*_{\psi_\alpha}(\nabla \Lambda_{\psi_\alpha}(\lambda)) > \kappa'+1/\ell \right\},
\end{aligned}
\right.\end{equation*}
where $\wt Q$ stands for the union of $Q$ and the closed dyadic cubes of the same generation as $Q$ and  neighboring $Q$. The usefulness of considering the cubes $\widetilde Q$ will appear in the proof of the following proposition. Recall the definitions \eqref{tildemu} and \eqref{tildeLambda} of $\mu^t_n$ and $\Lambda^t_n$ respectively.

\begin{proposition}\label{LD51}
With probability 1, for all integers $\ell, m\ge 1$ and all dyadic cubes $Q$, 
\begin{enumerate}
\item for all $\varrho\in  \mathcal{R}(m,\ell,Q)$, for $\mu_\varrho$-almost every~$t$, one has  $\lim_{n\to\infty}\frac{1}{n}\Lambda^t_{\tilde k,n}(\lambda)=\Lambda_{\psi_\alpha}(\lambda)$  for all $\lambda\in Q$. 

\medskip

\item For all $\varrho\in  \mathcal{R}'(m,\ell,Q)$, for $\mu_\varrho$-almost every~$t$, for all  $\lambda\in Q$, there exists $\varepsilon>0$ such that for $n$ large enough, one has $\mu^t_{\tilde k,n}(B(\nabla\Lambda_{\psi_\alpha}(\lambda),\varepsilon)) =0$.
\end{enumerate}
\end{proposition}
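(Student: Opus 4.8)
The proposition is a large deviations statement for the empirical measures $\mu^t_{\tilde k,n}$ along $\mu_\varrho$-typical branches, uniform in $\varrho$ over the compact families $\mathcal{R}(m,\ell,Q)$. The strategy is the usual one for the upper half of a Gärtner--Ellis argument: control the moment generating function $\Lambda^t_{\tilde k,n}(\lambda)$ from above and below by first and second moment computations under the Peyrière measure $\mathcal{Q}_\varrho$ associated with $\mu_\varrho$, then use Borel--Cantelli, and finally a uniformisation over $\varrho$ exactly as was done in Sections~\ref{LBM} and~\ref{LB} (passing $\sup_{\varrho\in\mathcal{R}(m,\ell,Q)}$ inside the expectation, exploiting that $Y(\varrho,u)$ has a uniform $L^1$ (or $L^{p_K}$, via Lemma~\ref{bettercontrol2}) bound, and that $\#\{\varrho_{|n}\}\le (j_n+1)!$).

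\textbf{Step 1 (upper bound on $\frac1n\Lambda^t_{\tilde k,n}(\lambda)$, part (1)).} Fix a dyadic $Q$ and $\lambda\in Q$. By definition $\exp(n\Lambda^t_{\tilde k,n}(\lambda))=\frac{1}{nk(n)}\sum_{j=0}^{nk(n)-1}\exp(\langle\lambda|S_{j+n}X(t)-S_jX(t)\rangle)$. Integrating against $\mu_\varrho$ and using the branching/independence structure one gets, for each fixed $j$, an expectation of the form $\exp(n\Lambda_{\psi(q,\alpha)}(\lambda))$ up to boundary corrections coming from the finitely many indices $k\le n$ and $k$ between $j$ and $j+n$ where the environment $(q_k,\alpha_k)$ has not yet stabilised to $(q_\alpha,\alpha)$; since $\varrho\in\mathcal{R}(m)$ these corrections are $o(n)$ uniformly. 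The extra input needed here, which is \emph{not} present in the plain martingale estimates of Section~\ref{ihM}, is control of $Y(\varrho,u1^{\cdots})$-type tails along the long increments $S_{j+n}-S_j$; this is where the condition \eqref{newN_j}, i.e. $N_{j+1}>M_j(k(M_j)+1)$ forcing $j_{n(k(n)+1)}\le j_n+1$, enters: it guarantees that the whole window $[0,n(k(n)+1)]$ sees at most one extra level of the sequence $(D_j)$, so the uniform bounds \eqref{control3'} and Lemma~\ref{bettercontrol2} still apply with the same $C_{X,\phi}$. Summing over the $nk(n)$ values of $j$, taking $\sup_\varrho$ inside $\mathbb{E}$, and using $\#\{\varrho_{|n(k(n)+1)}\}\le((j_n+3)!)^2$ together with the second inequality in \eqref{newN_j}, a Chernov/Markov argument gives $\frac1n\Lambda^t_{\tilde k,n}(\lambda)\le \Lambda_{\psi_\alpha}(\lambda)+o(1)$ along a Borel--Cantelli subsequence, hence for all $n$ by monotonicity in $\lambda$ and a covering of $Q$ by finitely many points (this is why one works with dyadic cubes $Q$ and their dilates $\widetilde Q$).

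\textbf{Step 2 (matching lower bound, part (1)).} For the reverse inequality one restricts the sum defining $\exp(n\Lambda^t_{\tilde k,n}(\lambda))$ to those $j$ for which $\frac{S_{j+n}X(t)-S_jX(t)}{n}$ lies near $\nabla\Lambda_{\psi_\alpha}(\lambda)$, and shows that under $\mu_\varrho$ a positive proportion $\approx k(n)$ of the $nk(n)$ increments do so — this is precisely where the hypothesis $\liminf \log k(n)/n=\kappa>-\Lambda^*_{\psi_\alpha}(\nabla\Lambda_{\psi_\alpha}(\lambda))$ on the defining set $\mathcal{R}(m,\ell,Q)$ is used: it is exactly the threshold, as in Theorem~B(1), above which the $k(n)$-fold sampling is enough to produce, $\mathcal{Q}_\varrho$-a.s., increments achieving the rare deviation. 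One again needs a second-moment estimate (as in the proof of Theorem~B in \cite{BL}) to show the empirical count concentrates, and then uniformises over $\mathcal{R}(m,\ell,Q)$ in the same way; combined with Step~1 this yields $\lim_n \frac1n\Lambda^t_{\tilde k,n}(\lambda)=\Lambda_{\psi_\alpha}(\lambda)$. For part (2), the argument is the soft direction: on $\mathcal{R}'(m,\ell,Q)$ one has $\kappa'<-\Lambda^*_{\psi_\alpha}(\nabla\Lambda_{\psi_\alpha}(\lambda))-1/\ell$, so a first-moment (union) bound shows $\mathbb{E}_{\mu_\varrho}\#\{0\le j\le nk(n)-1:\ \tfrac{S_{j+n}X(t)-S_jX(t)}{n}\in B(\nabla\Lambda_{\psi_\alpha}(\lambda),\varepsilon)\}$ decays exponentially for suitable $\varepsilon=\varepsilon(\ell)>0$, and Borel--Cantelli plus the uniformisation over $\varrho$ and a finite cover of $Q$ finishes it.

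\textbf{Main obstacle.} The delicate point is making the large deviation estimates of \cite{BL} — originally stated for the $k(n)$ \emph{disjoint} blocks $(S_{(j+1)n}X-S_{jn}X)_{0\le j\le k(n)-1}$ — work for the $nk(n)$ \emph{overlapping} increments $(S_{j+n}X-S_jX)_{0\le j\le nk(n)-1}$, \emph{and} to do so uniformly in $\varrho$ over a non-stationary (inhomogeneous) environment, so that the single Peyrière measure $\mathcal{Q}$ of Theorem~B is replaced by the family $\mu_\varrho$ whose local multipliers change with the generation. Overlapping increments are only weakly correlated and the second-moment bound still closes (decomposing $\mathrm{Cov}$ over the size of the common prefix and over the gap $|j-j'|$, the block/disjoint estimate being recovered on the diagonal part), but tracking the boundary terms caused by the pieces of the branch where the environment is still in $D_{j}$ rather than stabilised, while keeping every constant independent of $\varrho$, is the technical heart; condition \eqref{newN_j} is engineered exactly so that those transitional pieces are negligible on the scale $n(k(n)+1)$, and verifying this carefully — together with the passage $\sup_\varrho\int\le\int\sup_\varrho$ controlled by Lemma~\ref{bettercontrol2} — is where the real work lies.
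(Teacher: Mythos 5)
Your broad strategy---work under the Peyri\`ere measure $\mathcal{Q}_\varrho$, prove a summable moment bound, invoke Borel--Cantelli, then uniformise over $\varrho$ by pushing $\sup_\varrho$ inside $\mathbb{E}$ with the $L^{p_K}$ control of Lemma~\ref{bettercontrol2} and the cardinality bound $\#\{\varrho_{|n}\}$, using the dilated cubes $\wt Q$ for a finite $\lambda$-cover---is exactly the paper's, and you correctly identify overlapping increments and the inhomogeneous environment as the two real obstacles. Where you diverge is the key device. You propose to treat the $nk(n)$ overlapping increments as weakly correlated and close a second-moment bound by decomposing the covariance over common prefixes and gap sizes. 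The paper instead sidesteps covariances altogether: it partitions $\{0,\dots,nk(n)-1\}$ into the $n$ arithmetic progressions $\{i,i+n,\dots,i+(k(n)-1)n\}$, $0\le i\le n-1$; for each fixed $i$ the increments $S_{i+jn}X-S_{i+(j-1)n}X$, $1\le j\le k(n)$, run over disjoint blocks of the tree and hence (Lemma~\ref{control4}(1)) the normalised variables $Z^{(i)}_{n,j}(\varrho,\lambda,t)-1$ are centered and genuinely $\mathcal{Q}_\varrho$-independent, so von Bahr--Essen applies directly. One then sums the resulting exponentially small bound over the $n$ values of $i$, and there is no covariance to estimate; this is both cleaner and essential for keeping the constants uniform in $\varrho$. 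Two further points where the paper's route differs from yours: part (1) does not require a separate lower-bound step of the type you sketch---the concentration $1-\varepsilon\le k(n)^{-1}\sum_j Z^{(i)}_{n,j}\le 1+\varepsilon$, combined with the approximation $|s^{(i)}_{n,j}(\varrho,\lambda)-n\Lambda_{\psi_\alpha}(\lambda)|\le n\varepsilon+C_\varrho(\lambda)$, yields both inequalities for $\mathcal I_n(t,\lambda)/(nk(n))$ at once; and the "boundary/transition'' error you attribute to the condition in~\eqref{newN_j} is actually handled by a separate device, namely cutting $\mathcal{R}(m,\ell,Q)$ into the countably many pieces $\mathcal{R}(m,\ell,L,Q)$ (and likewise $\mathcal{R}'$) in which the tail $(q_k,\alpha_k)_{k\ge L}$ is already close to $(q_\alpha,\alpha)$, so that the pre-stabilisation error is absorbed into a constant $C_\varrho(\lambda)$ (and uniformly into a constant $A$ in Lemma~\ref{control4}(2)); the role of~\eqref{newN_j} is rather to force $j_{n(k(n)+1)}\le j_n+1$ so that the factor $((j_n+3)!)^2$ in the uniformised bound is still killed by $\exp(-N_j(p-1)\ell/8)$. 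Your sketch is not wrong, but without the arithmetic-progression splitting and the $L$-truncation the estimates would be considerably harder to close.
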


We assume this proposition for the time being and prove Theorem~\ref{UNIFER}.  

\begin{proof}[Proof of Theorem~\ref{UNIFER}]Let $\mathcal C$ stand for the set of closed dyadic cubes in $\R^d$. One has 
\begin{align*}
&\{(\alpha,\lambda)\in \widetilde I_X\times \R^d:\ -\Lambda^*_{\psi_\alpha}(\nabla \Lambda_{\psi_\alpha}(\lambda)) <\kappa\}\\
&=\bigcup_{m\ge 1} \bigcup_{\ell\ge 1} \{(\alpha,\lambda)\in \widetilde I_X\times \R^d:\alpha\in \widetilde I_X(m),\ -\Lambda^*_{\psi_\alpha}(\nabla \Lambda_{\psi_\alpha}(\lambda)) < \min (\ell, \kappa-1/\ell)\}\\
&=\bigcup_{m\ge 1} \bigcup_{\ell\ge 1}\bigcup_{Q\in\mathcal C} \{\alpha\in \widetilde I_X(m): \forall\, \lambda\in \wt Q,\  -\Lambda^*_{\psi_\alpha}(\nabla \Lambda_{\psi_\alpha}(\lambda)) < \min (\ell, \kappa-1/\ell)\} \times \widetilde Q,
\end{align*}
where one used the continuity in $(\alpha,\lambda)$ of $-\Lambda^*_{\psi_\alpha}(\nabla \Lambda_{\psi_\alpha}(\lambda))$. 
Consequently, due to Proposition~\eqref{LD51}(1), with probability 1, for all $\alpha\in \widetilde I_X$, if $m$ is large enough so that $\alpha\in \widetilde I_X(m)$, for all $\varrho\in  \mathcal{R}(m)$ such that $\lim_{k\to\infty}\varrho_k=(q_\alpha,\alpha)$, since each  $\lambda\in \R^d$ such that  $-\Lambda^*_{\psi_\alpha}(\nabla \Lambda_{\psi_\alpha}(\lambda)) <\kappa$ belongs, for $\ell $ large enough, to a dyadic cube $Q$ such that one has $-\Lambda^*_{\psi_\alpha}(\nabla \Lambda_{\psi_\alpha}) < \min (\ell, \kappa-1/\ell)$ over $Q$, one has $\mu_\varrho$-almost everywhere, $\lim_{n\to\infty}n^{-1} \Lambda^t_{\tilde k,n}(\lambda)=\Lambda_{\psi_\alpha}$ for all $\lambda\in \R^d$ such that  $-\Lambda^*_{\psi_\alpha}(\nabla \Lambda_{\psi_\alpha}(\lambda)) <\kappa$, i.e. the part (1) of the large deviations properties  $\mathrm{LD}(\Lambda_{\psi_\alpha},\widetilde k)$. One uses a similar argument to derive part (2) of $\mathrm{LD}(\Lambda_{\psi_\alpha},\widetilde k)$ from part (2) of Proposition~\eqref{LD51}. Part (3) of $\mathrm{LD}(\Lambda_{\psi_\alpha},\widetilde k)$ is established as \cite[Theorem 2.3(3)]{BL}).

To get the desired lower bound for $\dim E\big (X,\alpha, \mathrm{LD}( \Lambda_{\psi_\alpha},\widetilde k)\big )$, it is enough to pick $\varrho$ such that $\lim_{k\to\infty}\varrho_k=(q_\alpha,\alpha)$ and $\lim_{k\to\infty} \wt P_{X,\phi,\alpha_k}(q_k)-\langle q_k|\nabla \wt P_{X,\phi,\alpha_k}(q_k)\rangle=\wt P^*_{X,\phi,\alpha}(0)$, as in Lemma~\ref{approxi2} (then Proposition~\eqref{lb2} yields the result).
\end{proof}
\begin{proof}[Proof of Proposition~\ref{LD51}] Fix $m,\ell,Q$. We cut the sets $ \mathcal{R}(m,\ell,Q)$ and $  \mathcal{R}'(m,\ell,Q)$ into countably many pieces as follows: for every integer $L\ge 1$, setting $\Lambda_k=\Lambda_{\psi(q_k,\alpha_k)}$, let 
\[
 \mathcal{R}(m,\ell,L,Q)=\\ \left\{\varrho\in  \mathcal{R}(m,\ell,Q):
 \begin{cases} 
 \forall \ k\ge L,\ \forall\ \lambda\in \wt Q, \\ 
-\Lambda_k^*(\nabla \Lambda_k(\lambda)) < \min (2\ell, \kappa-1/2\ell)
\end{cases}
\right\},
\]
and
\[
  \mathcal{R}'(m,\ell,L,Q)=
\left\{\varrho\in   \mathcal{R}(m,\ell,Q): \begin{cases}\forall\ k\ge L,\ \forall\ \lambda\in Q, \\ 
|\Lambda_k^*(\nabla \Lambda_k(\lambda))-\Lambda_{\psi_\alpha}^*(\nabla \Lambda_{\psi_\alpha}(\lambda))| <1/2\ell 
\end{cases}
\right\}.
\]
The mappings $\Lambda_{\psi(q,\alpha)}$ and $\nabla \Lambda_{\psi(q,\alpha)}$ are continuous  as functions of  $(q,\alpha)$ taking values in $\mathcal C(\R^d\times\R^d,\R)$ and $\mathcal C(\R^d\times\R^d,\R^d)$ respectively (these spaces being endowed with the topology of the uniform convergence over compact sets). Thus, since $\wt Q$ is compact, we have $ \mathcal{R}(m,\ell,Q)=\bigcup_{L\ge 1} \mathcal{R}(m,\ell,L,Q)$ and $ \mathcal{R}'(m,\ell,Q)=\bigcup_{L\ge 1} \mathcal{R}'(m,\ell,L,Q)$.

\medskip
\noindent
Let us prove part (1) of the proposition. Fix $L\ge 1$. For $\varrho\in  \mathcal{R}(m,\ell,L,Q)$, $\lambda\in \wt Q$, $n\ge 1$, $1\le j\le k(n)$, $0\le i\le n-1$ and $t\in\partial \TT$, set 
$$
s^{(i)}_{n,j}(\varrho,\lambda)=\sum_{k=i+(j-1)n+1}^{i+jn}\Lambda_{\psi(q_k,\alpha_k)}(\lambda)
$$
and 
\begin{equation}\label{Znr}
Z^{(i)}_{n,j}(\varrho,\lambda,t)=\exp(\langle \lambda|(S_{i+jn}X(t)-S_{i+(j-1)n}X(t)\rangle-s^{(i)}_{n,j}(\varrho,\lambda)).
\end{equation}
It is enough to prove that for every $\lambda\in \wt Q$ and $\varepsilon>0$, one has  
\begin{equation}\label{control(1)}
\mathbb{E}\Big (\sum_{n\in\mathbb{N}} \sup_{\varrho\in  \mathcal{R}(m,\ell,L,Q)} \mu_\varrho(E(n,\varrho,\lambda))\Big )<\infty,
\end{equation}
where 
$$
E(n,\varrho,\lambda)=\Big\{t\in\partial \TT: \exists \,0\le i\le n-1,\ \Big |k(n)^{-1}\sum_{j=1}^{k(n)}(Z^{(i)}_{n,j}(\varrho,\lambda,t)-1)\Big |>\varepsilon \Big \}.
$$
Indeed, suppose that \eqref{control(1)} holds true. Then, for every $\lambda\in \wt Q$ and $\varepsilon\in (0,1)$, with probability 1, for all $\varrho\in  \mathcal{R}(m,\ell,L,Q)$, applying the Borel-Cantelli lemma to $\mu_\varrho$ yields, for $\mu_\varrho$-almost every $t$, an integer    $n_\varrho\ge 1$ such that for all $n\ge n_\varrho$, for all $0\le i\le n-1$,
$$
1-\varepsilon\le k(n)^{-1}\sum_{j=1}^{k(n)}Z^{(i)}_{n,j}(\varrho,\lambda,t)\le 1+\varepsilon.
$$
Moreover, given $\varrho\in   \mathcal{R}(m,\ell,L,Q)$, there exists $k_\varrho>1$ such that for all $k\ge k_\varrho$, one has $|\Lambda_{\psi(q_k,\alpha_k)}(\lambda)-\Lambda_{\psi_\alpha}(\lambda)|\le \varepsilon$, hence for $n\ge k_\varrho $ and $0\le i\le n-1$ one has  
$$
|s^{(i)}_{n,j}(\varrho,\lambda)-n\Lambda_{\psi_\alpha}(\lambda)|\le n\varepsilon+C_\varrho(\lambda),
$$ 
where $C_\varrho(\lambda)=\sum_{k=1}^{k_\varrho} |\Lambda_{\psi(q_k,\alpha_k)}(\lambda)-\Lambda_{\psi_\alpha}(\lambda)|$. Consequently,  setting 
$$
\mathcal{I}_n(t,\lambda)=\sum_{i=0}^{n-1}\sum_{j=1}^{k(n)}\exp(\langle \lambda|(S_{i+jn}X(t)-S_{i+(j-1)n}X(t)\rangle),
$$ 
for $n\ge \max( k_\varrho, n_\varrho)$, one obtains
$$
(1-\varepsilon)\exp (n\Lambda_{\psi_\alpha}(\lambda)-n\varepsilon-C_\varrho(\lambda))
\le \frac{\mathcal{I}_n(t,\lambda)}{nk(n)}\le (1+\varepsilon) \exp (n\Lambda_{\psi_\alpha}(\lambda)+ n\varepsilon+C_\varrho(\lambda)).
$$
By definition \eqref{tildeLambda} of $\Lambda_{\tilde k,n}^t(\lambda)$, one also has 
$\exp\big (\Lambda_{\tilde k,n}^t(\lambda)\big )= (nk(n))^{-1}\mathcal{I}_n(t,\lambda)$.
Letting $\varepsilon $ go to 0 along a discrete family, this gives that almost surely, for all $\varrho\in  \mathcal{R}(m,\ell,L,Q)$, for $\mu_\varrho$-almost every $t$, $\lim_{n\to\infty} n^{-1} \Lambda_{\tilde k,n}^t(\lambda)=\Lambda_{\psi_\alpha}(\lambda)$. Then, this convergence holds almost surely for a countable and dense subset of elements $\lambda$ of $\wt Q$, and finally the convexity of the functions $\Lambda_{\tilde k,n}^t$ gives the convergence for all $\lambda\in Q$, since $Q$ is included in the interior of~$\wt Q$. 

To prove \eqref{control(1)}, we need the following lemma, in which $\mathcal Q_\varrho$ stands for the Peyri\`ere measure associated with $\mu_\varrho$, that is the measure on $(\Omega\times \mathbb N^{\mathbb N},\mathcal A\otimes\mathcal B)$, defined by 
$$
\mathcal Q_\varrho(C)=\int_\Omega \int_{\mathbb N^{\mathbb N}} \mathbf{1}_C(\omega,t)\, {\rm d}\mu_{\varrho,\omega}(t) \, {\rm d}\mathbb P(\omega).
$$ 

\begin{lemma}\label{control4}
\begin{enumerate}
\item Let $\varrho \in   \mathcal{R}(m,\ell,L,Q)$, $\lambda\in \wt Q$ and $n\in\N$. For each $0\le i\le n-1$, the random variables $(\omega,t)\mapsto Z^{(i)}_{n,j}(\varrho,\lambda,t)-1$, $1\le j\le k(n)$, defined on $\Omega\times \mathbb N^{\mathbb N}$ are centered and independent with respect to~$\mathcal Q_\varrho$.
\item There exists $p(m,\ell,L,Q)\in (1,2]$ and $C(m,\ell,L,Q)>0$ such that for all $p\in (1,p(m,\ell,L,c)]$,  for all $\varepsilon>0$, for all $n\in\mathbb N$ and $0\le i\le n-1$, one has 
\begin{eqnarray*}
\mathcal Q_\varrho \Big (\Big |k(n)^{-1}\sum_{j=1}^{k(n)}\big (Z^{(i)}_{n,j}(\varrho,\lambda,t)-1\big )\Big |>\varepsilon \Big )&\le& C(m,\ell,L,Q) \exp(-n(p-1)\ell/4)
\end{eqnarray*}
independently of $\varrho\in  \mathcal{R}(m,\ell,L,Q)$ and $\lambda\in \wt Q$.
\end{enumerate}
\end{lemma}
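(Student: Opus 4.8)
\textbf{Plan for the proof of Lemma~\ref{control4}.}

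The plan is to establish the two assertions in order, the first being essentially a bookkeeping exercise about the Peyri\`ere measure and the second being the substantive estimate.

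\emph{Part (1).} First I would recall that, with respect to $\mathcal Q_\varrho$, the sequence of increments $\widetilde X_k:(\omega,t)\mapsto X_{t_1\cdots t_k}(\omega)$ is a sequence of independent random vectors whose law at level $k$ is governed by the ``tilted'' weight $\exp(\psi_k(q_k,\alpha_k))$; this is the inhomogeneous analogue of the classical fact recalled in the introduction for homogeneous Mandelbrot measures. In particular, for each fixed block of indices $[i+(j-1)n+1,\, i+jn]$ the random variable $S_{i+jn}X(t)-S_{i+(j-1)n}X(t)=\sum_{k=i+(j-1)n+1}^{i+jn}\widetilde X_k$ is a sum of independent increments, and $\mathbb{E}_{\mathcal Q_\varrho}\big(\exp(\langle\lambda|\widetilde X_k\rangle)\big)=\exp(\Lambda_{\psi(q_k,\alpha_k)}(\lambda))$ for $\lambda$ in the (full, by \eqref{allmomentsfinite}) domain. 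Hence $\mathbb{E}_{\mathcal Q_\varrho}\big(Z^{(i)}_{n,j}(\varrho,\lambda,t)\big)=\exp\big(s^{(i)}_{n,j}(\varrho,\lambda)-s^{(i)}_{n,j}(\varrho,\lambda)\big)=1$, so these variables are centered after subtracting $1$. Independence across $j$ (for fixed $i$ and $n$) follows because the blocks $[i+(j-1)n+1,i+jn]$, $1\le j\le k(n)$, are pairwise disjoint sets of generations and the increments $\widetilde X_k$ are $\mathcal Q_\varrho$-independent.

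\emph{Part (2).} This is where the work is. I would apply the von Bahr--Esseen inequality (Lemma~\ref{lem-2.1}) to the centered independent variables $Z^{(i)}_{n,j}(\varrho,\lambda,t)-1$, $1\le j\le k(n)$, under $\mathcal Q_\varrho$: for $p\in(1,2]$,
\begin{equation*}
\mathbb{E}_{\mathcal Q_\varrho}\Big(\Big|k(n)^{-1}\sum_{j=1}^{k(n)}\big(Z^{(i)}_{n,j}-1\big)\Big|^{p}\Big)\le 2^{p-1}k(n)^{-p}\sum_{j=1}^{k(n)}\mathbb{E}_{\mathcal Q_\varrho}\big(|Z^{(i)}_{n,j}-1|^{p}\big)\le 2^{p}k(n)^{1-p}\sup_{j}\mathbb{E}_{\mathcal Q_\varrho}\big(|Z^{(i)}_{n,j}|^{p}\big),
\end{equation*}
using $\mathbb{E}_{\mathcal Q_\varrho}(|Z-1|^p)\le 2^p\mathbb{E}_{\mathcal Q_\varrho}(|Z|^p)$ for a mean-one nonnegative $Z$. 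Now $\mathbb{E}_{\mathcal Q_\varrho}\big((Z^{(i)}_{n,j})^{p}\big)=\exp\big(\sum_{k}[\Lambda_{\psi(q_k,\alpha_k)}(p\lambda)-p\Lambda_{\psi(q_k,\alpha_k)}(\lambda)]\big)$, the sum over the block, because the increments are independent. The key point is to control the exponent: writing $g_k(\theta)=\Lambda_{\psi(q_k,\alpha_k)}(\theta\lambda)$, one has $g_k(p)-pg_k(1)=(p-1)[g_k(p)-g_k(1)]-(p-1)g_k(1)$ and more usefully, by the definition of the Legendre transform and the fact that $\Lambda_{\psi(q_k,\alpha_k)}(0)=0$,
$$
\Lambda_{\psi(q_k,\alpha_k)}(p\lambda)-p\Lambda_{\psi(q_k,\alpha_k)}(\lambda)= p\big[p^{-1}\Lambda_{\psi(q_k,\alpha_k)}(p\lambda)-\Lambda_{\psi(q_k,\alpha_k)}(\lambda)\big],
$$
and a first-order Taylor expansion in $p$ at $p=1$ gives $p^{-1}\Lambda_{\psi_k}(p\lambda)-\Lambda_{\psi_k}(\lambda)=(p-1)\big(\langle\lambda|\nabla\Lambda_{\psi_k}(\lambda)\rangle-\Lambda_{\psi_k}(\lambda)\big)+O((p-1)^2)= -(p-1)\Lambda_{\psi_k}^*(\nabla\Lambda_{\psi_k}(\lambda))+O((p-1)^2)$, where the $O$ is uniform over $(q_k,\alpha_k)\in K_m$ and $\lambda\in\widetilde Q$ by analyticity and compactness (this is exactly the role of the constraint $d((q,\alpha),\partial J_{X,\phi})\ge 1/m$ in the definition of $K_m$, which keeps the Hessians bounded). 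On $ \mathcal{R}(m,\ell,L,Q)$ one has $-\Lambda_k^*(\nabla\Lambda_k(\lambda))<\min(2\ell,\kappa-1/2\ell)$ for $k\ge L$, and $\liminf_n\log(k(n))/n=\kappa$, so for $p-1$ small enough (depending on $m,\ell,L,Q$ through the $O$-constant) the exponent $\sum_{k=i+(j-1)n+1}^{i+jn}[\Lambda_{\psi_k}(p\lambda)-p\Lambda_{\psi_k}(\lambda)]$ is $\le n(p-1)(\kappa-1/4\ell)$ up to an additive constant coming from the at most $L$ initial terms. Combining with $k(n)^{1-p}=\exp(-(p-1)\log k(n))\le \exp(-(p-1)n(\kappa-1/8\ell))$ for $n$ large, the product of the two exponentials gives $\exp(-n(p-1)/8\ell)$ up to constants; Markov's inequality $\mathcal Q_\varrho(|\cdot|>\varepsilon)\le \varepsilon^{-p}\mathbb{E}_{\mathcal Q_\varrho}(|\cdot|^p)$ then yields the claimed bound $C(m,\ell,L,Q)\exp(-n(p-1)\ell/4)$ after adjusting the constant $\ell/4$ versus $1/8\ell$ (I would choose the final constants to match the statement, absorbing $\varepsilon^{-p}$ and the constant from the $L$ initial terms into $C(m,\ell,L,Q)$). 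Crucially all bounds are uniform in $\varrho\in \mathcal{R}(m,\ell,L,Q)$ and $\lambda\in\widetilde Q$ because every estimate only used the defining inequalities of that set and the compactness of $K_m$ and $\widetilde Q$.

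\textbf{Main obstacle.} The delicate point is the uniformity of the second-order remainder in the Taylor expansion of $p\mapsto p^{-1}\Lambda_{\psi(q,\alpha)}(p\lambda)$ simultaneously over all $(q,\alpha)\in K_m$, all $\lambda\in\widetilde Q$, and all $p\in(1,p(m,\ell,L,Q)]$, together with verifying that the threshold $p(m,\ell,L,Q)$ can be chosen below the radius of finiteness; here \eqref{allmomentsfinite} (which forces $\mathcal D_{\Lambda_{\psi_\alpha}}=\R^d$ and real-analyticity) and the separation-from-the-boundary built into $K_m$ do the job, but one must check the Hessian bound is genuinely uniform — this is the analytic heart of the lemma, everything else being the von Bahr--Esseen inequality plus Markov.
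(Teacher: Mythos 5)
Your proof is correct and follows essentially the same route as the paper's: for part (2) it is the von Bahr--Esseen inequality plus Markov, with the $p$-th moment of $Z^{(i)}_{n,j}$ computed through the inhomogeneous Laplace transforms, a first-order Taylor expansion of $p\mapsto\Lambda_{\psi_k}(p\lambda)-p\Lambda_{\psi_k}(\lambda)$ producing $-(p-1)\Lambda^*_{\psi_k}(\nabla\Lambda_{\psi_k}(\lambda))$ with a uniformly controlled remainder, the defining inequality of $\mathcal R(m,\ell,L,Q)$ bounding the exponent for $k\ge L$, and the first $L$ terms absorbed into a constant. The one place where you diverge from the paper's write-up is part (1), which the paper establishes by an explicit cascade of conditional expectations over generations rather than by directly appealing to the $\mathcal Q_\varrho$-independence of the tilted increments $\widetilde X_k$; since that independence is precisely what the paper's computation proves, you should either supply that short calculation or cite an appropriate reference, as in the inhomogeneous setting this is not a "recalled" fact but one that must be checked.
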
 
We postpone the proof of this lemma to the end of the section.

\medskip

Now, for $\varrho \in   \mathcal{R}(m,\ell,L,Q)$, $\lambda\in \wt Q$, $n\in\N$, $0\le i\le n-1$ and $t\in\partial \TT$ let 
$$
V^{(i)}_n(\varrho,\lambda,t)=\Big |k(n)^{-1}\sum_{j=1}^{k(n)}(Z^{(i)}_{n,j}(\varrho,\lambda,t)-1\big )\Big |,$$
and notice that  by construction $V^{(i)}_n(\varrho,\lambda,t)$ is constant over each cylinder $[u]$ of generation $nk(n)+i$, so that we also denote it by $V^{(i)}_n(\varrho,\lambda,u)$. One has
$$
\mu_\varrho (\{t\in\partial \TT:\,\exists \, 0\le i\le n-1,\,  V^{(i)}_n(\varrho,\lambda,t)>\varepsilon\})\le \sum_{i=0}^{n-1}\sum_{u\in\TT_{nk(n)+i}}\mathbf{1}_{\{V^{(i)}_n(\varrho,\lambda,u)>\varepsilon\}}\mu_\varrho([u]).
$$
Recall that by  definition $\mu_\varrho([u])=(\prod_{k=1}^n W_{\varrho,u_1\cdots u_k} )   Y(\varrho,u)$, with $\E(Y(\varrho,u))=1$, and due to Lemma~\ref{bettercontrol2}, since  $ \mathcal{R}(m,\ell,L,Q)\subset \mathcal{R}(K_m)$, one has $
\|\sup_{ \varrho\in \mathcal{R}(m,\ell,L,Q)}Y(\varrho, u)\|_1= O((j_{|u|}+2)!)$.  Fix $u^{n,i}\in \N^{nk(n)+i}$ and  denote $\|\sup_{ \varrho\in \mathcal{R}(m,\ell,L,Q)}Y(\varrho, u^{n,i})\|_1$ by $B_{n,i}$. Using the independence between $ (\prod_{k=1}^{nk(n)+i} W_{\varrho,u_1\cdots u_k} )_{\varrho\in  \mathcal{R}(m,\ell,L,Q)}$ and $Y(\cdot, u)$ for all $u\in \N^{nk(n)+i}$, one gets 

\begin{eqnarray*}
&&\mathbb{E}\Big (\sup\Big \{\mu_\varrho (\{t\in\partial \TT:\,\exists \, 0\le i\le n-1,\, V^{(i)}_n(\varrho,\lambda,t)>\varepsilon\}):\varrho\in  \mathcal{R}(m,\ell,L,Q)\Big\}\Big )\\
&\le& \sum_{i=0}^{n-1}\mathbb{E}\Big (\sum_{u\in  \TT_{nk(n)+i}} \sup_{ \varrho\in \mathcal{R}(m,\ell,L,Q)}\mathbf{1}_{\{V^{(i)}_n(\varrho,\lambda,u)>\varepsilon\}}\prod_{k=1}^{nk(n)+i} W_{\varrho,u_1\cdots u_k} \Big )\ B_{n,i}.
\end{eqnarray*}
From this inequality one can obtain
\begin{align*}
&\mathbb{E}\Big (\sup\Big \{\mu_\varrho (\{t\in\partial \TT:\,\exists \, 0\le i\le n-1,\, V_n(\varrho,\lambda,t)>\varepsilon\}):\varrho\in  \mathcal{R}(m,\ell,L,Q)\Big\}\Big )\\
&\le \sum_{i=0}^{n-1} \sum_{\substack{\varrho_{|nk(n)+i}:\\\varrho\in  \mathcal{R}(m,\ell,L,Q)}}\mathbb{E}\Big (\sum_{u\in  \TT_{nk(n)+i}}\mathbf{1}_{\{V^{(i)}_n(\varrho,\lambda,u)>\varepsilon\}}\prod_{k=1}^{nk(n)+i} W_{\varrho,u_1\cdots u_k} \Big )\ B_{n,i}.
\end{align*}
Then, noting that  
$$
\mathbb{E}\Big (\sum_{u\in  \TT_{nk(n)+i}}\mathbf{1}_{\{V^{(i)}_n(\varrho,\lambda,u)>\varepsilon\}}\Big (\prod_{k=1}^{nk(n)+i} W_{\varrho,u_1\cdots u_k}\Big )\Big )=\mathcal{Q}_\varrho(V^{(i)}_n(\varrho,\lambda,t)>\varepsilon)$$ 
yields
\begin{align*}
&\mathbb{E}\Big (\sup\Big \{\mu_\varrho (\{t\in\partial \TT:\,\exists \, 0\le i\le n-1,\,V^{(i)}_n(\varrho,\lambda,t)>\varepsilon\}):\varrho\in  \mathcal{R}(m,\ell,L,Q)\Big\}\Big )\\
&\le \sum_{i=0}^{n-1}\sum_{\varrho_{|nk(n)+i}:\varrho\in  \mathcal{R}(m,\ell,L,Q)}\mathcal{Q}_\varrho(V^{(i)}_n(\varrho,\lambda,t)>\varepsilon) \ B_{n,i}\\
&\le  \sum_{i=0}^{n-1} (\#\{ \varrho_{|nk(n)+i}:\varrho\in  \mathcal{R}\})\Big ( \sup_{ \varrho\in \mathcal{R}(m,\ell,L,Q)} \mathcal{Q}_\varrho(V^{(i)}_n(\varrho,\lambda,t)>\varepsilon) \Big ) \ B_{n,i}\\
&\le n  (j_{n(k(n)+1)}+1!) C(m,\ell,L,Q) \exp(-n(p-1)\ell/4) O((j_{n(k(n)+1)}+2)!),
\end{align*}
where  Lemma~\ref{control4}(2) was used. Now recall that due to  \eqref{newN_j} one has $j_{n(k(n)+1)}\le j_n+1$, hence 
\begin{align*}
&\mathbb{E}\Big (\sup\Big \{\mu_\varrho (\{t\in\partial \TT:\,\exists \, 0\le i\le n-1,\, V^{(i)}_n(\varrho,\lambda,t)>\varepsilon\}):\varrho\in  \mathcal{R}(m,\ell,L,Q)\Big\}\Big )\\&\quad= O(1) C(m,\ell,L,Q) n(j_n+3)!^2 \exp(-n(p-1)\ell/4).
\end{align*}
It follows that 
\begin{align*}
&\sum_{n\in\mathbb{N}}\mathbb{E}\Big (\sup\Big \{\mu_\varrho (\{t\in\partial \TT:\,\exists \, 0\le i\le n-1,\,V^{(i)}_n(\varrho,\lambda,t)>\varepsilon\}):\varrho\in  \mathcal{R}(m,\ell,L,Q)\Big\}\Big )\\
& = O\Big ( \sum_{j\ge 0} \sum_{M_j+1\le n\le  M_{j+1}}((j+3)!)^2 n\exp(-n(p-1)\ell/4)\Big )\\
&= O\Big ( \sum_{j\ge 0} \frac{((j+3)!)^2}{1-\exp(-(p-1)\ell/8)}\exp (-M_j(p-1)\ell/8)\Big) \\
&=O\Big ( \sum_{j\ge 0} ((j+3)!)^2\exp (-N_j(p-1)\ell/8)\Big).
\end{align*}
Due to \eqref{newN_j} the above series converges. 

\begin{remark}\label{impossibility}The reason why we did not succeed in proving that with probability 1, the Gibbs measure $\nu_\alpha$ is carried by $E(X,\alpha, {\rm LD}(\Lambda_{\psi_\alpha},\widetilde k))$ simultaneously for all $\alpha\in\widering I_X$, is the following. As said in the introduction, it is tempting to adapt to the present problem what was done in \cite{B2,Attia} to get that with probability 1, the  measure $\nu_\alpha$ is carried by $E(X,\alpha)$ simultaneously for all $\alpha\in\widering I_X$. To do so, notice first that for $\alpha\in\widering I_X$, the measure $\nu_\alpha$ is the homogeneous Mandelbrot measure $\mu_\varrho$, where $\varrho=\varrho^{(\alpha)}$ is the constant sequence $((q_\alpha,\alpha))_{n\in\mathbb N}$.  Then, what we would need is to have at our disposal a suitable almost sure upper bound $g_n$ for the mapping $\alpha\mapsto \mu_{\varrho^{(\alpha)}} (\{t\in\partial \TT:\,V^{(i)}_n(\varrho^{(\alpha)},\lambda,t)>\varepsilon\})$, with the following properties: $g_n$ possesses almost surely an holomorphic extension on a deterministic complex neiborhood $V_\alpha$ of any $\alpha\in \widering I_X$;  $\sum_{n\ge 0} \mathbb{E}(|g_n(z)|)$ converges uniformly on any compact subset of $\bigcup_{\alpha\in \widering I_X} V_\alpha$, so that an application of the Cauchy-formula yields the almost surely simultaneous convergence of  $\sum_{n\ge 0} g_n(\alpha)$ for all $\alpha\in \widering I_X $;  but we could not find any way to apply such a strategy. \end{remark}

Now we prove part (2) of the proposition. This situation is not empty only if $\kappa'=\limsup_{n\to\infty} \log(k(n))/n<\infty$. 

Fix $L>0$. Given $\varrho\in  \mathcal{R}'(m,\ell,L,Q)$, $\lambda\in Q$, $\varepsilon>0$, $n\ge 1$, $1\le j\le k(n)$ and $0\le i\le n-1$, set 
$$
\widetilde V^{(i)}_j(\varrho,\lambda,t)=\mathbf{1}_{B(0,n\varepsilon)} (S_{i+jn}X(t)-S_{i+(j-1)n}X(t)-n\nabla \Lambda_{\psi_\alpha}(\lambda)).
$$
Mimicking what was done above, one can  get  
\begin{align*}
&\E\Big (\sup_{\varrho\in  \mathcal{R}'(m,\ell,L,Q)}\mu_\varrho\Big (\Big \{t\in \TT: \,\exists \, 0\le i\le n-1,\, \sum_{j=1}^{k(n)}\widetilde V^{(i)}_j(\varrho,\lambda,t)\ge 1\Big \}\Big )\Big)\\
&\le \sum_{i=0}^{n-1}(\# \varrho_{|nk(n)+i}:\varrho\in  \mathcal R)\Big ( \sup_{ \varrho\in \mathcal{R}'(m,\ell,L,Q)} \mathcal{Q}_\varrho\Big (\Big\{\sum_{j=1}^{k(n)}\widetilde V^{(i)}_j(\varrho,\lambda,t)\ge 1\Big \}\Big ) \Big ) \ B_{n,i}\\
&=O((j_n+3)!^2)\sum_{i=0}^{n-1}\sup_{ \varrho\in \mathcal{R}'(m,\ell,L,Q)} \mathcal{Q}_\varrho\Big (\Big\{\sum_{j=1}^{k(n)}\widetilde V^{(i)}_j(\varrho,\lambda,t)\ge 1\Big \}\Big ). 
\end{align*}

For $\varrho\in \mathcal{R}'(m,\ell,L,Q)$, 
the fact that the inclusion $S_{i+jn}X(t)-S_{i+(j-1)n}(t)-n\nabla \Lambda_{\psi_\alpha}(\lambda)\in B(0,n\varepsilon)$ implies $\langle\lambda|S_{i+jn}X(t)-S_{i+(j-1)n}(t)-n\nabla \Lambda_{\psi_\alpha}(\lambda)\rangle\ge -n\varepsilon\|\lambda\|$, yields
\begin{align*}&\mathcal Q_\varrho\Big (\Big\{\sum_{j=1}^{k(n)}\widetilde V^{(i)}_j(\varrho,\lambda,t)\ge 1\Big \}\Big )\\
&\le \sum_{j=1}^{k(n)}\mathcal Q_\varrho\big ( \langle\lambda|S_{i+jn}X(t)-S_{i+(j-1)n}(t)-n\nabla \Lambda_{\psi_\alpha}(\lambda)\rangle\ge -n\varepsilon\|\lambda\|\big ).
\end{align*}
Then, applying Markov inequality gives
\begin{align*}
&\mathcal Q_\varrho\Big (\Big\{\sum_{j=1}^{k(n)}\widetilde V^{(i)}_j(\varrho,\lambda,t)\ge 1\Big \}\Big )\\
&\le   \sum_{j=1}^{k(n)} \exp \big (n\varepsilon\|\lambda\| -n \langle\lambda|\nabla \Lambda_{\psi_\alpha}(\lambda)\rangle\big )\mathbb E_{\mathcal Q_\varrho}\big (\exp(\langle \lambda|S_{i+jn}X(t)-S_{i+(j-1)n}(t)\rangle)\big )\\
&= \sum_{j=1}^{k(n)} \exp \big (n\varepsilon\|\lambda\| -n \langle\lambda|\nabla \Lambda_{\psi_\alpha}(\lambda)\rangle\big ) \exp \Big (\sum_{k=i+(j-1)n+1}^{i+jn}\Lambda_{\psi(q_k,\alpha_k)}(\lambda)\Big ).
\end{align*}
Now, using the definition of $ \mathcal{R}'(m,\ell,L,Q)$ and setting 
$$
A=\sup_{\varrho\in  \mathcal{R}'(m,\ell,L,Q)} \sup_{\lambda\in Q}\sum_{k=1}^L |\Lambda_{\psi_\alpha}(\lambda)-\Lambda_{\psi(q_k,\alpha_k)}(\lambda)|,
$$ 
one obtains 
\begin{align*}
\mathcal Q_\varrho\Big (\Big\{\sum_{j=1}^{k(n)}\widetilde V^{(i)}_j(\varrho,\lambda,t)\ge 1\Big \}\Big )& \le  e^A \sum_{j=1}^{k(n)} \exp \big (n\varepsilon\|\lambda\| -n \langle\lambda|\nabla \Lambda_{\psi_\alpha}(\lambda)\rangle\big ) \exp \Big (n/2\ell +n \Lambda_{\psi_\alpha}(\lambda)\Big )\\
&= e^A \sum_{j=1}^{k(n)} \exp \big (n\varepsilon\|\lambda\|+n/2\ell +n\Lambda^*_{\psi_\alpha}(\nabla \Lambda_{\psi_\alpha}(\lambda))\big )\\
&
\le  e^A\, k(n) \exp \big (n\varepsilon\|\lambda\|-n/2\ell-n\kappa'\big ).
\end{align*}

Thus, taking $0<\varepsilon=\varepsilon_Q$ small enough so that $\varepsilon\|\lambda\|\le \ell/8$, since $\log(k(n))<n(\kappa'+\ell/8)$ for $n$ large enough, one gets 
$$
\sup_{ \varrho\in \mathcal{R}'(m,\ell,L,Q)} \mathcal{Q}_\varrho\Big (\Big\{\sum_{j=1}^{k(n)}\widetilde V^{(i)}_j(\varrho,\lambda,t)\ge 1\Big \}\Big )=O(\exp (-n/4\ell)).
$$
Mimicking  the end of the proof of part (1) of this proposition, we can get that given $\lambda\in Q$,  with $\varepsilon=\varepsilon_Q$
$$
\E\Big (\sum_{n\in\mathbb{N}} \sup_{\varrho\in  \mathcal{R}'(m,\ell,L,Q)}\mu_\varrho\Big (\Big \{t: \,\exists \, 0\le i\le n-1,\, \sum_{j=1}^{k(n)}\widetilde V^{(i)}_j(\varrho,\lambda,t)\ge 1\Big \}\Big )\Big)<\infty,
$$
hence, with probability 1,  by the Borel-Cantelli Lemma applied to each $\mu_\varrho$, one has that for all $\varrho\in  \mathcal{R}'(m,\ell,L,Q)$, for $\mu_\varrho$-almost every $t$, for $n$ large enough, for all $0\le i\le n-1$,  $E_{n,i}(t):=\{1\le j\le k(n): n^{-1}(S_{i+jn}X(t)-S_{i+(j-1)n}X(t))\in B(\nabla \Lambda_{\psi_\alpha}(\lambda),\varepsilon_Q)\}=\emptyset$.   

Now, for each $\lambda\in Q$, there exists $\eta_\lambda>0$ such that for all $\lambda'\in B(\lambda, \eta_\lambda)$, for all $ \varrho\in  \mathcal{R}'(m,\ell,L,Q)$ one has $B(\nabla \Lambda_{\psi_\alpha}(\lambda'),\varepsilon_Q/2)\subset B(\nabla \Lambda_{\psi_\alpha}(\lambda),\varepsilon_Q)$. One can extract from $(B(\lambda,\eta_\lambda))_{\lambda\in Q}$ a finite subfamily $(B(\lambda_s,\eta_{\lambda_s}))_{1\le s\le r}$ which covers $Q$. Since this family is finite, with probability 1, for all $\varrho\in  \mathcal{R}'(m,\ell,L,Q)$, for $\mu_\varrho$-almost every $t$, for $n$ large enough, for all $1\le s\le r $ and $0\le i\le n-1$, one has $E_{n,i}(t)=\emptyset$; consequently, by construction of $(B(\lambda_s,\eta_{\lambda_s}))_{1\le i\le r}$, for all $\lambda'\in Q$ and $0\le i\le n-1$, $E_{n,i}(t)=\emptyset$.  This finishes the proof of the proposition.  
\end{proof}

\begin{proof}[Proof of Lemma~\ref{control4}] (1) This is elementary, but we detail it for reader's convenience. Let $n\ge 1$ and $(f_1,\ldots,f_{k(n)})$ be $k(n)$ positive  Borel functions defined on $\R_+$. Fix $0\le i\le n-1$. One has 
$$
\E_{\mathcal Q_\varrho}\Big (\prod_{j=1}^{k(n)} f_j \big ( Z^{(i)}_{n,j}(\varrho,\lambda,t)\big)\Big )
=\E\Big (\int\prod_{j=1}^{k(n)} f_j ( Z^{(i)}_{n,j}(\varrho,\lambda,t))\,\mathrm{d}\mu_\varrho(t)\Big ).
$$
For each word $u$ of generation $nk(n)$, we denote  by $Z^{(i)}_{n,j}(\varrho,\lambda,u)$ the constant value of $Z^{(i)}_{n,j}(\varrho,\lambda,t)$ over the cylinder $[u]$. Using the fact that $Z^{(i)}_{n,j}(\varrho,\lambda,t)$ is $\sigma(N_u,X_{us}: u\in \bigcup_{k=i+(j-1)n}^{i+jn-1}\N^{k}, s\in \mathbb N)\otimes \mathcal C$-measurable, as well as  the definition of $\mu_\varrho$, the  independence between generations, and the branching property yields 
\begin{align*}
&\E_{\mathcal Q_\varrho}\Big (\prod_{j=1}^{k(n)} f_j \big ( Z^{(i)}_{n,j}(\varrho,\lambda,t)\big)\Big )\\
&=\E\Big (\sum_{u\in\TT_{nk(n)}}Y(\varrho,u)\Big[\prod_{k=(j-1)n+1}^{jn} W_{\varrho,u_1\cdots u_k}\Big ]\cdot \Big [\prod_{j=1}^{k(n)}  f_j \big( Z^{(i)}_{n,j}(\varrho,\lambda,u)\big)\Big ] \Big )\\
&=\E\Big (\sum_{u\in\TT_{nk(n)}}\prod_{j=1}^{k(n)}  \Big [ f_j \big( Z^{(i)}_{n,j}(\varrho,\lambda,u)\big) \prod_{k=i+(j-1)n+1}^{i+jn} W_{\varrho,u_1\cdots u_k}\Big ]\Big ).
\end{align*}
Recall that $W_{\varrho,u_1\cdots u_k}=\exp \big (\langle q_k|X_{u_1\cdots u_k}-\alpha_k\rangle -\wt P_{X,\phi,\alpha_k}(q_k)\phi_{u_1\cdots u_k}\big )$, and set
$$
U_{n,j}(u)=f_j ( Z^{(i)}_{n,j}(\varrho,\lambda,u)) \prod_{k=i+(j-1)n+1}^{i+jn} \exp (\langle q_k|X_{u_1\cdots u_k}-\alpha_k\rangle -\wt P_{X,\phi,\alpha_k}(q_k)\phi_{u_1\cdots u_k}).
$$
Note that this random variable is measurable with respect to  the $\sigma$-algebra $\mathcal{G}_{n,j}$ defined as $\mathcal{G}_{n,j}=\sigma \big ((N_w,(X_{w1},\phi_{w1}),\ldots ): w\in \bigcup_{k=i+j(n-1)}^{i+jn-1} \mathbb N^k\big )$.  Now, the above equality~rewrites
$$
\E_{\mathcal Q_\varrho}\Big (\prod_{j=1}^{k(n)} f_j ( Z^{(i)}_{n,j}(\varrho,\lambda,t))\Big )=\mathbb{E}\Big (\sum_{u\in \TT_{n(k(n)-1)}}\sum_{v\in \TT_n(u)} \prod_{j=1}^{k(n)} U_{n,j}(uv)\Big ).
$$
Conditioning on  $\sigma(\mathcal{G}_{n,j}:1\le j\le k(n)-1)$ and using the independences and identity in distribution between the random variables of the construction one gets
$$
\E_{\mathcal Q_\varrho}\Big (\prod_{j=1}^{k(n)} f_j ( Z^{(i)}_{n,j}(\varrho,\lambda,t))\Big )=\mathbb{E}\Big (\sum_{u\in \TT_{n(k(n)-1)}}\prod_{j=1}^{k(n)-1} U_{n,j}(uv)\Big ) \widetilde U_{n,k(n)},
$$
where  for $1\le j\le k(n)$ one set
\begin{align*}
&\widetilde U_{n,j}=\E\Big (\sum_{u\in\TT_{n}} f_{j} \big ( \exp(\langle \lambda|S_{n}X(u)\rangle-s^{(i)}_{n,j}(\varrho,\lambda)\big ) \\ & \quad\quad\quad\quad\cdot \prod_{k=i+1}^{i+n}\exp (\langle q_{(j-1)n+k}|X_{u_1\cdots u_k}-\alpha_{(j-1)n+k}\rangle -\wt P_{X,\phi,\alpha_{(j-1)n+k}}(q_{(j-1)n+k})\phi_{u_1\cdots u_k})\Big ).
\end{align*}
Iterating the previous calculation yields  
$$
\E_{\mathcal Q_\varrho}\Big (\prod_{j=1}^{k(n)} f_j ( Z^{(i)}_{n,j}(\varrho,\lambda,t))\Big )=\prod_{j=1}^{k(n)} \widetilde U_{n,j},
$$
and applying this with $f_{j'}=1$ for $j'\neq j$ one naturally obtains  $\widetilde U_{n,j}=\E_{\mathcal Q_\varrho}\big ( f_j ( Z^{(i)}_{n,j}(\varrho,\lambda,t))\big )$, hence
$$
\E_{\mathcal Q_\varrho}\Big (\prod_{j=1}^{k(n)} f_j ( Z^{(i)}_{n,j}(\varrho,\lambda,t))\Big )=\prod_{j=1}^{k(n)}\E_{\mathcal Q_\varrho}\big ( f_j ( Z^{(i)}_{n,j}(\varrho,\lambda,t))\big ),
$$
which is the desired independence. Then, taking $f_j(z)=z$ and $f_{j'}(z)=1$ for $j'\neq j$ yields, writing $k'$ for $i+(j-1)n+k$ and dropping $X,\phi$ in $\widetilde P_{X,\phi,\alpha}$:
\begin{align*}
&\E_{\mathcal Q_\varrho}\big (Z_{n,j}(\varrho,\lambda,t)\big )\\
&=\E\Big (\sum_{u\in \TT_n}\prod_{k=1}^{n} \exp \big (\langle \lambda|X_{u_1\cdots u_k}\rangle- \Lambda_{\psi(q_{k'},\alpha_{k'})}(\lambda) +\langle q_{k'}|X_{u_1\cdots u_k}-\alpha_{k'}\rangle -\wt P_{\alpha_{k'}}(q_{k'})\phi_{u_1\cdots u_k}\big )\Big )\\
&=\prod_{k=1}^n \E\Big (\sum_{i=1}^N \exp (\big (\langle \lambda|X_{i}\rangle +\langle q_{k'}|X_{i}-\alpha_{k'}\rangle -\wt P_{\alpha_{k'}}(q_{k'})\phi_{i}- \Lambda_{\psi(q_{k'},\alpha_{k'})}(\lambda)\big )\Big )=1
\end{align*}
by definition of $\Lambda_{\psi(q_{k'},\alpha_{k'})}$. Finally, the random variables  $Z^{(i)}_{n,j}(\varrho,\lambda,t)-1$ are $\mathcal{Q}_\varrho$-independent and centered. 

\smallskip

\noindent
(2) Thanks to (1), we can apply Lemma~\ref{lem-2.1} and get 
\begin{align*}
&\mathcal Q_\varrho \Big (\Big |k(n)^{-1}\sum_{j=1}^{k(n)}(Z^{(i)}_{n,j}(\varrho,\lambda,t)-1)\Big |>\varepsilon \Big )\le (\varepsilon k(n))^{-p}\E_{\mathcal Q_\varrho}\Big (\Big |\sum_{j=1}^{k(n)}(Z^{(i)}_{n,j}(\varrho,\lambda,t)-1)\Big |^p\Big )\\
&\le 2^{p-1}(\varepsilon k(n))^{-p} \sum_{j=1}^{k(n)}\E_{\mathcal Q_\varrho}( |Z^{(i)}_{n,j}(\varrho,\lambda,t)-1|^p)
 \le 2^{2p-1}(\varepsilon k(n))^{-p} \sum_{j=1}^{k(n)}\E_{\mathcal Q_\varrho}( Z^{(i)}_{n,j}(\varrho,\lambda,t)^p)
 \end{align*}
since $\E_{\mathcal Q_\varrho}( Z_{n,j}(\varrho,\lambda,t))=1$. Moreover, calculations similar to those used to establish part (1) of this lemma yield
$$
\E_{\mathcal Q_\varrho}( Z^{(i)}_{n,j}(\varrho,\lambda,t)^p)=\exp\Big( \sum_{k=i+(j-1)n+1}^{i+jn} \Lambda_{\psi(q_k,\alpha_k)}(p\lambda)-p\Lambda_{\psi(q_k,\alpha_k)}(\lambda)\Big ).
$$
Since $ \mathcal{R}(m,\ell,L,Q)\subset \mathcal{R}(K_m)$, and $K_m$ is a compact subset of $J_{X,\phi}$, using Taylor's expansion one gets $\Lambda_{\psi(q_k,\alpha_k)}(p\lambda)-p\Lambda_{\psi(q_k,\alpha_k)}(\lambda)=(1-p)\Lambda^*_{\psi(q_k,\alpha_k)}(\nabla\Lambda_{\psi(q_k,\alpha_k)}\lambda)+O((p-1)^2)$ uniformly in $\varrho\in  \mathcal{R}(m,\ell,L,Q)$, $\lambda\in\wt Q$ and $p-1$ small enough. Consequently, by  definition of $ \mathcal{R}(m,\ell,L,Q)$, for $k\ge L$ one obtains 
$$
\Lambda_{\psi(q_k,\alpha_k)}(p\lambda)-p\Lambda_{\psi(q_k,\alpha_k)}(\lambda)\le  (p-1)\min (2\ell, \kappa-1/2\ell)+  O((p-1)^2),
$$
hence for all $1\le j\le k(n)$
$$
\sum_{k=i+(j-1)n+1}^{i+jn} \Lambda_{\psi(q_k,\alpha_k)}(p\lambda)-p\Lambda_{\psi(q_k,\alpha_k)}(\lambda)\le  A+n\big ( (p-1)\min (2\ell, \kappa-1/2\ell)+  O((p-1)^2)\big )$$
uniformly in $\varrho\in  \mathcal{R}(m,\ell,L,Q)$, $\lambda\in\wt Q$ and $p-1$ small enough, where 
$$
A=\displaystyle  \sup_{p\in [1,2]}\sup_{\varrho \in  \mathcal{R}(m,\ell,L,Q)}\sup_{\lambda\in \wt Q}\sum_{k=1}^L |\Lambda_{\psi(q_k,\alpha_k)}(p\lambda)-p\Lambda_{\psi(q_k,\alpha_k)}(\lambda)|.
$$
The previous estimates yield 
\begin{align*}&\mathcal Q_\varrho \Big (\Big |k(n)^{-1}\sum_{j=1}^{k(n)}(Z^{(i)}_{n,j}(\varrho,\lambda,t)-1)\Big |>\varepsilon \Big )\\&\le  e^A \varepsilon^{-p} (k(n))^{1-p} \exp\big  (n\big ( (p-1)\min (2\ell, \kappa-1/2\ell)+  O((p-1)^2)\big )\big )
\end{align*}
in the same uniform manner as above. Take $p$ close enough to $1$ so that $O((p-1)^2)\le (p-1)/8\ell$. 

Now, for $n$ large enough, one has  $k(n)\ge \exp (n(\min (2\ell,\kappa-1/8\ell)))$, so that 
$$
\mathcal Q_\varrho \Big (\Big |k(n)^{-1}\sum_{j=1}^{k(n)}(Z^{(i)}_{n,j}(\varrho,\lambda,t)-1)\Big |>\varepsilon \Big )\le  e^A \varepsilon^{-p} \exp\big (n\big (1-p)\ell/4)
$$
uniformly in $\varrho\in  \mathcal{R}(m,\ell,L,Q)$, $\lambda\in\wt Q$ and $0\le i\le n-1$. 
\end{proof}

\section{Proof of Theorem~\ref{UNIFER2}}\label{proofLD2}

Let us start by stating the following proposition. 

\begin{proposition}\label{propkey} Assume \eqref{allmomentsfinite} and \eqref{pP}.
\begin{enumerate}

\item $\widetilde I_X\subset \widering {\mathcal{C}}_X$ and $I_X\setminus \widetilde I_X\subset \partial \mathcal{C}_X$. 

\item If $H\in \mathcal H_X\setminus \widetilde {\mathcal H}_X$ then $H\cap I_X=\emptyset$.

\item If $\mathcal H_X\setminus \widetilde {\mathcal H}_X\neq\emptyset$, then $(\partial I_X)_{\rm{crit}}=\widetilde I_X\setminus \widering I_X\neq\emptyset$. 

Assume now that $\mathbb{E}(N^p)<\infty$ for some $p>1$.  

\item If $H\in \widetilde{\mathcal H}_X$, then $H\cap I_X\neq\emptyset$.

Assume, in addition, \eqref{psiX} and \eqref{newphi}.

\item For all $F\in \widetilde {\mathcal F}^1_X\cup \overline {\mathcal F}^1_X$, for all $H\in\widetilde H_X$ such that $F\subset H$, one has $H\cap I_X=F\cap I_X=I^F_X$. Moreover, the conclusions of items (4) and (5) of Theorem~\ref{UNIFER2} hold true for $\alpha\in I^F_X$ and $\alpha\in \widetilde I^F_X$ respectively.

\item If $F,F'\in \widetilde {\mathcal F}^1_X\cup\overline {\mathcal F}^1_X$ with $F\neq F'$  then $\widetilde I^F_{X}\cap\widetilde I^{F'}_{X}=\emptyset$.

\end{enumerate}
\end{proposition}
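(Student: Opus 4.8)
The seven items of Proposition~\ref{propkey} split into a ``convex geometry'' part (items 1--4 and 7) and a ``probabilistic/multifractal'' part (items 5--6), and I would organize the proof accordingly, treating the purely geometric statements first since the later ones rest on them.

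For item (1), the inclusion $\widetilde I_X\subset\widering{\mathcal C}_X$ is obtained by unpacking the definition of $\mathcal C_X$: for $\alpha\in\widetilde I_X$ one has $\alpha=\nabla\widetilde P_X(q)$ for some $q$ with $\widetilde P_X^*(\nabla\widetilde P_X(q))\ge 0$, and the Gibbs-type weights $W_i=\exp(\langle q|X_i\rangle-\widetilde P_X(q))$ satisfy $\mathbb E(\sum_i W_i)=1$ and $\mathbb E(\sum_i W_iX_i)=\nabla\widetilde P_X(q)=\alpha$ by the elementary differentiation identity, so $\alpha\in\mathcal C_X$; to upgrade membership to the \emph{interior} one perturbs $q$ in each direction and uses the strict convexity of $\widetilde P_X$ from Proposition~\ref{detI}, which forces $\nabla\widetilde P_X$ to be an open map on $J_X$, so that $\widering I_X=\nabla\widetilde P_X(J_X)\subset\widering{\mathcal C}_X$, and one passes to the critical boundary points by noting they are limits of interior points that stay uniformly inside $\mathcal C_X$ (this last point needs a compactness argument; if $\mathcal C_X$ were not closed it could be delicate, but $\mathcal C_X$ is closed by definition). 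Then $I_X\setminus\widetilde I_X\subset\partial\mathcal C_X$ follows because a point of $\widering{\mathcal C}_X\cap I_X$ that is not in $\widetilde I_X$ would violate the characterization $\widering I_X=\nabla\widetilde P_X(J_X)$ together with $I_X=\overline{\nabla\widetilde P_X(J_X)}$: any $\alpha\in I_X\setminus\widetilde I_X$ is a boundary point of $I_X$ at which $\widetilde P_X^*(\alpha)=0$ is \emph{not} attained as $\nabla\widetilde P_X(q)$, i.e. $\widetilde P_X$ has a supporting hyperplane of slope $\alpha$ only ``at infinity'', and one shows the corresponding half-space is a supporting half-space of $\mathcal C_X$ by taking the relevant limit of Gibbs measures and checking the mass cannot spread off the hyperplane.

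Items (2)--(4) are then the natural dictionary between supporting hyperplanes $H$ of $\mathcal C_X$ and the quantity $\mathbb E(N^H)$. For (2), if $H\notin\widetilde{\mathcal H}_X$, i.e. $\mathbb E(N^H)<1$, then along the supercritical Galton--Watson tree only subcritically many particles lie in $H$, so the branching random walk cannot accumulate its mean on $H$; quantitatively, for $\alpha\in H$ one estimates $\mathbb E(\sum_{u\in\TT_n}\exp(\langle q|S_nX(u)-n\alpha\rangle))$ with $q$ normal to $H$ pointing outward and lets $\|q\|\to\infty$, getting that $\widetilde P_X^*(\alpha)<0$, hence $\alpha\notin I_X$ (this is essentially the computation already used in the proof of Proposition~\ref{rn}). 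For (4), conversely, if $\mathbb E(N^H)\ge 1$ one restricts to the subtree $\TT^H$ (or $\TT^F$ for a maximal $F\subset H$ in $\widehat{\mathcal F}_X$) and applies the lower-bound machinery of Theorem~\ref{thm-1.1} to the $\vec F$-valued branching random walk $X_F-\alpha_F$, which has $\mathbb E(N^F)\ge 1$ by construction and satisfies \eqref{pP} relative to $\vec F$, so $I_{X_F-\alpha_F}\neq\emptyset$ and $\alpha_F+I_{X_F-\alpha_F}\subset I_X\cap H$. Item (3) is then bookkeeping: $\mathcal H_X\setminus\widetilde{\mathcal H}_X\neq\emptyset$ forces, via (1)--(2), that some boundary piece of $\mathcal C_X$ is disjoint from $I_X$, so $I_X\subsetneq\mathcal C_X$, and the remaining boundary points of $I_X$ where $\widetilde P_X^*=0$ \emph{is} attained are exactly $(\partial I_X)_{\mathrm{crit}}=\widetilde I_X\setminus\widering I_X$, which is nonempty by a continuity/compactness argument on $\widetilde P_X^*$.

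The genuinely hard step is item (5). Here I would fix $F\in\widetilde{\mathcal F}^1_X\cup\overline{\mathcal F}^1_X$ and $H\in\widetilde{\mathcal H}_X$ with $F\subset H$, and prove the three nested equalities $H\cap I_X=F\cap I_X=I^F_X$. The inclusion $I^F_X\subset F\cap I_X\subset H\cap I_X$ is from (4) applied inside $F$ and the obvious $F\subset H$. The reverse $H\cap I_X\subset I^F_X$ is the crux: a level $\alpha\in H\cap I_X$ must be realizable by the branching random walk restricted to the subtree of particles staying near $H$, and one must show that because $F$ is \emph{minimal} in $\mathcal F_X$ with $\mathbb E(N^F)=\mathbb E(N^H)$ (that is the defining property of $\widehat{\mathcal F}_X$ and of $\widetilde{\mathcal F}^1_X\cup\overline{\mathcal F}^1_X$), essentially all of that mass is forced into $F$, so the restriction actually behaves like the $\vec F$-valued walk $X_F-\alpha_F$; this is where I expect to need the strengthened hypotheses \eqref{psiX}, \eqref{newphi}, and $\mathbb E(N^p)<\infty$, in order to control simultaneously the many subtrees and to guarantee \eqref{phibound} holds for the induced walk (as flagged in the introduction right before \eqref{newphi}). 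Granting $H\cap I_X=I^F_X$, the conclusions of Theorem~\ref{UNIFER2}(4)--(5) for $\alpha\in I^F_X$, resp. $\alpha\in\widetilde I^F_X$, transfer verbatim by applying Theorem~\ref{thm-1.1} and Theorem~\ref{UNIFER} to $(N^F,X_F-\alpha_F,\phi_F)$ on $\partial\widetilde\TT^F$ and pulling back through the isometry $\boldsymbol b_F$. Finally, item (7): if $F\neq F'$ are both in $\widetilde{\mathcal F}^1_X\cup\overline{\mathcal F}^1_X$, then $\widetilde I^F_X\subset\widering{\mathcal C}_{X,F}\subset F$ and likewise $\widetilde I^{F'}_X\subset F'$ lie in the \emph{relative} interiors of distinct faces/affine pieces, while by item (2) (applied inside the ambient hyperplanes) a point common to both would have to be an exposed point of $\mathcal C_X$ in $F\cap F'$, contradicting that $\widetilde I^F_X$ avoids $\partial^{\mathrm{rel}} I^F_X$; more simply, $F\cap F'$ is a proper affine subspace of $F$, and $\widetilde I^F_X$ being contained in the relative interior of $I^F_X$ (by the $F$-analogue of item (1)) cannot meet it. The whole proposition should then be assembled in the order (1)$\Rightarrow$(2)$\Rightarrow$(4)$\Rightarrow$(3), then (5) using (1),(2),(4) plus Theorems~\ref{thm-1.1}--\ref{UNIFER}, then (7).
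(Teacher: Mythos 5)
Your item (2) argument is a genuine simplification. For $\alpha\in H$ the map $t\mapsto\widetilde P_X(te)-tc=\log\mathbb E\big(\sum_{i=1}^N e^{t(\langle e|X_i\rangle-c)}\big)$ is non-increasing (Lemma~\ref{lemreduc} gives $\langle e|X_i\rangle\le c$ a.s.\ for $i\le N$) and converges to $\log\mathbb E(N^H)<0$ by dominated convergence, so $\widetilde P_X^*(\alpha)\le\log\mathbb E(N^H)<0$ directly. The paper instead studies the two-parameter family $G_X(q,\beta)$ on $J_X\times[0,1]$ and derives a contradiction from its convexity in $\beta$; your route avoids that machinery. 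Items (1) and (3) of your sketch follow the paper's ideas.

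Item (4), however, has a genuine gap. Restricting to $\TT^F$ and applying the lower-bound machinery to $X_F-\alpha_F$ requires $\mathbb E(N^F)>1$ so that $\widetilde\TT^F$ survives with positive probability. But $\widetilde{\mathcal H}_X$ only demands $\mathbb E(N^H)\ge 1$; when $\mathbb E(N^F)=\mathbb E(N^H)=1$ the restricted Galton-Watson tree is critical and dies out almost surely, so there is no $\partial\widetilde\TT^F$ on which to realize $\alpha_H$. The paper resolves this by building an inhomogeneous Mandelbrot measure on the \emph{full} tree $\partial\TT$ from the weights $W_{\varepsilon_n,i}=\gamma_{\varepsilon_n}^{-1}\big(\mathbf 1_H(X_i)+\varepsilon_n\mathbf 1_{H^c}(X_i)\big)$ with $\varepsilon_n\downarrow 0$; the hypothesis $\mathbb E(N^p)<\infty$, which your sketch never invokes here, is precisely what controls the $L^p$ moments of $\sum_iW_{\varepsilon_n,i}$. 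Item (6) — which you label (7); the proposition has six items and you never address (6) by its actual number — ends on a false step: that $F\cap F'$ is a proper affine subspace of $F$ does not keep the relative interior $\widering{\mathcal C}_{X,F}$ from meeting it (a chord through the centre of an open disk is a proper affine subspace that meets the open disk). The paper instead deduces from $\widetilde I^F_X\cap\widetilde I^{F'}_X\neq\emptyset$ that $\widering{\mathcal C}_{X,F}\cap\mathcal C_{X,F'}\neq\emptyset$, then produces a nearby $\alpha''\in F$ with $L_{e,c}(\alpha'')>0$ for a hyperplane $H\supset F'$, $H\not\supset F$, contradicting $\mathcal C_X\subset L_{e,c}^{-1}(\R_-)$, with a separate case analysis when both $F,F'\in\overline{\mathcal F}^1_X$.

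For item (5), where the technical weight of the proposition lies, you correctly identify the crux and the relevant assumptions, but what you give is a description of the goal. The inclusion $H\cap I_X\subset I^F_X$ rests on Lemma~\ref{AAA}: conditioning the Gibbs weights $W_{n,i}$ to $\{X_i\in F\}$ changes neither the limiting mean nor the limiting entropy/Lyapunov ratio. Its proof is a uniform-integrability estimate using Orlicz duality (the Young-function pair built from $\psi$ in \eqref{psiX}) together with the uniform bound $\mathbb E\big(\sum_iW_{n,i}\log^+W_{n,i}\big)\le e^{-1}\mathbb E(N)$. The case $F\in\overline{\mathcal F}^1_X$ with $\mathbb E(N^F)=1$ further needs Lemma~\ref{lastlem}, a double-limit with an interpolation on $\theta\in[0,1]$ for $P_{n_k,\varepsilon_k}(\theta)$. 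None of these estimates are present in your sketch; ``essentially all of the mass is forced into $F$'' names the lemma rather than proving it.
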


\begin{proof}[Proof of Theorem~\ref{UNIFER2}] 

The properties $\widetilde{\mathcal H}_X=\{H\in \mathcal H_X:\, H\cap I_X\neq\emptyset\}$, $\widetilde I_X\subset  \widering {\mathcal{C}}_X$,  and $I_X\setminus \widetilde I_X= \bigcup_{H\in\widetilde{\mathcal H}_X} H\cap I_X$, that is point (1) and the first part of point (2) of the theorem follow directly from Proposition~\ref{propkey}(1)-(4). This is also the case of the first part of point (3), that is the property that $(\partial I_X)_{\mathrm{crit}}=\emptyset$ if and only if $\widetilde{\mathcal H}_X={\mathcal H}_X$.

To see why the second part of point (3) holds, recall that the set of exposed points is dense in the set of extremal points of $\mathcal C_X$ (see \cite[Theorem 18.6]{Roc}).  If $\widetilde{\mathcal H}_X={\mathcal H}_X$, the previous properties imply that any supporting  hyperplane $H$ of $\mathcal C_X$ does intersect $\partial I_X$. In particular, for each exposed point $P$ of $\mathcal C_X$, choosing  $H\in \mathcal H_X$ which intersects $C_X$ only at  $P$, we get $P\in I_X$, and  the condition $\mathbb{E}(N^H)\ge 1$ is equivalent to $\mathbb{E}(N^{\{P\}})\ge 1$. This implies that the set of exposed points of  $\mathcal C_X$ is finite (since $\mathbb{E}(N)<\infty$) and coincides with its set of extremal points. It follows that the set $\mathcal C_X$ is a polytope and equals $I_X$. Again due to the properties previously established, this is equivalent to the fact that every exposed point $P$ of $\mathcal C_X$ satisfies $\mathbb{E}(N^{\{P\}})\ge 1$.

Suppose that $I_X\setminus \widetilde I_X\neq\emptyset$. It follows from the first part of point (5) of Proposition~\ref{propkey} that $I_X\setminus \widetilde I_X=\bigcup_{F\in \widetilde {\mathcal F}^1_X\cup \overline {\mathcal F}^1_X}I^F_X$. Then, the second part of this fifth point combined with point (6) of the same proposition and a recursion on the dimension of $I_X$ yields both that $I_X\setminus \widetilde I_X=\bigsqcup_{F\in \widetilde {\mathcal F}^d_X\cup \overline {\mathcal F}^d_X}\widetilde I^F_X$ and points (4) and (5) of Theorem~\ref{UNIFER2}.
 \end{proof} 

If $H\in\mathcal H_X$, there is a unique couple $(e,c)\in \mathbb S^{d-1}\times \mathbb R$ such that 
\begin{equation}\label{H1}
H=L_{e,c}^{-1}(\{0\})\text{ and } \mathcal C_X\subset L_{e,c}^{-1}(\R_-), \text{ where } L_{e,c}:\beta\in\R^d\mapsto  \langle e| \beta \rangle-c.
\end{equation}
We set 
$$
H_+=L_{e,c}^{-1}(\R_+^*).
$$
The following preliminary observation will be useful.
\begin{lemma}\label{lemreduc}
Let $H\in \mathcal H_X$. One has $\mathbb{E}\Big (\sum_{i=1}^N\mathbf{1}_{H_+}(X_i)\Big )=0$.
\end{lemma}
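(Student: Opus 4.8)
The statement is that $\mathbb{E}\big(\sum_{i=1}^N \mathbf{1}_{H_+}(X_i)\big)=0$, i.e.\ almost surely none of the $X_i$ lands strictly on the ``wrong'' side of a supporting hyperplane $H$ of $\mathcal{C}_X$. The plan is to argue by contradiction: assume $\mathbb{E}\big(\sum_{i=1}^N \mathbf{1}_{H_+}(X_i)\big)>0$ and construct an admissible weight sequence $(W_i)_{i\ge 1}$ producing a point of $\mathcal{C}_X$ that violates $\mathcal{C}_X\subset L_{e,c}^{-1}(\R_-)$, contradicting \eqref{H1}.

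First I would record the basic inequality: since $\mathcal{C}_X\subset L_{e,c}^{-1}(\R_-)$ and the elements of $\mathcal{C}_X$ are exactly the vectors $\mathbb{E}(\sum_{i=1}^N W_i X_i)$ with $(W_i)_{i\ge 1}$ nonnegative, jointly defined with $(N,(X_i)_{i\ge 1})$, and $\mathbb{E}(\sum_{i=1}^N W_i)=1$, every such weight gives $\langle e\mid \mathbb{E}(\sum_{i=1}^N W_i X_i)\rangle\le c$, equivalently $\mathbb{E}\big(\sum_{i=1}^N W_i (\langle e\mid X_i\rangle - c)\big)\le 0$, i.e.\ $\mathbb{E}\big(\sum_{i=1}^N W_i\, L_{e,c}(X_i)\big)\le 0$. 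Now I would test this against a concentrated family of weights. Set $p=\mathbb{E}\big(\sum_{i=1}^N \mathbf{1}_{H_+}(X_i)\big)$ and suppose $p>0$. For $M>0$ let $A_M=\{x: 0<L_{e,c}(x)\le M\}$; since $H_+=\{L_{e,c}>0\}=\bigcup_{M}A_M$, by monotone convergence there is $M$ with $p_M:=\mathbb{E}\big(\sum_{i=1}^N \mathbf{1}_{A_M}(X_i)\big)>0$. Define $W_i = p_M^{-1}\,\mathbf{1}_{A_M}(X_i)$; this is a nonnegative random element of $\R_+^{\mathbb N}$ jointly defined with $(N,(X_i)_i)$, and $\mathbb{E}(\sum_{i=1}^N W_i)=1$, so the resulting vector $\alpha=\mathbb{E}(\sum_{i=1}^N W_i X_i)$ belongs to $\mathcal{C}_X$. (Strictly, $\alpha\in\mathcal{C}_X$ follows because $\mathcal{C}_X$ is defined as the closure of the set of such vectors; here the vector itself is of the required form, and the integrability needed to make $\alpha$ well defined is guaranteed by $\|X_i\|\le (M+|c|)/\,\mathrm{dist}$-type bounds on $A_M$ together with $\mathbb{E}(N)<\infty$, or more simply by restricting $A_M$ to a bounded set.) For this choice, $\mathbb{E}\big(\sum_{i=1}^N W_i\, L_{e,c}(X_i)\big) = p_M^{-1}\,\mathbb{E}\big(\sum_{i=1}^N \mathbf{1}_{A_M}(X_i)\, L_{e,c}(X_i)\big) > 0$, since the integrand is strictly positive on the event of positive expectation $\{\sum_i \mathbf{1}_{A_M}(X_i)>0\}$. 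This contradicts the inequality $\mathbb{E}\big(\sum_{i=1}^N W_i\, L_{e,c}(X_i)\big)\le 0$ derived above, so $p=0$.

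The one point that needs a little care — and the only genuine obstacle — is the membership $\alpha\in\mathcal{C}_X$ and the well-definedness of $\mathbb{E}(\sum_{i=1}^N W_i X_i)$: the definition of $\mathcal{C}_X$ quantifies over weight sequences without an a priori integrability hypothesis on $\sum W_i X_i$, so one must ensure the expectation converges absolutely. This is handled by choosing $A_M$ to be, in addition, contained in a Euclidean ball $B(0,R)$; then $\|\sum_{i=1}^N W_i X_i\| \le R\, p_M^{-1} \sum_{i=1}^N \mathbf{1}_{A_M}(X_i)$, which is integrable because $\mathbb{E}(N)<\infty$. Shrinking $A_M$ this way does not destroy $p_M>0$ for $R$ large, by monotone convergence again. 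With this adjustment the argument is complete; alternatively, since $\mathcal{C}_X$ is closed, one may approximate $W_i$ by $W_i\wedge n$ and pass to the limit, but the bounded-support route is cleaner. Everything else is the elementary observation that a supporting hyperplane gives a sign constraint which a point of $H_+$ of positive mass would violate.
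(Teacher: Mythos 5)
Your argument is essentially the paper's: assume $\theta=\mathbb{E}\big(\sum_{i=1}^N\mathbf{1}_{H_+}(X_i)\big)>0$, take normalized indicator weights supported where $X_i\in H_+$, and observe that the resulting element of $\mathcal{C}_X$ has $\langle e|\cdot\rangle>c$, contradicting $\mathcal{C}_X\subset L_{e,c}^{-1}(\R_-)$. The paper uses $W_i=\theta^{-1}\mathbf{1}_{H_+}(X_i)\mathbf{1}_{[1,N]}(i)$ directly, whereas you truncate to a bounded slab $A_M$ first; this is a sound precaution, though under the standing assumption \eqref{psiX} (which implies \eqref{allmomentsfinite}, hence $\mathbb{E}\big(\sum_{i=1}^N\|X_i\|\big)<\infty$) the weights $W_i\le\theta^{-1}$ already make $\mathbb{E}\big(\sum_{i=1}^NW_iX_i\big)$ absolutely convergent, so the truncation is not needed. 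Correct, same route.
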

\begin{proof}
Suppose that $\theta=\mathbb{E}\Big (\sum_{i=1}^N\mathbf{1}_{H_+}(X_i)\Big )>0$ and set $W_i=\theta^{-1}\mathbf{1}_{H_+}(X_i)\mathbf{1}_{[1,N]}(i)$ for all $i\ge 1$. By construction of $(W_i)_{i\in\mathbb{N}}$ and by definition of $\mathcal{C}_X$ and $\mathcal H_X$, $\beta=\mathbb{E}\Big (\sum_{i=1}^NW_iX_i\Big )\in  \mathcal{C}_X\subset \R^d\setminus H_+$, so $\langle e|\beta\rangle\le c$. However, $\langle e|\beta\rangle=\theta^{-1} \mathbb{E}\Big (\sum_{i=1}^N\mathbf{1}_{H_+}(X_i)\langle e|X_i\rangle \Big )>c$. This contradiction implies that $\theta=0$. 
\end{proof}

We also define, for $\varepsilon>0$, 
\begin{equation}\label{H2}
U_\varepsilon = L_{e,c}^{-1}((-\infty,-\varepsilon))\text{ and }V_\varepsilon=\R^d\setminus U_\varepsilon.
\end{equation}

\begin{proof}[Proof of Proposition~\ref{propkey}] (1) The fact that $\widetilde I_X\subset \widering {\mathcal{C}}_X$ follows from the  inclusion  $\nabla\widetilde P_X(\R^d)\subset \mathcal{C}_X$ and the fact that the mapping $q\in\R^d \mapsto \nabla\widetilde P_X(q)$ is open.

Suppose, by contradiction, that there exists $\alpha\in (I_X\setminus \widetilde I_X) \cap \widering{\mathcal C}_X$.  Given such an $\alpha$, there exists a sequence $(q_n)_{n\in\N}$ of elements of $J_X$ such that $\alpha=\lim_{n\to+\infty}(\alpha_n=\nabla \widetilde P_X(q_n))$. Moroever, $\lim_{n\to\infty} \|q_n\|=+\infty$, for otherwise it is easily seen that $\alpha\in \widetilde I_X$.  

Without loss of generality, let us assume that $e_n=q_n/\|q_n\|$ converges to $ e\in\mathbb{S}^{d-1}$. 

Recall that 
\[
\alpha_n=\nabla \widetilde P_X(q_n)=\mathbb{E}\Big (\sum_{i=1}^N\exp (\|q_n\|\langle e_n|X_i\rangle-\widetilde P_X(q_n)) X_i\Big ).
\]
Since we assumed that $\alpha$ is an interior point of $\mathcal C_X$, there exists $c>\langle e|\alpha\rangle$ such that $L_{e,c}^{-1}(\{0\})\cap \widering {\mathcal C}_X\neq\emptyset$. This implies that  for all $\varepsilon>0$, one has $\mathbb{E}\Big (\sum_{i=1}^N\mathbf{1}_{V_{\varepsilon}}(X_i)\Big )>0$; indeed, otherwise there would exist $\varepsilon_1>0$ such that $\mathbb{P}\big (X_i\in U_{\varepsilon_1}, \,\forall \, 1\le i\le N\big )=1$, hence for each non negative random vector $(W_i)_{i\in\mathbb N}$  jointly defined with $(N,(X_i))_{i\in\mathbb N}$ and such that  $\mathbb E(\sum_{i=1}^N W_i)=1$, one would have   $\big\langle e|\mathbb E(\sum_{i=1}^N W_iX_i)\big \rangle \le c-\varepsilon_1$, contradicting  $L_{e,c}^{-1}(\{0\})\cap {\mathcal C}_X\neq\emptyset$. 

Consequently, since $(e_n)_{n\in\mathbb{N}}$ converges to $e$, for all $\varepsilon>0$ there exist $n_\varepsilon\in\mathbb N$ and $A_\varepsilon>0$ such that if $n\ge n_\varepsilon$, setting $U_{n,\varepsilon} = L_{e_n,c}^{-1}((-\infty,-\varepsilon))\text{ and }V_{n,\varepsilon} =\R^d\setminus U_{n,\varepsilon}$:
$$
\mathbb{E}\Big (\sum_{i=1}^N\mathbf{1}_{V_{n,\varepsilon}}(X_i)\Big )\ge \mathbb{E}\Big (\sum_{i=1}^N\mathbf{1}_{V_{\varepsilon/2}}(X_i)\mathbf{1}_{[0,A_\varepsilon]}(\|X_i\|)\Big )>0.
$$ 

Fix $\varepsilon>0$ such that $\alpha\in U_{4\varepsilon}$. Noting that $\mathbb{E}\big (\sum_{i=1}^N\exp (\langle q_n|X_i\rangle-\widetilde P_X(q_n))\big )=1$ for all $n\in\mathbb N$, if $n\ge n_\varepsilon$ one has
\begin{align*}
\Big \|\mathbb{E}\big (\sum_{i=1}^N\mathbf{1}_{U_{n,3\varepsilon}}(X_i) X_ie^{\langle q_n|X_i\rangle-\widetilde P_X(q_n)}\big )\Big \| &\le \frac{\mathbb{E}\big (\sum_{i=1}^N\mathbf{1}_{U_{n,3\varepsilon}}(X_i) \|X_i\|\exp (\|q_n\|\langle e_n|X_i\rangle)\big )}{ \mathbb{E}\big (\sum_{i=1}^N\mathbf{1}_{V_{n,\varepsilon}}(X_i)\exp (\|q_n\|\langle e_n|X_i\rangle)\big )}\\
&\le  \frac{\mathbb{E}\big (\sum_{i=1}^N\mathbf{1}_{U_{n,3\varepsilon}}(X_i) \|X_i\|\exp (\|q_n\|(c-3\varepsilon)\big )}{ \mathbb{E}\big (\sum_{i=1}^N\mathbf{1}_{V_{n,\varepsilon}}(X_i)\exp (\|q_n\|(c-\varepsilon))\big )}\\
&\le  \frac{\mathbb{E}\big (\sum_{i=1}^N\|X_i\|\big)}{\mathbb{E}\Big (\sum_{i=1}^N\mathbf{1}_{V_{\varepsilon/2}}(X_i)\mathbf{1}_{[0,A_\varepsilon]}(\|X_i\|)\Big )}e^{-2\|q_n\|\varepsilon}.
\end{align*}
It follows that $
\lim_{n\to\infty} \big \|\mathbb{E}\big (\sum_{i=1}^N\mathbf{1}_{U_{n,3\varepsilon}}(X_i) X_ie^{\langle q_n|X_i\rangle-\widetilde P_X(q_n)}\big )\big \|=0.$ 
This yields that 
\begin{align*}
\langle e|\alpha\rangle&=\lim_{n\to\infty} \langle e_n|\alpha_n\rangle= \lim_{n\to\infty} \mathbb{E}\Big (\sum_{i=1}^N\mathbf{1}_{V_{n,3\varepsilon}}(X_i) \langle e_n|X_i\rangle e^{\langle q_n|X_i\rangle-\widetilde P_X(q_n)}\Big ) \ge c-3\varepsilon,
\end{align*}
since the previous estimates imply that  $\lim_{n\to\infty} \mathbb{E}\big (\sum_{i=1}^N\mathbf{1}_{V_{n,\varepsilon'}}(X_i) e^{\langle q_n|X_i\rangle-\widetilde P_X(q_n)}\big )=1$ for all $\varepsilon'>0$. However, $\langle e|\alpha\rangle\le c-4\varepsilon$ since $\alpha\in U_{4\varepsilon}$, so we meet  a contradiction.

\noindent
(2) Fix $H\in \mathcal H_X\setminus \widetilde {\mathcal H}_X$. Recall that $\mathbb{E}(N^H)<1$. Suppose that $H\cap I_X\neq\emptyset$. Let $\alpha\in H\cap I_X$ and $(q_n)_{n\in\mathbb N}\in J_X^{\mathbb N}$ such that $\lim_{n\to\infty} \nabla\widetilde P_X(q_n)=\alpha$. 

For $\varepsilon>0$,  set
$$
\begin{cases}
G_{X,\varepsilon}: (q,\beta)\in J_X\times[0,1]\mapsto \mathbb{E}\big(\sum_{i=1}^N \mathbf{1}_{U_\varepsilon}(X_i) e^{\beta(\langle q|X_i\rangle-\widetilde P_X(q))}\big )\\
\widetilde G_{X,\varepsilon}: (q,\beta)\in J_X\times[0,1]\mapsto \mathbb{E}\big (\sum_{i=1}^N \mathbf{1}_{V_\varepsilon}(X_i)e^{\beta(\langle q|X_i\rangle-\widetilde P_X(q))}\big).
\end{cases}$$

Note that given $q\in J_X$, the mapping $G_X(q,\cdot)=G_{X,\varepsilon}(q,\cdot)+\widetilde G_{X,\varepsilon}(q,\cdot)$, which neither depends on $\varepsilon$ nor on $H$, is convex, takes values $\mathbb{E}(N)>1$ and $1$ at  $\beta=0$ and $\beta=1$ respectively, and has $-\widetilde P_X^*(\nabla \widetilde P_X(q))<0$ as left derivative at $\beta=1$.  Thus, $G_X(q,\beta)>1$ for all $\beta\in (0,1)$. Below we prove that the existence of $\alpha$ contradicts this fact. 

Fix $\beta\in(0,1)$ and $\rho>0$ such that $\beta+(1-\beta)\mathbb{E}(N^H)+\rho<1$.  For any $\eta\in (0,1]$ and $q\in J_X$ one has, setting $W_{q,i}=\exp (\langle q|X_i\rangle-\widetilde P_X(q))$:
\begin{align*}
&G_{X,\varepsilon}(q,\beta)\le \eta^\beta \mathbb{E}\Big (\sum_{i=1}^N \mathbf{1}_{U_\varepsilon}(X_i) \mathbf{1}_{\{W_{q,i}\le \eta\}}\Big)+ \mathbb{E}\Big(\sum_{i=1}^N \mathbf{1}_{U_\varepsilon}(X_i)\mathbf{1}_{\{W_{q,i}> \eta\}}W_{q,i}^{\beta-1} W_{q,i}\Big ) \\
&\le \eta^\beta \mathbb{E}(N)+ \eta^{\beta-1} \mathbb{E}\Big(\sum_{i=1}^N \mathbf{1}_{U_\varepsilon}(X_i)\mathbf{1}_{\{W_{q,i}> \eta\}} W_{q,i}\Big) \le \eta^\beta \mathbb{E}(N)+\eta^{\beta-1} G_{X,\varepsilon}(q,1).
\end{align*}
Fix $\eta>0$ such that $ \eta^\beta\mathbb{E}(N)\le \rho/4$ and then $n=n(\rho,\varepsilon)\in \mathbb N$ such that $\eta^{\beta-1} G_{X,\varepsilon}(q_n,1)\le \rho/4$, and consequently  $G_{X,\varepsilon}(q_n,\beta)\le \rho/2$. This is possible since $\lim_{n\to\infty} G_{X,\varepsilon}(q_n,1)=0$. Indeed, 
$
\Big\langle e|\mathbb{E}\Big (\sum_{i=1}^N \mathbf{1}_{U_\varepsilon}(X_i) W_{q_n,i}^\beta X_i\Big)\Big \rangle \le (c-\varepsilon) G_{X,\varepsilon}(q_n,1),
$
so that due to Lemma~\ref{lemreduc},  $\langle e|\nabla\widetilde P_X(q_n)\rangle\le c\,  \widetilde G_{X,\varepsilon}(q_n,1)+(c-\varepsilon) G_{X,\varepsilon}(q_n,1)\le c -\varepsilon G_{X,\varepsilon}(q_n,1)$, and consequently  $c=\langle e|\alpha\rangle \le c-\varepsilon\limsup_{n\to\infty} G_{X,\varepsilon}(q_n,1)$. 

Now, note that  $\widetilde G_{X,\varepsilon}(q_n,0)$ tends to $\mathbb{E}(N^H)<1$ as $\varepsilon\to 0$ (due to Lemma~\ref{lemreduc} again), and $\widetilde G_{X,\varepsilon}(q_n,1)=G_X(q_n,1)-G_{X,\varepsilon}(q_n,1)\le 1$. It follows from  the convexity of  $\widetilde G_{X,\varepsilon}(q_n,\cdot)$ that if $\varepsilon$ is chosen small enough one has  $\widetilde G_{X,\varepsilon}(q_n,\beta)\le \beta+(1-\beta)\mathbb{E}(N_H) +\rho/2$. Finally, 
$$
G_X(q_n,\beta)=G_{X,\varepsilon}(q_n,\beta)+\widetilde G_{X,\varepsilon}(q_n,\beta)\le \beta+(1-\beta)\mathbb{E}(N^H)+\rho<1, 
$$
which is the expected contradiction.

\noindent
(3) Suppose that $ \mathcal H_X\setminus \widetilde {\mathcal H}_X\neq\emptyset$ and fix  $H\in \mathcal H_X\setminus \widetilde {\mathcal H}_X$. Fix also $\beta\in H\cap \mathcal C_X$ and $\alpha\in \widering I_X$. Since $\beta\not\in I_X$, there exists $\alpha'\in [\alpha,\beta)$ such that  $\alpha'\in \partial I_X$ and $\alpha'\not\in \partial \mathcal C_X$. By point (1) of this proposition, this implies that $\alpha'\in\widetilde I_X\setminus \widering I_X$. 

\noindent
(4) For $\varepsilon\in (0,1)$, define $\gamma_{\varepsilon}= \mathbb{E}(N^H)+\varepsilon \mathbb{E}(N^{H^c})$.  Then, for $1\le i\le N$ set $W_{\varepsilon,i}=\frac{1}{\gamma_{\varepsilon}}\mathbf{1}_H(X_i)+\frac{\varepsilon}{\gamma_{\varepsilon}} \mathbf{1}_{H^c}(X_i)$ and set $W_{\varepsilon,i}=0$ for $i>N$. Note that since $\mathbb{E}(N^H)\ge 1$, and \eqref{pP} implies that $\mathbb{E}(N^{H^c})>0$, one has $\sup_{i\ge 1}W_{\varepsilon,i}< 1$. Consequently $-\mathbb{E}\big (\sum_{i=1}^N W_{\varepsilon,i}\log W_{\varepsilon,i}\big )$ is positive; also, since $\mathbb{E}(N^p)<\infty$ for somme $p>1$, one has  $\mathbb{E}\big (\big (\sum_{i=1}^N W_{\varepsilon,i}\big )^p\big )<\infty$. In addition, $\mathbb{E}\big (\sum_{i=1}^N W_{\varepsilon,i} X_i\big )$ converges to $\alpha_H=\mathbb{E}\big (\sum_{i=1}^N \mathbf{1}_H(X_i)X_i\big )/\mathbb{E}(N^H)$ as $\varepsilon$ tends to~0.

For each $u\in \bigcup_{n\ge 0}\mathbb N^{\mathbb N}$, set 
$W_{\varepsilon,i}(u)=\frac{1}{\gamma_{\varepsilon}}\mathbf{1}_H(X_{ui})+\frac{\varepsilon}{\gamma_{\varepsilon}} \mathbf{1}_{H^c}(X_{ui})$ for $1\le i\le N_u$ and $W_{\varepsilon,i}(u)=0$ for $i>N_u$. Mimicking what was done to construct the measures $\mu_\varrho$ ($\varrho\in\mathcal R$) and determine the behavior of $(S_nX)_{n\in\mathbb{N}}$  almost everywhere with respect to such a  measure (Proposition~\ref{pp3'}), we can find a non increasing positive sequence $(\varepsilon_n)_{n\ge 0}$ converging to 0 such that the inhomogeneous Mandelbrot martingale associated with the random vectors $(W_{\varepsilon_{|u|},i}(u))_{i\in\mathbb N}$, $u\in \bigcup_{n\ge 0}\mathbb N^{\mathbb N}$, yields almost surely a positive measure $\mu^H$ supported on $E(X,\alpha_H)$.

\noindent
(5)  Let $F\in \widetilde {\mathcal F}^1_X\cup \overline {\mathcal F}^1_X$. Note that the fact that $F\cap I_X\neq\emptyset$ can be obtained directly in the same way as  $H\cap I_X\neq\emptyset$ when $H\in\widetilde H_X$.

Fix $H\in \widetilde {\mathcal H}_X$ such that $F\subset H$.

We distinguish the cases $F\in \widetilde {\mathcal F}^1_X$ and $F\in \overline{\mathcal F}^1_X$.  

{\bf Case 1:}  $F\in \widetilde {\mathcal F}^1_X$. Remember that in this case $\mathbb{E}(N^F)>1$. 

We first prove that $ H\cap I_X\subset F\cap I_X\subset  I^F_X$. 

Let $\alpha\in H\cap I_X$.  According to Lemma~\ref{approxi2}, we can take a sequence $(q_n,\alpha_n)_{n\in\mathbb{N}}$ of elements of $D$ such that $\displaystyle\lim_{n\to\infty}  \beta(q_n,\alpha_n)=\alpha$ and $\displaystyle\lim_{n\to\infty} \wt P_{X,\phi,\alpha_n}(q_n)-\langle q_n|\nabla \wt P_{X,\phi,\alpha_n}(q_n)\rangle=\wt P^*_{X,\phi,\alpha}(0)$. Set $W_{n,i}=\mathbf{1}_{[1,N]}(i)e^{\langle{q_n|X_i-\alpha_n\rangle }-\widetilde P_{X,\phi,\alpha_n}(q_n)}$ for  $i\ge 1$. With the notations \eqref{entropie}, \eqref{lyap} and \eqref{alphaXphi},  one has $h(q_n,\alpha_n)= -\mathbb E\big (\sum_{i=1}^N W_{n,i}\log W_{n,i}\big )$, $\lambda(q_n,\alpha_n)=\mathbb E\big (\sum_{i=1}^N W_{n,i}\phi_i\big )$, $\wt P_{X,\phi,\alpha_n}(q_n)-\langle q_n|\nabla \wt P_{X,\phi,\alpha_n}(q_n)\rangle=\frac{h(q_n,\alpha_n)}{\lambda(q_n,\alpha_n)}$, and $ \beta(q_n,\alpha_n)=\mathbb E\big (\sum_{i=1}^N W_{n,i}X_i\big )$.

For any Borel subset $V$ of $\R^d$ and any  $n\in\mathbb N$, set 
$$
p_{V,n}=\mathbb{E}\Big (\sum_{i=1}^N \mathbf{1}_V(X_i) W_{n,i}\Big).   
$$
Then, define the sequence of vectors $(\widetilde W_{n,i})_{i\ge 1}$, $n\in\mathbb{N}$, by $\widetilde W_{n,i}=\mathbf{1}_{[1,N]}(i)p_{F,n}^{-1}\mathbf{1}_F(X_i) W_{n,i}$.

\begin{lemma}\label{AAA}
\begin{enumerate} 
\item $\lim_{n\to\infty} \mathbb E\big (\sum_{i=1}^N \widetilde W_{n,i}X_i\big )=\alpha$. In particular $\alpha\in F$. 

\item  $\liminf_{n\to\infty} \frac{\sum_{k=1}^n-\mathbb E\big (\sum_{i=1}^N \widetilde W_{k,i}\log \widetilde W_{k,i}\big )}{\sum_{k=1}^n\mathbb E\big (\sum_{i=1}^N \widetilde W_{k,i}\phi_i\big )}\ge \widetilde P_{X,\phi,\alpha}^*(0)$. 
\end{enumerate}
\end{lemma}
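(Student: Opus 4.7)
The membership $\alpha\in F$ claimed in part~(1) is in fact a convex-geometric consequence of the hypotheses, independent of the asymptotic analysis. Indeed, $F\in\widetilde{\mathcal F}^1_X$ entails $\mathbb E(N^{H\setminus F})=0$, hence for any non-negative $(W_i)$ with $\mathbb E(\sum W_i)=1$ such that $\mathbb E(\sum W_iX_i)\in H$, the identity $0=c-\langle e|\mathbb E(\sum W_iX_i)\rangle=\mathbb E(\sum W_i(c-\langle e|X_i\rangle))$ together with the a.s.\ bound $\langle e|X_i\rangle\le c$ forces $W_i$ to be concentrated on $\{X_i\in H\}\subset\{X_i\in F\}$, giving $\mathcal C_X\cap H\subset F$; in particular $\alpha\in H\cap I_X\subset H\cap\mathcal C_X\subset F$. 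The substantive task is then to analyse the asymptotic behaviour of the renormalised weights $\widetilde W_{n,i}$, which I will derive from a quantitative concentration of the Gibbs weights $W_{n,i}$ on~$F$.

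The first observation is that $\|q_n\|\to\infty$ in the non-trivial case $\mathbb E(\sum_{i=1}^N\mathbf{1}_{H^c}(X_i))>0$: applying the identity of the preceding paragraph to the weights $(W_i)=(e^{\langle q_\alpha|X_i\rangle-\widetilde P_X(q_\alpha)})$ rules out $\alpha\in\nabla\widetilde P_X(\R^d)$, hence $\alpha\notin\widetilde I_X$, and any bounded subsequential limit $(q_\ast,\alpha_\ast)$ of $(q_n,\alpha_n)$ would, by continuity and the implicit function argument underlying Proposition~\ref{qalpha}, place $\alpha$ in $\widetilde I_X$, a contradiction. (The trivial case reduces to $X_i\in F$ a.s.\ and the lemma is then immediate.) Next, any subsequential direction $e'$ of $q_n/\|q_n\|$ is an outer normal at $\alpha$, by the standard Laplace argument for tilted measures with $\|q_n\|\to\infty$: $\langle e'|\beta_n\rangle$ converges to the support function $\max_{x\in\mathcal C_X}\langle e'|x\rangle$, which must equal $\langle e'|\alpha\rangle$. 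Hence the smallest face of $\mathcal C_X$ containing $\alpha$ is contained in $H\cap\mathcal C_X\subset F$, and the tilted measure $W_{n,i}\,\mathrm d\mathbb P$ must concentrate on this face. Quantitatively, from
$$
c-\langle e|\beta_n\rangle=\mathbb E\Big(\textstyle\sum\mathbf{1}_{H^c}(X_i)W_{n,i}(c-\langle e|X_i\rangle)\Big)\longrightarrow 0
$$
and Markov's inequality $p_{A_\delta,n}\le\delta^{-1}(c-\langle e|\beta_n\rangle)$ with $A_\delta=\{\langle e|\cdot\rangle\le c-\delta\}$, combined with a tilted Chernov bound on $A_\delta$ and the superlinear moment hypothesis~\eqref{psiX}, one obtains $p_{F^c,n}\to 0$, $\mathbb E(\sum\mathbf{1}_{F^c}W_{n,i}\|X_i\|)\to 0$ and, via $|\log W_{n,i}|\le\|q_n\|(\|X_i\|+\|\alpha_n\|)+\widetilde P_{X,\phi,\alpha_n}(q_n)\phi_i$, also $\mathbb E(\sum\mathbf{1}_{F^c}W_{n,i}|\log W_{n,i}|)\to 0$; the $\phi$-correction $\mathbb E(\sum\mathbf{1}_{F^c}W_{n,i}\phi_i)\to 0$ follows from~\eqref{newphi}.

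Part~(1) is then immediate from $\mathbb E(\sum\widetilde W_{n,i}X_i)=p_{F,n}^{-1}(\beta_n-\mathbb E(\sum\mathbf{1}_{F^c}W_{n,i}X_i))\to\alpha$. For part~(2), the algebraic identities
$$
-\mathbb E\big(\textstyle\sum\widetilde W_{n,i}\log\widetilde W_{n,i}\big)=\log p_{F,n}+p_{F,n}^{-1}\big(h(q_n,\alpha_n)+\mathbb E(\textstyle\sum\mathbf{1}_{F^c}W_{n,i}\log W_{n,i})\big),
$$
$$
\mathbb E\big(\textstyle\sum\widetilde W_{n,i}\phi_i\big)=p_{F,n}^{-1}\big(\lambda(q_n,\alpha_n)-\mathbb E(\textstyle\sum\mathbf{1}_{F^c}W_{n,i}\phi_i)\big)
$$
combined with the vanishing of the correction terms and the defining property $h(q_n,\alpha_n)/\lambda(q_n,\alpha_n)\to\widetilde P^*_{X,\phi,\alpha}(0)$ of the chosen sequence yield the pointwise limit of the entropy-to-Lyapunov ratio; the claim on the ratio of partial sums then follows by a weighted Ces\`aro argument, using that $\lambda(q_n,\alpha_n)$ stays bounded away from $0$ and $\infty$ (compare~\eqref{uniformboundSnphi}). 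The main obstacle is the quantitative concentration argument, specifically ruling out that $W_{n,i}\,\mathrm d\mathbb P$ pulls weight into a thin slab just inside $H^c$ where $\|X_i\|$ can be arbitrarily large; this requires the delicate interplay between the exponential tilting in direction $q_n$ and~\eqref{psiX}, as well as a careful handling of the normalising factor $-\widetilde P_{X,\phi,\alpha_n}(q_n)\phi_i$.
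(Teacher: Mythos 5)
Your overall strategy tracks the paper's reasonably closely: Markov's inequality on $c-\langle e|\beta_n\rangle$ to concentrate the tilted weights near $H$, the uniform entropy bound $\mathbb{E}(\sum_i W_{n,i}\log^+W_{n,i})\le e^{-1}\mathbb{E}(N)$ together with the Orlicz-duality consequence of \eqref{psiX} to control moments of the form $\mathbb{E}(\sum_i \mathbf{1}_{F^c}W_{n,i}\|X_i\|)$, the algebraic identities for the renormalised entropy and Lyapounov exponent, and a Ces\`aro passage. Your direct convex-geometric derivation of $\alpha\in F$ from $\mathbb{E}(N^{H\setminus F})=0$ and the face structure is a valid alternative to the paper's approach, where $\alpha\in F$ just falls out of the limit $\mathbb{E}(\sum_i\widetilde W_{n,i}X_i)\to\alpha$ because $\mathbb{E}(\sum_i\widetilde W_{n,i}X_i)\in F$ by construction. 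The detour through $\|q_n\|\to\infty$ and outer-normal directions is not used in the paper and is unnecessary (it is also partly heuristic, e.g. the jump from ``bounded subsequential limit of $(q_n,\alpha_n)$'' to ``$\alpha\in\widetilde I_X$'' is not spelled out), but it is harmless.

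There is, however, a genuine gap in the step ``via $|\log W_{n,i}|\le\|q_n\|(\|X_i\|+\|\alpha_n\|)+\widetilde P_{X,\phi,\alpha_n}(q_n)\phi_i$, also $\mathbb{E}(\sum_i\mathbf{1}_{F^c}W_{n,i}|\log W_{n,i}|)\to 0$.'' By your own preceding argument $\|q_n\|\to\infty$, so this pointwise bound grows without control; multiplying it against $\mathbb{E}(\sum_i\mathbf{1}_{F^c}W_{n,i}\|X_i\|)\to 0$ and $\mathbb{E}(\sum_i\mathbf{1}_{F^c}W_{n,i}\phi_i)\to 0$ produces an indeterminate $\infty\cdot 0$: without a rate estimate on those two vanishing quantities that beats $\|q_n\|$, the claimed vanishing of the log-correction does not follow. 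Note also that what is really needed is the weaker one-sided bound $\liminf_{n}\mathbb{E}(\sum_i\mathbf{1}_{F^c}W_{n,i}\log W_{n,i})\ge 0$, i.e. that the $\log^-$ part does not drag the entropy down. The paper achieves exactly this without ever bounding $\log W_{n,i}$ pointwise: it keeps only the part of the sum with $W_{n,i}\in[0,1]$ (where $Z_{n,i}=W_{n,i}\log W_{n,i}\le 0$), splits $F^c$ into the thin slab $V_\varepsilon\setminus F$ (where $|Z_{n,i}|\le 1/e$ and $\mathbb{E}(\sum_i(\mathbf{1}_{V_\varepsilon}-\mathbf{1}_F)(X_i))$ is small for small $\varepsilon$) and the far region $U_\varepsilon$, and on $U_\varepsilon$ uses the calibrated truncation level $a_{\eta,n}=(1-p_{V_{\varepsilon_\eta},n})^{-1/2}\to\infty$: on $W_{n,i}\in[0,e^{-a_{\eta,n}}]$ one has $|Z_{n,i}|\le a_{\eta,n}e^{-a_{\eta,n}}$, on $W_{n,i}\in(e^{-a_{\eta,n}},1]$ one has $|Z_{n,i}|\le a_{\eta,n}W_{n,i}$, giving the bound $a_{\eta,n}e^{-a_{\eta,n}}\mathbb{E}(N)+a_{\eta,n}^{-1}\to 0$. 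To repair your proof you would have to either adopt this rate-adapted truncation or establish a quantitative concentration estimate $\|q_n\|\cdot\mathbb{E}(\sum_i\mathbf{1}_{F^c}W_{n,i}\|X_i\|)\to 0$ (and similarly for the $\phi$-term), which is plausible but not proved in your argument.
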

Assume this lemma for a while. Let $(W^F_{i})_{i\ge 1}$ be defined by $W^F_{i}=\mathbf{1}_{[1,N]}(i)\mathbf{1}_F(X_i)/\mathbb{E}(N^F)$. For $\theta\in (0,1]$, $n\ge 1$ and $i\ge 1$ set $\widetilde W_{n,i}(\theta)=\theta W^F_{i}+(1-\theta)\widetilde W_{n,i}$. By convexity of the function $x\log(x)$, noting that $\mathbb E\Big (\sum_{i=1}^N W^F_{i}\log W^F_{i}\Big )=-\log(\mathbb{E}(N^F))$, one has 
\begin{align}
\label{compare entropies}\mathbb E\Big (\sum_{i=1}^N \widetilde W_{n,i}(\theta)\log \widetilde W_{n,i}(\theta)\Big )
\le -\theta\log(\mathbb{E}(N^F))+(1-\theta)\mathbb E\Big (\sum_{i=1}^N\widetilde W_{n,i}\log \widetilde W_{n,i}\Big ).
\end{align}
Then, Lemma~\ref{AAA}(2) together with $\mathbb{E}(N^F)>1$ yields  $-\mathbb E\big (\sum_{i=1}^N \widetilde W_{n,i}(\theta)\log \widetilde W_{n,i}(\theta)\big )>0$. Let now $(\theta_n)_{n\in\mathbb{N}}$ be a positive sequence converging to 0. One has 
$$
\liminf_{n\to\infty} \frac{ \mathbb E\big (\sum_{i=1}^N \widetilde W_{n,i}(\theta_n)\log \widetilde W_{n,i}(\theta_n)\big )}{\mathbb E\big (\sum_{i=1}^N \widetilde W_{n,i}\log \widetilde W_{n,i}\big)}\ge 1\text{ and }\lim_{n\to\infty} \frac{ \mathbb E\big (\sum_{i=1}^N \widetilde W_{n,i}(\theta_n)\phi_i\big )}{\mathbb E\big (\sum_{i=1}^N \widetilde W_{n,i}\phi_i\big )}=1,
$$
where the inequality comes from \eqref{compare entropies} and the equality is direct. Then, Lemma~\ref{AAA}(2) again implies $\liminf_{n\to\infty} \frac{\sum_{k=1}^n-\mathbb E\big (\sum_{i=1}^N \widetilde W_{k,i}(\theta_k)\log \widetilde W_{n,i}(\theta_k)\big )}{\sum_{k=1}^n\mathbb E\big (\sum_{i=1}^N \widetilde W_{k,i}(\theta_k)\phi_i\big )}\ge \widetilde P_{X,\phi,\alpha}^*(0)$. Now, note  that the random vectors $(\widetilde W_{n,i}(\theta_n))_{n\in\mathbb{N}}$ with positive ``entropy'' can be used to construct an inhomogeneous Mandelbrot martingale which, conditionally on $\partial \TT^F\neq\emptyset$, converges to a positive limit, and makes it possible to define a positive measure $\mu^{\alpha}$ fully supported on $\partial \TT^F$. Moreover, the martingale can be adjusted so that the conclusions of Propositions~\ref{lb2} hold, as well as that of Proposition~\ref{pp3'} applied to $\mu^\alpha$ and  $(S_nX)_{n\in\mathbb{N}}$ restricted to $\partial T^F$: $\underline \dim  (\mu^\alpha)\ge  \liminf_{n\to\infty} \frac{\sum_{k=1}^n-\mathbb E\big (\sum_{i=1}^N \widetilde W_{k,i}(\theta_k)\log \widetilde W_{n,i}(\theta_k)\big )}{\sum_{k=1}^n\mathbb E\big (\sum_{i=1}^N \widetilde W_{k,i}(\theta_k)\phi_i\big )}\ge \widetilde P_{X,\phi,\alpha}^*(0)$, and for $\mu^\alpha$-a.e. $t\in \partial \TT^F$, 
$\lim_{n\to\infty} n^{-1}S_nX(t)= \lim_{n\to\infty}n^{-1}\sum_{k=1}^n \mathbb E\Big (\sum_{i=1}^N \widetilde W_{k,i}(\theta_k)X_i\Big )=\alpha$, where Lemma~\ref{AAA}(1) was used to get the second equality. In other words, recalling the definitions introduced before the statement of Theorem~\ref{UNIFER2},  the $\vec{F}$-valued branching random walk  $(S_n (X_F-\alpha_F))_{n\in\mathbb{N}}$ on $\partial \widetilde \TT^F$ satisfies  that $\alpha-\alpha_F\in I_{X_F-\alpha_F}$, so $\alpha\in I_X^F$ by definition of $I_X^F$. 

Thus, we proved that $H\cap I_X=F\cap I_X\subset I_X^F$. Moreover, for all $\alpha\in F\cap I_X$, conditional on $\partial\TT^F\neq\emptyset$, one has $(\widetilde P_{X_F-\alpha_F,\phi_F,\alpha-\alpha_F})^*(0)\ge \dim (E(X,\alpha)\cap \partial\TT^F)\ge  \widetilde P_{X,\phi,\alpha}^*(0)$, where the second inequality was just proved and for the first inequality one  uses the fact that by definition  $E(X,\alpha)\cap \partial\TT^F=\boldsymbol{b}_F(E(X_F-\alpha_F,\alpha-\alpha_F))$, where $\boldsymbol{b}_F$ is an isometry between $(\partial\widetilde\TT^F,\mathrm{d}_{\phi_F})$ and $(\partial\TT^F,\mathrm{d}_{\phi})$, and the fact that Proposition~\ref{rn} holds for $(S_n(X_F-\alpha_F))_{n\in\mathbb N}$ on  $(\partial\widetilde\TT^F,\mathrm{d}_{\phi_F})$.

Let us now prove that $I_X^F\subset F\cap I_X$, as well as Theorem~\ref{UNIFER2}(4) and (5).  

This time, we use what we know about the $\vec{F}$-valued branching random walk $(S_nX-n\alpha_F)_{n\in\mathbb{N}}$ on $\partial\TT^F$, conditionally on $\partial \TT^F\neq\emptyset$, thanks to Theorems~\ref{thm-1.1} and~\ref{UNIFER} and their proofs applied to $(S_n(X_F-\alpha_F))_{n\in\mathbb{N}}$ on $\partial \widetilde\TT^F$ and the isometry $\boldsymbol{b}_F$ between $(\partial\widetilde\TT^F,\mathrm{d}_{\phi_F})$ and $(\partial\TT^F,\mathrm{d}_{\phi})$. We can consider a family $(\mu_\varrho^F)_{\varrho\in \mathcal {R}^F}$ of inhomogeneous Mandelbrot  measures simultaneously constructed and fully supported on $\partial\TT^F$, and dedicated to the study of $(S_nX-n\alpha_F))_{n\in\mathbb N}$ on $\partial\TT^F$. For each sequence $\rho\in \mathcal {R}^F$, the measure $\mu^F_\rho$ is constructed by using, at each generation $n\ge 1$ of the associated multiplicative cascade, independent copies of non negative random vectors $(\widetilde W_{\varrho_n})_{n\in\mathbb{N}}\in \R_+^{\mathbb{N}}$ simultaneously defined with $(N,(\phi_i)_{i\in\mathbb N} )$ such that $\mathbb{E}\big (\sum_{i=1}^NW_{\varrho_n,i}\big )=1=\mathbb{E}\big (\sum_{i=1}^N\mathbf{1}_{F}(X_i)W_{\varrho_n,i}\big )$. Let us denote these copies by $((W_{\varrho_n,ui})_{i\in\mathbb N} )_{u\in\mathbb N^{n-1}}$. For any positive sequence $\theta=(\theta_n)_{n\in\mathbb N}$ converging to $0$ define
$$
W_{\varrho_n, ui}(\theta)=(\lambda_{\varrho,\theta}(n))^{-1}\Big (\mathbf{1}_{[1, N_u]}(i)\cdot  (\mathbf{1}_{F}(X_{ui})W_{\varrho_n,ui}+\theta_n\mathbf{1}_{F^c}(X_{ui}))\Big )_{i\ge 1},
$$
where $\lambda_{\varrho,\theta}(n)=\mathbb{E}\Big (\sum_{i=1}^N\mathbf{1}_{F}(X_i)W_{\varrho_n,i}+\theta_n\mathbf{1}_{F^c}(X_i)\Big )$.  It is easily seen that following the lines of the proof of Theorem~\ref{thm-1.1},  one can choose the small perturbation $\theta$ so that these new weights make it possible to define almost surely a family $(\mu_\varrho)_{\varrho\in \mathcal {R}^F}$ of  inhomogeneous Mandelbrot measures, all fully supported on $\partial \TT$, and such that with probability 1: (1) for all $\rho\in\mathcal R^F$, $\underline \dim(\mu_\rho)$ equals the value taken by $\underline \dim(\mu^F_\rho)$ almost surely, conditionally on $\partial\TT^F\neq\emptyset$; (2) $\mu_\rho$ is supported on $E(X,\alpha)$ if and only if conditionally on $\partial\TT^F\neq\emptyset$, $\mu_\rho^F$ is supported on $E(X-\alpha_F,\alpha-\alpha_F)\cap \partial\TT^F$. This implies that if $\alpha\in I_X^F$, then $\alpha\in I_X\cap F$, and $\widetilde P_{X,\phi,\alpha}^*(0)=\dim E(X,\alpha)\ge (\widetilde P^F_{X_F-\alpha_F,\phi_F,\alpha-\alpha_F})^*(0)$ almost surely. This, together  with previous estimates shows that with probability 1, for all $\alpha\in I_X^F=F\cap I_X$, one has $\dim E(X,\alpha)=\widetilde P_{X,\phi,\alpha}^*(0)=(\widetilde P^F_{X_F-\alpha_F,\phi_F,\alpha-\alpha_F})^*(0)$, hence the conclusion of Theorem~\ref{UNIFER2}(4) for $F$. 

To get the conclusion of Theorem~\ref{UNIFER2}(5) for $F$, we proceed just as above, but this time we use a small perturbation of the martingales generating the family of measures $(\mu_\varrho^F)_{\varrho\in\bigcup_{n\in\mathbb{N}}\mathcal{R}^F(m)}$ that we would use in the proof of Theorem~\ref{UNIFER} to treat the case of the $\vec{F}$-valued branching random walk  $(S_nX-n\alpha_F)_{n\in\mathbb N}$ on $\partial \TT^F$. This is enough to conclude.

\begin{proof}[Proof of Lemma~\ref{AAA}] 

We begin with preliminary observations. 

At first, for all $n\in\mathbb N$, the fact that $(q_n,\alpha_n)\in J_{X,\phi}$ implies that the entropy $h(q_n,\alpha_n)=-\mathbb E\big (\sum_{i=1}^NW_{n,i}\log W_{n,i} \big )$ is positive. Consequently, 
$$
\mathbb E\big (\sum_{i=1}^NW_{n,i}\log^+ W_{n,i} \big )\le \mathbb E\big (\sum_{i=1}^NW_{n,i}\log^- W_{n,i} \big )\le e^{-1}\mathbb E(N).
$$ 
This uniform bound plays a crucial r\^ole in the estimates below and justifies the introduction of the assumption \eqref{psiX}. Also, let $\psi$ such that \eqref{psiX} holds. Without loss of generality we can assume that $\psi(x)=0$. Define the  non negative convex function $\Psi:x\ge 0\mapsto \exp(\psi(x))-1$.  Due to our assumptions on $\psi$, $\Psi$ satisfies $\lim_{x\to\infty} \Psi(x)/x=\infty$ as well as  $\Psi(0)=0$. In the language of Orlicz spaces theory \cite{RR,Neveu}, both $\psi$ and $\Psi$ are the restrictions to $\R_+$ of strict Young functions. The convex conjugate of $\Psi$ is the function $\Phi$ defined as $\Phi:y\in\R\mapsto \sup\{x|y|-\Psi(x):\, x\ge 0\}$. It is a strict Young function as well, and it is not difficult to see that $\lim_{x\to\infty} \psi(x)/x=\infty$ implies that $\lim_{x\to\infty} \Phi(x)/(x\log(x))=0$. Let $G:x\ge 0\mapsto \Phi^{-1}(x\log^+(x))$, where $\Phi^{-1}$ stands for the right-continuous  inverse of $\Phi$. One has $\lim_{x\to\infty}G(x)/x=\infty$. Set $g:x\ge 1\mapsto \sup\{z/G(z): z\ge x\}$. For all $a\ge 1$,
$$
\mathbb{E}\Big (\sum_{i=1}^N\mathbf{1}_{(a,\infty)} (W_{n,i}) W_{n,i}\|X_i\|\Big )\le g(a)\mathbb{E}\Big (\sum_{i=1}^N G(W_{n,i})\|X_i\|\Big ). 
$$
Moreover, H\"older's inequality for Orlicz spaces yields 
$$
\mathbb{E}\Big (\sum_{i=1}^N G(W_{n,i})\|X_i\|\Big )\le 2\, \|(G(W_{n,i}))_{1\le i\le N}\|_\Phi\cdot   \|(X_i)_{1\le i\le N}\|_\Psi,
$$
where  $\|(Z_i)_{1\le i\le N}\|_\Upsilon=\inf\{k>0:\,  \mathbb{E}\big (\sum_{i=1}^N \Upsilon(Z_i/k)\big ) \le 1\}$. 
 However, it is clear that $\|(X_i)_{1\le i\le N}\|_\Psi<\infty$ and since $\Phi(z/k)\le \Phi(z)/k$ for all $z\ge 0$ and $k\ge 1$ (by convexity and the fact that $\Phi(0)=0$), one has $\mathbb{E}\big (\sum_{i=1}^N \Phi (G(W_{n,i})/k)\big )\le k^{-1} \mathbb E\big (\sum_{i=1}^NW_{n,i}\log^+ W_{n,i} \big )\le k^{-1} e^{-1}\mathbb E(N)$ for $k\ge 1$, so $\|(G(W_{n,i}))_{1\le i\le N}\|_\Phi  \le \lfloor e^{-1}\mathbb E(N)\rfloor +1$ for all $n\ge 1$. Finally,  there exists  $C_{N,X,\psi}>0$ depending on  $(N,X,\psi)$ only such that 
\begin{equation*}\label{unifint}
\mathbb{E}\Big (\sum_{i=1}^N\mathbf{1}_{(a,\infty)} (W_{n,i}) W_{n,i}\|X_i\|\Big )\le C_{N,X,\psi} g(a) \quad \text{for all $a\ge 1$ and $n\in\mathbb N$}, 
\end{equation*}

Now we prove assertion (1). Since $\beta(q_n,\alpha_n)=\mathbb E\big (\sum_{i=1}^NW_{n,i}X_i\big )$ converges to $\alpha$ as $n\to\infty$, it is enough to prove that 
\begin{equation}\label{target}
\lim_{n\to\infty} \Big\|\mathbb E\Big (\sum_{i=1}^NW_{n,i}X_i\Big )-p_{F,n}\mathbb E\Big (\sum_{i=1}^N\widetilde W_{n,i}X_i\Big )\Big \|=0 \text{ and }\lim_{n\to\infty} p_{F,n}=1.
\end{equation}
Recall the notations \eqref{H1} and~\eqref{H2}. Due to Lemma~\ref{lemreduc}, for all $\varepsilon>0$ one has $\lim_{n\to\infty} \big (p_{V_\varepsilon,n}=\mathbb E\big (\sum_{i=1}^N \mathbf{1}_{V_\varepsilon}(X_i)W_{n,i}\big )\big )=1$. Also, for any  $\eta\in (0,1)$ there exists $\varepsilon=\varepsilon_\eta>0$ such that $\mathbb{E}\Big (\sum_{i=1}^N \big (\mathbf{1}_{V_\varepsilon}(X_i)-\mathbf{1}_F(X_i)\big )\|X_i\|\Big )\le \eta^2$. It is so since $\mathbb{E}(N^H)=\mathbb{E}(N^F)$, $\lim_{\varepsilon\to 0} 1_{V_\varepsilon}=1_H$ and $\mathbb E\big (\sum_{i=1}^ N  \|X_i\|\big )<\infty$.  It follows that 
\begin{align*}
&\Big\|\mathbb E\Big (\sum_{i=1}^NW_{n,i}X_i\Big )-p_{F,n}\mathbb E\Big (\sum_{i=1}^N\widetilde W_{n,i}X_i\Big )\Big \|\\&\le \Big\|\mathbb E\Big (\sum_{i=1}^N\mathbf{1}_{U_{\varepsilon_\eta}} W_{n,i}X_i\Big ) \Big \|+\eta^{-1}\mathbb{E}\Big (\sum_{i=1}^N \big (\mathbf{1}_{V_{\varepsilon_\eta}}(X_i)-\mathbf{1}_F(X_i)\big )\mathbf{1}_{[0,1/\eta]}(W_{n,i})\|X_i\|\Big )\\
&\qquad +   \mathbb{E}\Big (\sum_{i=1}^N\mathbf{1}_{(1/\eta,\infty)}(W_{n,i}) W_{n,i} \|X_i\|\Big)\le \Big\|\mathbb E\Big (\sum_{i=1}^N\mathbf{1}_{U_{\varepsilon_\eta}} W_{n,i}X_i\Big ) \Big \|+ \eta+ C_{N,X,\psi} g(1/\eta). 
\end{align*}
Fix $B_\eta>0$ such that $\mathbb E\big (\sum_{i=1}^N\mathbf{1}_{(B_\eta,\infty)}(\|X_i\|)\|X_i\|\big )\le \eta^2$. One also has 
\begin{align*}
&\Big\|\mathbb E\Big (\sum_{i=1}^N\mathbf{1}_{U_{\varepsilon_\eta}} W_{n,i}X_i\Big ) \Big \|\le \eta^{-1}\mathbb E\Big (\sum_{i=1}^N\mathbf{1}_{U_{\varepsilon_\eta}} \mathbf{1}_{[0,1/\eta]}(W_{n,i})\mathbf{1}_{(B_\eta,\infty)}(\|X_i\|)\|X_i\|\Big )\\
&\qquad + B_\eta\,\mathbb E\Big (\sum_{i=1}^N\mathbf{1}_{U_{\varepsilon_\eta}}W_{n,i}\mathbf{1}_{[0,B_\eta]}(\|X_i\|)\Big )+ \mathbb{E}\Big (\sum_{i=1}^N\mathbf{1}_{(1/\eta,\infty)}(W_{n,i}) W_{n,i} \|X_i\|\Big),
\end{align*}
so $\big\|\mathbb E\big (\sum_{i=1}^N\mathbf{1}_{U_{\varepsilon_\eta}} W_{n,i}X_i\big ) \big \|\le\eta+B_\eta(1-p_{V_{\varepsilon_\eta,n}})+C_{N,X,\psi} g(1/\eta)$.  Finally, for any $\rho>0$, if we fix $\eta\in(0,1)$ such that $3\eta+2C_{N,X,\psi} g(1/\eta)\le \rho$ and $n_\rho\in\N$ such that for all integers $n\ge n_\rho$ one has $B_\eta(1-p_{V_{\varepsilon_\eta,n}})\le \eta$, we get 
$\big\|\mathbb E\big (\sum_{i=1}^NW_{n,i}X_i\big )-p_{F,n}\mathbb E\big (\sum_{i=1}^N\widetilde W_{n,i}X_i\big )\big \|\le \rho$ for all $n\ge n_\rho$, hence the first assertion of \eqref{target}. 

The fact that $\lim_{n\to\infty} p_{F,n}=1$ follows from the properties  $\lim_{n\to\infty}p_{V_\varepsilon,n}=1$ for all $\varepsilon>0$, $\lim_{\varepsilon\to 0}\mathbb{E}\big (\sum_{i=1}^N \mathbf{1}_{V_\varepsilon}(X_i)-\mathbf{1}_{F}(X_i)\big )=0$, and the inequality $\mathbb{E}\big (\sum_{i=1}^N\mathbf{1}_{(a,\infty)}(W_{n,i}) W_{n,i}\big)\le (\log(a))^{-1} \mathbb{E}\big (\sum_{i=1}^NW_{n,i}\log^+W_{n,i}\big)\le (\log(a))^{-1}e^{-1}\mathbb E(N)$ for all $a>1$.

For assertion (2),  assume that $\widetilde P_{X,\phi,\alpha}^*(0)>0$, for otherwise the result is direct. Since we know that $\mathbb E\big (\sum_{i=1}^N W_{n,i}\phi_i\big )\ge -\log(\beta)>0$, where $\beta$ is as in Lemma~\ref{controlSnphi}, one has $h:=\inf_{n\in\mathbb N} - \mathbb E\big (\sum_{i=1}^N W_{n,i}\log W_{n,i}\big )>0$. Also, mimicking what was done above yields $\lim_{n\to\infty} \big |\mathbb E\big (\sum_{i=1}^N W_{n,i}\phi_i\big )-\mathbb E\big (\sum_{i=1}^N \widetilde W_{n,i}\phi_i\big )\big |=0$, hence  $\lim_{n\to\infty} \frac{ \mathbb E\big (\sum_{i=1}^N \widetilde W_{n,i}\phi_i\big )}{\mathbb E\big (\sum_{i=1}^N W_{n,i}\phi_i\big )}=1$. 
Since $\lim_{n\to\infty} \frac{-\mathbb E\big (\sum_{i=1}^N W_{n,i}\log W_{n,i}\big )}{\mathbb E\big (\sum_{i=1}^N W_{n,i}\phi_i\big )}=\widetilde P_{X,\phi,\alpha}^*(0)$, to conclude we only need to prove that 
\begin{equation}\label{quotient}
\liminf_{n\to\infty} \frac{\mathbb E\big (\sum_{i=1}^N \widetilde W_{n,i}\log \widetilde W_{n,i}\big )}{\mathbb E\big (\sum_{i=1}^N W_{n,i}\log W_{n,i}\big )}\ge 1.
\end{equation} 
To see this, write  
\begin{align}
\label{A}-\mathbb E\Big (\sum_{i=1}^N \widetilde W_{n,i}\log \widetilde W_{n,i}\Big )=\log(p_{F,n})-\mathbb E\Big (\sum_{i=1}^N \mathbf{1}_F(X_i) W_{n,i}\log  W_{n,i}\Big ),
\end{align}
and setting $Z_{n,i}= W_{n,i}\log  W_{n,i}$, note that 
\begin{align}
\label{AA}\mathbb E\Big (\sum_{i=1}^N \mathbf{1}_F(X_i) Z_{n,i}\Big )&\le \mathbb E\Big (\sum_{i=1}^N \mathbf{1}_F(X_i)\mathbf{1}_{[0,1]}(W_{n,i}) Z_{n,i}\Big ) + \mathbb E\Big (\sum_{i=1}^N \mathbf{1}_{(1,\infty)}(W_{n,i}) Z_{n,i}\Big ),
\end{align}
and that for all $\varepsilon>0$, 
\begin{align}
\nonumber&\Big |\mathbb E\Big (\sum_{i=1}^N \mathbf{1}_F(X_i)\mathbf{1}_{[0,1]}(W_{n,i}) Z_{n,i}\Big )-\mathbb E\Big (\sum_{i=1}^N \mathbf{1}_{[0,1]}(W_{n,i}) Z_{n,i}\Big ) \Big|\\
\label{BB}&\le \Big |\mathbb E\Big (\sum_{i=1}^N \big (\mathbf{1}_{V_\varepsilon}-\mathbf{1}_F\big )(X_i)\mathbf{1}_{[0,1]}(W_{n,i}) Z_{n,i}\Big )\Big|+\Big |\mathbb E\Big (\sum_{i=1}^N \mathbf{1}_{U_\varepsilon}(X_i)\mathbf{1}_{[0,1]}(W_{n,i}) Z_{n,i}\Big ) \Big|.
\end{align}
Since $|W_{n,i}\log  W_{n,i}|$ is bounded by $1/e$ over  $\{W_{n,i}\in [0,1]\}$, for  every $\eta\in (0,h)$ there exists $\varepsilon_\eta>0$ such that 
\begin{equation}\label{CC}
\Big |\mathbb E\Big (\sum_{i=1}^N \big (\mathbf{1}_{V_{\varepsilon_\eta}}(X_i)-\mathbf{1}_F(X_i)\big )\mathbf{1}_{[0,1]}(W_{n,i}) Z_{n,i}\Big )\Big|\le \eta/2.
\end{equation}
Set $a_{\eta,n}=(1-p_{V_{\varepsilon_\eta},n})^{-1/2}$ and $r_{\eta,n}=\big |\mathbb E\big (\sum_{i=1}^N \mathbf{1}_{U_{\varepsilon_\eta}}(X_i)\mathbf{1}_{[0,1]}(W_{n,i}) Z_{n,i}\big ) \big|$. 
Since by construction $\lim_{n\to\infty}a_{\eta,n}=\infty$, for $n$ large enough one has  $e^{-a_{\eta,n}}\le 1/e$, whence  $|Z_{n,i}|\le  a_{\eta,n}e^{-a_{\eta,n}}$ when $W_{n_k,i}\in \big[0,e^{-a_{\eta,n}}\big ]$. Consequently, for $n$ large enough
 $$
 r_{\eta,n}\le  a_{\eta,n}e^{-a_{\eta,n}}\mathbb{E}(N) +a_{\eta,n}\, \mathbb{E}\Big (\sum_{i=1}^N \mathbf{1}_{U_{\varepsilon_\eta}}(X_i) W_{n,i}\Big )=a_{\eta,n}e^{-a_{\eta,n}}\mathbb{E}(N)+a_{\eta,n}^{-1},
 $$
which implies that $\lim_{n\to\infty}r_{\eta,n}=0$. Putting this together with \eqref{A}--\eqref{CC} yields that for any $\eta>0$, for $n$ large enough, 
$
-\mathbb E\Big (\sum_{i=1}^N \widetilde W_{n,i}\log  \widetilde W_{n,i}\Big )\ge -\mathbb E\Big (\sum_{i=1}^N W_{n,i}\log  W_{n,i}\Big )-\eta$. This yields \eqref{quotient}.  \end{proof}

{\bf Case 2:}  $F\in \overline{\mathcal F}^1_X$.  We start by proving that $I_X\cap F\subset\{\alpha_F\}$ (we already proved that $\{\alpha_F\}\subset I_X\cap F$), which will establish that $I_X\cap F=I^F_X=\widetilde I^F_X$ by definition of $I^F_X$ and $\widetilde I^F_X$. 

If $\mathbb{E}(N^F)>1$ then $\dim F=0$, and the conclusion is direct. So we  assume that $\mathbb{E}(N^F)=1$ (and implicitly $d\ge 2$ for otherwise the discussion is trivial). Let $H\in \widetilde H_X$ such that $F\subset H$. By definition of $\overline{\mathcal F}^1_X$ one  has $\alpha_F=\alpha_H$, and it remains to prove that $I_X\cap H\subset\{\alpha_H\}$. To do so, fix $\alpha\in I_X\cap H$ and a sequence $(q_n)_{n\in\mathbb N}\in J_X^{\mathbb N}$ such that $\nabla\widetilde P_X(q_n)$ converges to $\alpha$ as $n\to\infty$.  Set $W_{n,i}=e^{\langle q_n|X_i\rangle-\widetilde P_X(q_n)}$, $i\ge 1$. For $\varepsilon>0$ and $n\in\mathbb N$, define 
$$
P_{n,\varepsilon}: \theta \in [0,1]\mapsto \mathbb{E}\Big (\sum_{i=1}^N \mathbf{1}_{V_\varepsilon}(X_i) W_{n,i}^\theta\Big ).
$$

\begin{lemma}\label{lastlem}
For all $r\in (0,1)$, there exist a positive sequence $(\varepsilon_k)_{k\in\mathbb N}$ converging to 0 and an increasing sequence of integers $(n_k)_{k\in\mathbb N}$ such that $\lim_{k\to\infty}\|P_{n_k,\varepsilon_k}-1\|_\infty=0$, as well as a sequence $(\theta_k)_{k\in\mathbb N}\in [0,1]^{\mathbb N}$ such that for all $k\in\mathbb N$:
\begin{equation}\label{ineq Wntheta}
\mathbb{E}\Big (\sum_{i=1}^N \mathbf{1}_{V_{\varepsilon_k}}(X_i) \mathbf{1}_{\{W_{n_k,i}\in [1-r,1+r]^c\}}W_{n_k,i}^{\theta_k}\Big )\le r.
\end{equation} 
\end{lemma}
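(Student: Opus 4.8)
The plan is to study the convex function $\theta\mapsto P_{n,\varepsilon}(\theta)=\mathbb{E}\big(\sum_{i=1}^N\mathbf{1}_{V_\varepsilon}(X_i)W_{n,i}^{\theta}\big)$ on $[0,1]$ and to choose $\varepsilon_k\downarrow 0$ and $n_k\uparrow\infty$ in a coupled (diagonal) way so that the weights $W_{n_k,i}$ carried by the children landing in $V_{\varepsilon_k}$ concentrate near $1$; the identity $\mathbb{E}(N^H)=\mathbb{E}(N^F)=1$ is what makes this possible, and $\theta_k$ will be chosen afterwards.

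First I would record the reductions. Since $H$ is a supporting hyperplane of $\mathcal C_X$ and $\widering I_X\subset\widering{\mathcal C}_X$ (Proposition~\ref{propkey}(1)), $\alpha\in\partial I_X$, hence $\widetilde P_X^*(\alpha)=0$ (a concave u.s.c.\ function whose superlevel set $\{\ \ge 0\}$ equals the compact $I_X$ cannot take a positive value on $\partial I_X$), and by upper semicontinuity $h_n:=\widetilde P_X^*(\nabla\widetilde P_X(q_n))=-\mathbb{E}\big(\sum_iW_{n,i}\log W_{n,i}\big)\to 0$. Moreover $\|q_n\|\to\infty$: otherwise a subsequence converges to some $q_*$, and $\langle e|\nabla\widetilde P_X(q_*)\rangle=\langle e|\alpha\rangle=c$ together with $\langle e|X_i\rangle\le c$ a.s.\ (Lemma~\ref{lemreduc}) and $\mathbb{E}(\sum_ie^{\langle q_*|X_i\rangle-\widetilde P_X(q_*)})=1$ would force $X_i\in H$ for all $1\le i\le N$ a.s., contradicting \eqref{pP}. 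Exactly as in the proof of Lemma~\ref{AAA}, $\langle e|\nabla\widetilde P_X(q_n)\rangle\to c$ and $\langle e|X_i\rangle\le c$ a.s.\ yield $p_{U_\varepsilon,n}:=\mathbb{E}\big(\sum_i\mathbf{1}_{U_\varepsilon}(X_i)W_{n,i}\big)\to 0$ as $n\to\infty$ for each fixed $\varepsilon>0$, hence $P_{n,\varepsilon}(1)=p_{V_\varepsilon,n}\to 1$; and since $\mathbb{E}(N^{H_+})=0$ (Lemma~\ref{lemreduc}) and $\mathbf{1}_{V_\varepsilon}\downarrow\mathbf{1}_{H\cup H_+}$, monotone convergence gives $P_{n,\varepsilon}(0)=\mathbb{E}(N^{V_\varepsilon})\downarrow\mathbb{E}(N^H)=1$ as $\varepsilon\downarrow 0$.

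The upper bound $\sup_{[0,1]}P_{n,\varepsilon}\le\mathbb{E}(N^{V_\varepsilon})$ is then immediate from convexity of $\theta\mapsto P_{n,\varepsilon}(\theta)$ (maximum at an endpoint, and $P_{n,\varepsilon}(1)\le\mathbb{E}(\sum_iW_{n,i})=1\le\mathbb{E}(N^{V_\varepsilon})$). For the lower bound it suffices to find the diagonal so that for every $r'\in(0,1)$,
\[
\mathbb{E}\Big(\sum_{i=1}^N\mathbf{1}_{V_{\varepsilon_k}}(X_i)\mathbf{1}_{\{W_{n_k,i}\notin[1-r',1+r']\}}\Big)\xrightarrow[k\to\infty]{}0,\qquad(\star)
\]
because $w^{\theta}\ge 1-r'$ for $w\ge 1-r'$, $\theta\in[0,1]$, gives $P_{n,\varepsilon}(\theta)\ge(1-r')\,\mathbb{E}\big(\sum_i\mathbf{1}_{V_\varepsilon}(X_i)\mathbf{1}_{\{W_{n,i}\ge 1-r'\}}\big)$, so $(\star)$ and $\mathbb{E}(N^{V_{\varepsilon_k}})\to 1$ force $\inf_{[0,1]}P_{n_k,\varepsilon_k}\to 1$. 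To get $(\star)$ I would use the divergence inequality $x\log x-x+1\ge(\sqrt x-1)^2$: since $\mathbb{E}\big(\sum_i\mathbf{1}_{V_\varepsilon}(X_i)(W_{n,i}\log W_{n,i}-W_{n,i}+1)\big)=\mathbb{E}\big(\sum_i\mathbf{1}_{V_\varepsilon}W_{n,i}\log W_{n,i}\big)-p_{V_\varepsilon,n}+\mathbb{E}(N^{V_\varepsilon})$, $(\star)$ (via Markov applied to $(\sqrt{W_{n_k,i}}-1)^2$) follows once $\mathbb{E}\big(\sum_i\mathbf{1}_{V_{\varepsilon_k}}(X_i)W_{n_k,i}\log W_{n_k,i}\big)\to 0$. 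This last quantity is pinched: from below by Jensen (convexity of $x\log x$), $\mathbb{E}\big(\sum_i\mathbf{1}_{V_\varepsilon}W_{n,i}\log W_{n,i}\big)\ge p_{V_\varepsilon,n}\log\big(p_{V_\varepsilon,n}/\mathbb{E}(N^{V_\varepsilon})\big)$, whose $n\to\infty$ limit $-\log\mathbb{E}(N^{V_\varepsilon})$ tends to $0$ as $\varepsilon\to 0$; from above by $\mathbb{E}\big(\sum_i\mathbf{1}_{V_\varepsilon}W_{n,i}\log W_{n,i}\big)= -h_n-\mathbb{E}\big(\sum_i\mathbf{1}_{U_\varepsilon}W_{n,i}\log W_{n,i}\big)\le -h_n+\mathbb{E}\big(\sum_i\mathbf{1}_{U_\varepsilon}W_{n,i}\log^-W_{n,i}\big)$, where the key estimate is that $x\log(1/x)\le C\sqrt x$ on $(0,1]$ and Cauchy--Schwarz (twice) give $\mathbb{E}\big(\sum_i\mathbf{1}_{U_\varepsilon}W_{n,i}\log^-W_{n,i}\big)\le C\,\mathbb{E}\big(\sum_i\mathbf{1}_{U_\varepsilon}\sqrt{W_{n,i}}\big)\le C\sqrt{\mathbb{E}(N)\,p_{U_\varepsilon,n}}\to 0$ as $n\to\infty$. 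The main obstacle is exactly this bookkeeping: one first fixes $\varepsilon_k$ small (so $\log\mathbb{E}(N^{V_{\varepsilon_k}})$ is small) and then $n_k$ very large (so $h_{n_k}$ and $p_{U_{\varepsilon_k},n_k}$ are small), the point being that the only obstruction to a clean ``relative entropy $\to 0$ on $V_\varepsilon$'' is the negative contribution of the escaping children in $U_\varepsilon$, which is annihilated by the escape bound $p_{U_\varepsilon,n}\to 0$.

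Along that diagonal $\|P_{n_k,\varepsilon_k}-1\|_\infty\to 0$, so it remains to pick $\theta_k$. For the fixed $r$ of the statement, I would split the bad children of $(\star)$ with $r'=r$ into those with $W_{n_k,i}>K$ and those with $W_{n_k,i}\in(1+r,K]\cup(0,1-r)$, for a large fixed $K$. Using $\mathbf{1}_{\{w>K\}}\le(\log K)^{-1}\log^+w$, the entropy bound $\mathbb{E}\big(\sum_iW_{n,i}\log^+W_{n,i}\big)\le e^{-1}\mathbb{E}(N)$ (as in the proof of Lemma~\ref{AAA}), and $w^{\theta_k}\le w$ for $w\ge1$, the first part contributes at most $e^{-1}\mathbb{E}(N)/\log K\le r/2$; using $w^{\theta_k}\le K^{\theta_k}$ on the bounded range and $w^{\theta_k}\le 1$ on $(0,1-r)$, the second part is at most $K^{\theta_k}\,\mathbb{E}\big(\sum_i\mathbf{1}_{V_{\varepsilon_k}}(X_i)\mathbf{1}_{\{W_{n_k,i}\notin[1-r,1+r]\}}\big)$, which $\to 0$ by $(\star)$ once $\theta_k\downarrow 0$ is taken slowly enough that $K^{\theta_k}\le 2$. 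Hence \eqref{ineq Wntheta} holds for all large $k$, and discarding finitely many initial terms of $(\varepsilon_k,n_k,\theta_k)$ makes it hold for all $k$, completing the proof.
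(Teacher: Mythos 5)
Your proof is correct, and it takes a genuinely different route from the paper's. The paper argues twice by contradiction: it establishes $\|P_{n_k,\varepsilon_k}-1\|_\infty\to 0$ by showing (via convexity) that it suffices to get $P'_{n_k,\varepsilon_k}(1)\to 0$, and rules out $\liminf P'_{n_k,\varepsilon_k}(1)>0$ by comparing with $\mathbb{E}\big(\sum_i W_{n_k,i}\log W_{n_k,i}\big)=-\widetilde P_X^*(\nabla\widetilde P_X(q_{n_k}))<0$ after killing the $U_{\varepsilon_k}$-contribution; it then obtains the $\theta_k$'s by a second contradiction using a lower bound on $P''_{n_k,\varepsilon_k}$. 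You instead prove the single, stronger concentration statement $(\star)$, i.e.\ $\mathbb{E}\big(\sum_i\mathbf{1}_{V_{\varepsilon_k}}(X_i)(\sqrt{W_{n_k,i}}-1)^2\big)\to 0$, by squeezing the ``partial entropy'' $\mathbb{E}\big(\sum_i\mathbf{1}_{V_{\varepsilon_k}}W_{n_k,i}\log W_{n_k,i}\big)$ between the Jensen lower bound $p_{V_\varepsilon,n}\log(p_{V_\varepsilon,n}/\mathbb{E}(N^{V_\varepsilon}))$ and the upper bound $-h_n+O\big(\sqrt{p_{U_\varepsilon,n}}\big)$, then applying the pointwise Pinsker-type inequality $x\log x-x+1\ge(\sqrt x-1)^2$. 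This buys you both conclusions of the lemma at once: the lower bound $\inf_{[0,1]}P_{n_k,\varepsilon_k}\to 1$ follows from $(\star)$ and $w^\theta\ge 1-r'$ on $\{w\ge 1-r'\}$, and the existence of $(\theta_k)$ also follows from $(\star)$ (indeed $\theta_k=0$ already works, making your $K$-split and the entropy bound $\mathbb{E}\big(\sum_iW_{n,i}\log^+W_{n,i}\big)\le e^{-1}\mathbb{E}(N)$ superfluous, though harmless). Both arguments rely on the same two pillars — $\mathbb{E}(N^{V_\varepsilon})\to\mathbb{E}(N^H)=1$ from Lemma~\ref{lemreduc}, and $p_{U_\varepsilon,n}\to 0$ for fixed $\varepsilon$ — but your concentration inequality replaces the two separate convexity contradictions by a single quantitative estimate. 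One small remark: the side claim that $\widetilde P_X^*(\alpha)=0$ (hence $h_n\to 0$) is not actually needed; your upper bound only uses $h_n>0$, so you can drop that reduction and the attendant appeal to upper semicontinuity.
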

Assume this lemma for a while. We know that $\lim_{k\to\infty}  \mathbb{E}\big (\sum_{i=1}^N \mathbf{1}_{V_{\varepsilon_k}}(X_i) X_i\big )=\alpha_H$ and $\lim_{k\to\infty}  \mathbb{E}\big (\sum_{i=1}^N\mathbf{1}_{V_{\varepsilon_k}}(X_i) W_{n_k,i} X_i\big )=\alpha$ (using the same arguments as  in the proof of Lemma~\ref{AAA}(1)). Proving that $\lim_{k\to\infty}  \mathbb{E}\big (\sum_{i=1}^N\mathbf{1}_{V_{\varepsilon_k}}(X_i) |W_{n_k,i}-1| \|X_i\|\big )=0$ will give us the conclusion. To do so, note that for any $r\in (0,1)$, for all $k\in\mathbb N$, due to \eqref{ineq Wntheta} and the fact that $1-\theta_k \in[0,1]$, one has
\begin{align*}
\mathbb{E}\Big (\sum_{i=1}^N \mathbf{1}_{V_{\varepsilon_k}}(X_i) \mathbf{1}_{ [1-r,1+r]}(W_{n_k,i})W_{n_k,i}\Big )
&\ge (1-r)^{1-\theta_k} \mathbb{E}\Big (\sum_{i=1}^N \mathbf{1}_{V_{\varepsilon_k}}(X_i) \mathbf{1}_{ [1-r,1+r]}(W_{n_k,i})W_{n_k,i}^{\theta_k}\Big )\\
&\ge (1-r)^2.
\end{align*}
Consequently, since $\mathbb{E}\big (\sum_{i=1}^N \mathbf{1}_{V_{\varepsilon_k}}(X_i) W_{n_k,i}\big )\le 1$, one obtains
\begin{equation}\label{3r}
 \mathbb{E}\Big (\sum_{i=1}^N \mathbf{1}_{V_{\varepsilon_k}}(X_i)\mathbf{1}_{ [1-r,1+r]^c}(W_{n_k,i})W_{n_k,i}\Big )\le 1-(1-r)^2\le 2r.
\end{equation}

For $\eta\in(0,1/2)$, let $B_\eta$ as in the proof of Lemma~\ref{AAA}(1). Assume without loss of generality that $B_\eta>1$ and set $r=r_\eta=\eta/B_\eta$; one has $1+r_\eta<\eta^{-1}$. For  $k\in\mathbb N$,  set $A_{r,k}=\mathbb{E}\Big (\sum_{i=1}^N\mathbf{1}_{V_{\varepsilon_k}}(X_i)  \mathbf{1}_{[0,1-r]}(W_{n_k,i}) |W_{n_k,i}-1| \|X_i\|\Big )$. One has $A_{r,k}\le B_\eta B_{r,k}+  \mathbb{E}\big (\sum_{i=1}^N \mathbf{1}_{(B_\eta,\infty)}(\|X_i\|) \|X_i\|\big )$, where
\begin{align*}
&B_{r,k}=\mathbb{E}\Big (\sum_{i=1}^N\mathbf{1}_{V_{\varepsilon_k}}(X_i)  \mathbf{1}_{[0,1-r]}(W_{n_k,i}) (1-W_{n_k,i})\Big )\\
&\le |P_{n_k,\varepsilon_k}(0)-P_{n_k,\varepsilon_k}(1)|+  \mathbb E\Big (\sum_{i=1}^N
\mathbf{1}_{V_{\varepsilon_k}}(X_i)\big ( \mathbf{1}_{[1-r,1+r]}(W_{n_k,i})r+\mathbf{1}_{[1-r,1+r]^c}(W_{n_k,i}) W_{n_k,i}\big )\Big ).
\end{align*}
Moreover, 
\begin{align*}
\mathbb{E}\Big (\sum_{i=1}^N\mathbf{1}_{V_{\varepsilon_k}}(X_i) |W_{n_k,i}-1| \|X_i\|\Big )&\le A_{r,k}+\mathbb{E}\Big (\sum_{i=1}^N\mathbf{1}_{[1-r,1+r]}(W_{n_k,i}) r  \|X_i\|\Big)\\&+\mathbb{E}\Big (\sum_{i=1}^N\mathbf{1}_{V_{\varepsilon_k}}(X_i) \mathbf{1}_{[1-r,1+r]^c}(W_{n_k,i})W_{n_k,i}\mathbf{1}_{[0,B_\eta]}(\|X_i\|)B_\eta\Big )\\
&\ \, +
\mathbb{E}\Big (\sum_{i=1}^N\mathbf{1}_{V_{\varepsilon_k}}(X_i) \mathbf{1}_{[0,1/\eta]}(W_{n_k,i}) \eta^{-1}\mathbf{1}_{(B_\eta,\infty)}(\|X_i\|) \|X_i\|\Big )\\&\ \ \  +\mathbb{E}\Big (\sum_{i=1}^N \mathbf{1}_{(1/\eta,\infty)}(W_{n_k,i}) W_{n_k,i}\|X_i\|\Big ).
\end{align*}
This, together with the  estimates obtained in the proof of Lemma~\ref{AAA} as well as \eqref{3r} yield, after setting $\delta_k=2\|P_{n_k,\varepsilon_k}-1\|_\infty$:
\begin{align*}
\mathbb{E}\big (\sum_{i=1}^N\mathbf{1}_{V_{\varepsilon_k}}(X_i) |W_{n_k,i}-1| \|X_i\|\big )
\le B_\eta\delta_k+\Big (3+\mathbb{E}(N)+\mathbb{E}\big (\sum_{i=1}^N\|X_i\|\big )\Big )\eta+C_{N,X,\psi}g(1/\eta),
\end{align*}
which is enough to conclude.

\begin{proof}[Proof of Lemma~\ref{lastlem}] Let us now prove the claim. Note first that $P_{n,\varepsilon}(0)=\mathbb{E}\big (N^{V_\varepsilon} \big)$ tends to $\mathbb{E}\big(N^H\big)=1$ as $\varepsilon\to 0$. Moreover, arguing like  in the proof of (2) shows that for any fixed $\varepsilon>0$, one has $\lim_{n\to\infty}P_{n,\varepsilon}(1)=1$. Consider  a positive sequence $(\varepsilon_k)_{k\in\mathbb N}$ converging to 0 and an increasing sequence of integers $(n_k)_{k\in\mathbb N}$ such that  $\lim_{k\to\infty}P_{n_k,\varepsilon_k}(1)=1$. Since the functions $P_{n_k,\varepsilon_k}$ are convex, the previous properties imply that to prove that $\lim_{k\to\infty}\|P_{n_k,\varepsilon_k}-1\|_\infty=0$, it is sufficient to prove that $\limsup_{k \to\infty} P_{n_k,\varepsilon_k}'(1)=0$. 

Suppose, by contradiction, that this is not the case.  Without loss of generality, fix $\eta>0$ such that for all $k\in\mathbb N$ one has
$
P_{n_k,\varepsilon_k}'(1)=\mathbb{E}\Big (\sum_{i=1}^N \mathbf{1}_{V_{\varepsilon_k}}(X_i) W_{n_k,i}\log W_{n_k,i} \Big )\ge \eta.
$
Define two sequences $\big(R_k=\mathbb{E}\big (\sum_{i=1}^N \mathbf{1}_{U_{\varepsilon_k}}(X_i) \mathbf{1}_{[0,1]}(W_{n_k,i})W_{n_k,i}\log W_{n_k,i} \big )\big)_{k\in\mathbb N}$ and $\big (a_k=(1-P_{n_k,\varepsilon_k}(1))^{-1/2}=\big (\mathbb{E}\big (\sum_{i=1}^N \mathbf{1}_{U_{\varepsilon_k}}(X_i) W_{n_k,i}\big )\big)^{-1/2}\big)_{k\in\mathbb N}$.  Using the same argument as in the proof of Lemma~\ref{AAA}(2) one can get 
$ |R_k|\le  a_ke^{-a_k}\mathbb{E}(N)+a_k^{-1}$
for $k$ large enough, so $\lim_{k\to\infty}R_k=0$, and finally 
$
\liminf_{k\to \infty} \mathbb{E}\big (\sum_{i=1}^N W_{n_k,i}\log W_{n_k,i} \big )
\ge \liminf_{k\to \infty}P_{n_k,\varepsilon_k}'(1)\ge \eta.
$
However, for all $k\in\mathbb N$ one has $\mathbb{E}\big (\sum_{i=1}^N W_{n_k,i}\log W_{n_k,i} \big )=-\widetilde P_X^*(\nabla \widetilde P_X(q_{n_k}))<0$, since $q_{n_k}\in J_X$. This is the desired contradiction.

Next, suppose that there exists $r\in (0,1)$ such that for $k$ large enough, for all $\theta\in[0,1]$, one has 
$
\mathbb{E}\big (\sum_{i=1}^N \mathbf{1}_{V_{\varepsilon_k}}(X_i) \mathbf{1}_{\{W_{n_k,i}\in [1-r,1+r]^c\}}W_{n_k,i}^{\theta}\big )> r.
$
This implies that for $k$ large enough, $P_{n_k,\varepsilon_k}''(\theta)\ge r( \log (1+r))^2 $ for all $\theta\in [0,1]$, which contradicts the fact that $\lim_{k\to\infty}\|P_{n_k,\varepsilon_k}-1\|_\infty=0$. This new contradiction yields the claim.
\end{proof}
Now we come to the validity of the conclusions of Theorem~\ref{UNIFER2}(4) and (5) for $F$.

Suppose again that $\mathbb{E}(N^F)=1$. The previous arguments and calculations show that $\lim_{k\to\infty} P_{n_k,\varepsilon_k}'(1)=0$ so 
$
\liminf_{k\to \infty} \mathbb{E}\big (\sum_{i=1}^N W_{n_k,i}\log W_{n_k,i} \big )\ge \liminf_{k\to \infty}P_{n_k,\varepsilon_k}'(1)=0. 
$
This implies that $ \widetilde P_X^*(\alpha_H)=\lim_{k\to\infty} \widetilde P_X^*(\nabla \widetilde P_X(q_{n_k}))=0$. Since under $\mathrm{d}_\phi$ the Hausdorff dimension of $E_X(\alpha)$ is bounded by  $(|\log (\beta)|)^{-1}$ times its Hausdorff dimension under $\mathrm{d}_1$ (where $\beta$ is as in \eqref{uniformboundSnphi}), we conclude that $\dim E(X,\alpha_H)=0$, hence  Theorem~\ref{UNIFER2}(4) for $F$.

If $F=\{\alpha_F\}$ and $\mathbb{E}(N^F)>1$, the same argument as when $\dim F\ge 1$ shows that $\widetilde P^*_{X,\phi,\alpha_F}(0)\le \widetilde P_{X_F-\alpha_F,\phi_F,0}(0)$, where $\widetilde P_{X_F-\alpha_F,\phi_F,0}(0)$ is the Hausdorff dimension of $\partial\TT^F$ conditionally on non extinction of $\TT^F$. Now, consider a positive sequence $\theta=(\theta_n)_{n\in\mathbb{N}}$ converging to 0, as well as $\widetilde W=\big (\widetilde W_i=\mathbf{1}_{[0,N]}(i) \mathbf{1}_{F}(X_{i})\exp(-\widetilde P_{X_F-\alpha_F,\phi_F,0}(0)\phi_i)\big )_{i\ge 1}$. Then, for $n\ge 1$ and $(u,i)\in\mathbb{N}^{n-1}\times\N$, set
$
W_{ui}(\theta)=\big (\lambda_{\theta}(n))^{-1}\big (\mathbf{1}_{[1, N_u]}(i)\cdot  (\widetilde W_{ui}+\theta_n\mathbf{1}_{F^c}(X_{ui})\big )\big )_{i\ge 1},
$
where $\lambda_{\theta}(n)=\mathbb{E}\big (\sum_{i=1}^N\widetilde W_i+\theta_n\mathbf{1}_{F^c}(X_{i})\big )$. We leave the reader check that this family of random vectors defines almost surely a non degenerate inhomogeneous Mandelbrot measure $\mu^F$ supported on $E(X,\alpha_F)$ and of Hausdorff dimension 
$\widetilde P_{X_F-\alpha_F,\phi_F,0}(0)$, so  $\widetilde P_{X_F-\alpha_F,\phi_F,0}(0)\le \dim E(X,\alpha_F)\le  \widetilde P^*_{X,\phi,\alpha_F}(0)$. This yields Theorem~\ref{UNIFER2}(4) for $F$. 

Finally, Theorem~\ref{UNIFER2}(5) for $F$ follows from calculations similar to those done in the proof of Theorem~\ref{UNIFER}, by considering $\mu^F$ instead of $\mu_\varrho$. 

\noindent
(6) Suppose that $F\in \widetilde {\mathcal F}_X^1$ and let $H$ be an element of $\mathcal H_X$ such that $F'\subset H$ and $F\not\subset H$. Suppose that 
$\widetilde I^F_X\cap\widetilde I^{F'}_X\neq\emptyset$. Due to point (1) of this proposition applied to the branching random walk $(S_nX)_{n\in\mathbb{N}}$ restricted to $\partial \TT^F$, one has $\widering{\mathcal C}_{X,F}\cap  {\mathcal C}_{X,F'}\neq\emptyset$. This implies that for any point $\alpha\in \widering{\mathcal C}_{X,F}\cap  {\mathcal C}_{X,F'}$  there are necessarily points $\alpha'$ and $\alpha''$ in a neighbourhood of $\alpha$ relative to $F$ such that $L_{e,c}(\alpha')<0$ and $L_{e,c}(\alpha'')>0$, for otherwise some neighbourhood of $\alpha$ relative to $F$ would be included in $F'$, hence $F\subset F'$, which would contradict the assumption that $F\in\widetilde {\mathcal{F}}^1_X$. However, the inquality $L_{e,c}(\alpha'')>0$ contradicts the fact that $\mathcal C_X\subset L_{e,c}^{-1}(\R_-)$. So $\widetilde I^F_X\cap\widetilde I^{F'}_X=\emptyset$.

By symmetry, we can now suppose that both $F$ and $F'$ belong to $\overline{\mathcal F}_X^1$. If $\mathbb{E}(N^F)>1$ one has $F=\{\alpha_F\}=\widetilde I^F_X$. So if $\widetilde I^F_X\cap\widetilde I^{F'}_X\neq\emptyset$, then $F\subset F'$, which contradicts the fact that $F\in\overline {\mathcal{F}}^1_X$. Still by symmetry, the only case which remains to be studied is $\mathbb{E}(N^F)=1=\mathbb{E}(N^{F'})$. Then $\widetilde I^F_X\cap \widetilde I^{F'}_X\neq\emptyset$ means that $\alpha_F=\alpha_{F'}\in H$. Since $\alpha_F=\mathbb{E}\Big (\sum_{i=1}^N \mathbf{1}_F(X_i)X_i\Big)$ and $F\subset L_{e,c}^{-1}(\R_-)$, this implies that with probability 1, for all $1\le i\le N$, $\mathbf{1}_F(X_i)=1$ implies $\mathbf{1}_H(X_i)=1$. This holds for any element $H$ of $\mathcal H_X$ containing $F'$, from which we conclude that $\mathbb{E}(N^{F\cap F'})=1$. Consequently, $F\cap F'=F$ because $F\in\widehat {\mathcal F}_X$, and we get $F\subset F'$, new contradiction.  Thus $\widetilde I^F_X\cap \widetilde I^{F'}_X=\emptyset$. 
\end{proof}

\section{Proofs of Corollaries~\ref{cor-1},~\ref{cor-2} and~\ref{cor-3}}\label{pfcor}

\begin{proof}[Proof of Corollary~\ref{cor-1}] For each $r\in \mathbb Q\cap(0,\infty)$, fix $\widetilde k_r\in\boldsymbol{\widetilde K}$ such that $\lim_{n\to\infty} \frac{\log(k_r(n))}{n}=r$. It follows from Theorem\,B that there exists $\Omega^*\subset \Omega$ of $\mathbb{P}$-probability 1, such that for all $\omega\in \Omega^*$, there exists a set $E^\omega$ of full $\nu$-measure such that for all $t\in E^\omega$ and all $r\in \mathbb Q\cap(0,\infty)$ the large deviation principle $\mathrm{LD}(\Lambda_\psi,\widetilde k_r)$ holds.

Now fix $\omega\in\Omega^*$ and $t\in E^\omega$. Then fix $\widetilde k\in \boldsymbol{\widetilde K}$. Set $\ell=\lim_{n\to\infty} \frac{\log(k(n))}{n}$. If $\ell =0$ there is nothing to prove. If $\ell>0$, and $\lambda \in\mathcal{D}_{\Lambda_\psi}$ such that $\ell> -\Lambda_\psi^*(\nabla \Lambda_\psi(\lambda))$, for any $r_1< \ell< r_2$ with $r_1,r_2\in \mathbb Q\cap(0,\infty)$ and $r_1> -\Lambda_\psi^*(\nabla \Lambda_\psi(\lambda))$, for $n$ large enough one has $k_{r_1}(n)\le k(n)\le k_{r_2}(n)$ so that 
$$
n^{-1}\log(k(n)/k_{r_1}(n))+n^{-1}\Lambda^t_{\tilde k_{r_1},n}(\lambda)\le 
 n^{-1}\Lambda^t_{\tilde k,n}(\lambda)\le n^{-1}\log(k_{r_2}(n)/k(n))+n^{-1}\Lambda^t_{\tilde k_{r_2},n}(\lambda),
 $$
 hence $
\ell-r_1+\Lambda_\psi(\lambda)\le  \liminf_{n\to\infty} n^{-1}\Lambda^t_{\tilde k,n}(\lambda)\le\limsup_{n\to\infty} n^{-1}\Lambda^t_{\tilde k,n}(\lambda) \le r_2-\ell +\Lambda_\psi(\lambda),
$
where we used that $\mathrm{LD}(\Lambda_\psi,\widetilde k_r)$ holds at $(\omega,t)$ for $r\in\{r_1,r_2\}$. Since $r_1$ and $r_2$ are arbitrary, we get that part (1) of $\mathrm{LD}(\Lambda_\psi,\widetilde k)$ holds as well at $(\omega,t)$. 

To get part (2) of $\mathrm{LD}(\Lambda_\psi,\widetilde k)$, suppose that $\ell<-\Lambda_\psi^*(\nabla \Lambda_\psi(\lambda))$ and take $r_2\in \mathbb Q\cap(0,\infty)$ such that $\ell<r_2 <-\Lambda_\psi^*(\nabla \Lambda_\psi(\lambda))$. The fact that $\mathrm{LD}(\Lambda_\psi,\widetilde k_{r_2})$ holds directly implies that there exists $\epsilon>0$ such that  $\Big \{0\le j\le nk(n)-1: \frac{S_{j+n}X(t)-S_{j}X(t)}{n}\in B(\nabla\Lambda_\psi(\lambda),\epsilon)\Big \} =\emptyset$. 
for $n$ large enough. 

Finally, the fact that part (3) of $\mathrm{LD}(\Lambda_\psi,\widetilde k)$ holds follows from the fact that part (1) holds and the arguments developed to derive \cite[Theorem 2.3(3)]{BL}.
\end{proof}

Corollaries~\ref{cor-2} and~\ref{cor-3} are proved similarly.  

\section{Possible relaxation of the assumptions in Theorems~\ref{thm-1.1} and \ref{UNIFER}}\label{Relaxation}
Set $\mathcal{D}_X=\mathrm{dom}(\widetilde P_X)$ and note that  $\mathcal{D}_X$ is closed (as shows a simple application of Fatou's lemma). 

Assume that $\mathcal{D}_X\neq\R^d$ and  $\mathcal{D}_X$ contains an open neighbourhood of $0$, denoted by $V$. We can assume that $\widetilde P_X^*(\nabla \widetilde P_X(q))>0$ for all $q\in \mathcal V$ as well. 

Trying to mimick what was done when $\mathcal{D}_X$ equals $\R^d$, set $\widehat I_X=\{\nabla \widetilde P_X(q):\,  q\in \widering {\mathcal D}_X,\, \widetilde P_X^*(\nabla \widetilde P_X(q))\ge 0\}$. In \eqref{JXphi}, replace $\R^d\times I_X$ by  $\widering {\mathcal D}_X\times \widehat I_X$ in the definition of $J_{X,\phi}$, and in \eqref{phibound} take the supremum  over $q\in  {\mathcal{D}}_X$. The other assumptions remain the same. Then, as when $\mathcal{D}_X=\R^d$, for all $\alpha\in \widehat I_X$ there exists a unique point $q_\alpha\in\mathcal{D}_X$ at which $\inf_{q\in \mathcal{D}_X}\widetilde P_{X,\phi,\alpha}(q)$ is reached. Moreover, if $\partial \mathcal{D}_X$ is compact, then $\alpha\in \widehat I_X\mapsto q_\alpha$ is continuous.  But it is not clear that in general $q_\alpha$ belongs to $\widering {\mathcal D}_X$. 

Denote by $\widetilde I_X$ the set of those $\alpha\in \widehat I_X$ such that  $ q_\alpha\in \widering {\mathcal D}_X$. The points of $\widetilde I_X$ of the form $\nabla \widetilde P_X(q)$ such that $\widetilde P_X^*(\nabla \widetilde P_X(q))>0$ are interior points of $\widetilde I_X$. Then, the part of Theorem~\ref{thm-1.1} regarding $\dim E(X,\alpha)$ is still valid if one replaces $I_X$ by $\widetilde I_{X}$ and that about $\dim K$ is valid if $K\subset \widetilde I_{X}$. Moreover,  Theorem~\ref{UNIFER} is valid if for every $\alpha\in \widetilde I_{X}$ the domain  $\mathcal{D}_{\Lambda_{\psi_\alpha}}$ of definition of $\Lambda_{\psi_\alpha}$  is taken equal to $\widering{\mathcal{D}}_X-q_\alpha$. Now, we wish $\widetilde I_X$ to be not empty. The set $\widetilde I_X$ is not easy to understand in general. It is obvious that if ${\mathrm d}_\phi={\mathrm d}_1$, then $\widetilde I_X=\widehat I_X$ and $q_\alpha=q$ if $\alpha=  \nabla \widetilde P_X(q)$. The same properties hold if more generally the components of $\phi=(\phi_i)_{i\ge 1}$ are identically distributed and $\phi$ is independent of $(N,X=(X_i)_{i\in\mathbb N})$, or if the components of $X$ are identically distributed and $X$ is independent of $(N,\phi)$. Also, $\widetilde I_X$ is not empty if $\phi$ is a small perturbation of $(1)_{i\in\N}$. 
 
However, let $\alpha_0=\mathbb{E}\Big (\sum_{i=1}^NX_i \exp(-\widetilde P_{X,\phi,\alpha}(0)\phi_i)\Big )$, where $\alpha$ is arbitrary in $\R^d$ (we already saw that $\widetilde P_{X,\phi,\alpha}(0)$ does not depend on $\alpha$). By construction one has $q_{\alpha_0}=0$ since $\nabla \widetilde P_{X,\phi,\alpha_0}(0)=\mathbb{E}\Big (\sum_{i=1}^N(X_i-\alpha_0) \exp(-\widetilde P_{X,\phi,\alpha_0}(0)\phi_i)\Big )=0$. Moreover, it is easily seen that  the differential of $\alpha\mapsto \nabla\widetilde P_{X,\phi,\alpha}(0)$ at $\alpha_0$ is invertible (precisely, it is a nontrivial  multiple of the identity). Consequently, one can apply the implicit function theorem to $f: (q,\alpha)\mapsto (q,\nabla\widetilde P_{X,\phi,\alpha}(q))$ at $(0,\alpha_0)$ where $f$ takes the value $(0,0)$, and obtain a neighborhood of $\alpha_0$ over which $q_\alpha\in \widering{\mathcal D}_X$, and $\widetilde P_{X,\phi,\alpha}(q_\alpha)\ge 0$.

It is thus natural to consider the open set $\widetilde I_{X,\phi}$ of those $\alpha\in I_X$
 such that $q_\alpha\in \widering {\mathcal D}_X$ and $f$ is a local diffeomorphism at $(q_\alpha,\alpha)$. Then, the extensions of Theorems~\ref{thm-1.1} and~\ref{UNIFER} given in the penultimate paragraph hold as well if one replaces $\widetilde I_X$ by $\widetilde I_{X,\phi}$. Moreover, when $I_X=\overline{ \{\nabla \widetilde P_X(q): q\in J_X\}}$, one has $\widetilde I_{X,\phi}\subset\widetilde I_X$.

\end{document}